\documentclass[a4paper,10pt,reqno]{amsart}
\usepackage{amsmath}
\usepackage{cases}
\usepackage{amsfonts}
\usepackage[colorlinks,linkcolor=blue,citecolor=blue]{hyperref}
\usepackage{latexsym, amssymb, amsmath, amsthm, bbm}
\usepackage[all]{xy}
\usepackage{pgfplots}
\usepackage{enumerate}
\usepackage{mathrsfs}

\DeclareSymbolFont{EulerExtension}{U}{euex}{m}{n}
\DeclareMathSymbol{\euintop}{\mathop} {EulerExtension}{"52}
\DeclareMathSymbol{\euointop}{\mathop} {EulerExtension}{"48}

\allowdisplaybreaks[4]

\def \k{\Bbbk}
\def \id{\operatorname{id}}
\def \A{\mathcal{A}}
\def \B{\mathcal{B}}
\def \C{\mathcal{C}}
\def \D{\mathcal{D}}
\def \E{\mathcal{E}}
\def \F{\mathcal{F}}
\def \G{\mathcal{G}}
\def \X{\mathcal{X}}
\def \Y{\mathcal{Y}}
\def \Z{\mathcal{Z}}

\def \span{\operatorname{span}}

\usepackage{mathtools}

\numberwithin{equation}{section}
\theoremstyle{plain}
 \newtheorem{theorem}{Theorem}[section]

\newtheorem{lemma}[theorem]{Lemma}
\newtheorem{proposition}[theorem]{Proposition}
\newtheorem{corollary}[theorem]{Corollary}
\newtheorem{definition}[theorem]{Definition}
\newtheorem{example}[theorem]{Example}
\newtheorem{remark}[theorem]{Remark}

\begin{document}
\title[Quiver approach to quasi-quantum groups]{A quiver approach to quasi-quantum groups with the Chevalley property}
\thanks{}
\author[J. Yu]{Jing Yu}
\address{School of Mathematical Sciences, University of Science and Technology of China, Hefei 230026, China}
\email{yujing46@ustc.edu.cn}

\thanks{2020 \textit{Mathematics Subject Classification}. 16T05, 16G20, 17B37, 18M05}
\keywords{Coquasi-Hopf algebras, Dual Chevalley property, Generalized Hopf quivers, Integral tensor category}
\maketitle
\begin{abstract}
In this paper, we develop a quiver approach to coquasi-Hopf algebras with the dual Chevalley property. We introduce a modified generalized path coalgebra $\Bbbk(\mathrm{Q},\mathcal{S})$ associated with a given quiver $\mathrm{Q}$ and a collection of simple coalgebras $\mathcal{S}=\{C_i\mid i\in \mathrm{Q}_0\}$ indexed by the vertices of $\mathrm{Q}$, such that its link quiver coincides with $\mathrm{Q}$. We prove that such a coalgebra admits a graded coquasi-Hopf algebra structure with the dual Chevalley property if and only if $\mathrm{Q}$ is a generalized Hopf quiver and $\bigoplus_{i\in \mathrm{Q}_0}C_i$ forms a cosemisimple coquasi-Hopf algebra. Moreover, we provide a classification of these coquasi-Hopf algebra structures. We then study the link-indecomposable components of a coquasi-Hopf algebra with the dual Chevalley property, and give the generalized dual Gabriel's theorem for such coquasi-Hopf algebras. As an application, we apply the quiver method to classify finite integral tensor categories with the Chevalley property of finite representation type. We also give structural characterizations of coradically graded coquasi-Hopf algebras of tame corepresentation type. Furthermore, we investigate finite braided integral tensor categories with the Chevalley property via the quiver approach.
\end{abstract}
\date{}

\tableofcontents
\section{Introduction}
The concept of quasi-Hopf algebras was introduced by Drinfeld \cite{Dri90} in connection with the Knizhnik-Zamolodchikov system of equations from conformal field theory. Subsequently, Majid \cite{Maj92} introduced the dual notion, termed coquasi-Hopf algebras, also known as Majid algebras \cite{SS93}. In accordance with Drinfeld's philosophy of quantum groups \cite{Dri87}, we can understand both of these mutually dual algebraic structures in the framework of quasi-quantum groups; they are obtained from Hopf algebras by weakening the coassociativity and associativity axioms, respectively.
They have found important applications in various areas of mathematics and physics, for example low-dimensional topology, conformal field theory, representation theory, and number theory. See, for instance, \cite{DPR90, DNR25, FGR22, GR25, RT91}.

Recall that, under Tannaka-Krein duality \cite{Maj95}, finite-dimensional quasi-quantum groups are deeply related to finite tensor categories. Within this framework, Etingof and Ostrik \cite{EO04} demonstrated that a finite tensor category in which all objects have integer Frobenius-Perron dimensions (termed a finite integral tensor category) can be realized from a finite-dimensional quasi-quantum group. Thus, classifying these tensor categories is converted into classifying the associated quasi-quantum groups.

The easiest class of non-semisimple tensor categories is formed by those with the Chevalley property, i.e., categories in which the tensor product of any two simple objects remains semisimple. The name derives from Chevalley's theorem \cite{Che54}, which established this property for finite-dimensional representations of groups and Lie algebras over a field of characteristic $0$. Moreover, the finite-dimensional module categories of the quantized enveloping algebras $U_q(\mathfrak{g})$, the quantized Borel subalgebras $U_q^{\geq 0}(\mathfrak{g})$, and the quantized coordinate rings $\mathcal{O}_q(G)$  over the field $\mathbb{C}$ of complex numbers, satisfy the Chevalley property, where $q \in \mathbb{C}^{\times}$ is not a root of unity; see \cite{Jos94}, as well as \cite[Remark 1.6]{HQW26}. Tensor categories with the Chevalley property arising from some big quantum groups at roots of unity have also been studied recently in \cite{HQWZ26}.

The classification of finite integral tensor categories with the Chevalley property provides a key motivation for constructing new types of (co)quasi-Hopf algebras with the (dual) Chevalley property. Here, by the (dual) Chevalley property we mean that for a quasi-Hopf algebra the Jacobson radical is a quasi-Hopf ideal, and for a coquasi-Hopf algebra the coradical is a coquasi-Hopf subalgebra.
In the non-semisimple setting, pointed tensor categories, where all simple objects are invertible \cite{EGNO15}, provide an important class of tensor categories with the Chevalley property, and the current classification results for such categories are mainly focused on finite integral pointed tensor categories. Etingof and Gelaki pioneered this line of research with a series of papers \cite{EG04, EG05, EG06, Gel05}, introducing a method to construct genuine elementary quasi-Hopf algebras from known finite-dimensional pointed Hopf algebras. They also gave an explicit classification of such algebras over cyclic groups of prime order. Here, ``genuine" means the quasi-Hopf algebra is not twist equivalent to a Hopf algebra. Building on this, Angiono \cite{Ang10} extended the construction and obtained a complete classification over cyclic groups whose order is not divisible by $2, 3, 5, 7$. Recently, Huang, Liu, Yang, Ye, and Zhang completed the classification of finite-dimensional coradically graded pointed coquasi-Hopf algebras over abelian groups in their series of works \cite{HLYY20, HLYY26,HYZ18}, thereby generalizing the classification results for pointed Hopf algebras by Andruskiewitsch and Schneider in \cite{AS10}.

Meanwhile, building on Gabriel's foundational contributions \cite{Gab72}, quivers and their representations emerged as a central framework in the representation theory of associative algebras. Within the context of Hopf algebras and quantum groups, related notions such as Hopf quivers \cite{CR02} and covering quivers \cite{GS98} were introduced and further developed in works including \cite{CHZ06, MT19,VZ04}. In \cite{Hua09,Hua12}, Huang constructed pointed coquasi-Hopf algebras on Hopf quivers exhaustively using the projective representation theory of groups combined with a proper deformation theory. From the perspective of the quiver setting for pointed coquasi-Hopf algebras, it turns out that nothing new emerges beyond the Hopf quivers. Subsequently, Huang, together with Liu and Ye, classified pointed coquasi-Hopf algebras of finite corepresentation type \cite{HLY11} and graded elementary tame quasi-Hopf algebras \cite{HLY15} by using the quiver method. As a result, several classification results for finite integral pointed tensor categories according to their representation type were obtained. They \cite{HL12} also studied coquasitriangular pointed coquasi-Hopf algebras and braided pointed tensor categories via the quiver approach. Therefore, quiver method proves to be a useful tool in the classification of finite integral pointed tensor categories.

The study of non-pointed, non-semisimple integral tensor categories with the Chevalley property originates primarily from the pioneering work of Andruskiewitsch, Etingof, and Gelaki \cite{AEG01}, who proved that any finite-dimensional triangular Hopf algebra over $\mathbb{C}$ with the Chevalley property can be obtained by twisting a triangular Hopf algebra whose $R$-matrix has rank at most $2$. Subsequently, Etingof and Gelaki \cite{EG03} showed that every finite symmetric tensor category over an algebraically closed field of characteristic $0$ has the Chevalley property, and that any such category is equivalent to the category of finite-dimensional representations of a triangular Hopf algebra with $R$-matrix of rank $\leq 2$. More recently, they \cite{EG21a, EG21b} classified finite-dimensional triangular quasi-Hopf algebras with the Chevalley property over an algebraically closed field $\k$ of characteristic $p > 0$, thereby also classifying finite symmetric integral tensor categories with the Chevalley property over $\k$. In addition, finite-dimensional Hopf algebras with the dual Chevalley property and a given coradical have been classified in a case-by-case manner using the approach of Andruskiewitsch and Schneider \cite{AS10}. An example is the classification of Hopf algebras whose coradical is a fixed $16$-dimensional semisimple Hopf algebra $H_{b:1}$ \cite{ZGH21}. However, more general classification results for integral tensor categories with the Chevalley property remain relatively scarce.

In our previous work, we investigated the link quivers of Hopf algebras with the dual Chevalley property and classified the corepresentation types of such algebras \cite{YLL24, YL24, YL26}. We showed that the link quivers of these Hopf algebras are completely determined by the Grothendieck ring structure of their coradical and the number of arrows with ending vertex $\k1.$
Naturally, one may ask whether the same holds for the link quivers of coquasi-Hopf algebras with the dual Chevalley property. On the other hand, the inverse problem of the above question is also worth considering. Since one can construct pointed (coquasi-)Hopf algebra structures on Hopf quivers, perhaps one can also construct (coquasi-)Hopf algebra structures with the dual Chevalley property on certain quivers. However, it remains unknown how to do so from a given quiver.
Inspired by Geiss, Leclerc, and Schr\"{o}er's work (\cite{GLS17}) on translating the representations of a class of Iwanaga-Gorenstein algebras defined via quivers with relations associated with symmetrizable Cartan matrices into representations of modulated graphs, we wish to place a simple coalgebra $C_i$ at each vertex $i\in \mathrm{Q}_0$ of a quiver $\mathrm{Q} = (\mathrm{Q}_0, \mathrm{Q}_1)$, and at each arrow $\alpha: i \rightarrow j$ of the quiver, place a simple $C_j$-$C_i$-bicomodule. Then, by means of the structure of a cotensor coalgebra, we can construct a coradically graded coalgebra. Clearly, if some $C_i$ is not one-dimensional, then this coalgebra is not pointed.
From now on, we only need to determine for which quivers we can construct a (coquasi-)Hopf algebra with the dual Chevalley property such that the given quiver is its link quiver, and to understand how such structures can be constructed. This essentially corresponds to the yet undefined concept of generalized Hopf quivers.

In this paper, we establish a quiver approach to coquasi-Hopf algebras with the dual Chevalley property. We first prove that, as in the Hopf algebra case, the link quiver of such a coquasi-Hopf algebra $H$ is completely determined by the Grothendieck ring structure of its coradical $H_0$ and the number of arrows with ending vertex $\k1$; see Proposition \ref{prop:alphaijk}. Moreover, the number of arrows with ending vertex $\k1$ is related to the comodule decomposition of the left-left Yetter-Drinfeld module $(H_1/H_0)^{co H}$; see Remark \ref{rm:Hopf;pointed}. Abstracting this property, we introduce the definition of a generalized Hopf quiver; see Definition \ref{def:generalizedHopfquiver}. We then introduce a modified generalized path coalgebra (see Subsection \ref{subsection4.1}) associated with a given quiver and a collection of simple coalgebras ordered by the vertices of the quiver, whose link quiver is the initial given quiver. We prove that such a coalgebra admits a graded coquasi-Hopf algebra structure with the dual Chevalley property if and only if the given quiver is a generalized Hopf quiver and the direct sum of such a class of simple coalgebras forms a cosemisimple coquasi-Hopf algebra, thereby addressing the question posed above. We also give a classification of such coquasi-Hopf algebra structures; see Theorem \ref{thm:generalizedHopfquiver}. As a byproduct, we construct a class of tensor categories with the Chevalley property; see Proposition \ref{prop:k(Q,S)antipodebijective}. It should be noted that, compared to pointed tensor categories, this class of tensor categories with the Chevalley property covers a broader range of types. It can include some special fusion categories as tensor subcategories, for example, the module category of the fixed-point subalgebra
$V^G$ of a holomorphic vertex operator algebra $V$; see Example \ref{ex:DwGquiver}.
In addition, we study the link-indecomposable components of a coquasi-Hopf algebra with the dual Chevalley property (see Proposition \ref{prop:HcHd}), and give the generalized dual Gabriel's theorem for such coquasi-Hopf algebras (see Theorem \ref{thm:Gabriel}).

Similar to how the quiver method helps in the study of pointed integral tensor categories, we use the quiver method that we established for coquasi-Hopf algebras with the dual Chevalley property to investigate the classification of finite integral tensor categories with the Chevalley property. We first establish the relationship between the corepresentation types of coquasi-Hopf algebras with the dual Chevalley property and their link quivers (see Proposition \ref{prop:corep=quiver}), and then classify finite-dimensional coquasi-Hopf algebras with the dual Chevalley property of finite corepresentation type using the quiver method (see Proposition \ref{prop:finite}), thereby obtaining classification results for the corresponding tensor categories (see Theorem \ref{thm:finite}). We also provide some characterizations of graded coquasi-Hopf algebras with the dual Chevalley property of tame corepresentation type; see Propositions \ref{prop:tame1-dim} and \ref{prop:tameC}. Furthermore, we study coquasitriangular coquasi-Hopf algebras with the dual Chevalley property, and thereby investigate finite braided integral tensor categories with the Chevalley property via the quiver approaches; see Theorem \ref{thm:gabrielcoquasi} and Proposition \ref{prop:braidedtensorcategory}.

The organization of this paper is as follows: In Section \ref{section2}, we recall the definitions of tensor categories, coquasi-Hopf algebras, Majid bimodules, and Yetter-Drinfeld modules, and present some of their basic properties. In Section \ref{section3}, we characterize the link quiver of coquasi-Hopf algebras with the dual Chevalley property, and furthermore investigate the decomposition of these coquasi-Hopf algebras into link-indecomposable components. In Section \ref{section4}, we first construct a class of quiver-related coalgebras: a modified generalized path coalgebra associated with a given quiver and a collection of simple coalgebras indexed by the vertices of this quiver. Then we give the definition of a generalized Hopf quiver, and prove that such a coalgebra admits a graded coquasi-Hopf algebra structure with the dual Chevalley property if and only if the given quiver is a generalized Hopf quiver and the direct sum of the simple subcoalgebras forms a cosemisimple coquasi-Hopf algebra. Moreover, we give the generalized dual Gabriel's theorem for coquasi-Hopf algebras with the dual Chevalley property. Section \ref{section5} is devoted to applying the quiver method to the classification of finite integral tensor categories with the Chevalley property. Specifically, we classify finite integral tensor categories with the Chevalley property of finite representation type, and provide structural characterizations of graded coquasi-Hopf algebras with the dual Chevalley property of tame corepresentation type. Furthermore, we investigate finite braided integral tensor categories with the Chevalley property via the quiver approaches.

\section{Tensor categories and coquasi-Hopf algebras}\label{section2}
Throughout this paper $\Bbbk$ denotes an \textit{algebraically closed field} unless specified otherwise and all spaces are over $\Bbbk$. In this section, we introduce some essential concepts and present foundational results. Further details can be found in \cite{EGNO15} for tensor categories and in \cite{BCPV19} for (co)quasi-Hopf algebras.
\subsection{Unital based rings and tensor categories}
Let $\mathbb{Z}_+$ be the set of non-negative integers. Some relevant concepts are recalled as follows.
\begin{definition}\emph{(}\cite[Chapter 3]{EGNO15}\emph{)}\label{def:based}
Let $A$ be a unital associative ring that is free as a $\mathbb{Z}$-module.
\begin{itemize}
  \item[(1)]A $\mathbb{Z}_+$-basis of $A$ is a basis $B=\{b_{i}\}_{i\in I}$ such that $b_ib_j=\sum_{k\in I}\alpha_{ij}^kb_k$, where $\alpha_{ij}^k\in\mathbb{Z}_+$.
  \item[(2)]A ring $A$ endowed with a fixed $\mathbb{Z}_+$-basis $\{b_i\}_{i\in I}$ is called a unital based ring if the following conditions are satisfied:
  \begin{itemize}
  \item[(i)]$1$ is a basis element.
  \item[(ii)]
There exists an involution $i \mapsto i^*$ of $I$ such that the induced map
$$a=\sum\limits_{i\in I}a_ib_i \mapsto a^*=\sum\limits_{i\in I}a_ib_{i^*},\;\; a_i\in \mathbb{Z}$$ is an anti-involution of $A$, and
$$\tau(b_ib_j)=\left\{
\begin{aligned}
1,~~~  \text{if} ~~~ i=j^*, \\
0,~~~  \text{if} ~~~ i\neq j^*.
\end{aligned}
\right.$$
  \end{itemize}
\item[(3)]A fusion ring is a unital based ring of finite rank.
\item[(4)]A ring $A$ endowed with a fixed $\mathbb{Z}_+$-basis $\{b_i\}_{i\in I}$ is said to be a unital transitive $\mathbb{Z}_+$-ring if $1$ belongs to the basis and, for any $i,j\in I$ there exist $k,l\in I$ such that $b_ib_k$ and $b_l b_i$ contain $b_j$ with a nonzero coefficient.
\end{itemize}
\end{definition}
\begin{remark}\rm\label{rm:1once}
Let $A$ be a unital based ring with a $\mathbb{Z}_+$-basis $\{b_i\}_{i\in I}$.
By Definition \ref{def:based} (2) (ii), for each basis element $b_i\in B$, the identity element $1$ appears exactly once in the expansion of the product $b_ib_i^*$. Moreover, \cite[Exercise 3.3.2]{EGNO15} implies $A$ is transitive.
\end{remark}
\begin{lemma}\label{lem:cijkinvariant}
For all $i,j,k\in I$ in a unital based ring, we have $\alpha_{ij}^k=\alpha_{kj^*}^i.$
\end{lemma}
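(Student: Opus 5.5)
The plan is to express both sides of $\alpha_{ij}^k=\alpha_{kj^*}^i$ as values of the trace functional $\tau$ on a triple product, and then use the symmetry properties of $\tau$ that follow from Definition \ref{def:based} (2)(ii). Here $\tau\colon A\to\mathbb{Z}$ is the group homomorphism sending $b_0=1$ to $1$ and every other basis element to $0$, that is, $\tau(a)$ is the coefficient of $1$ in $a$.

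\textbf{Step 1: a coefficient formula.} I claim that for every $a\in A$ and $k\in I$, the coefficient of $b_k$ in $a$ equals $\tau(a\,b_{k^*})$. By linearity it suffices to check this for $a=b_m$. In that case the axiom gives $\tau(b_mb_{k^*})=1$ exactly when $m=(k^*)^*=k$, and $0$ otherwise, which is the claim. Applying it to $a=b_ib_j$ yields
\[
\alpha_{ij}^k=\tau(b_ib_jb_{k^*}),
\]
and applying it to $a=b_kb_{j^*}$ yields
\[
\alpha_{kj^*}^i=\tau(b_kb_{j^*}b_{i^*}).
\]

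\textbf{Step 2: properties of $\tau$.} I will establish two facts.

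First, $\tau(a^*)=\tau(a)$. This holds because $b_0^*=b_{0^*}$, and $0^*=0$: since $*$ is an anti-involution of rings, it sends the unit to the unit, so $1^*=1$.

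Second, $\tau(xy)=\tau(yx)$ for basis elements. By the axiom, $\tau(b_ib_j)$ and $\tau(b_jb_i)$ are both equal to $\delta_{i,j^*}$, and the condition $i=j^*$ is equivalent to $j=i^*$. By bilinearity, $\tau$ is therefore a trace on all of $A$.

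\textbf{Step 3: combining.} Write $u=b_kb_{j^*}b_{i^*}$. Since $*$ is an anti-involution,
\[
u^*=b_{i}\,b_{j}\,b_{k^*}.
\]
Hence
\[
\alpha_{kj^*}^i=\tau(u)=\tau(u^*)=\tau(b_ib_jb_{k^*})=\alpha_{ij}^k .
\]
The cyclicity of $\tau$ is not even needed in this step; it only serves as a consistency check.

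The only delicate point is justifying $0^*=0$, i.e.\ that $1^*=1$. This follows because an anti-automorphism preserves the multiplicative identity: $1^*a^*=(a\cdot 1)^*=a^*$ for all $a$, and $*$ is surjective, so $1^*$ is a left identity and hence equals $1$. The rest of the argument is routine.
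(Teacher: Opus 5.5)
Your proof is correct, and it takes a slightly different route from the paper's.

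The paper argues in two steps. First it compares coefficients in $(b_ib_j)^*=b_{j^*}b_{i^*}$ to get $\alpha_{ij}^k=\alpha_{j^*i^*}^{k^*}$. Then it invokes the cyclic symmetry of the structure constants from \cite[Proposition 3.1.6]{EGNO15}, which in your notation is $\tau(b_{j^*}b_{i^*}b_k)=\tau(b_kb_{j^*}b_{i^*})$, to reach $\alpha_{kj^*}^i$.

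You instead write both sides as $\tau$ of triple products that are exactly $*$-images of each other. The single identity $\tau(a^*)=\tau(a)$, which reduces to $1^*=1$ and which you justify correctly, then closes the argument, and cyclicity of $\tau$ is never needed. In effect you fuse the paper's two steps into one application of the anti-involution to the full triple product.

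Your version is self-contained and shows that only the anti-involution and the normalization of $\tau$ matter. You also make explicit the standard meaning of $\tau$ as the coefficient of $1$, which the paper's definition of a unital based ring uses without stating. The paper's version is shorter on the page but rests on the external citation, and through it on the trace property of $\tau$.
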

\begin{proof}
Since $*$ is an anti-involution, it follows that $$b_{j^*}b_{i^*}=(b_ib_j)^*=\sum\limits_{k\in I}\alpha_{ij}^kb_{k^*}.$$
On the other hand, from the product structure we also have $$b_{j^*}b_{i^*}=\sum\limits_{k\in I}\alpha_{j^* i^*}^{k^*}b_{k^*}.$$ Comparing coefficients yields $\alpha_{ij}^k=\alpha_{j^*i^*}^{k^*}$ for all $i,j,k\in I.$
Applying \cite[Proposition 3.1.6]{EGNO15} then gives $\alpha_{ij}^k=\alpha_{j^*i^*}^{k^*}=\alpha_{kj^*}^i.$
\end{proof}

Let $A$ be a transitive unital $\mathbb{Z}_+$-ring of finite rank. For each basis element $b_i\in B$, let $\operatorname{FPdim}(b_i)$ denote the maximal non-negative eigenvalue of the matrix of left multiplication by $b_i$. As this matrix has non-negative entries, the Frobenius-Perron theorem (\cite[Theorem 3.2.1]{EGNO15}) guarantees the existence of such an eigenvalue. The function $\operatorname{FPdim}$ is called the \textit{Frobenius-Perron dimension}. We remark that $\operatorname{FPdim}$ is the unique character of $A$ which takes non-negative values on $B$. The reader is referred to \cite[Chapter 3]{EGNO15} for details.

According to \cite[Proposition 3.3.6 (2)]{EGNO15}, there exists a nonzero element $R \in A \otimes_{\mathbb{Z}} \mathbb{C}$ which is unique up to scalar multiples, such that $X  R = \operatorname{FPdim}(X) R$ for all $X \in A$. This element also satisfies $R  Y = \operatorname{FPdim}(Y) R$ for all $Y \in A$ and is called a \textit{regular element} of $A$. If $A$ is a fusion ring, then by \cite[Proposition 3.3.11]{EGNO15}, the element $R = \sum_{i \in I} \operatorname{FPdim}(b_i) b_i$ has this property.
We state the following lemma.
\begin{lemma}\label{lem:FPeq}
Let $A$ be a fusion ring with a $\mathbb{Z}_+$-basis $B=\{b_i\}_{i\in I}$. For $i,j\in I$, write the product as $b_ib_j=\sum_{k\in I}\alpha_{ij}^kb_k$ with $\alpha_{ij}^k\in\mathbb{Z}_+$. Then for any $k,t\in I$, we have
$$\operatorname{FPdim}(b_k) \operatorname{FPdim}(b_t)=\sum\limits_{i\in I} \operatorname{FPdim}(b_i)\alpha_{i,k}^t.$$
\end{lemma}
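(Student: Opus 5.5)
The plan is to use the regular element $R=\sum_{i\in I}\operatorname{FPdim}(b_i)b_i$ of the fusion ring $A$. By \cite[Proposition 3.3.11]{EGNO15} it satisfies $R\,Y=\operatorname{FPdim}(Y)R$ for all $Y\in A$. The identity in Lemma \ref{lem:FPeq} should then drop out by comparing the coefficients of $b_t$ on both sides of $R\,b_k=\operatorname{FPdim}(b_k)R$.

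First, expand the left-hand side with the structure constants:
$$R\,b_k=\sum_{i\in I}\operatorname{FPdim}(b_i)\,b_ib_k=\sum_{i\in I}\operatorname{FPdim}(b_i)\sum_{t\in I}\alpha_{i,k}^t b_t=\sum_{t\in I}\Big(\sum_{i\in I}\operatorname{FPdim}(b_i)\alpha_{i,k}^t\Big)b_t.$$
The right-hand side is $\operatorname{FPdim}(b_k)R=\sum_{t\in I}\operatorname{FPdim}(b_k)\operatorname{FPdim}(b_t)\,b_t$.

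Next, note that $B$ is a $\mathbb{Z}$-basis of $A$, so it remains a $\mathbb{C}$-basis of $A\otimes_{\mathbb{Z}}\mathbb{C}$. Comparing coefficients of $b_t$ therefore gives exactly the claimed equality for every pair $k,t$.

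The only step that needs care is the right-multiplication property $R\,Y=\operatorname{FPdim}(Y)R$ for this specific $R$, as opposed to the left version. The paper states both properties for the regular element, so I would cite them directly.

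If a self-contained check is wanted, there is a direct verification. Lemma \ref{lem:cijkinvariant} gives $\alpha_{i,k}^t=\alpha_{t,k^*}^i$. Hence
$$\sum_i\operatorname{FPdim}(b_i)\alpha_{t,k^*}^i=\operatorname{FPdim}(b_tb_{k^*})=\operatorname{FPdim}(b_t)\operatorname{FPdim}(b_{k^*}),$$
because $\operatorname{FPdim}$ is a ring character. It then remains to use $\operatorname{FPdim}(b_{k^*})=\operatorname{FPdim}(b_k)$, which is a standard fact from \cite[Chapter 3]{EGNO15}. I expect this duality invariance to be the main potential obstacle if one avoids citing the regular-element property.
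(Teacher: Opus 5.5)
Your proposal is correct and matches the paper's proof. The paper likewise takes $R=\sum_{i\in I}\operatorname{FPdim}(b_i)b_i$, uses \cite[Propositions 3.3.6 (2) and 3.3.11]{EGNO15} to get $R\,b_k=\operatorname{FPdim}(b_k)R$, expands with the structure constants, and compares the coefficients of $b_t$; your optional check via Lemma~\ref{lem:cijkinvariant}, multiplicativity of $\operatorname{FPdim}$, and $\operatorname{FPdim}(b_{k^*})=\operatorname{FPdim}(b_k)$ is also valid but not needed.
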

\begin{proof}
It follows from \cite[Propositions 3.3.6 (2) and 3.3.11]{EGNO15} that $R = \sum_{i \in I} \operatorname{FPdim}(b_i) b_i$ is a regular element
and $\operatorname{FPdim}(b_k)R=R b_k.$
This means that
$$ \operatorname{FPdim}(b_k) (\sum\limits_{i \in I} \operatorname{FPdim}(b_i) b_i)=(\sum\limits_{i \in I} \operatorname{FPdim}(b_i) b_i)b_k
 =\sum\limits_{i\in I} \sum\limits_{t\in I}\operatorname{FPdim}(b_i)\alpha_{ik}^t b_t,$$
which follows that $\operatorname{FPdim}(b_k) \operatorname{FPdim}(b_t)=\sum_{i\in I} \operatorname{FPdim}(b_i)\alpha_{i,k}^t.$
\end{proof}

Recall that a locally finite $\Bbbk$-linear abelian rigid monoidal category $\mathscr{C}$ is called a \textit{tensor category} (\cite[Definition 4.1.1]{EGNO15}) if the bifunctor $\otimes:\mathscr{C}\times \mathscr{C}\rightarrow \mathscr{C}$ is bilinear on morphisms and the unit object $\mathbbm{1}$ is simple. Here by a \textit{locally finite category} (\cite[Definition 1.8.1]{EGNO15}) we mean one whose morphism spaces are finite-dimensional and in which every object has finite length. Moreover, a tensor category with enough projectives and finitely many isomorphism classes of simple objects is called \textit{finite} \cite[Definition 1.8.6]{EGNO15}.

For a tensor category $\mathscr{C}$, we denote its Grothendieck ring by $\operatorname{Gr}(\mathscr{C})$. From \cite[Proposition 4.5.4]{EGNO15}, $\operatorname{Gr}(\mathscr{C})$ is a transitive unital $\mathbb{Z}_+$-ring, whose $\mathbb{Z}_+$-basis $\mathcal{V}$ is given by the set of isomorphism classes of simple objects in $\mathscr{C}.$
If in addition, $\mathscr{C}$ is semisimple, then according to \cite[Proposition 4.9.1]{EGNO15}, $\operatorname{Gr}(\mathscr{C})$ is a unital based ring. Let $X$ be an object in a finite tensor category $\mathscr{C}$, then its \textit{Frobenius-Perron dimension} $\operatorname{FPdim}(X)$ is defined to be the Frobenius-Perron dimension of its class $[X]$ in $\operatorname{Gr}(\mathscr{C})$.

Next, we introduce the specific type of tensor category that constitutes the main subject of this paper.
\begin{definition}\emph{(}\cite[Definition 4.12.3]{EGNO15}\emph{)}
A tensor category $\mathscr{C}$ is said to have the Chevalley property, if the tensor product of every two simple objects of $\mathscr{C}$ is semisimple.
\end{definition}

Let $\mathscr{C}$ be a tensor category with the Chevalley property and $\mathscr{C}_0$ be the full subcategory of semisimple objects of $\mathscr{C}$. In this setting, $\mathscr{C}_0$ is a semisimple tesnor category and $\operatorname{Gr}(\mathscr{C}_{0})$ is isomorphic to $\operatorname{Gr}(\mathscr{C})$ as unital based rings. For a general tensor category $\mathscr{C}$, however, the Grothendieck ring $\operatorname{Gr}(\mathscr{C})$ is not necessarily a unital based ring. For instance, consider the $2$-dimensional irreducible representation $X$ of the group $S_3$ over a field of characteristic $2$: the tensor product $X \otimes X^*$ contains two copies of the trivial representation in its composition series (see \cite[Example 3.1.9(V)]{EGNO15}). This is contrary to Remark \ref{rm:1once}.

\subsection{Coquasi-Hopf algebras}
Coquasi-Hopf algebras, also known as Majid algebras or dual quasi-Hopf algebras, are precisely the dual notion of Drinfeld's quasi-Hopf algebras \cite{Dri90}. We explicitly recall only the definitions about coquasi-Hopf algebras (see \cite[Section 2.4]{Maj95}). The corresponding definitions for quasi-Hopf algebras can be obtained in a dual manner.
\begin{definition}
A coquasi-bialgebra is a coalgebra $(H,\Delta,\varepsilon)$ over $\k$ equipped with a
compatible quasi-algebra structure. More explicitly, there exist two coalgebra homomorphisms $$m: H \otimes H \rightarrow H, \ a
\otimes b \mapsto ab \;\;\text{and}\;\; \mu: \k \rightarrow H,\ \lambda \mapsto \lambda
1_H,$$ a convolution-invertible map $\Phi: H^{\otimes 3} \rightarrow \k$
called the reassociator such that for all $a,b,c,d \in
H$ the following equalities hold:
\begin{eqnarray}
&a_1(b_1c_1)\Phi(a_2,b_2,c_2)=\Phi(a_1,b_1,c_1)(a_2b_2)c_2,\label{eq:associativity}\\
&1_H a=a=a1_H, \label{eq:1h=h1=h}\\
&\Phi(a_1,b_1,c_1d_1)\Phi(a_2b_2,c_2,d_2) =\Phi(b_1,c_1,d_1)\Phi(a_1,b_2c_2,d_2)\Phi(a_2,b_3,c_3), \label{eq:penta}\\
&\Phi(a,1_H,b)=\varepsilon(a)\varepsilon(b).\label{eq:Phi(1)=e}
\end{eqnarray}
A coquasi-bialgebra $H$ is called a coquasi-Hopf algebra if it admits a coquasi-antipode, i.e., a coalgebra antimorphism $S : H \to H$ together with two maps $\alpha, \beta : H \rightarrow \Bbbk$ satisfying for any $a \in H$:
\begin{eqnarray}
&S(a_1)\alpha(a_2)a_3=\alpha(a)1_H,\;\; a_1\beta(a_2)S(a_3)=\beta(a)1_H, \label{eq:alpha,beta} \\
&\Phi(a_1,S(a_3),a_5)\beta(a_2)\alpha(a_4)
=\Phi^{-1}(S(a_1),a_3,S(a_5)) \alpha(a_2)\beta(a_4)=\varepsilon(a). \label{eq:Phialphabeta=e}
 \end{eqnarray}
Here and below we adopt the shorthand $\Phi(a,b,c):=\Phi(a\otimes b\otimes c)$ and use the Sweedler's notation $\Delta(a)=a_1 \otimes
a_2$ for the coproduct; the result of the $n$-fold iterated coproduct applied to
$a$ is denoted $a_1 \otimes a_2 \otimes \cdots \otimes
a_{n+1}$. Moreover, a coquasi-antipode $(S,\alpha,\beta)$ is said to be bijective if $S$ is bijective.
\end{definition}

Note that (\ref{eq:penta}) and (\ref{eq:Phi(1)=e}) imply the identities
\begin{eqnarray}
\Phi(1_H,a,b)=\Phi(a,b,1_H)=\varepsilon(a)\varepsilon(b),\;\;\forall a,b\in H.
\end{eqnarray}
The relations (\ref{eq:alpha,beta}) and (\ref{eq:Phialphabeta=e}) also imply $S(1_H)=1_H$ and $\alpha(1_H)\beta(1_H)=1$.
It is worth pointing out that the coquasi-antipode is unique up to a convolution invertible element $U\in H^*$. In other words, if $(S^\prime,\alpha^\prime,\beta^\prime)$ is another triple satisfying (\ref{eq:alpha,beta}) and (\ref{eq:Phialphabeta=e}), then $$S^\prime=U*S*U^{-1},\;\; \alpha^\prime=U*\alpha,\;\; \beta^\prime=\beta*U^{-1}.$$
Moreover, coquasi-Hopf algebras are a direct generalization of Hopf algebras. Indeed, when the reassociator $\Phi$ is trivial (i.e., $\Phi(a,b,c) = \varepsilon(a)\varepsilon(b)\varepsilon(c)$ for all $a,b,c \in H$), and $\alpha=\varepsilon=\beta$, a coquasi-Hopf algebra reduces to an ordinary Hopf algebra.

Let $H$ be a coquasi-Hopf algebra over $\k.$ We now recall the monoidal structure of the category $\mathscr{M}{}^H$ of finite-dimensional right $H$-comodules. The tensor product of two $H$-comodules $U$ and $V$ is taken as the vector space tensor product $U\otimes V$, equipped with the coaction $\rho^R_{U\otimes V}(u\otimes v)= u_0\otimes v_0\otimes u_{1}v_{1}$, where $\rho^R_U(u)=u_0\otimes u_{1}$ and $\rho^R_V(v)=v_0\otimes v_{1}$ in Sweedler's notation. The associativity constraints are given by:
\begin{eqnarray*}
a_{U,V,W}: (U\otimes V)\otimes W&\rightarrow& U\otimes (V\otimes W)\\
(u\otimes v)\otimes w&\mapsto& u_0\otimes (v_0\otimes w_0)\Phi(u_1, v_1, w_1)
\end{eqnarray*}
for all $u\in U, v\in V, w\in W$ and $U, V, W\in\mathscr{M}{}^H$. The unit object is the trivial comodule $\k.$
The left dual of an $H$-comodule $V\in\mathscr{M}{}^H$ is the dual vector space $V^*$ equipped with the coaction $\rho^R_{V^*}(f)= f_0\otimes f_1$, defined such that  $$\langle f_0,v \rangle f_1=\langle f, v_0 \rangle S(v_1)$$ for all $v\in V, f\in V^*$. If $S$ is invertible, then the right dual of $V$ can be defined analogously by replacing $S$ with $S^{-1}$ in the formula above. Consequently, the corepresentation category of a coquasi-Hopf algebra with bijective coquasi-antipode forms a tensor category.

Combining \cite[Theorem 1.1]{BIR09} and \cite[Theorem 4.10]{BC03} yields the following lemma.
\begin{lemma}\label{lem:cosstensor}
Let $H$ be a cosemisimple coquasi-Hopf algebra with a coquasi-antipode $(S,\alpha,\beta)$. Then $S$ is bijective. Furthermore, the category $\mathscr{M}{}^H$ of finite-dimensional right $H$-comodules is a semisimple tensor category.
\end{lemma}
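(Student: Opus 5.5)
The plan is simply to assemble the two cited results. The lemma has two claims: $S$ is bijective, and $\mathscr{M}{}^H$ is a semisimple tensor category. I would prove the first claim first, since the second depends on it through rigidity.

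\textbf{Bijectivity of $S$.} I would invoke \cite[Theorem 4.10]{BC03}. There, cosemisimple coquasi-Hopf algebras are shown to admit a left (equivalently, right) total integral $\lambda\colon H\to\k$ with $\lambda(1_H)=1$. Using such an integral, one shows that the coquasi-antipode $S$ of a cosemisimple $H$ is bijective, in analogy with the Hopf case, where cosemisimplicity forces bijectivity of the antipode. If the cited statement only gives injectivity, I would finish as follows. $S$ is a coalgebra antimorphism, so it maps each simple subcoalgebra $C$ onto a simple subcoalgebra. Every simple subcoalgebra of a coalgebra over an algebraically closed field is finite-dimensional. The coradical is all of $H$, so $H=\bigoplus C$. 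An injective map permuting finite-dimensional pieces of equal dimension is surjective, provided $S$ maps the set of simple subcoalgebras to itself bijectively. That proviso would follow from \cite[Theorem 1.1]{BIR09}, which concerns antipodes of (co)quasi-Hopf algebras. I expect this step to be the main obstacle, but only as a matter of quoting the references correctly.

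\textbf{Tensor category structure.} With $S$ bijective, the discussion preceding the lemma shows that $\mathscr{M}{}^H$ has both left duals (built from $S$) and right duals (built from $S^{-1}$). The tensor product, the associativity constraint $a_{U,V,W}$ and the unit $\k$ give a monoidal structure. The pentagon and triangle axioms follow from (\ref{eq:penta}) and (\ref{eq:Phi(1)=e}). The evaluation and coevaluation maps are twisted by $\alpha$ and $\beta$, and (\ref{eq:alpha,beta}) and (\ref{eq:Phialphabeta=e}) give the zig-zag identities. The category is $\k$-linear abelian with finite-dimensional Hom spaces, every object has finite length, and $\otimes$ is bilinear. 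The unit $\k$ is one-dimensional and hence simple.

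\textbf{Semisimplicity.} Cosemisimplicity of $H$ means that every comodule is a direct sum of simple comodules. Hence $\mathscr{M}{}^H$ is semisimple, and together with the previous step it is a semisimple tensor category.
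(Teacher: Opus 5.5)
Your route is the paper's: its proof consists only of combining \cite[Theorem 4.10]{BC03} with \cite[Theorem 1.1]{BIR09}. Your tensor-category and semisimplicity paragraphs are exactly the routine checks that remain after that.

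The problem is that you leave the decisive step unattributed and cite \cite[Theorem 1.1]{BIR09} for something else. \cite[Theorem 4.10]{BC03} only supplies a nonzero (normalized) integral on the cosemisimple $H$. The step you pass over with ``one shows \dots\ in analogy with the Hopf case'' is precisely \cite[Theorem 1.1]{BIR09}: a dual quasi-Hopf algebra with nonzero integrals has bijective antipode. That is the real content of the first claim, so it should be cited directly. Once it is, your fallback is superfluous. Taken on its own the fallback does not work. Nothing in it produces the assumed injectivity of $S$. Its ``proviso'', that $S$ permutes the simple subcoalgebras bijectively, is essentially the surjectivity you are trying to prove.

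If you prefer an argument without integrals, your fallback can be completed using duality and semisimplicity.

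\textbf{Injectivity.} Left duals exist for any coquasi-Hopf algebra, since they use only $S,\alpha,\beta$. The adjunctions for left duals give $\dim\operatorname{Hom}(Y^*,X^*)=\dim\operatorname{Hom}(X,Y)$. So in the semisimple category $\mathscr{M}{}^H$, the assignment $V\mapsto V^*$ sends simple comodules to pairwise non-isomorphic simple comodules. Since $\operatorname{cf}(V^*)=S(\operatorname{cf}(V))$, this yields $\dim S(C)=\dim C$, and $S(C)\neq S(C')$ for $C\neq C'$. Hence $S$ is injective on $H=\bigoplus C$.

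\textbf{Surjectivity.} The map $\operatorname{coev}_D\colon \k\to D\otimes D^*$ is nonzero by the zig-zag identity, and it splits by semisimplicity. Therefore $0\neq\operatorname{Hom}(D\otimes D^*,\k)\cong\operatorname{Hom}(D,D^{**})$. Both $D$ and $D^{**}$ are simple, so $D\cong D^{**}$. This gives $\operatorname{cf}(D)=S^2(\operatorname{cf}(D))\subseteq S(H)$.
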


As a corollary, we obtain the following result, which extends \cite[Theorem 3.3]{Lar71} to the coquasi-Hopf algebra case.
\begin{corollary}\label{coro:S2C=C}
Let $H$ be a cosemisimple coquasi-Hopf algebra over $\k$. Then for any simple subcoalgebra $C$ of $H$, we have $S^2(C)=C.$
\end{corollary}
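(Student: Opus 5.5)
By Lemma \ref{lem:cosstensor}, $S$ is bijective and $\mathscr{M}{}^H$ is a semisimple tensor category, so it has both left and right duals. The plan is to read the statement off from duality. Simple subcoalgebras of $H$ are exactly the coefficient coalgebras $C(V)$ of simple right comodules $V$. The left dual $V^*$ has coefficients in $S(C(V))$, and the right dual ${}^*V$ has coefficients in $S^{-1}(C(V))$. The conclusion will then come from the canonical isomorphism $V\cong {}^*(V^*)$ in a rigid category.

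First I check that $S$ and $S^{-1}$ map simple subcoalgebras to simple subcoalgebras. Since $S$ is a bijective coalgebra antimorphism, it gives a bijection between subcoalgebras of $H$ and subcoalgebras of $H^{\mathrm{cop}}\cong H$ as sets of subspaces. A subspace is a subcoalgebra of $H$ if and only if it is a subcoalgebra of $H^{\mathrm{cop}}$, and inclusions are preserved. Hence $S(C)$ and $S^{-1}(C)$ are simple subcoalgebras of $H$.

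Next I identify the coefficient coalgebras of the duals. Let $C=C(V)$ for a simple comodule $V$. The defining formula $\langle f_0,v\rangle f_1=\langle f,v_0\rangle S(v_1)$ shows that the coaction of $V^*$ takes values in $V^*\otimes S(C)$. Choosing a basis $\{v_i\}$ of $V$ with $\rho(v_j)=\sum_i v_i\otimes c_{ij}$, the matrix coefficients of $V^*$ with respect to the dual basis are $S(c_{ji})$. Since the $c_{ij}$ span $C$ and $S$ is injective, this gives $C(V^*)=S(C)$. The same computation with $S^{-1}$ gives $C({}^*W)=S^{-1}(C(W))$ for the right dual.

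Finally I combine these. In any rigid monoidal category there is a natural isomorphism ${}^*(V^*)\cong V$, and isomorphic comodules have the same coefficient coalgebra. Therefore
\[
C=C(V)=C({}^*(V^*))=S^{-1}(C(V^*))=S^{-1}(S(C)).
\]
This composite is trivially an identity, so it says nothing about $S^2(C)$. The naive chain does not work.

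The real input is the other double dual: $V^{**}\cong V$ in a semisimple tensor category over an algebraically closed field. This holds because $V^{**}$ is again simple and its Frobenius–Perron class is fixed. More precisely, in the unital based ring $\operatorname{Gr}(\mathscr{M}{}^H)$ the involution $*$ satisfies $i^{**}=i$ by Definition \ref{def:based}(2)(ii), which applies by \cite[Proposition 4.9.1]{EGNO15}. So $[V^{**}]=[V]$, and hence $V^{**}\cong V$ because both are simple. Applying the coefficient computation twice gives
\[
C=C(V)=C(V^{**})=S(C(V^*))=S^2(C).
\]
The main point to verify carefully is that the involution of the based ring really is the one induced by the left dual $V\mapsto V^*$, so that $[V^{**}]=[V]$. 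This follows from uniqueness of duals: both $V^{**}$ and ${}^*(V^*)$ are characterized by the same condition involving $\operatorname{Hom}(\mathbbm{1},-\otimes-)$, and semisimplicity together with $\tau(b_ib_j)=\delta_{i,j^*}$ forces the dual class to be unique.
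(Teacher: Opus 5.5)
Your proof is correct and is essentially the paper's: the paper also reduces the claim to $V^{**}\cong V$ in the semisimple tensor category $\mathscr{M}^H$, citing \cite[Proposition 4.8.1]{EGNO15} directly (that result is what makes $*$ involutive in the based ring of \cite[Proposition 4.9.1]{EGNO15}, which you invoke), and your computation $C(V^*)=S(C(V))$ spells out the step the paper leaves implicit. The ${}^*(V^*)$ detour contributes nothing and can be deleted.
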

\begin{proof}
Because $S^2$ is a coalgebra map, $S^2(C)$ is itself a subcoalgebra of $H$. Since $C$ is simple, it follows that either $S^2(C)\cap C=0$ or $C\subseteq S^2(C)$. Using Lemma \ref{lem:cosstensor}, the category $\mathscr{M}{}^H$ of finite-dimensional right $H$-comodules is a semisimple tensor category. It follows from \cite[Proposition 4.8.1]{EGNO15} that $S^2(C)=C.$
\end{proof}

\begin{example}\rm\label{ex:group}
Let $G$ be a group. The group algebra $\k G$ is naturally a cosemisimple Hopf algebra with
$\Delta(g)=g\otimes g,\varepsilon(g)=1,$ and $ S(g)=g^{-1}$
for all $g\in G$. Let $\omega$ be a normalized $3$-cocycle on $G$, i.e.,
\begin{eqnarray*}
\omega(ef,g,h)\omega(e,f,gh)=\omega(e,f,g)\omega(e,fg,h)\omega(f,g,h),\;\;\omega(f,1,g)=1
\end{eqnarray*}
for all $e,f,g,h \in G$.
By linear extension, $\omega : (\k G)^{\otimes 3} \rightarrow \k$ becomes a convolution-invertible map. Define two linear functions  $\alpha,\beta : \k G \rightarrow  \k$ by $ \alpha(g):=\varepsilon(g) $ and $ \beta(g):=\frac{1}{\omega(g,g^{-1},g)} $
 for all $g\in G$. Then $kG$ together with these $\omega, \ \alpha$ and $\beta$ forms a cosemisimple coquasi-Hopf algebra, denoted by $(\k G,\omega)$ (see \cite[Example 2.2]{HLYY20}). The Gr-category $\operatorname{Vec}^{\omega}_G$ (see \cite[Example 2.3.8]{EGNO15}) is precisely the category of comodules over $(\k G,\omega)$.
\end{example}
\begin{example}\rm\label{ex:DwG}
For a finite group G and a $3$-cocycle $\omega$ on $G$, Dijkgraaf, Pasquier and Roche \cite{DPR90} constructed a quasi-Hopf algebra $D^\omega(G)$, known as the \textit{twisted Drinfeld double} of $G$, in connection with the topological field theories. They also conjectured that the representation category of the fixed-point subalgebra $V^G$ of a holomorphic vertex operator algebra $V$ is equivalent to the module category of some twisted Drinfeld double $D^\omega (G)$. This conjecture was proved in \cite{DNR25}. It should be noted that $D^\omega(G)$ is semisimple if and only if the characteristic of $\k$ does not divide the order of $G$ (see \cite[Section 8.6]{BCPV19}). For convenience, we assume that the characteristic of $\k$ is $0$. By taking the dual $(D^\omega(G))^*$, one obtains a cosemisimple coquasi-Hopf algebra. More explicitly, let $\k^G$ denote the linear dual of $\k G$; it is spanned by the orthogonal idempotents $p_x$ defined by $p_x(y)=\delta_{x,y}$ for all $x,y\in G$. Then $(D^\omega(G))^*$ is $\k^G\otimes \k G$ as a vector space, with unit $\sum_{g\in G}p_g \# 1$, and with multiplication, comultiplication, counit, reassociator, and coquasi-antipode given by
\begin{eqnarray*}
&(p_g \otimes x)(p_h \otimes y) = \delta_{g,h} \sigma(g,x,y) p_g \otimes xy,\\
&\Delta(p_g \otimes x) = \sum\limits_{ ab = g} \tau(a,b,x)p_a \otimes  bxb^{-1} \otimes p_b \otimes x,\;\;\varepsilon(p_g \otimes x) = \delta_{g,1},\\
&\Phi(p_g \otimes x, p_h \otimes y, p_f \otimes z) = \delta_{g,1} \delta_{h,1} \delta_{f,1} \omega(x,y,z),\\
&S(p_g\otimes  x)=\sigma (g^{-1},  gxg^{-1},  gx^{-1}g^{-1})^{-1}\tau(g^{-1}, g, x)^{-1}p_{g^{-1}}\otimes gx^{-1}g^{-1},\\
&\alpha(p_g\otimes x)=\delta_{g,1},\;\;\beta(p_g\otimes x)=\delta_{g,1}\frac{1}{\omega(x,x^{-1},x)},
\end{eqnarray*}
for all $g,h,f,x,y,z\in G$, where
\begin{eqnarray*}
\tau(g,h,f) &:=& \frac{\omega(g,h,f)  \omega(ghfh^{-1}g^{-1},g,h)}{\omega(g,hfh^{-1},h)},\\
\sigma(g,h,f) &:=& \frac{\omega(g,h,f) \, \omega(ghg^{-1},gfg^{-1},g)}{\omega(ghg^{-1},g,f)}.
\end{eqnarray*}
Consider, for example, the quaternion group $Q_8=\langle x, y\mid x^4=1, y^2=x^2, yx=x^{-1}y\rangle$. Define
$\omega(x^{a_1} y^{b_1}, x^{a_2} y^{b_2}, x^{a_3} y^{b_3}) = (-1)^{a_1 b_2  b_3}$
for all $a_1,a_2,a_3 \in \{0,1,2,3\}$ and $b_1,b_2,b_3 \in \{0,1\}$. Then $(D^\omega(Q_8))^*$ forms a non-pointed cosemisimple coquasi-Hopf algebra.
\end{example}

According to \cite[Corollary 4.4]{BC03}, a coquasi-antipode of a finite-dimensional coquasi-Hopf algebra is bijective.
Therefore, the representation category of any finite-dimensional quasi-Hopf algebra, as well as the corepresentation category of any finite-dimensional coquasi-Hopf algebra, naturally forms a finite tensor category. Conversely, under mild assumptions, any finite tensor category arises in precisely this manner.
\begin{lemma}\emph{(}\cite[Proposition 2.6]{EO04}\emph{)}\label{lem:integertensor}
A finite tensor category $\mathscr{C}$ is integral (i.e., the Frobenius-Perron dimensions of all its objects are integers) if and only if $\mathscr{C}$ is tensor equivalent to the representation category of a finite-dimensional quasi-Hopf algebra.
\end{lemma}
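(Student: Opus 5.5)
This lemma is quoted from \cite[Proposition 2.6]{EO04}, and I would prove it by the Tannakian reconstruction argument, one direction at a time.

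\emph{The ``if'' direction.} Suppose $\mathscr{C}\simeq \operatorname{Rep}(A)$ for a finite-dimensional quasi-Hopf algebra $A$. Let $F:\mathscr{C}\to \operatorname{Vec}$ be the forgetful functor. It is exact and faithful. It is also quasi-monoidal: $F(X\otimes Y)\cong F(X)\otimes F(Y)$ naturally, though not necessarily compatibly with the associativity constraints. Consequently the map $X\mapsto \dim F(X)$ induces a ring homomorphism $\operatorname{Gr}(\mathscr{C})\to\mathbb{Z}$ that is non-negative on the basis of simple objects. FPdim is the unique character of $\operatorname{Gr}(\mathscr{C})$ taking non-negative values on this basis (\cite[Chapter 3]{EGNO15}). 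Hence $\operatorname{FPdim}(X)=\dim F(X)\in\mathbb{Z}$ for every object $X$, so $\mathscr{C}$ is integral.

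\emph{The ``only if'' direction.} Assume $\mathscr{C}$ is integral. Take the regular object of $\mathscr{C}$, namely
\[
P=\bigoplus_{i}\operatorname{FPdim}(X_i)\,P_i ,
\]
where $P_i$ is the projective cover of the simple object $X_i$. The multiplicities are non-negative integers precisely because $\mathscr{C}$ is integral. Define $F:=\operatorname{Hom}(P,-):\mathscr{C}\to\operatorname{Vec}$. Since $P$ is a projective generator, $F$ is exact and faithful, and
\[
\dim F(X)=\sum_i \operatorname{FPdim}(X_i)\,[X:X_i]=\operatorname{FPdim}(X).
\]

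It remains to show that $F$ is quasi-monoidal, i.e., that there are natural isomorphisms $J_{X,Y}:F(X)\otimes F(Y)\to F(X\otimes Y)$, with no coherence required. This is the step I expect to be the main obstacle. I would argue as follows.
\begin{itemize}
\item The functor $X\otimes Y\mapsto F(X)\otimes F(Y)$ and the functor $X\otimes Y\mapsto F(X\otimes Y)$ are both exact in each variable on $\mathscr{C}\boxtimes\mathscr{C}$.
\item Each is therefore represented by a projective object of $\mathscr{C}\boxtimes\mathscr{C}$. The first is represented by $P\boxtimes P$. The second is represented by the projective object $R(P)$, where $R$ is the right adjoint of $\otimes$; it is projective because $\otimes$ is exact.
\item Two projective objects are isomorphic exactly when their classes in $K_0$ of projectives agree. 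So it suffices to compare multiplicities of each $P_i\boxtimes P_j$ in the two objects.
\item Computing these multiplicities reduces to the identity
\[
\operatorname{FPdim}(X\otimes Y)=\operatorname{FPdim}(X)\operatorname{FPdim}(Y),
\]
together with the fact that $P$ is regular, i.e., $X\otimes P$ is a multiple of $P$ with coefficient $\operatorname{FPdim}(X)$ in $K_0$.
\item This gives an isomorphism of representing objects, hence the required natural isomorphism $J$. Choosing $J$ compatible with the unit is harmless.
\end{itemize}

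With $F$ in hand, set $A:=\operatorname{End}(F)$. This is a finite-dimensional algebra and $\mathscr{C}\simeq \operatorname{Rep}(A)$ as abelian categories. The quasi-monoidal structure $J$ transports $\otimes$ to a coproduct $\Delta:A\to A\otimes A$, which is an algebra map since $F(X)\otimes F(Y)\cong F(X\otimes Y)$ as $A$-modules. The associativity constraint $a_{X,Y,Z}$, conjugated by $J$, is a natural automorphism of $F^{\otimes 3}$, i.e., an invertible element $\Phi\in A^{\otimes 3}$. Mac Lane's pentagon for $a$ translates into the pentagon identity for $\Phi$. Finally, the rigidity of $\mathscr{C}$ produces the quasi-antipode $(S,\alpha,\beta)$ via the evaluation and coevaluation maps, exactly as in Drinfeld's and Majid's reconstruction \cite{Maj95}. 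The resulting tensor equivalence $\mathscr{C}\simeq\operatorname{Rep}(A)$ proves the claim.
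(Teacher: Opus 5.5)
Your argument is correct and is essentially the proof of \cite[Proposition 2.6]{EO04}, which the paper cites without giving its own: the ``if'' part uses uniqueness of the character $\operatorname{FPdim}$ that is non-negative on simples, and the ``only if'' part builds the quasi-fiber functor $\operatorname{Hom}(P,-)$ from the integral regular object $P$. Its quasi-monoidal structure comes from comparing the projective objects of $\mathscr{C}\boxtimes\mathscr{C}$ representing $F(X)\otimes F(Y)$ and $F(X\otimes Y)$, and Tannakian reconstruction finishes the proof.

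Two small corrections. First, the object representing $X\boxtimes Y\mapsto \operatorname{Hom}(P,X\otimes Y)$ is $L(P)$ for the \emph{left} adjoint $L$ of $\otimes:\mathscr{C}\boxtimes\mathscr{C}\to\mathscr{C}$, not $R(P)$; it is projective because $\otimes$ is exact. Second, the multiplicity comparison only needs $\dim\operatorname{Hom}(Q,X_i\boxtimes X_j)=\operatorname{FPdim}(X_i)\operatorname{FPdim}(X_j)$ for both representing objects $Q$, so the regularity of $P$ under tensoring is not actually used.
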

Inspired by \cite{AEG01}, we say that a coquasi-Hopf algebra $H$ (not necessarily finite-dimensional) has the \textit{dual Chevalley property} if its coradical $H_0$ is a coquasi-Hopf subalgebra. On the other hand, a quasi-Hopf algebra $H$ (not necessarily finite-dimensional) is said to have the \textit{Chevalley property} if its Jacobson radical $\operatorname{Rad}(H)$ is a quasi-Hopf ideal of $H$.

Obviously, for a finite-dimensional coquasi-Hopf algebra
$H$, it has the dual Chevalley property if and only if $\mathscr{M}^H$ is a tensor category with the Chevalley property, if and only if
$H^*$ is a quasi-Hopf algebra with the Chevalley property, if and only if ${}_{H^*}\mathscr{M}$ has the Chevalley property.

Since finite-dimensional coquasi-Hopf algebras are dual to quasi-Hopf algebras, it follows from Lemma \ref{lem:integertensor} that a finite tensor category $\mathscr{C}$ is tensor equivalent to the corepresentation category of a finite-dimensional coquasi-Hopf algebra if and only if the Frobenius-Perron dimensions of objects in $\mathscr{C}$ are integers. Consequently, classifying finite tensor categories with the Chevalley property whose objects have integer Frobenius-Perron dimensions is equivalent to classifying finite-dimensional (co)quasi-Hopf algebras with the (dual) Chevalley property.

\begin{example}\rm
Recall that a coquasi-Hopf algebra is called \textit{pointed} if its underlying coalgebra is pointed. In the pointed case, every simple comodule is one-dimensional. It follows immediately that every pointed coquasi-Hopf algebra satisfies the dual Chevalley property.
\end{example}
\begin{example}\rm\label{example:coquasiChevalley}
Bosonization is well established in the context of Hopf algebras. This construction was later extended to coquasi-Hopf algebras \cite{AP13}.
Hence, coquasi-Hopf algebras with the dual Chevalley property can be constructed via the bosonization of cosemisimple coquasi-Hopf algebras and Nichols algebras. Moreover, given a coquasi-Hopf algebra $H$ with the dual Chevalley property, let $\{H_n\}_{n\geq0}$ denote its coradical filtration and define the associated coradically graded coalgebra as
$\operatorname{gr}H=\bigoplus_{n\geq0}H_n/H_{n-1}$, where $H_{-1}=0.$ Then naturally
$\operatorname{gr}H$ inherits from $H$ a graded coquasi-Hopf algebra structure (\cite[Proposition 6.2]{AP13}) that also satisfies the dual Chevalley property. The
corresponding graded associator $\operatorname{gr}\Phi$ fulfills
$\operatorname{gr}\Phi(\bar{a},\bar{b},\bar{c})=0$ for all homogeneous elements
$\bar{a},\bar{b},\bar{c} \in \operatorname{gr} H$ unless they all lie in $H_0.$
Similar conditions hold for $\operatorname{gr}\alpha$ and $\operatorname{gr}\beta.$ According to \cite[Corollary 6.4]{AP13}, there exists a Hopf algebra $R$ in the category ${}^{H_0}_{H_0}\mathcal{YD}$ of Yetter-Drinfeld module over $H_0$  (see Definition \ref{def:YDmod}) such that $\operatorname{gr} H\cong R\# H_0.$
\end{example}
\begin{remark}\rm
Let $H$ be a non-cosemisimple coquasi-Hopf algebra with the dual Chevalley property. Then all simple comodules are non-projective. Otherwise, if $V$ were a simple projective comodule, then since $\mathbbm{1}$ is contained in $V^* \otimes V$ (by Remark \ref{rm:1once}), \cite[Proposition 4.2.12]{EGNO15} would imply that $\mathbbm{1}$ is also projective, contradicting \cite[Corollary 4.2.13]{EGNO15}.
\end{remark}

\subsection{Majid bimodules and the fundamental theorem}\label{subsection2.3}
In this subsection, we let $H$ be a coquasi-bialgebra over $\k$ with reassociator $\Phi$. We first present the definitions of left (resp. right) coquasi-Hopf $H$-bicomodules and of $H$-Majid bimodules, and then recall the fundamental theorem for coquasi-Hopf algebras.

It is well-known that the category ${}^{H}\mathscr{M}^{H}$ of finite-dimensional $H$-bicomodules forms a monoidal category. Its associativity constraints are given for all $U, V, W\in {}^{H}\mathscr{M}^{H}$ and $u\in U, v\in V, w\in W$, by
$$a_{U, V, W}((u\otimes v)\otimes w):=\Phi^{-1}(u_{-1}, v_{-1}, w_{-1})u_0\otimes (v_0\otimes w_0) \Phi (u_1, v_1, w_1).$$
Moreover, $((H, \Delta, \Delta), m, \mu)$ is an algebra in ${}^{H}\mathscr{M}^{H}$. Consequently, it makes sense to define a \textit{left coquasi-Hopf $H$-bicomodules} \cite[Remark 2.3]{BC03} as being a left $H$-module in ${}^{H}\mathscr{M}^{H}$. We may therefore denote by
${}^{H}_H\mathscr{M}^{H}$ the category of left $H$-module in ${}^{H}\mathscr{M}^{H}$. Note, however, that $H$ itself is not an algebra in $\mathscr{M}^{H}$ or in ${}^{H}\mathscr{M}$.
In a similar manner, one can define the categories ${}^{H}\mathscr{M}^{H}_H$ and ${}_H^{H}\mathscr{M}^{H}_H$. The category ${}^{H}\mathscr{M}^{H}_H$ is known as the category of \textit{right coquasi-Hopf $H$-bicomodules} (\cite[Remark 2.3]{BC03}), while ${}_H^{H}\mathscr{M}^{H}_H$ is referred to as the category of \textit{$H$-Majid bimodules} (\cite[Definition 3.2]{Hua09}).

More specifically, we give the following definition.
\begin{definition}\emph{(}\cite[Definition 3.2]{Hua09}\emph{)}
Let $H$ be a coquasi-bialgebra over $\k$ with reassociator $\Phi.$ A
linear space $M$ is called an $H$-Majid bimodule, if $M$ is an
$H$-bicomodule with structure maps $(\rho_M^{L},\rho_M^{R}),$ and there
exist two $H$-bicomodule morphisms
$$p_L: H \otimes M \rightarrow M, \ h \otimes m \mapsto h\cdot m, \quad
p_R: M \otimes H \rightarrow M, \ m \otimes h \mapsto m\cdot h$$ such that
for all $g, h \in H, m \in M,$ the following equalities hold:
\begin{eqnarray}
&1_H\cdot m=m=m\cdot 1_H, \\
&g_1\cdot(h_1\cdot m_0)\Phi(g_2,h_2,m_1)=\Phi(g_1,h_1,m_{-1})(g_2h_2)\cdot m_0, \\
&m_0\cdot (g_1h_1)\Phi(m_1,g_2,h_2)=\Phi(m_{-1},g_1,h_1)(m_0\cdot g_2)\cdot h_2, \\
&g_1\cdot (m_0\cdot h_1)\Phi(g_2,m_1,h_2)=\Phi(g_1,m_{-1},h_1)(g_2\cdot m_0)\cdot h_2.
\end{eqnarray}
\end{definition}
Note that when $\Phi$ is trivial, the notions of left (resp. right) coquasi-Hopf $H$-bicomodules and $H$-Majid bimodules respectively revert to the classical left (resp. right) $H$-Hopf modules and $H$-Hopf bimodules.

Analogous to the fundamental theorem for Hopf modules in Hopf algebra theory (\cite[Theorem 1.9.4]{Mon93}), certain coquasi-bialgebras with specific properties also admit a corresponding fundamental theorem.
Ardizzoni and Pavarin \cite{AP12} proved that, for a coquasi-bialgebra $H$, the fundamental theorem is equivalent to the existence of a suitable map $s: H \rightarrow H$, called a preantipode (\cite[Definition 3.6]{AP12}). Here, a \textit{preantipode} is a $\k$-linear map $s:H\rightarrow H$ such that for all $a\in H$:
\begin{eqnarray*}
&s(a_2)_1 \otimes a_1s(a_2)_2 = s(a)\otimes 1_H,\\
&s(a_1)_1a_2 \otimes s(a_1)_2 = 1_H \otimes s(a),\\
&\Phi(a_1 \otimes s(a_2)\otimes a_3) = \varepsilon(a).
\end{eqnarray*}
Moreover, by setting $s:=\beta * S* \alpha$, it follows from \cite[Theorem 3.10]{AP12} that every coquasi-Hopf algebra $(H, m, \mu, \Delta, \varepsilon, \Phi, S, \alpha, \beta)$ admits a preantipode; hence the fundamental theorem holds for all coquasi-Hopf algebras.

According to \cite[Theorems 2.7, 3.9 and 3.10]{AP12}, we present the following proposition, which generalizes \cite[Theorem 1.9.4]{Mon93}.
\begin{proposition}\label{prop:Hopfmod}
Let $H$ be a coquasi-Hopf algebra over $\k$ and let $(M, \rho_M^L, \rho_M^R,p_L)\in {}_H^{H}\mathscr{M}^{H}$. Define the map
 $\phi:H\otimes {}^{co H}M \rightarrow M$ by setting $\phi(h\otimes m):=h\cdot m$, where ${}^{co H}M= \{m\in M\mid \rho_M^L(m)=1_H\otimes m\}$. Then $\phi$ is bijective.
\end{proposition}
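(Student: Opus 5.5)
We will imitate the classical proof of the fundamental theorem for Hopf modules, using the preantipode $s:=\beta * S * \alpha$ provided by \cite[Theorem 3.10]{AP12} in place of the antipode.

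\textbf{Step 1: the projection onto coinvariants.} For $M\in{}_H^{H}\mathscr{M}^{H}$ define
$$E:M\rightarrow M,\qquad E(m):=\Phi^{-1}(m_{-2},s(m_{-1})_1, m_{1})\, s(m_{-1})_2\cdot m_0 ,$$
where the extra $\Phi$-factors are inserted as in \cite[Section 3]{AP12}. In the trivial-$\Phi$ case this is $S(m_{-1})m_0$. The first key claim is that $E(m)\in {}^{co H}M$. To prove it we use the first preantipode identity $s(a_2)_1\otimes a_1s(a_2)_2=s(a)\otimes 1_H$ together with the fact that $p_L$ is a bicomodule map, so $\rho^L(h\cdot m)=h_1m_{-1}\otimes h_2\cdot m_0$. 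The second claim is that $E$ restricts to the identity on ${}^{co H}M$; this follows from $s(1_H)=1_H$, which in turn follows from the third identity and normalization of $\Phi$.

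\textbf{Step 2: an inverse for $\phi$.} Define
$$\psi:M\rightarrow H\otimes{}^{co H}M,\qquad \psi(m):=m_{-1}\otimes E(m_0),$$
again adjusted by $\Phi$-factors, and check that $\psi\phi=\id$ and $\phi\psi=\id$.

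For $\phi\psi(m)=m_{-1}\cdot E(m_0)$, we expand $E$ and use the module associativity
$$g_1\cdot(h_1\cdot m_0)\Phi(g_2,h_2,m_1)=\Phi(g_1,h_1,m_{-1})(g_2h_2)\cdot m_0$$
to rebracket the product. We then use the second preantipode identity in the form $a_1s(a_2)$-type collapse, and the third identity $\Phi(a_1,s(a_2),a_3)=\varepsilon(a)$ to absorb the leftover reassociator. This yields $m$.

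For $\psi\phi(h\otimes m)$ with $m$ coinvariant, $\rho^L(h\cdot m)=h_1\otimes h_2\cdot m$. It therefore suffices to show $E(h\cdot m)=\varepsilon(h)m$, which follows from the same preantipode identities.

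\textbf{Main obstacle.} The hard part is the bookkeeping of the reassociator factors: getting the exact correct $\Phi$-decorations in $E$ and $\psi$ so that both compositions close up. Our plan is to avoid doing this by hand. Instead we invoke \cite[Theorem 2.7]{AP12}, which says the fundamental theorem (bijectivity of $\phi$) is equivalent to the existence of a preantipode, and \cite[Theorems 3.9, 3.10]{AP12}, which give that $\beta*S*\alpha$ is one. We then only need to note that the map in \cite{AP12} coincides with our $\phi(h\otimes m)=h\cdot m$, after matching left/right conventions.
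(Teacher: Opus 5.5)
Your final argument is exactly the paper's: the paper gives no computation of its own and simply invokes \cite[Theorems 2.7, 3.9 and 3.10]{AP12} with the preantipode $s=\beta*S*\alpha$. Your ``matching of conventions'' can be made precise as follows: AP12 is formulated for right modules and right coinvariants, so apply it to $H^{op,cop}$ with reassociator $\Phi^{321}$, for which the same $s$ is again a preantipode (the first two preantipode identities are interchanged and the third is unchanged). Steps~1--2 should be discarded rather than offered as a sketch, because as written they are wrong: for $H=(\k G,\omega)$ and $M=H$ with its regular structure, your $E$ gives $E(g)=\omega(g,g^{-1},g)^{-2}\,1$, so the claimed identity $E(h\cdot m)=\varepsilon(h)m$ fails for $m=1$, $h=g$ whenever $\omega(g,g^{-1},g)^{2}\neq 1$.
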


If, in addition, $ H $ is a coquasi-Hopf algebra with a bijective antipode $S$, then we obtain a ``mixed'' version of the fundamental theorem.
\begin{proposition}\label{prop:mixedHopfmod}
Let $H$ be a coquasi-Hopf algebra over $\k$ with a bijective coquasi-antipode $(S,\alpha,\beta)$, and let $(M, \rho_M^L, \rho_M^R,p_L)\in {}_H^{H}\mathscr{M}^{H}$. Denote the space of right coinvariants by $M^{co H}= \{m\in M\mid \rho_M^R(m)=m\otimes 1_H\}$. Then the map
\begin{eqnarray*}
\psi: H\otimes M^{co H} \rightarrow M,\;\; h\otimes m\mapsto h\cdot m
\end{eqnarray*}
is bijective.
\end{proposition}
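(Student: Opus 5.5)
The plan is to reduce Proposition \ref{prop:mixedHopfmod} to Proposition \ref{prop:Hopfmod}. The idea is to transport the left-coinvariant description to right coinvariants by a bijection built from $S^{-1}$. Concretely, I will write down a candidate inverse of $\psi$, patterned on the Hopf-algebra formula $m\mapsto m_{-1}\otimes S(m_0)\cdot m_1$ but with the sides exchanged and $S^{-1}$ inserted. The natural choice is to define a projection
$$E: M\to M^{co H},\qquad E(m)= S^{-1}(m_1)\cdot m_0 \ \text{(suitably corrected by }\alpha,\beta,\Phi\text{)},$$
and then set $\psi^{-1}(m)= m_{\ast}\otimes E(m_{\ast\ast})$, using the right coaction to split off the $H$-factor. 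The correction factors are fixed by requiring that $E$ restrict to the identity on $M^{co H}$.

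The key steps, in order, are as follows.

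\textbf{Step 1.} Check that $E(M)\subseteq M^{co H}$. For this I use that $p_L$ is a bicomodule map, that $S^{-1}$ is a coalgebra antimorphism, and the identities \eqref{eq:alpha,beta} rewritten for $S^{-1}$. The latter are obtained by applying $S^{-1}$ to \eqref{eq:alpha,beta} and using that $S$ reverses the coproduct.

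\textbf{Step 2.} Show that $E(h\cdot m)=\varepsilon(h)m$ for $m\in M^{co H}$. This uses the module associativity axiom of ${}^H_H\mathscr{M}^H$ with $\Phi$, together with the normalization \eqref{eq:Phialphabeta=e}. From this I obtain $\psi^{-1}\psi=\id$.

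\textbf{Step 3.} Show that $\psi\psi^{-1}=\id$, that is, $m_{(1)}\cdot E(m_{(2)})=m$. This again uses the quasi-associativity and \eqref{eq:Phialphabeta=e}.

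If the direct computation becomes unmanageable, there is a cleaner alternative that I would try first. Given $M\in{}^H_H\mathscr{M}^H$, I would form the ``co-opposite twist''. Let $M^\sigma$ be the same space with left and right coactions swapped via $S^{-1}$: the new left coaction is $m\mapsto S^{-1}(m_1)\otimes m_0$. The plan is to show that $M^\sigma$ becomes a left coquasi-Hopf bicomodule over $H^{op,cop}$, or over a suitable twist of $H$, whose left coinvariants are exactly $M^{co H}$. Proposition \ref{prop:Hopfmod}, applied to that coquasi-Hopf algebra, would then give a bijection $H\otimes M^{co H}\to M$. It remains to compare this bijection with $\psi$ via the bijection $S^{-1}$ on the $H$-factor.

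The main obstacle is precisely Step 2/3, or equivalently verifying the structure in the twisted approach. Because $M$ is not an honest module, every rearrangement of $h\cdot(g\cdot m)$ costs a factor of $\Phi$. One must show that these factors collapse using \eqref{eq:Phialphabeta=e} applied to an element whose legs appear in the order dictated by $S^{-1}$. I expect to handle this by first treating the free object $M=H\otimes V$ with $V\in\mathscr{M}^H$, where all structures are explicit. Proposition \ref{prop:Hopfmod} says that every $M$ is isomorphic to $H\otimes{}^{co H}M$ in ${}^H_H\mathscr{M}^H$, so it suffices to prove bijectivity of $\psi$ for such $H\otimes V$.

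For $M=H\otimes V$, with right coaction $h\otimes v\mapsto h_1\otimes v_0\otimes h_2v_1$ (up to $\Phi$), the right coinvariants are identified with $V$ via $v\mapsto \beta$-corrected $S^{-1}(v_1)\otimes v_0$. Once this is done, $\psi$ becomes an explicit map $H\otimes V\to H\otimes V$, namely $h\otimes v\mapsto hS^{-1}(v_1)\otimes v_0$ times scalars. This map is invertible with inverse $h\otimes v\mapsto hv_1\otimes v_0$ corrected by $\Phi$, because $S^{-1}$ is an antipode-like inverse. This is the standard argument, and verifying it is where bijectivity of $S$ is genuinely used.
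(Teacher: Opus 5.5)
\textbf{Gap.} None of your three routes reaches a proof; each stops at the step that carries the content.

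\emph{Direct route.} The ``suitable corrections by $\alpha,\beta,\Phi$'' in $E$ are never specified. Requiring $E|_{M^{co H}}=\id$ neither determines them nor gives Steps 1 and 3. What is actually needed is a preantipode for $H^{op}$ built from $S^{-1},\alpha S^{-1},\beta S^{-1}$, together with the Ardizzoni--Pavarin verification. That amounts to reproving a fundamental theorem.

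\emph{Free-object route.} The reduction to $H\otimes V$ via Proposition \ref{prop:Hopfmod} is sound. Your $\beta S^{-1}$-corrected element $\beta(S^{-1}(v_{(1)}))\,S^{-1}(v_{(2)})\otimes v_{(0)}$ is indeed right coinvariant, by \eqref{eq:alpha,beta} for $H^{op}$. But you do not show that these elements exhaust $(H\otimes V)^{co H}$; without that you only get surjectivity of $\psi$. Nor do you write down and check the $\Phi$-corrected inverse. Saying ``this is the standard argument'' skips the whole proposition.

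\emph{Co-opposite twist.} This fails as stated. Once $S^{-1}$ is inserted into the coactions, the new left coaction of $h\cdot m$ is $S^{-1}(h_2m_1)\otimes h_1\cdot m_0$. So $p_L$ can only become a comodule map if you also act through $S$ (already in the Hopf case the coefficient is $S^{-1}(m_1)S^{-1}(h_2)$). In a coquasi-Hopf algebra, $S$ is anti-multiplicative only up to the twist $f$ of \cite[Proposition 3.49]{BCPV19}. Hence $M^\sigma$ is not a module over $H^{op,cop}$. Your ``suitable twist'' would require twisting the entire Majid-bimodule structure, which you do not develop.

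\textbf{Missing idea.} No $S^{-1}$-transport and no new computation are needed. The paper keeps both coactions and moves the action to the other side. Setting $m\circ h:=h\cdot m$ makes $M$ a right $H^{op}$-module in the bicomodule category. Here $H^{op}$, with reassociator $(\Phi^{-1})^{321}$ and coquasi-antipode $(S^{-1},\alpha S^{-1},\beta S^{-1})$, is a coquasi-Hopf algebra; this is exactly where bijectivity of $S$ is used. The right-handed version of Proposition \ref{prop:Hopfmod} then says that $M^{co H}\otimes H^{op}\to M$, $m\otimes h\mapsto h\cdot m$, is bijective, which is $\psi$ up to the flip.

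Your instinct to exchange the coactions also works if you drop $S^{-1}$:
\begin{itemize}
\item The right $H$-coaction is a left $H^{cop}$-coaction, and vice versa.
\item $p_L$ is then a left module structure in ${}^{H^{cop}}\mathscr{M}^{H^{cop}}$; the module axiom with reassociator $\Phi^{-1}$ is equivalent to the original one.
\item Its left coinvariants are exactly $M^{co H}$.
\end{itemize}
Proposition \ref{prop:Hopfmod}, applied to the coquasi-Hopf algebra $(H^{cop},\Phi^{-1},S^{-1},\beta S^{-1},\alpha S^{-1})$, is then literally the claim.
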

\begin{proof}
Note that $(H^{op}, m^{op}, \mu, \Delta, \varepsilon, (\Phi^{-1})^{321}, S^{-1}, \alpha S^{-1}, \beta S^{-1} )$ remains a coquasi-Hopf algebra, where $(\Phi^{-1})^{321}(f,g,h)=\Phi^{-1}(h,g,f)$, and it forms an algebra in the category ${}^H\mathscr{M}^H$.
It follows that any left $H$-module in ${}^H\mathscr{M}^H$ corresponds canonically to a right $H^{op}$-module in ${}^H\mathscr{M}^H$. This correspondence is realized concretely by equipping $M$ with the right $H^{op}$-action defined as $m\circ h:=h\cdot m.$ Proceeding analogously to the proof of Proposition \ref{prop:Hopfmod}, one establishes that the map $$\phi: M^{co H} \otimes H^{op}\rightarrow M,\;\;m\otimes h\mapsto m\circ h$$ is bijective, from which the bijectivity of $\psi$ follows.
\end{proof}
Lemma \ref{lem:cosstensor} and Proposition \ref{prop:mixedHopfmod} lead directly to the following corollary.
\begin{corollary}\label{coro:H0fundamental}
Let $H$ be a cosemisimple coquasi-Hopf algebra over $\k$ and $(M, \rho_M^L, \rho_M^R,p_L)\in {}_H^{H}\mathscr{M}^{H}$. Then the map
\begin{eqnarray*}
\psi: H\otimes M^{co H} \rightarrow M,\;\; h\otimes m\mapsto h\cdot m
\end{eqnarray*}
is bijective.
\end{corollary}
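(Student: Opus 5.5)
The plan is to deduce Corollary \ref{coro:H0fundamental} directly from Proposition \ref{prop:mixedHopfmod}. That proposition already gives the bijectivity of $\psi: H\otimes M^{co H}\to M$, $h\otimes m\mapsto h\cdot m$, for any $(M,\rho_M^L,\rho_M^R,p_L)\in {}_H^{H}\mathscr{M}^{H}$, under one extra hypothesis: the coquasi-antipode $(S,\alpha,\beta)$ must be bijective. So the only thing to check is that a cosemisimple coquasi-Hopf algebra has a bijective coquasi-antipode.

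First, I fix a coquasi-antipode $(S,\alpha,\beta)$ of the cosemisimple coquasi-Hopf algebra $H$. By Lemma \ref{lem:cosstensor}, which combines \cite[Theorem 1.1]{BIR09} and \cite[Theorem 4.10]{BC03}, the map $S$ is bijective. One small point deserves a remark. The coquasi-antipode is unique only up to a convolution-invertible $U\in H^*$, with $S'=U*S*U^{-1}$. The lemma, however, is stated for an arbitrary coquasi-antipode, so no choice issue arises. In any case, the bijectivity statement is applied to the particular $(S,\alpha,\beta)$ we fixed.

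Second, with $S$ bijective, $H$ satisfies every hypothesis of Proposition \ref{prop:mixedHopfmod}. The object $M$ is an arbitrary left coquasi-Hopf $H$-bicomodule, exactly as in that proposition, so applying it yields that $\psi$ is bijective. This completes the argument.

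There is essentially no obstacle. The nontrivial inputs, namely the bijectivity of $S$ in the cosemisimple case and the mixed fundamental theorem (via the opposite coquasi-Hopf algebra $H^{op}$ with reassociator $(\Phi^{-1})^{321}$ and antipode $S^{-1}$), have already been established earlier in the paper.
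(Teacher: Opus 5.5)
Your proposal is correct and is the paper's own argument: the paper derives this corollary directly from Lemma \ref{lem:cosstensor} (a cosemisimple coquasi-Hopf algebra has bijective $S$) together with Proposition \ref{prop:mixedHopfmod}. Your side remark on the non-uniqueness of the coquasi-antipode is harmless but unnecessary, since the lemma applies to any choice of $(S,\alpha,\beta)$.
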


\begin{example}\rm\label{example:H1/H0}
Let $H$ be a coquasi-Hopf algebra over $\k$ with the dual Chevalley property, and $\{H_n\}_{n\geq0}$ be its coradical filtration. As shown in Example \ref{example:coquasiChevalley}, $\operatorname{gr}(H)$ carries the structure of a graded coquasi-Hopf algebra. Let $\pi: H_1 \longrightarrow H_1/H_0$ denote the quotient map. For any $\overline{h}\in H_1/H_0$, define
\begin{eqnarray}\label{comodulestructure}
\rho_M^L(\bar{h})=(\id\otimes\pi)\Delta(h),\;\;\rho_M^R(\bar{h})=(\pi\otimes\id)\Delta(h).
\end{eqnarray}
It is straightforward to verify that $(H_1/H_0, \rho_M^L, \rho_M^R)$ is an $H_0$-bicomodule. Furthermore, we endow $H_1/H_0$ with an $H_0$-bimodule structure in ${}^{H_0}\mathscr{M}^{H_0}$ via
$$ h\cdot \overline{x}:=\overline{hx},\; \;  \overline{x}\cdot h:=\overline{xh}, \;\; \text{for all }h\in H_0,x\in H_1.$$
With this bicomodule structure and the bimodule structure defined above, $H_1/H_0$ is an $H_0$-Majid bimodule.
\end{example}

\subsection{Yetter-Drinfeld modules}\label{subsection:2.4}
Let $H$ be a coquasi-bialgebra over $\k$.
Ardizzoni and Pavarin \cite{AP13} established a connection between ${}_H^{H}\mathscr{M}^{H}_H$ and the category of Yetter-Drinfeld modules over $H$. We briefly recall their results.

\begin{definition}\emph{(}\cite[Definition 3.1]{Bal09}\emph{)} \label{def:YDmod}
A left-left Yetter-Drinfeld module over $H$ is a triple $\left( V,\rho^L_V ,\vartriangleright \right),$ where
\begin{itemize}
\item $(V,\rho^L_V )$ is an object in ${^{H}\mathscr{M}}$;
\item $\vartriangleright :H\otimes V\rightarrow V$ is a $\Bbbk$-linear map satisfying the following conditions for all $h,l \in H$ and $v \in V$:
\begin{eqnarray}
&\left( hl\right) \vartriangleright v=
\frac{\Phi \left(
h_{2}, \left( l_{2}\vartriangleright v_{0}\right) _{-1},
l_{3}\right)}{\Phi \left( h_{1}, l_{1}, v_{-1}\right)\Phi \left( (h_{3}\vartriangleright (l_{2}\vartriangleright
v_{0})_{0}\right) _{-1}, h_{4}, l_{4})}
\left(
h_{3}\vartriangleright \left( l_{2}\vartriangleright v_{0}\right)
_{0}\right) _{0}
,& \label{eq:YD1}\\
&1_{H}\vartriangleright v=v, &\label{eq:YD2}\\
&\left( h_{1}\vartriangleright v\right) _{-1}h_{2}\otimes \left(
h_{1}\vartriangleright v\right) _{0}=h_{1}v_{-1}\otimes \left(
h_{2}\vartriangleright v_{0}\right) . \label{eq:YDmod3}&
\end{eqnarray}
\end{itemize}
\end{definition}
The category ${}_H^H\mathcal{YD}$ of Yetter-Drinfeld modules over $H$, together with the tensor product $\otimes$ and the unit object $\Bbbk$, is a monoidal category.
Similarly, we may define the notions of right-right, right-left and
left-right Yetter-Drinfeld modules over $H$ by
$$\mathcal{YD}{}_H^H={}_{H^{op,cop}}^{H^{op,cop}}\mathcal{YD}, {}^H\mathcal{YD}{}_H={}_{H^{op}}^{H^{op}}\mathcal{YD}, {}_H\mathcal{YD}{}^H={}_{H^{cop}}^{H^{cop}}\mathcal{YD}.$$

\begin{remark}\rm\label{rm:YD2def}
Let $H$ be a coquasi-Hopf algebra over $\k$ with a bijective coquasi-antipode. Inspired by \cite[Section 2]{Bal09}, we introduce the linear maps
\begin{eqnarray*}
p(h,g) = \Phi^{-1}(g, h_1, S(h_3) \beta(h_2)), \;\;
q(h,g) = \Phi(g, h_3,S^{-1}(h_1) \alpha (S^{-1}(h_2))),
\end{eqnarray*}
for all $g,h \in H$. Observe that $(H^{op,cop},m^{op},\mu,\Delta^{cop}, \Phi^{321},S,\beta,\alpha)$ is also a coquasi-Hopf algebra, where $\Phi^{321}(f,g,h)=\Phi(h,g,f)$.
It follows from \cite[Remark 3.2 (2)]{Bal09} that
axiom (\ref{eq:YDmod3}) is equivalent to
\begin{eqnarray}\label{eq:YDmod2}
 (h\vartriangleright v)_{-1} \otimes (h\vartriangleright v)_0=p(h_4,(h_3 \vartriangleright v_0)_{-1})   q(S(h_6),h_1v_{-2})    (h_2v_{-1})S(h_5)\otimes (h_3 \vartriangleright v_0)_0.
\end{eqnarray}
\end{remark}

\begin{example}\rm\label{ex:coquaitriangular}
Recall that a \textit{coquasitriangular} coquasi-Hopf algebra (\cite[Definition 11.14]{BCPV19}) is a pair $(H, R)$, where $H$ is a coquasi-Hopf algebra and $R:H\otimes H\rightarrow \k$ is a convolution invertible map such that
\begin{eqnarray}
&R(a_1, b_1)a_2b_2=b_1a_1 R(a_2, b_2),&\label{eq:CQT1}\\
&R(ab, c)=\Phi(c_1,a_1,b_1)R(a_2,c_2)\Phi^{-1}(a_3,c_3,b_2)R(b_3,c_4)\Phi(a_4,b_4,c_5),&\label{eq:CQT2}\\
&R(a,bc)=\Phi^{-1}(b_1,c_1,a_1)R(a_2,c_2)\Phi(b_2,a_3,c_3)R(a_4,b_3)\Phi^{-1}(a_5,b_4,c_4).&\label{eq:CQT3}
\end{eqnarray}
Clearly, $\mathscr{M}^H$ forms a braided monoidal category; see \cite[Definition 8.1.1]{EGNO15} for the definition.
Moreover, let $M$ be any left comodule over a coquasitriangular coquasi-Hopf algebra $(H,R)$. We claim that $M$ admits a left-left Yetter-Drinfeld module structure via
$$ h\vartriangleright m:=R(m_{-1}, h)m_0.$$
Indeed, by (\ref{eq:CQT3}), we have
\begin{eqnarray*}
(hl)\vartriangleright m=R(m_{-1},hl)m_0
=\frac{\Phi(h_2,R(m_{-4},l_2)m_{-3},l_3)}{\Phi(h_1,l_1,m_{-5})\Phi(R(m_{-2},h_3)m_{-1},h_4,l_4)}m_0.
\end{eqnarray*}
This establishes (\ref{eq:YD1}). Conditions (\ref{eq:YD2}) and (\ref{eq:YDmod3}) in the definition of a Yetter-Drinfeld module follow, respectively, from the fact that $R$ is convolution invertible and from equation (\ref{eq:CQT1}) satisfied by $R$.
\end{example}

If, in addition, $H$ admits a preantipode, then, as established in \cite[Proposition 3.8 and Lemma 4.4]{AP13}, the category $({}_H^{H}\mathscr{M}^{H}_H, \otimes_H, H)$ of $H$-Majid bimodules is monoidal equivalent to the category $({}^H_H\mathcal{YD}, \otimes,\k)$ of Yetter-Drinfeld modules over $H$. As every coquasi-Hopf algebra admits a preantipode (\cite[Theorem 3.10]{AP12}), the following proposition holds.
\begin{proposition}\label{prop:Majidmod=YD}
Let $H$ be a coquasi-Hopf algebra over $\k$. Then the functors
\begin{eqnarray*}
G:\; {}_H^{H}\mathscr{M}^{H}_H \rightarrow  {}^H_H\mathcal{YD},\;\;
M\mapsto M^{coH}
\end{eqnarray*}
and
\begin{eqnarray*}
F:{}^H_H\mathcal{YD}\rightarrow{}_H^{H}\mathscr{M}^{H}_H,\;\;N\mapsto N\otimes H
\end{eqnarray*}
are mutually inverse monoidal equivalences. More explicitly, $M^{coH}\in {}^H_H\mathcal{YD}$ and $N\otimes H\in {}_H^{H}\mathscr{M}^{H}_H$ are equipped with the following structures: for all $m\in M^{coH}, n\in N, h,l\in H,$
\begin{eqnarray*}
&\rho^L _{M^{coH}}\left( m\right) :=\rho^L _{M}(m),& \\
&h\vartriangleright m:=\Phi( h_{1}m_{-1}, S(h_{4}), h_{5})\left(
h_{2}m_{0}\right) S(h_{3}),&\\
&l\cdot
(n\otimes h):=\Phi (l_{1}, n_{-1}, h_{1})\Phi
^{-1}((l_{2}\vartriangleright n_{0})_{-1}, l_{3},
h_{2})(l_{2}\vartriangleright n_{0})_{0}\otimes l_{4}h_{3}, & \\
&(n\otimes
h)\cdot l:=\Phi ^{-1}(n_{-1}, h_{1}, l_{1})n_{0}\otimes
h_{2}l_{2}, &  \\
&\rho _{N\otimes H}^{L}\left( n\otimes h\right) :=n_{-1}h_{1}\otimes
(n_{0}\otimes h_{2}), & \\
&\rho _{N\otimes H}^{R}\left( n\otimes h\right) :=(n\otimes h_{1})\otimes
h_{2}.&
\end{eqnarray*}
\end{proposition}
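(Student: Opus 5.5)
This proposition is essentially an assembly of results of Ardizzoni and Pavarin, so the plan is to reduce it to them rather than re-verify all the axioms. First, by \cite[Theorem 3.10]{AP12}, every coquasi-Hopf algebra $H$ admits a preantipode, namely $s:=\beta * S * \alpha$. I would briefly check that this choice satisfies the three defining identities of a preantipode. This is a direct computation from (\ref{eq:alpha,beta}) and (\ref{eq:Phialphabeta=e}); the third identity $\Phi(a_1,s(a_2),a_3)=\varepsilon(a)$ is just the first half of (\ref{eq:Phialphabeta=e}) rewritten. Then \cite[Proposition 3.8 and Lemma 4.4]{AP13} apply verbatim to give a monoidal equivalence between $({}_H^{H}\mathscr{M}^{H}_H,\otimes_H,H)$ and $({}^H_H\mathcal{YD},\otimes,\k)$. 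It remains to identify the functors with $G$ and $F$ as written.

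For $G$, I would check three things, for $M$ an $H$-Majid bimodule.
\begin{itemize}
\item $M^{coH}$ is a left subcomodule. Since $\rho^L$ and $\rho^R$ commute, $\rho^L(M^{coH})\subseteq H\otimes M^{coH}$.
\item The formula $h\vartriangleright m=\Phi(h_1m_{-1},S(h_4),h_5)(h_2m_0)S(h_3)$ lands in $M^{coH}$. This uses that the actions are bicomodule maps together with (\ref{eq:alpha,beta}).
\item The Yetter-Drinfeld axioms (\ref{eq:YD1})--(\ref{eq:YDmod3}) hold.
\end{itemize}
In the AP framework this action is $m\mapsto \tau(h\cdot m)$ twisted by the adjoint action. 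Here $\tau:M\to M^{coH}$, $\tau(m)=\Phi(m_{-1},s(m_1)_1,m_2)\,m_0\cdot s(m_1)_2$-type, is the projection coming from the preantipode. Rewriting $s$ in terms of $\beta,S,\alpha$ should yield exactly the displayed formula.

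For $F$, I would verify that the displayed left and right actions on $N\otimes H$ and the two coactions make $N\otimes H$ a Majid bimodule. The right structure is just the free right module twisted by $\Phi^{-1}$, which is routine by (\ref{eq:associativity}) and (\ref{eq:penta}). The left action is the one forced by the Yetter-Drinfeld compatibility (\ref{eq:YDmod3}).

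Mutual inverseness comes in two parts.
\begin{itemize}
\item The natural map $M^{coH}\otimes H\to M$, $m\otimes h\mapsto m\cdot h$, is an isomorphism by the right-handed version of Proposition \ref{prop:Hopfmod}, obtained via $H^{op}$ as in Proposition \ref{prop:mixedHopfmod}. It is a morphism of Majid bimodules.
\item $(N\otimes H)^{coH}=N\otimes 1_H\cong N$ as Yetter-Drinfeld modules, using $\Phi(\cdot,\cdot,1_H)=\varepsilon\otimes\varepsilon$ and (\ref{eq:Phi(1)=e}).
\end{itemize}
Monoidality uses the isomorphism $(N\otimes H)\otimes_H(N'\otimes H)\cong (N\otimes N')\otimes H$ with the appropriate $\Phi$-twists, as in \cite[Lemma 4.4]{AP13}.

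The main obstacle I expect is matching the explicit formula for $\vartriangleright$ on $M^{coH}$ with the one obtained from AP's preantipode-based projection. That step requires unwinding $s=\beta*S*\alpha$ and using (\ref{eq:Phialphabeta=e}) to cancel the $\alpha,\beta$ factors. I would do this by computing $\tau(h_1\cdot m)\cdot$-adjoint directly and reducing with (\ref{eq:penta}).
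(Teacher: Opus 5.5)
Your reduction is the same as the paper's. Its proof consists only of citing \cite[Theorem 3.10]{AP12} for the preantipode and \cite[Lemmas 3.6, 3.7, Proposition 3.8, Lemma 4.4]{AP13}, and your check of the preantipode axioms for $s=\beta*S*\alpha$ is fine.

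The gap is in the step you single out as the main obstacle: it cannot be completed, because the $\alpha,\beta$ factors do not cancel. For $x=h\cdot m$ with $m\in M^{coH}$ one has $x_{-1}\otimes x_0\otimes x_1\otimes x_2=h_1m_{-1}\otimes h_2m_0\otimes h_3\otimes h_4$. Also $\Delta(s(a))=\beta(a_1)S(a_3)\otimes S(a_2)\alpha(a_4)$. So your $\tau$ gives
\[
\tau(h\cdot m)=\beta(h_3)\alpha(h_6)\,\Phi(h_1m_{-1},S(h_5),h_7)\,(h_2m_0)S(h_4).
\]
The displayed formula is exactly the specialization $\alpha=\beta=\varepsilon$ of this. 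It arises from $\Phi(h_1m_{-1},s(h_3)_1,h_4)(h_2m_0)s(h_3)_2$ by replacing $s(h_3)_1\otimes s(h_3)_2$ with $S(h_4)\otimes S(h_3)$, i.e.\ by conflating the preantipode with $S$. No instance of (\ref{eq:Phialphabeta=e}) removes $\beta(h_3)\alpha(h_6)$. Your own second step already needs that $\beta$: the check that $\tau(h\cdot m)$ is right coinvariant goes through via the collapse $a_1\beta(a_2)S(a_3)=\beta(a)1_H$ from (\ref{eq:alpha,beta}), and the displayed expression has no $\beta$ to feed into it.

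The discrepancy changes the result, as a small example shows. Take $H=(\k\mathbb{Z}_2,\omega)$ with $\mathbb{Z}_2=\{1,g\}$ and $\omega(g,g,g)=-1$, so $\alpha=\varepsilon$ and $\beta(g)=-1$. Let $N=\k n$ with $\rho(n)=g\otimes n$ and $g\vartriangleright n=\sqrt{-1}\,n$; this satisfies (\ref{eq:YD1})--(\ref{eq:YDmod3}).
\begin{itemize}
\item In $F(N)$ the displayed formulas give $g\cdot(n\otimes 1)=\sqrt{-1}\,n\otimes g$ and $(n\otimes g)\cdot g=-\,n\otimes 1$.
\item Hence the displayed action on $F(N)^{coH}=N\otimes 1$ is $g\vartriangleright(n\otimes 1)=-\sqrt{-1}\,(n\otimes 1)$, and $GF(N)\not\cong N$.
\item The factor $\beta(g)=-1$ in $\tau$ restores $+\sqrt{-1}$.
\end{itemize}
In general, on a group-like $g$ the two actions differ by the scalar $\alpha(g)\beta(g)$, which is $\omega(g,g^{-1},g)^{-1}$ for $(\k G,\omega)$.

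So what your argument and the cited lemmas actually prove is the proposition with $h\vartriangleright m:=\tau(h\cdot m)$ as above. With that action, your step $(N\otimes H)^{coH}\cong N$ also needs $\tau(n'\otimes l)=\varepsilon(l)\,n'\otimes 1_H$. That comes from the structure theorem (equivalently, the preantipode axioms), not just from the normalization of $\Phi$.

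A smaller point: deriving $M^{coH}\otimes H\cong M$ via $H^{op}$, as in Proposition \ref{prop:mixedHopfmod}, needs $S$ bijective, which is not assumed here. Go through $H^{op,cop}$ instead, which is coquasi-Hopf with $(S,\beta,\alpha)$ by Remark \ref{rm:YD2def}, or use the structure theorem of \cite{AP12} directly.
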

\begin{proof}
The structures of $M^{coH}\in {}^H_H\mathcal{YD}$ and $N\otimes H\in {}_H^{H}\mathscr{M}^{H}_H$ follow directly from \cite[Lemmas 3.6 and 3.7]{AP13}. Since every coquasi-Hopf algebra admits a preantipode (\cite[Theorem 3.10]{AP12}),  it follows from \cite[Proposition 3.8 and Lemma 4.4]{AP13} that $F$ and $G$ are mutually inverse monoidal equivalences.
\end{proof}

\section{Link quivers and link-indecomposable components}\label{section3}
The aim of this section is to investigate the properties of the link quiver of a coquasi-Hopf algebra with the dual Chevalley property, and to characterize the structure of their link-indecomposable components.
\subsection{Preliminaries on link quivers}
In this subsection, we fix a coalgebra $(H,\Delta,\varepsilon)$ over $\Bbbk$. Let $\{H_n\}_{n\geq0}$ denote its coradical filtration, and let $\mathcal{S}$ be the set of all simple subcoalgebras of $H.$

The link quiver of a coalgebra was originally introduced by Montgomery using the wedge of simple subcoalgebras of $H$ (see \cite[Definition 1.1]{Mon95}). A modified definition was later given in \cite[Definition 4.1]{CHZ06}. We begin by recalling the notion of the link quiver.
\begin{definition}\emph{(}\cite[Definition 4.1]{CHZ06}\emph{)}
Let $H$ be a coalgebra over $\Bbbk$. Denote the set of all simple subcoalgebras of $H$ by $\mathcal{S}$. The link quiver $\mathrm{Q}(H)$ of $H$ is defined as follows: its vertex set is $\mathcal{S}$; for any simple subcoalgebra $C, D\in \mathcal{S}$ with $\dim_{\Bbbk}(C)=r^2, \dim_{\Bbbk}(D)=s^2$, there are exactly $\frac{1}{rs}\dim_{\Bbbk}((C\wedge D)/(C+D))$ arrows from $D$ to $C$.
\end{definition}

\begin{remark}\rm\label{rm:link=ext}
Let $H$ be a coalgebra, and let $\{V_i \mid i \in I\}$ be a complete set of representatives of the isomorphism classes of simple right $H$-comodules. Recall that the \textit{Ext quiver} of $H$ is defined as an oriented graph with vertex set indexed by $I$, where for any $i, j \in I$, the number of arrows from $i$ to $j$ is given by $\dim_k \operatorname{Ext}^1_H(V_i, V_j)$. As shown in \cite[Theorem 2.1 and Corollary 4.1]{CHZ06}, the link quiver of $H$ coincides with its Ext quiver.
\end{remark}

Next, we turn to an alternative description of the link quiver of a coalgebra. Before proceeding further, we recall the definitions of multiplicative and primitive matrices, which serve as the non-pointed analogues of group-like and skew-primitive elements, respectively. For details, see \cite{Li22}.

Recall that a square matrix $\G=(g_{ij})_{r\times r}$ over $H$ is said to be \textit{multiplicative}, if $$\Delta(g_{ij})=\sum\limits_{t=1}^r g_{it}\otimes g_{tj}$$ and $\varepsilon(g_{ij})=\delta_{i, j}$ for any $1\leq i,j \leq r$, where $\delta_{i, j}$ denotes the Kronecker notation. A multiplicative matrix $\C$ is said to be \textit{basic}, if its entries are linearly independent.
Note that a basic multiplicative matrix $\C$ spans a simple subcoalgebra of $H$, and every simple subcoalgebra of $H$ is spanned by some basic multiplicative matrix.
According to \cite[Lemma 2.4]{Li22}, the basic multiplicative matrix of the simple coalgebra $C$ would be unique up to the similarity relation.
\begin{remark}\emph{(}\cite[Remark 3.4]{YLL24}\emph{)}\rm
Let $\mathcal{M}$ be a set of representatives for the similarity classes of basic multiplicative matrices over $H$. Then $\mathcal{S}$ is in canonical bijection with $\mathcal{M}$ via the map sending a simple subcoalgebra to its basic multiplicative matrix. Moreover, $\mathcal{S} = \{ \operatorname{span}(\C) \mid \C \in \mathcal{M} \}$, where $\operatorname{span}(\C)$ is the subspace of $H_0$ spanned by the entries of $\C$.
\end{remark}

Let $\mathcal{C}=(c_{ij})_{r\times r}$ and $\mathcal{D}=(d_{ij})_{s\times s}$ be basic multiplicative matrices over $H$.
 A matrix $\mathcal{X}=(x_{ij})_{r\times s}$ over $H$ is said to be $(\mathcal{C}, \mathcal{D})$-\textit{primitive}, if $$\Delta(x_{ij})=\sum\limits_{k=1}^r c_{ik}\otimes x_{kj}+\sum\limits_{t=1}^s x_{it}\otimes d_{tj}$$ holds for all $1\leq i \leq r, 1\leq j\leq s$. A primitive matrix $\X$ is \textit{non-trivial}, if there exists some entry of $\X$ which does not belong to the coradical $H_0$.

Let $\Y=\left(y_{ij}\right)_{m\times n}$ be any matrix with entries in $H_1.$ We denote by $\overline{\Y}$ the matrix $\left(\overline{y_{ij}}\right)_{m\times n}$, where $\overline{y_{ij}}=y_{ij}+H_0\in H_1/H_0$. The subspace of $H_1/H_0$ spanned by the entries of $\overline{\Y}$ is written as $\operatorname{span}(\overline{\Y})$. Take $C, D\in\mathcal{S}$ with basic multiplicative matrices $\C$ and $\D$ respectively. By \cite[Lemma 2.4]{YLL24}, for any non-trivial $(\C, \D)$-primitive matrix $\X = (x_{ij})_{r \times s}$,  $\span(\overline{\X})$ is a simple $C$-$D$-bicomodule, and $\dim_\k(\operatorname{span}(\overline{\X})) = rs$.
According to \cite[Corollary 2.11 and Lemma 2.17]{YLL24}, there exists a family of non-trivial $(\C,\D)$-primitive matrices $\{\X_{\C,\D}^{(\gamma_{\C, \D})}\}_{\gamma_{\C, \D}\in\Gamma_{\C, \D}}$ such that
\begin{eqnarray}\label{eq:complete}
(C\wedge D)/(C+D)\cong \bigoplus\limits_{\gamma_{\C, \D}\in\Gamma_{\C, \D}}\operatorname{span}(\overline{\X_{\C,\D}^{(\gamma_{\C, \D})}}).
\end{eqnarray}
\begin{definition}\emph{(}\cite[Definition 2.12]{YLL24}\emph{)}
Let $\mathcal{C}$ and $\mathcal{D}$ be basic multiplicative matrices over a coalgebra $H$.
A family of non-trivial $(\mathcal{C}, \mathcal{D})$-primitive matrices $\{\X_{\C,\D}^{(\gamma_{\C, \D})}\}_{\gamma_{\C, \D}\in\Gamma_{\C, \D}}$ that satisfies property (\ref{eq:complete}) is said to be complete.
\end{definition}

Using \cite[Corollary 2.18]{YLL24}, the number of arrows from vertex $D$ to vertex $C$ in the link quiver of $H$ can be identified with the cardinal number of a complete family of non-trivial $(\mathcal{C}, \mathcal{D})$-primitive matrices.
Specifically, denote ${}^{\C}{\mathcal{P}}^{\D}$ by the set of a complete family of non-trivial $(\C, \D)$-primitive matrices, and define
$${}^{\C}\mathcal{P}=\bigcup_{\D\in\mathcal{M}}{}^{\C}{\mathcal{P}}^{\D},\;\;
{\mathcal{P}}^{\D}=\bigcup_{\C\in\mathcal{M}}{}^{\C}{\mathcal{P}}^{\D},\;\;
\mathcal{P}=\bigcup_{\C\in\mathcal{M}} {}^{\C}\mathcal{P}.
$$
Then $^{\C}{\mathcal{P}}^{\D}$ may be regarded as the set of arrows from vertex $D$ to vertex $C$, ${\mathcal{P}^{\D}}$ as the set of arrows starting at $D$, and ${^{\C}\mathcal{P}}$ as the set of arrows ending at $C$.

We can summarize the above analysis with the following lemma.
\begin{lemma}\label{lem:linkquiver=S,P}
The link quiver of a coalgebra $H$ can be presented as the directed graph $\mathrm{Q}(H)=(\mathcal{S}, \mathcal{P})$, with vertex set
$\mathcal{S}$ and arrow set $\mathcal{P}.$
\end{lemma}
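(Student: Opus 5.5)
The statement to prove is Lemma~\ref{lem:linkquiver=S,P}: the link quiver $\mathrm{Q}(H)$ is the directed graph with vertex set $\mathcal{S}$ and arrow set $\mathcal{P}$. The plan is to unwind the definition of $\mathrm{Q}(H)$ and match vertices and arrow counts directly.

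\textbf{Vertices.} By definition the vertices of $\mathrm{Q}(H)$ are the simple subcoalgebras of $H$, so they are exactly $\mathcal{S}$. Using the bijection $\mathcal{S}\leftrightarrow\mathcal{M}$, $\C\mapsto\span(\C)$, from \cite[Remark 3.4]{YLL24}, we may index vertices by basic multiplicative matrices. This makes the notation ${}^{\C}\mathcal{P}^{\D}$ meaningful as arrows between $\span(\D)$ and $\span(\C)$.

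\textbf{Arrows.} Fix $C=\span(\C)$ and $D=\span(\D)$ with $\C$ of size $r\times r$ and $\D$ of size $s\times s$, so $\dim C=r^2$ and $\dim D=s^2$. By definition there are $\frac{1}{rs}\dim_\k((C\wedge D)/(C+D))$ arrows from $D$ to $C$. By \eqref{eq:complete}, this quotient is isomorphic to $\bigoplus_{\gamma\in\Gamma_{\C,\D}}\span(\overline{\X^{(\gamma)}_{\C,\D}})$. By \cite[Lemma 2.4]{YLL24}, each summand has dimension exactly $rs$, since each $\X^{(\gamma)}$ is a non-trivial $(\C,\D)$-primitive matrix of size $r\times s$.

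Dividing by $rs$ shows the number of arrows from $D$ to $C$ equals $|\Gamma_{\C,\D}|=|{}^{\C}\mathcal{P}^{\D}|$. This count is independent of the chosen complete family; that independence is \cite[Corollary 2.18]{YLL24}, or it follows from the dimension count just given. Since $\mathcal{P}$ is the disjoint union of the sets ${}^{\C}\mathcal{P}^{\D}$ over all ordered pairs, identifying ${}^{\C}\mathcal{P}^{\D}$ with the arrows $D\to C$ gives an isomorphism of quivers $\mathrm{Q}(H)\cong(\mathcal{S},\mathcal{P})$.

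\textbf{Expected obstacle.} The only delicate point is when $\dim((C\wedge D)/(C+D))$ is infinite. There the count must be read as a cardinal equality, and the direct sum decomposition handles this, because the dimension of a direct sum of $rs$-dimensional spaces is $rs\cdot|\Gamma|$. A secondary point is that the identification depends on the choice of representatives $\C\in\mathcal{M}$ only up to similarity, which \cite[Lemma 2.4]{Li22} guarantees is harmless.
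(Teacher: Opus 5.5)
Your argument is correct and is essentially the paper's own route. The lemma there simply summarizes three ingredients: the definition of $\mathrm{Q}(H)$, the decomposition \eqref{eq:complete}, and the fact that each $\operatorname{span}(\overline{\X})$ has dimension $rs$, combined into the arrow count the paper takes from \cite[Corollary 2.18]{YLL24}. Your added remarks on cardinal arithmetic in the infinite case and on independence of the chosen complete family are harmless refinements of that same argument.
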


\subsection{Link quivers of coquasi-Hopf algebras with the dual Chevalley property}
In this subsection, let $(H, m, \mu, \Delta, \varepsilon, \Phi, S, \alpha, \beta)$ be a coquasi-Hopf algebra over $\k$ with the dual Chevalley property. Denote by $\mathcal{S}$ the set of all simple subcoalgebras of $H.$ We aim to present four distinct construction methods for complete families of non-trivial primitive matrices over
$H$. Through the process of these four different constructions, we investigate the cardinal number of such families, which then enables us to characterize the link quiver of $H.$

Inspired by the Kronecker product for matrices, we begin by defining the following operators on matrices over $H$. For any matrix $\A=(a_{ij})_{r\times s}$ and $\B=(b_{ij})_{u\times v}$ over $H$, define $\A\odot \B$ and $\A\odot^\prime \B$ as follows
 $$\A\odot \B=
\left(\begin{array}{ccc}
      a_{11}\B& \cdots &  a_{1s}\B  \\
      \vdots  & \ddots & \vdots  \\
      a_{r1}\B&  \cdots & a_{rs}\B
    \end{array}\right),\;\;
\A\odot^\prime \B=\left(\begin{array}{cccc}
      \A b_{11} &   \cdots &  \A b_{1v} \\
      \vdots &  \ddots & \vdots  \\
      \A b_{u1} &  \cdots & \A b_{uv}
    \end{array}\right).$$

Let $C, D\in\mathcal{S}$ with basic multiplicative matrices $\C$ and $\D$, respectively. Since $H$ has the dual Chevalley property, $\C\odot^\prime \D$ is again a multiplicative matrix over $H_0$. From \cite[Proposition 2.6 (2)]{Li22} that there exists an invertible matrix $L_{\C, \D}$ over $\k$ such that $$L_{\C, \D} \left(\C\odot^\prime \D \right)L_{\C, \D}^{-1}=
\left(\begin{array}{cccc}
       \E_1 & 0 & \cdots & 0  \\
      0 & \E_2 & \cdots & 0  \\
      \vdots & \vdots & \ddots & \vdots  \\
      0 & 0 & \cdots & \E_{u_{\C, \D}}
  \end{array}\right),$$
where $\E_{1}, \E_{2},\cdots, \E_{ u_{(\C, \D)}}\in\mathcal{M}$ are the basic multiplicative matrices corresponding to simple subcoalgebras $E_{1}, E_{2},\cdots, E_{ u_{(\C, \D)}}\in\mathcal{S}$, respectively. Note that a cosemisimple coalgebra $CD$ admits a decomposition into a direct sum of simple subcoalgebras, and $u_{(\C, \D)}$ is precisely the number of such simple subcoalgebras. Hence, in fact, $u_{(\C, \D)}$ does not depend on the choice of basic multiplicative matrices $\C$ and $\D$, nor on the choice of the invertible matrix $L_{\C, \D}$.

Now we can define a multiplication on $\mathbb{Z}\mathcal{S}$ as follows: for $C, D\in \mathcal{S}$,
\begin{eqnarray}\label{eq:multi}
C\cdot D=\sum\limits_{i=1}^{u_{(\C, \D)}} E_i.
\end{eqnarray}
Given a finite-dimensional right $H$-comodule $(M, \rho_M^R)$, recall that the \textit{coefficient coalgebra} $\operatorname{cf}(M)$ of $M$ is the smallest subcoalgebra of $H$ satisfying $\rho^R_M(M)\subseteq M\otimes \operatorname{cf}(M)$. Let $V_C, V_D, V_E$ denote the simple right $H$-comodules whose coefficient coalgebras are $C, D, E$, respectively. Since $(V_C \otimes V_D) \otimes V_E \cong V_C \otimes (V_D \otimes V_E)$, the multiplication defined above is associative. Moreover,
by Lemma \ref{lem:cosstensor}, Corollary \ref{coro:S2C=C}, and the fact that $(V\otimes W)^* \cong W^* \otimes V^*$, we obtain that the map $C\mapsto S(C)$ defines an anti-involution. With the multiplication and anti-involution defined above, we have $\mathbb{Z}\mathcal{S}$ is a unital based ring with $\mathbb{Z}_+$-basis $\mathcal{S}$.

The following lemma is obtained by sending $M$ to $\operatorname{cf}(M)$.
\begin{lemma}\label{lem:ZS=gr}
Let $H$ be a coquasi-Hopf algebra over $\k$ with the dual Chevalley property and $\mathcal{S}$ be the set of all simple subcoalgebras of $H$. Then $\operatorname{Gr}(\mathscr{M}^{H_0})$ is isomorphic to $\mathbb{Z}\mathcal{S}$ as unital based rings.
\end{lemma}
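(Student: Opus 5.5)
The map to use is the one already suggested: send the class $[M]$ of a simple right $H_0$-comodule to its coefficient coalgebra $\operatorname{cf}(M)$, and extend $\mathbb{Z}$-linearly to $\Theta:\operatorname{Gr}(\mathscr{M}^{H_0})\to\mathbb{Z}\mathcal{S}$. The proof then has three parts: $\Theta$ is a bijection on the distinguished bases, it is multiplicative and unital, and it intertwines the two involutions. Since $\mathbb{Z}\mathcal{S}$ was shown above to be a unital based ring with basis $\mathcal{S}$, and $\operatorname{Gr}(\mathscr{M}^{H_0})$ is a unital based ring by Lemma \ref{lem:cosstensor} and \cite[Proposition 4.9.1]{EGNO15}, these three facts give the claim.

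\textbf{Step 1 (bases).} The simple subcoalgebras of $H$ all lie in $H_0$, so they are exactly the simple subcoalgebras of the cosemisimple coalgebra $H_0$. Over an algebraically closed field, each simple coalgebra $C$ of dimension $r^2$ is a matrix coalgebra $M_r(\k)^*$. It has a unique simple right comodule up to isomorphism, namely the row space of a basic multiplicative matrix $\C$. Moreover, a simple comodule $V$ satisfies $\operatorname{cf}(V)=C$ if and only if $V$ is a $C$-comodule. Hence $[V]\mapsto\operatorname{cf}(V)$ is a bijection from isoclasses of simple $H_0$-comodules onto $\mathcal{S}$. The unit object $\k$ goes to $\k1_H$, the unit of $\mathbb{Z}\mathcal{S}$.

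\textbf{Step 2 (multiplicativity).} This is the main point. Let $V_C,V_D$ be simple with coefficient coalgebras $C,D$, realized by $\C=(c_{ij})_{r\times r}$ and $\D=(d_{ij})_{s\times s}$. The coaction on $V_C\otimes V_D$ is $v_0\otimes w_0\otimes v_1w_1$. In the basis $e_i\otimes f_k$ its coefficient matrix is therefore the matrix with entries $c_{ij}d_{kl}$, which up to reordering of the basis is $\C\odot^\prime\D$.

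Conjugating by $L_{\C,\D}$ block-diagonalizes this matrix into $\E_1,\dots,\E_{u_{(\C,\D)}}$. A change of basis of $V_C\otimes V_D$ by $L_{\C,\D}$ therefore exhibits a decomposition
\[
V_C\otimes V_D\cong\bigoplus_{i=1}^{u_{(\C,\D)}} V_{E_i},
\]
where $V_{E_i}$ is the comodule whose coefficient matrix is $\E_i$. Each $\E_i$ is basic, so $\operatorname{cf}(V_{E_i})=E_i$ and $V_{E_i}$ is simple. This yields $\Theta([V_C][V_D])=\sum_i E_i=C\cdot D$ by \eqref{eq:multi}.

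The care needed here is only bookkeeping. The row/column convention for the coaction must be matched with the ordering in $\odot^\prime$ rather than $\odot$. If the conventions come out reversed, I would compose with a permutation similarity, which leaves the multiset $\{E_i\}$ unchanged.

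\textbf{Step 3 (duality).} Take a right comodule $V$ with coaction $e_j\mapsto\sum_i e_i\otimes c_{ij}$. The defining formula $\langle f_0,v\rangle f_1=\langle f,v_0\rangle S(v_1)$ gives the dual basis coaction $e^i\mapsto\sum_j e^j\otimes S(c_{ij})$. Thus $\operatorname{cf}(V^*)=\operatorname{span}S(\C)=S(C)$, so the involution $[V]\mapsto[V^*]$ corresponds to $C\mapsto S(C)$, which was shown above to be the anti-involution of $\mathbb{Z}\mathcal{S}$.

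Finally, since both involutions are defined by the basis permutation $i\mapsto i^*$ and $\Theta$ matches the bases and the involutions, the trace conditions in Definition \ref{def:based}(2)(ii) are preserved automatically. So $\Theta$ is an isomorphism of unital based rings.
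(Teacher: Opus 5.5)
Your proposal is correct and takes the same route as the paper, which proves the lemma in one line by sending a simple comodule $M$ to its coefficient coalgebra $\operatorname{cf}(M)$. The paper relies on the preceding discussion for the substance: the block-diagonalization of $\C\odot^\prime\D$ that defines the product in $\mathbb{Z}\mathcal{S}$, and $C\mapsto S(C)$ realizing duals. You make explicit what the paper leaves implicit, namely the permutation similarity between the coefficient matrix of $V_C\otimes V_D$ and $\C\odot^\prime\D$, and the computation $\operatorname{cf}(V^*)=S(C)$.
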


\begin{remark}\rm\label{rm:FP=dim}
The equality (\ref{eq:multi}) in $\mathbb{Z}\mathcal{S}$ implies $$\sqrt{\dim_{\Bbbk}(C)}\sqrt{\dim_{\k}(D)}=\sum\limits_{i=1}^{u_{(\C, \D)}} \sqrt{\dim_{\Bbbk}(E_i)}.$$
If, in addition, $H_0$ is finite-dimensional, then $\mathbb{Z}\mathcal{S}$ is a fusion ring. For any $C\in\mathcal{S}$, we claim that $\operatorname{FPdim}(C)=\sqrt{\dim_{\k}(C)}$. Indeed, the map $C\mapsto \sqrt{\dim_{\k}(C)}$ is a character of $\mathbb{Z}\mathcal{S}$ that takes non-negative values. According to \cite[Proposition 3.3.6 (3)]{EGNO15}, we have $\operatorname{FPdim}(C)=\sqrt{\dim_{\k}(C)}$.
\end{remark}

In what follows, define ${}^1\mathcal{S}:=\{C\in\mathcal{S}\mid \k1+C\neq \k1\wedge C\}.$ For any $D\in{}^1\mathcal{S}$ with basic multiplicative matrix $\D\in\mathcal{M}$, we apply \cite[Corollary 2.11]{YLL24} to fix a complete family $\{\X_{1,\D}^{(\gamma_{1,\D})}\}_{{\gamma_{1,\D}\in\Gamma}_{1,\D}}$ of non-trivial $(1, \D)$-primitive matrices. We then set
\begin{eqnarray}\label{eq:^1P}
{^1\mathcal{P}}:=\bigcup\limits_{D\in {}^1\mathcal{S}} \{\mathcal{X}_{1,\D}^{(\gamma_{1, \mathcal{D}})}\mid \gamma_{1, \mathcal{D}}\in\Gamma_{1, \mathcal{D}}\}.
\end{eqnarray}

In fact, we can show that ${}^1\mathcal{P}$ must be nonempty for a non-cosemisimple coquasi-Hopf algebra $H$ with the dual Chevalley property.
\begin{lemma}\label{lem:1P>0}
Let $H$ be a non-cosemisimple coquasi-Hopf algebra over $\k$ with the dual Chevalley property. Then we have $\mid {}^1\mathcal{P} \mid \geq 1.$
\end{lemma}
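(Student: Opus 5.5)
Suppose, for a contradiction, that ${}^1\mathcal{P}=\emptyset$, i.e.\ $\k1\wedge C=\k1+C$ for every $C\in\mathcal{S}$. The goal is to show that this forces $H_1=H_0$, hence $H=H_0$, contradicting non-cosemisimplicity. Since $H$ is not cosemisimple, $H_1\neq H_0$ (if $H_1=H_0$ then $H_n=H_0$ for all $n$ by the usual induction on the coradical filtration). So it suffices to show that any arrow in $\mathrm{Q}(H)$ produces an arrow ending at $\k1$. By Lemma \ref{lem:linkquiver=S,P} and (\ref{eq:complete}), $H_1\neq H_0$ gives simple subcoalgebras $C,D\in\mathcal{S}$ and a non-trivial $(\C,\D)$-primitive matrix $\X=(x_{ij})_{r\times s}$.

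I will transport $\X$ to an arrow ending at $\k1$ by multiplying on the left with the dual of $C$. The natural candidate is $S(\C)^{T}\odot\X$ (or $\X$ multiplied by the entries of a basic matrix of $S(C)$ in the appropriate order). Write $\C^\ast$ for a basic multiplicative matrix spanning $S(C)$; this is a simple subcoalgebra because $S$ is a coalgebra antimorphism, bijective by Lemma \ref{lem:cosstensor}. Since $H_0$ is a subalgebra and $H$ has the dual Chevalley property, the product $\C^\ast\odot'\C$ is multiplicative over $H_0$. Remark \ref{rm:1once} together with Lemma \ref{lem:ZS=gr} shows that its block decomposition via some $L$ contains the $1\times1$ block $1$ exactly once. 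A direct coproduct computation, using that the multiplication is a coalgebra map, shows that $\C^\ast\odot'\X$ (suitably arranged) is a $(\C^\ast\odot'\C,\ \C^\ast\odot'\D)$-primitive matrix. Conjugating by $L$ on the left and by a decomposing matrix $L'$ for $\C^\ast\odot'\D$ on the right then gives block primitive matrices. The row-block corresponding to the trivial block $1$ yields $(1,\E)$-primitive matrices for the simple constituents $E$ of $S(C)D$.

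The main obstacle is non-triviality: one must show that at least one of these $(1,\E)$-blocks has an entry outside $H_0$. I would handle this with the Majid bimodule structure of $M=H_1/H_0$ from Example \ref{example:H1/H0}. Left multiplication by the invertible simple object lets one recover $\overline{\X}$. Concretely, Corollary \ref{coro:H0fundamental} (applied to $H_0$ and $M$, or its left-coinvariant version, Proposition \ref{prop:Hopfmod}) gives $M\cong H_0\otimes {}^{co H_0}M$ via multiplication. Hence $M\neq0$ forces ${}^{coH_0}M\neq0$. Moreover, ${}^{coH_0}M$ is exactly the part of $M$ whose left $H_0$-coaction is trivial, that is, the sum of the components $(\k1\wedge E)/(\k1+E)$ over $E\in\mathcal{S}$, via (\ref{comodulestructure}).

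This yields a cleaner route that avoids the matrix bookkeeping entirely. Decompose $M=\bigoplus_{C,D}{}^C M^D$ as an $H_0$-bicomodule, where ${}^CM^D\cong (C\wedge D)/(C+D)$. The left coinvariants ${}^{coH_0}M$ lie in ${}^{\k1}M=\bigoplus_D(\k1\wedge D)/(\k1+D)$. By Proposition \ref{prop:Hopfmod}, ${}^{coH_0}M\neq0$ whenever $M\neq0$, which gives some $D$ with $\k1\wedge D\neq\k1+D$, i.e.\ $D\in{}^1\mathcal{S}$. Then \cite[Corollary 2.11]{YLL24} gives $|\Gamma_{1,\D}|\geq1$, so $|{}^1\mathcal{P}|\geq1$.

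The one delicate point is verifying that $H_1/H_0$ with the left $H_0$-action is a genuine object of ${}^{H_0}_{H_0}\mathscr{M}^{H_0}$. This is the content of Example \ref{example:H1/H0}, so Proposition \ref{prop:Hopfmod} applies with $H_0$ in place of $H$, $H_0$ being a coquasi-Hopf algebra by the dual Chevalley property.
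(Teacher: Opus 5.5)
Your final argument is correct and is essentially the paper's proof: $H_1/H_0\neq 0$ is an $H_0$-Majid bimodule by Example \ref{example:H1/H0}, Proposition \ref{prop:Hopfmod} then forces ${}^{co H_0}(H_1/H_0)\neq 0$, and these left coinvariants are exactly $\bigoplus_{D\in\mathcal{S}}(\k1\wedge D)/(\k1+D)$ (the paper cites \cite[Theorem 4.1]{CHZ06} for this identification), which yields some $D\in{}^1\mathcal{S}$. The contradiction framing and the first matrix-transport paragraph are unnecessary, and you can drop them.
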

\begin{proof}
Since $H$ is non-cosemisimple, it follows that $H_1/H_0\neq0.$ From Example \ref{example:H1/H0}, $H_1/H_0$ admits the structure of an $H_0$-Majid bimodule. Applying Proposition \ref{prop:Hopfmod}, we obtain an isomorphism
\begin{eqnarray*}
\phi: H_0\otimes {}^{co H_0}(H_1/H_0) \cong H_1/H_0,
\end{eqnarray*}
given by $\phi(h\otimes \overline{x}) = \overline{hx}$.
This means that ${}^{co H_0}(H_1/H_0)\neq0.$ According to \cite[Theorem 4.1]{CHZ06}, we can show that $${}^{co H_0}(H_1/H_0)\cong \bigoplus\limits_{C\in\mathcal{S}}(\k1\wedge C)/(\k1+C).$$
The construction of ${}^1\mathcal{P}$ implies $\mid {}^1\mathcal{P} \mid \geq 1.$
\end{proof}

Next, we consider the case where $H$ is non-cosemisimple and construct a complete family of non-trivial primitive matrices over $H$ starting from ${^1\mathcal{P}}$.
Notice that for any non-trivial $(1, \D)$-primitive matrix $\Y\in{^1\mathcal{P}}$ and $\C\in\mathcal{M}$, we have
\begin{eqnarray}\label{equationBY}
\left(\begin{array}{cc}
I&0\\
0&L_{\C, \D}
 \end{array}\right)
\left(\C\odot^\prime
\left(\begin{array}{cc}
1&\Y\\
0&\D
 \end{array}\right)\right)
 \left(\begin{array}{cc}
I&0\\
0&L_{\C, \D}^{-1}
 \end{array}\right)
=\left(\begin{array}{ccccccc}
    \mathcal{C}  & {\mathcal{Y}_{\C}^{ 1}} & {\mathcal{Y}_{\C}^{ 2}} & \cdots & {\mathcal{Y}_{\C}^{ u_{(\mathcal{C}, \mathcal{D})}}}  \\
    0 &   \mathcal{E}_{1} &0&\cdots  &  0 \\
    0& 0&\mathcal{E}_{2}&\cdots &0\\
    \vdots  &\vdots  &\vdots& \ddots & \vdots  \\
    0   & 0 &0  &\cdots& \mathcal{E}_{u_{(\mathcal{C}, \mathcal{D})}}
  \end{array}\right),
\end{eqnarray}
where $L_{\C, \D}$ is an invertible matrix over $\k$ and $\E_1, \E_2, \cdots, \E_{u_{(\C, \D)}}\in \mathcal{M}$. In particular, when $C = \k 1$, we set $L_{1,\D} = I$, where $I$ denotes the identity matrix over $\k$. Then we have $u_{(1,\D)} = 1$ and $\Y_{1}^{1} = \Y$.

We state the following lemma.
\begin{lemma}\label{Lemma:cap=0}
For any $1\leq i\leq u_{\C, \D}$, the matrix $\Y_{\C}^{i}$ appearing in \eqref{equationBY} is a non-trivial $(\C, \E_i)$-primitive matrix. Moreover, the sum $\sum_{i=1}^{ u_{(\C, \D)}}\operatorname{span}(\overline{\Y_{\C}^{ i}})$ is direct.
\end{lemma}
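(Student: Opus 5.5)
The plan is to read off everything from the matrix identity \eqref{equationBY}, using that conjugation by invertible scalar matrices preserves multiplicativity. First we check that $\C\odot^\prime\left(\begin{smallmatrix}1&\Y\\0&\D\end{smallmatrix}\right)$ is a multiplicative matrix. The matrix $\left(\begin{smallmatrix}1&\Y\\0&\D\end{smallmatrix}\right)$ is multiplicative precisely because $\Y$ is $(1,\D)$-primitive. The multiplication $m$ is a coalgebra map, so $\Delta(c_{ij}z_{kl})=\sum c_{it}z_{km}\otimes c_{tj}z_{ml}$ and $\varepsilon(c_{ij}z_{kl})=\delta_{ij}\delta_{kl}$. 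Hence the $\odot^\prime$-product of two multiplicative matrices is multiplicative, and conjugating by the block scalar matrix $\operatorname{diag}(I,L_{\C,\D})$ keeps this property. Some care is needed because the blocks of $\C\odot^\prime(\cdot)$ are interleaved. This is harmless: a permutation of rows and columns (again conjugation by a scalar matrix) groups the entries into the shape $\left(\begin{smallmatrix}\C & \Y\odot^\prime\text{-part}\\ 0 & \C\odot^\prime\D\end{smallmatrix}\right)$. So \eqref{equationBY} is, up to this bookkeeping, exactly the stated block form.

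Next we compare entries of $\Delta$ in the block upper triangular multiplicative matrix on the right-hand side of \eqref{equationBY}. The block row is $(\C,\Y_\C^1,\dots,\Y_\C^{u})$ and the diagonal is $\operatorname{diag}(\C,\E_1,\dots,\E_u)$. Reading off the $(1,i+1)$ block of $\Delta$ gives $\Delta(\Y_\C^i)=\C\otimes\Y_\C^i+\Y_\C^i\otimes\E_i$, since the off-diagonal blocks between distinct $\E$'s vanish. Thus each $\Y_\C^i$ is $(\C,\E_i)$-primitive.

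For non-triviality and directness we argue in $H_1/H_0$. The entries of $\Y_\C^1,\dots,\Y_\C^u$ are scalar combinations of the entries of $\C\odot^\prime\Y$, namely the products $c_{ij}y_{kl}$. Conversely, since $L_{\C,\D}$ is invertible, the entries of $\C\odot^\prime\Y$ are recovered as combinations of the entries of the $\Y_\C^i$. Therefore
\[
\textstyle\sum_i\operatorname{span}(\overline{\Y_\C^i})=\operatorname{span}\{\overline{c_{ij}y_{kl}}\}=C\cdot\operatorname{span}(\overline{\Y}).
\]
Here we use the left $H_0$-action on $H_1/H_0$ from Example \ref{example:H1/H0}.

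Now $\operatorname{span}(\overline{\Y})\subseteq{}^{co H_0}(H_1/H_0)$: a non-trivial $(1,\D)$-primitive matrix has left coaction $\rho^L(\overline{y_{kl}})=1\otimes\overline{y_{kl}}$. Also, $\operatorname{span}(\overline\Y)$ has dimension $s=|\D|$ by \cite[Lemma 2.4]{YLL24}. By Proposition \ref{prop:Hopfmod}, the map $H_0\otimes{}^{co H_0}(H_1/H_0)\to H_1/H_0$ is injective. Hence $\dim C\cdot\operatorname{span}(\overline\Y)=\dim C\cdot s=r^2s$.

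On the other side, each $\operatorname{span}(\overline{\Y_\C^i})$ has dimension at most $r\cdot e_i$, where $e_i$ is the size of $\E_i$. Since $rs=\sum_i e_i$, the sum of these bounds is $r\cdot\sum_i e_i=r^2s$. The total dimension $r^2s$ therefore forces every bound to be an equality and the sum to be direct. In particular each $\overline{\Y_\C^i}\ne0$, so each $\Y_\C^i$ is non-trivial. This also matches \cite[Lemma 2.4]{YLL24}, which gives $\dim\operatorname{span}(\overline{\Y_\C^i})=re_i$ once non-triviality is known.

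I expect the main obstacle to be the dimension count. One must verify that the injectivity supplied by Proposition \ref{prop:Hopfmod} applies to the elements $c\otimes\overline{y}$ with $c\in C$, which needs the identification of the action with $\overline{cy}$. One must also use the size equality $rs=\sum e_i$, which comes from $\C\odot^\prime\D$ being conjugate to $\operatorname{diag}(\E_i)$.
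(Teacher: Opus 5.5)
Your proof is correct and follows the paper's route. Primitivity of each $\Y_{\C}^{i}$ comes from conjugating the multiplicative matrix $\C\odot^\prime\left(\begin{smallmatrix}1&\Y\\0&\D\end{smallmatrix}\right)$, and the fundamental theorem (Proposition \ref{prop:Hopfmod}) for $H_1/H_0$ over $H_0$ shows that the $r^2s$ entries of $\C\odot^\prime\Y$ are linearly independent modulo $H_0$. Your count $\sum_i re_i=r^2s$ simply makes explicit the non-triviality and directness that the paper obtains by citing \cite[Corollary 2.6]{YLL24} and the argument of \cite[Lemma 3.5]{YLL24}. No row or column reordering is needed: by the definition of $\odot^\prime$, the matrix is already $\left(\begin{smallmatrix}\C&\C\odot^\prime\Y\\0&\C\odot^\prime\D\end{smallmatrix}\right)$.
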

\begin{proof}
Using \cite[Remark 2.5]{Li22}, a direct proof shows that for each $1\leq i\leq u_{\C,\D}$, $\Y_\C^i$ is a $(\C, \E_i)$-primitive matrix. A similar argument as in the proof of Lemma \ref{lem:1P>0} shows that \begin{eqnarray*}
\phi: H_0\otimes {}^{co H_0}(H_1/H_0) \rightarrow H_1/H_0, \;\;h\otimes \overline{x} \mapsto \overline{hx}
\end{eqnarray*}
is an isomorphism. Note that we have $\span(\C)\subseteq H_0$ and $\span(\overline{\Y})\subseteq {}^{co H_0}(H_1/H_0)$. It follows from \cite[Lemma 2.4]{YLL24} that the entries of $\C\odot^\prime \Y$ are linearly independent in $H_1/H_0,$ which means that the set of all column vectors of the matrix
\begin{eqnarray*}
\left(\begin{array}{ccccccc}
{\mathcal{Y}_{\C}^{ 1}} & {\mathcal{Y}_{\C}^{ 2}} & \cdots & {\mathcal{Y}_{\C}^{ u_{(\mathcal{C}, \mathcal{D})}}}
\end{array}\right)
\end{eqnarray*}
is linearly independent over $H_1/H_0.$ Since each column contains an element that does not belong to $H_0$, it follows from \cite[Corollary 2.6]{YLL24} that each $\Y_{\C}^i$ is non-trivial. By an argument similar to that in the proof of \cite[Lemma 3.5]{YLL24}, the sum $\sum_{i=1}^{ u_{(\C, \D)}}\operatorname{span}(\overline{\Y_{\C}^{ i}})$ is direct.
\end{proof}

Using the notation in (\ref{equationBY}), we denote
\begin{eqnarray}\label{^BPY}
^{\C}\mathcal{P}_{\Y}:=\{\Y_{\C}^{ i}\mid 1\leq i\leq u_{(\C, \D)}\},
\end{eqnarray}
\begin{eqnarray}\label{definition:^BP}
^{\C}\mathcal{P}:=\bigcup\limits_{\Y\in{{}^1\mathcal{P}}}{}^{\C}\mathcal{P}_{\Y},\;\;\; \mathcal{P}_{\Y}:=\bigcup\limits_{\C\in \mathcal{M}}{}^{\C}\mathcal{P}_{\Y}.
\end{eqnarray}
Moreover, denote
\begin{eqnarray}\label{definition:P}
\mathcal{P}:=\bigcup\limits_{\C\in \mathcal{M}}{^{\C}\mathcal{P}}=\bigcup\limits_{\Y\in{{}^1\mathcal{P}}}\mathcal{P}_{\Y} .
\end{eqnarray}

\begin{remark}\rm\label{remark:ucd}
We remark that $\bigcup_{\Y\in{{}^1\mathcal{P}}}{}^{1}\mathcal{P}_{\Y}$ coincides with ${}^1\mathcal{P}$ as defined in (\ref{eq:^1P}). Moreover, the elements of the set $^{\C}\mathcal{P}_{\Y}$ depend on the choice of the invertible matrix $L_{\C, \D}$ in (\ref{equationBY}). However, Lemma \ref{Lemma:cap=0} guarantees that its cardinal number $\mid {}^{\C}\mathcal{P}_{\Y}\mid = u_{\C, \D}$ remains invariant under such choices.
\end{remark}

With the notations above, we now demonstrate that the constructed family $\mathcal{P}$ is precisely the desired set of complete primitive matrices.
\begin{proposition}\label{prop:Pcomplete}
\begin{itemize}
\item[(1)] The union $\mathcal{P}=\bigcup_{\Y\in{}^1\mathcal{P}}\mathcal{P}_{\Y}$ is disjoint;
\item[(2)]Denote
${}^{\C}\mathcal{P}^{\D}:=\{\X\in \mathcal{P}\mid  \X \text{ is a non-trivial }(\C, \D)\text{-primitive matrix}\}.$ Then
 ${}^{\C}\mathcal{P}^{\D}$ forms a complete family of non-trivial $(\C, \D)$-primitive matrices. Moreover, we have $H_1/H_0=\bigoplus_{\X\in\mathcal{P}}\operatorname{span}(\overline{\X})$.
\end{itemize}
\end{proposition}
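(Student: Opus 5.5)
The plan is to reduce everything to the fundamental-theorem isomorphism
\[
\phi: H_0\otimes {}^{co H_0}(H_1/H_0)\to H_1/H_0,\qquad h\otimes\overline{x}\mapsto \overline{hx},
\]
from Proposition \ref{prop:Hopfmod}, as already used in Lemma \ref{lem:1P>0}. Two decompositions are involved. By Lemma \ref{lem:1P>0} and \cite[Theorem 4.1]{CHZ06}, together with the completeness of each family fixed in (\ref{eq:^1P}), we have ${}^{co H_0}(H_1/H_0)=\bigoplus_{\Y\in{}^1\mathcal{P}}\span(\overline{\Y})$. Since $H$ has the dual Chevalley property, $H_0$ is cosemisimple, so $H_0=\bigoplus_{\C\in\mathcal{M}}\span(\C)$. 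Hence
\[
H_1/H_0=\bigoplus_{\C\in\mathcal{M}}\ \bigoplus_{\Y\in{}^1\mathcal{P}}\phi\bigl(\span(\C)\otimes\span(\overline{\Y})\bigr).
\]

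The first key step is to identify each summand. Multiplying the block matrix in (\ref{equationBY}) and using that $L_{\C,\D}$ is an invertible scalar matrix, the entries of $\C\odot^\prime\Y$ and the entries of $\Y_\C^1,\dots,\Y_\C^{u_{(\C,\D)}}$ span the same subspace modulo $H_0$. So $\phi(\span(\C)\otimes\span(\overline{\Y}))=\span(\overline{\C\odot^\prime\Y})=\sum_i\span(\overline{\Y_\C^i})$. Lemma \ref{Lemma:cap=0} says this last sum is direct and that each $\Y_\C^i$ is a non-trivial $(\C,\E_i)$-primitive matrix. Substituting gives
\[
H_1/H_0=\bigoplus_{\X\in\mathcal{P}}\span(\overline{\X}),
\]
where $\mathcal{P}$ is counted with multiplicity over the pairs $(\Y,\C,i)$. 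This is the second assertion of (2). Part (1) follows at once: if the same matrix appeared for two different indices (two $\Y$'s, or two $\C$'s), its span would occur twice in a direct sum of nonzero spaces, which is impossible. Likewise, the $\Y_\C^i$ for different $i$ are distinct.

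For completeness of ${}^{\C}\mathcal{P}^{\D}$, apply the coefficient projections. The space $H_1/H_0$ is an $H_0$-bicomodule via (\ref{comodulestructure}), and it splits as $\bigoplus_{C,D}{}^C(H_1/H_0)^D$, where the $(C,D)$-component is the image of $C\wedge D$, that is, $(C\wedge D)/(C+D)$ (cf. \cite[Corollary 2.11]{YLL24}). Each $\span(\overline{\X})$ with $\X$ a non-trivial $(\C^\prime,\D^\prime)$-primitive matrix is a simple $C^\prime$-$D^\prime$-bicomodule, so it lies in the $(C^\prime,D^\prime)$-component. Intersecting the decomposition above with the $(C,D)$-component therefore yields
\[
(C\wedge D)/(C+D)\cong\bigoplus_{\X\in{}^{\C}\mathcal{P}^{\D}}\span(\overline{\X}),
\]
which is exactly property (\ref{eq:complete}).

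The main obstacle is the precise identification of the $(C,D)$-isotypic component of $H_1/H_0$ with $(C\wedge D)/(C+D)$, and checking that $\span(\overline{\X})$ sits inside it. Here $(C\wedge D)/(C+D)$ is viewed as a subspace of $H_1/H_0$ via $C\wedge D\subseteq H_1$ and $(C\wedge D)\cap H_0=C+D$. I would handle this by the standard argument using the orthogonal idempotents of $H_0^*$ projecting onto the simple subcoalgebras, acting on both sides, as in \cite{YLL24}. A minor subtlety is the case where the $\E_i$ are only similar to the chosen representatives in $\mathcal{M}$. Conjugating by a scalar matrix does not change spans or primitivity, so this is harmless.
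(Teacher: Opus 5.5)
Your proposal is correct and follows essentially the paper's route: the isomorphism $\phi$ from Proposition~\ref{prop:Hopfmod}, the decomposition ${}^{co H_0}(H_1/H_0)=\bigoplus_{\Y\in{}^1\mathcal{P}}\operatorname{span}(\overline{\Y})$, Lemma~\ref{Lemma:cap=0}, and the identification of the $(C,D)$-component with $(C\wedge D)/(C+D)$, which the paper takes from \cite[Theorem 4.1]{CHZ06}. The one difference is the order of the argument: you get independence across different pairs $(\C,\Y)$ from injectivity of $\phi$ on $\bigoplus_{\C,\Y}\operatorname{span}(\C)\otimes\operatorname{span}(\overline{\Y})$ and then deduce (1) from that directness, whereas the paper proves (1) first by appeal to \cite[Lemma 3.8]{YLL24}; your version makes that independence step explicit rather than leaving it inside the citation.
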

\begin{proof}
\begin{itemize}
\item[(1)]
An analogous argument to the one used in the proof of Lemma \ref{lem:1P>0} shows that the linear map
\begin{eqnarray*}
\phi: H_0\otimes {}^{co H_0}(H_1/H_0) \rightarrow H_1/H_0, \;\;h\otimes \overline{x} \mapsto \overline{hx}
\end{eqnarray*}
is an isomorphism.
In view of \cite[Lemma 2.8, Corollary 2.11 and Lemma 2.17]{YLL24}, one can show that
\begin{eqnarray*}
{}^{co H_0}(H_1/H_0) = \bigoplus_{\Y\in{}^1\mathcal{P}}\operatorname{span}(\overline{\Y}).
\end{eqnarray*}
By an argument similar to that in \cite[Lemma 3.8]{YLL24}, we have the union $\mathcal{P}=\bigcup_{\Y\in{}^1\mathcal{P}}\mathcal{P}_{\Y}$ is disjoint.
\item[(2)]
By Proposition \ref{prop:Hopfmod}, we have
\begin{eqnarray*}
H_1/H_0 &=& \{h\cdot\overline{x} \mid h\in H_0, \overline{x}\in {}^{co H_0}(H_1/H_0)\}\\
&\subseteq&  \sum_{\C\in \mathcal{M}}\sum_{\Y\in{}^1\mathcal{P}}\operatorname{span}(\overline{\C\odot^\prime \Y})\\
&=& \sum_{\X\in\mathcal{P}}\operatorname{span}(\overline{\X}).
\end{eqnarray*}
Consequently, $$H_1/H_0=\sum_{\X\in\mathcal{P}}\operatorname{span}(\overline{\X}).$$
Furthermore, by Lemma \ref{Lemma:cap=0} and $(1)$, we obtain
\begin{eqnarray*}
\sum_{\X\in\mathcal{P}}\operatorname{span}(\overline{\X})&=& \bigoplus\limits_{\Y\in{}^1\mathcal{P}}\bigoplus\limits_{\C\in\mathcal{M}}\bigoplus\limits_{\X\in{}^{\C}\mathcal{P}_{\Y}}\operatorname{span}(\overline{\X}).
\end{eqnarray*}
The collection ${}^{\C}\mathcal{P}^{\D}$ can be shown, by applying \cite[Theorem 4.1 (iii)]{CHZ06} and (\ref{eq:complete}), to form a complete family of non-trivial $(\C,\D)$-primitive matrices.
\end{itemize}
\end{proof}

Similarly, let $\mathcal{S}^1 = \{C \in \mathcal{S} \mid C + \k1 \neq C \land \k1\}$. For any $D \in \mathcal{S}^1$ with associated basic multiplicative matrix $\D \in \mathcal{M}$, by \cite[Corollary 2.11]{YLL24}, we can choose a complete family $\{\X_{\D,1}^{(\gamma_{\D,1})}\}_{\gamma_{\D,1} \in \Gamma_{\D,1}}$ of non-trivial $(\C,1)$-primitive matrices.
Define
\begin{equation}\label{def:P^1prime}
\mathcal{P}^{\prime 1} = \bigcup_{D \in \mathcal{S}^1} \bigl\{ \X_{\D,1}^{(\gamma_{\D,1})} \mid \gamma_{\D,1} \in \Gamma_{\D,1} \bigr\}.
\end{equation}

Using a similar approach, we can construct a complete family of non-trivial primitive matrices, beginning with $\mathcal{P}^{\prime 1}.$
For any non-trivial $(\F, 1)$-primitive matrix $\Z\in{\mathcal{P}^{\prime 1}}$, we have
\begin{eqnarray*}\label{equationBYprime}
\left(\begin{array}{cc}
L_{\C, \F}&0\\
0&I
 \end{array}\right)
\left(\C\odot^\prime
\left(\begin{array}{cc}
\F&\Z\\
0&1
 \end{array}\right)\right)
 \left(\begin{array}{cc}
L_{\C, \F}^{-1}&0\\
0&I
 \end{array}\right)
=\left(\begin{array}{cccccc}
      \G_{1} & 0 & \cdots&0& {\Z_{\C}^{1}} \\
      0&\G_{2}& \cdots&0& {\Z_{\C}^{2}}\\
       \vdots&\vdots & \ddots & \vdots& \vdots  \\
     0 & 0 &\cdots& \G_{u_{(\C, \F)}}&{\Z_{\C}^{u_{(\C, \F)}}}\\
     0&0&\cdots&0&\C
  \end{array}\right),
\end{eqnarray*}
where $I$ is the identity matrix over $\k$ and $\G_1, \G_2, \cdots, \G_{u_{(\C, \F)}}\in \mathcal{M}$.
Denote
\begin{eqnarray}\label{^BPYprime}
\mathcal{P}^{\prime\C}_{\Z}&=&\{\Z_{\C}^{i}\mid 1\leq i\leq u_{(\C, \F)}\},
\end{eqnarray}
\begin{eqnarray}\label{definition:^BPprime}
\mathcal{P}^{\prime \C}&=&\bigcup\limits_{\Z\in{\mathcal{P}^{\prime 1}}}{}\mathcal{P}^{\prime \C}_{\Z},\;\;\; \mathcal{P}^\prime_{\Z}=\bigcup\limits_{\C\in \mathcal{M}}\mathcal{P}^{\prime \C}_{\Z},
\end{eqnarray}
\begin{eqnarray}\label{definition:Pprime}
\mathcal{P}^\prime&=&\bigcup\limits_{\C\in \mathcal{M}}{\mathcal{P}^{\prime \C}}=\bigcup\limits_{\Z\in{\mathcal{P}^1}}\mathcal{P}^\prime_{\Z} .
\end{eqnarray}
According to Corollary \ref{coro:H0fundamental}, there is an isomorphism
$$ H_0 \otimes {(H_1/H_0)^{\operatorname{co}H_0}} \cong H_1/H_0, $$
given by $h \otimes \overline{x} \mapsto h \cdot \overline{x}$ for all $h \in H_0$ and $\overline{x} \in {(H_1/H_0)^{\operatorname{co}H_0}}$. By an argument analogous to that used in Remark \ref{remark:ucd} and Proposition \ref{prop:Pcomplete}, we obtain the following proposition.
\begin{proposition}\label{prop:Pprimecomplete}
\begin{itemize}
\item[(1)]The cardinal number $\mid \mathcal{P}^{\prime\C}_{\Z}\mid = u_{\C, \F}$, where $\mathcal{P}^{\prime\C}_{\Z}$ appears in (\ref{^BPYprime});
\item[(2)]The union $\mathcal{P}^\prime=\bigcup_{\Z\in{\mathcal{P}^1}}\mathcal{P}^\prime_{\Z}$ is disjoint;
\item[(3)]Denote
${}^{\C}\mathcal{P}^{\prime \D}:=\{\X^\prime\in \mathcal{P}^\prime\mid\X^\prime \text{ is a non-trivial }(\C, \D)\text{-primitive matrix}\}.$
Then it is a complete family of non-trivial $(\C, \D)$-primitive matrices, and we have
$H_1/H_0=\bigoplus_{\X^{\prime}\in\mathcal{P}^{\prime}}\span(\overline{\X^{\prime}})$.
\end{itemize}
\end{proposition}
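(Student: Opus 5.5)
The plan is to mirror the proofs of Remark \ref{remark:ucd}, Lemma \ref{Lemma:cap=0} and Proposition \ref{prop:Pcomplete}. The roles of left and right are exchanged throughout. Proposition \ref{prop:Hopfmod} is replaced by the mixed fundamental theorem in Corollary \ref{coro:H0fundamental}. This applies because $H_0$ is a cosemisimple coquasi-Hopf algebra, so its coquasi-antipode is bijective by Lemma \ref{lem:cosstensor}, and $H_1/H_0$ is an $H_0$-Majid bimodule by Example \ref{example:H1/H0}. So we have the isomorphism $\psi: H_0\otimes (H_1/H_0)^{co H_0}\to H_1/H_0$, $h\otimes\overline{x}\mapsto \overline{hx}$.

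\textbf{Step 1: the right coinvariants.} Using \cite[Theorem 4.1]{CHZ06} and \cite[Lemma 2.8, Corollary 2.11, Lemma 2.17]{YLL24} (right-handed versions), we identify
$$(H_1/H_0)^{co H_0}\cong\bigoplus_{F\in\mathcal{S}}(F\wedge \k1)/(F+\k1)=\bigoplus_{\Z\in\mathcal{P}^{\prime 1}}\operatorname{span}(\overline{\Z}).$$
Indeed, $\overline{x}$ is right coinvariant exactly when its right $H_0$-coaction lands in $\k1$, and each $\operatorname{span}(\overline{\Z})$ is such a simple $F$-$\k1$-bicomodule.

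\textbf{Step 2: analogue of Lemma \ref{Lemma:cap=0}, which gives (1).} Fix $\Z\in\mathcal{P}^{\prime 1}$ a non-trivial $(\F,1)$-primitive matrix and $\C\in\mathcal{M}$. Comparing coproducts in the displayed conjugation, via \cite[Remark 2.5]{Li22}, shows that each $\Z_\C^i$ is a $(\G_i,\C)$-primitive matrix. Since $\psi$ is injective, the entries of $\C\odot^\prime\Z$ are linearly independent modulo $H_0$. Hence the rows of the block column $(\Z_\C^1;\dots;\Z_\C^{u_{(\C,\F)}})$ are linearly independent over $H_1/H_0$. By \cite[Corollary 2.6]{YLL24}, each $\Z_\C^i$ is then non-trivial, and the sum $\sum_i\operatorname{span}(\overline{\Z_\C^i})$ is direct, as in \cite[Lemma 3.5]{YLL24}. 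In particular the $u_{(\C,\F)}$ matrices are distinct. So $|\mathcal{P}^{\prime\C}_{\Z}|=u_{(\C,\F)}$, which is independent of the choice of $L_{\C,\F}$ because $u_{(\C,\F)}$ only counts simple summands of $CF$.

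\textbf{Step 3: disjointness and completeness, giving (2) and (3).} Disjointness of $\bigcup_{\Z}\mathcal{P}'_{\Z}$ follows as in \cite[Lemma 3.8]{YLL24}. Distinct $\Z$ have independent spans, so $\bigoplus_{\C}\operatorname{span}(\overline{\C\odot'\Z})$ for different $\Z$ intersect trivially by bijectivity of $\psi$. Next, surjectivity of $\psi$ gives
$$H_1/H_0\subseteq\sum_{\C\in\mathcal{M}}\sum_{\Z\in\mathcal{P}^{\prime 1}}\operatorname{span}(\overline{\C\odot^\prime\Z})=\sum_{\X'\in\mathcal{P}'}\operatorname{span}(\overline{\X'}).$$
Combining this with Step 2 and disjointness, the sum is direct, so $H_1/H_0=\bigoplus_{\X'\in\mathcal{P}'}\operatorname{span}(\overline{\X'})$. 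Finally, we intersect with the $C$-$D$ isotypic part: by \cite[Theorem 4.1(iii)]{CHZ06}, $(C\wedge D)/(C+D)$ is the component of $H_1/H_0$ with left coefficient $C$ and right coefficient $D$. This shows it equals $\bigoplus_{\X'\in{}^{\C}\mathcal{P}^{\prime\D}}\operatorname{span}(\overline{\X'})$, which is exactly completeness in the sense of (\ref{eq:complete}).

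\textbf{Main obstacle.} The delicate point is Step 2, specifically transferring linear independence from $\C\odot'\Z$ to the conjugated blocks. Because multiplication is now by $H_0$ on the left of right coinvariants, the bookkeeping uses rows rather than columns. The conjugation by $\mathrm{diag}(L_{\C,\F},I)$ must be checked to act compatibly: it mixes only the $\C\odot'\F$ part, while the right factor is $\C$ itself. I would verify this first by writing out $\C\odot'\begin{pmatrix}\F&\Z\\0&1\end{pmatrix}$ explicitly before invoking \cite[Corollary 2.6]{YLL24}.
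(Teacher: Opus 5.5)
Your proposal is correct and follows the same route as the paper. The paper simply invokes the mixed fundamental theorem of Corollary~\ref{coro:H0fundamental}, namely $H_0\otimes (H_1/H_0)^{co H_0}\cong H_1/H_0$, and then mirrors Remark~\ref{remark:ucd}, Lemma~\ref{Lemma:cap=0} and Proposition~\ref{prop:Pcomplete} with rows in place of columns. You make explicit what the paper leaves to the reader: the block form $\C\odot^\prime\begin{pmatrix}\F&\Z\\0&1\end{pmatrix}=\begin{pmatrix}\C\odot^\prime\F&\C\odot^\prime\Z\\0&\C\end{pmatrix}$, and the use of bijectivity of $\psi$ to get directness and disjointness.
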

We can show that the cardinal number of ${}^1\mathcal{P}$ and $\mathcal{P}^{\prime 1}$ are the same.
\begin{lemma}\label{lem:P1prime>0}
Let $H$ be a non-cosemisimple coquasi-Hopf algebra over $\k$ with the dual Chevalley property.
\begin{itemize}
\item[(1)]Then for any $D\in {}^1\mathcal{S}$, we have $\mid{^1\mathcal{P}^{\D}}\mid=\mid{}^{K S(\D)^TK^{-1}}\mathcal{P}^{\prime 1}\mid$, and $D\in{}^1\mathcal{S}$ if and only if $S(D)\in\mathcal{S}^1$, where $K S(\D)^TK^{-1}\in\mathcal{M}$ is a basic multiplicative matrix of $S(D)$, with $K\in\mathrm{GL}(\k)$, and $S(\D)=(S(d_{ij}))_{r_{\D}\times r_{\D}}$.
\item[(2)]Moreover, $\mid{^1\mathcal{P}}\mid=\mid\mathcal{P}^{\prime 1}\mid \geq 1$.
\end{itemize}
\end{lemma}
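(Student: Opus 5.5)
The plan is to exploit the coquasi-antipode $S$, which is bijective on $H_0$ by Lemma \ref{lem:cosstensor}, as an anti-coalgebra map that exchanges the roles of left and right in primitive matrices. First I would record how $S$ acts on multiplicative and primitive matrices. If $\D=(d_{ij})$ is basic multiplicative, then $S$ is a coalgebra antimorphism, so $\Delta(S(d_{ij}))=\sum_t S(d_{tj})\otimes S(d_{it})$. Hence $S(\D)^T$ is multiplicative, and it is basic since $S$ is injective on $H_0$. It spans $S(D)$, which is a simple subcoalgebra of $H_0$ by Corollary \ref{coro:S2C=C} and the dual Chevalley property. Thus $KS(\D)^TK^{-1}$ is the representative in $\mathcal{M}$ for a suitable $K$. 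Similarly, if $\X$ is a non-trivial $(1,\D)$-primitive matrix, then
$$\Delta(S(x_{1j}))=\sum_t S(d_{tj})\otimes S(x_{1t})+S(x_{1j})\otimes 1.$$
So $S(\X)^T$ is an $(S(\D)^T,1)$-primitive matrix, and $KS(\X)^T$ is a $(KS(\D)^TK^{-1},1)$-primitive matrix.

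Second, I will show that $S$ induces a linear isomorphism
$$(\k1\wedge D)/(\k1+D)\;\longrightarrow\;(S(D)\wedge\k1)/(S(D)+\k1).$$
Since $S$ is an anti-coalgebra map, it sends $C\wedge D$ into $S(D)\wedge S(C)$. The problem is that $S$ need not be injective on all of $H$ unless it is bijective. I would avoid this by working directly with the quotient $H_1/H_0$ and its induced bicomodule structure. The map $\bar S:H_1/H_0\to H_1/H_0$ is well defined because $S(H_0)\subseteq H_0$ and $S(H_1)\subseteq H_1$, the latter holding since $S$ is an anti-coalgebra map and $H_1=H_0\wedge H_0$.

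To get injectivity on the relevant pieces, I apply the same construction to $S(D)$, which gives a map in the reverse direction. I then need that $\bar S^2$ is injective on $(\k1\wedge D)/(\k1+D)$. The main obstacle is proving this injectivity rigorously, or otherwise avoiding the need for it. My first attempt is to compare dimensions through the two fundamental theorems. Proposition \ref{prop:Hopfmod} gives $H_0\otimes{}^{coH_0}(H_1/H_0)\cong H_1/H_0$, and Corollary \ref{coro:H0fundamental} gives $H_0\otimes(H_1/H_0)^{coH_0}\cong H_1/H_0$. These two isomorphisms, combined with the counts $\dim(\k1\wedge D)/(\k1+D)=r_D\,|{}^1\mathcal P^{\D}|$ from (\ref{eq:complete}) and the injectivity of $\bar S$ restricted to ${}^{coH_0}(H_1/H_0)$, would show that $\bar S$ maps ${}^{coH_0}(H_1/H_0)$ isomorphically onto $(H_1/H_0)^{coH_0}$.

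For the injectivity step, I will argue as follows. If $\bar S(\bar x)=0$ with $\bar x$ in the span of entries of a non-trivial $(1,\D)$-primitive matrix, then the entries of $S(\X)$ lie in $H_0$. Applying the criterion \cite[Corollary 2.6]{YLL24} for triviality, together with the $(\D,1)$-primitive structure, would then force the entries of $\X$ to lie in $H_0$ after using the identity (\ref{eq:alpha,beta}) with $\alpha$. Concretely, $x_{1j}$ can be recovered from $S(x)$, $\alpha$ and the multiplication in $H_0$ via $S(a_1)\alpha(a_2)a_3=\alpha(a)1$, modulo $H_0$.

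Once the isomorphism is in place, non-triviality is preserved. Matching complete families then gives $|{}^1\mathcal P^{\D}|=|{}^{KS(\D)^TK^{-1}}\mathcal P^{\prime1}|$, using (\ref{eq:complete}) and $\dim S(D)=\dim D$. In particular, $D\in{}^1\mathcal S$ if and only if $S(D)\in\mathcal S^1$.

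Finally, part (2) follows by summing over $D\in{}^1\mathcal S$. The map $D\mapsto S(D)$ is a bijection of $\mathcal S$ by Corollary \ref{coro:S2C=C}, so it restricts to a bijection ${}^1\mathcal S\to\mathcal S^1$. Summing the equalities from part (1) gives $|{}^1\mathcal P|=|\mathcal P^{\prime1}|$, and Lemma \ref{lem:1P>0} supplies the bound $\geq1$.
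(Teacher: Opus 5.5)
Your route differs from the paper's. The paper never applies $S$ to $H_1$. It forms $KS(\D)^TK^{-1}\odot^\prime\left(\begin{smallmatrix}1&\Y\\0&\D\end{smallmatrix}\right)$, i.e.\ it multiplies by the basic multiplicative matrix of $S(D)$. It then uses the following facts:
\begin{itemize}
\item $\k1$ occurs exactly once in $S(D)\cdot D$ (Remark \ref{rm:1once}), which extracts a unique $(KS(\D)^TK^{-1},1)$-primitive $\Y^\prime$;
\item Lemma \ref{Lemma:cap=0}, i.e.\ the fundamental theorem for $H_1/H_0$, gives non-triviality and independence;
\item the completeness of $\mathcal{P}$ (Proposition \ref{prop:Pcomplete}) gives exhaustiveness, and the symmetric construction gives the converse.
\end{itemize}
Your approach uses neither fusion rules nor the fundamental theorem. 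It needs only that $S$ is an anti-coalgebra map, bijective on $H_0$ with $S^2(D)=D$, together with (\ref{eq:alpha,beta}). Your core computation is right: expanding $S(a_1)\alpha(a_2)a_3=\alpha(a)1$ on an entry of a $(1,\D)$-primitive $\X$ gives
\[\alpha(1)x_{1j}=\alpha(x_{1j})1-\sum_t\alpha(x_{1t})d_{tj}-\sum_{s,t}\alpha(d_{st})S(x_{1s})d_{tj}.\]
So if all $S(x_{1s})\in H_0$, then $\X$ is trivial, because $H_0$ is a subalgebra and $\alpha(1)\beta(1)=1$.

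Two steps need repair. First, injectivity on each $\operatorname{span}(\overline{\X_\gamma})$ does not give injectivity on $(\k1\wedge D)/(\k1+D)=\bigoplus_\gamma\operatorname{span}(\overline{\X_\gamma})$, since images of different summands may overlap. Instead, use that $\ker\bar S$ is a right $H_0$-subcomodule of $H_1/H_0$; this holds because $\rho^L(\bar S\bar x)=S(\bar x_{(1)})\otimes\bar S(\bar x_{(0)})$ and $S|_{H_0}$ is injective. Now take $x=\sum_{\gamma,j}\lambda_{\gamma j}x^\gamma_{1j}$ with $\bar S(\bar x)=0$. Then $\rho^R(\bar x)=\sum_{s,j}\bar y_{sj}\otimes d_{sj}$ with $y_{sj}=\sum_\gamma\lambda_{\gamma j}x^\gamma_{1s}$, so each $\bar y_{sj}\in\ker\bar S$. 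The identity above gives $\alpha(1)x\equiv-\sum_{s,t,j}\alpha(d_{st})S(y_{sj})d_{tj}$ modulo $H_0$, hence $\bar x=0$.

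Second, and more seriously, the reverse inequality is not secured.
\begin{itemize}
\item The count through $H_0\otimes{}^{coH_0}(H_1/H_0)\cong H_1/H_0\cong H_0\otimes(H_1/H_0)^{coH_0}$ gives nothing per $D$ unless $H_1/H_0$ is finite-dimensional, whereas the lemma concerns arbitrary $H$.
\item Injectivity of $\bar S^2$ on $(\k1\wedge D)/(\k1+D)$ only controls $\bar S$ on the image of $\bar S$, not on all of $(S(D)\wedge\k1)/(S(D)+\k1)$.
\end{itemize}
What you need is that $\bar S$ is injective on $(S(D)\wedge\k1)/(S(D)+\k1)$, which $\bar S$ maps into $(\k1\wedge S^2(D))/(\k1+S^2(D))=(\k1\wedge D)/(\k1+D)$. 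This is the mirror argument with $a_1\beta(a_2)S(a_3)=\beta(a)1$. For a $(\C,1)$-primitive $\Z=(z_{j1})$,
\[\beta(1)z_{j1}=\beta(z_{j1})1-\sum_k c_{jk}\beta(z_{k1})-\sum_{k,l}c_{jk}\beta(c_{kl})S(z_{l1}),\]
and the same subcomodule trick extends this to sums. With injections in both directions, the two dimensions agree as cardinals. This gives $|{}^1\mathcal{P}^{\D}|=|{}^{KS(\D)^TK^{-1}}\mathcal{P}^{\prime 1}|$. It also gives the backward implication $S(D)\in\mathcal{S}^1\Rightarrow D\in{}^1\mathcal{S}$, which your one-sided injectivity does not. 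Part (2) then follows as you describe.
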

\begin{proof}
\begin{itemize}
\item[(1)]For any non-trivial $(1,\D)$-primitive matrix $\Y\in {}^1\mathcal{P}$, we consider the expression
$$
K S(\D)^TK^{-1} \odot^\prime\left(\begin{array}{cc}
1&\Y\\
0&\D
 \end{array}\right).$$
It follows from Remarks \ref{rm:1once} and \ref{remark:ucd} that the set $${}^{K S(\D)^T K^{-1}}\mathcal{P}_{\Y}\cap (\bigcup_{\C\in\mathcal{M}} {}^{\C}\mathcal{P}^{1})$$
contains a unique element, which we denote by $\Y^\prime$. It should be pointed out that $\Y^\prime$ is a non-trivial $(K S(\D)^TK^{-1}, 1)$-primitive matrix and that $\dim_{\k}(\span(\overline{\Y^\prime}))=\dim_{\k}(\span(\overline{\Y}))$. Similarly, we can also start from a non-trivial $(K S(\D)^TK^{-1}, 1)$-primitive matrix to obtain a non-trivial $(1,\D)$-primitive matrix. Thus, we can prove $D\in{}^1\mathcal{S}$ if and only if $S(D)\in\mathcal{S}^1$. Moreover, the definition of unital based ring guarantees that $C\cdot D$ contains $1$ if and only if $C=S(D)$. Using Proposition \ref{prop:Pcomplete} and (\ref{eq:complete}),
we obtain the following
\begin{eqnarray*}
  &&\dim_{\k}(\k 1\wedge D)/(\k 1+D))  \\
  &=&\sum\limits_{\Y\in {}^1\mathcal{P}^{\D}}\dim_{\k}(\span(\overline{\Y}))\\
  &=&\sum\limits_{\Y\in {}^1\mathcal{P}^{\D}}\dim_{\k}(\span(\overline{\Y^\prime}))\\
  &=&\dim_{\k}(S(D)\wedge \k 1)/(S(D)+\k 1)).
\end{eqnarray*}
It follows that $\mid{^1\mathcal{P}^{\D}}\mid=\mid{}^{K S(\D)^TK^{-1}}\mathcal{P}^{\prime 1}\mid$, and $D\in{}^1\mathcal{S}$ if and only if $S(D)\in\mathcal{S}^1$.
\item[(2)]
The definitions of
${}^1\mathcal{P}$ and $\mathcal{P}^{\prime 1}$ imply that $\mid{^1\mathcal{P}}\mid=\mid\mathcal{P}^{\prime 1}\mid.$ According to Lemma \ref{lem:1P>0}, we have $\mid\mathcal{P}^{\prime 1}\mid\geq 1$.
\end{itemize}
\end{proof}
\begin{remark}\rm\label{rm:4constructions}
Symmetrically, we can also obtain two other distinct methods for constructing complete families of non-trivial primitive matrices by considering
$$
\left(\begin{array}{cc}
1&\Y\\
0&\D
 \end{array}\right)\odot \C\;\; \text{and}\;\;\left(\begin{array}{cc}
\F&\Z\\
0&1
 \end{array}\right)\odot \C,
 $$
 where $\C\in\mathcal{M}, \Y\in{}^1\mathcal{P}$ and $\Z\in\mathcal{P}^{\prime1}.$
It should be noted that although a complete set of non-trivial primitive matrices can be chosen in different ways, its cardinal number is a fixed invariant.
While Lemma \ref{lem:linkquiver=S,P} establishes that the arrows in the link quiver of $H$ can be represented by a complete family of non-trivial primitive matrices, the four distinct methods we have developed for constructing such families allow us to characterize the link quiver of $H.$
These four constructions demonstrate that the link quiver of $H$ is entirely determined by the Grothendieck ring structure of $\mathscr{M}^{H_0}$ and the cardinal number of ${}^1\mathcal{P}$ (or $\mathcal{P}^{\prime 1}$).
\end{remark}
In what follows, for convenience, let $\mathcal{S}=\{C_i\mid i\in I\}$ denote the set of all simple subcoalgebras of $H$. For any $C_i, C_j\in\mathcal{S}$, we write $$C_i\cdot C_j=\sum\limits_{t\in I}\alpha_{ij}^tC_t,$$ where $\alpha_{ij}^t\in\mathbb{Z}_+$.
Furthermore, we set $\mathcal{M}=\{\C_i\mid i\in I\}$ such that each $\C_i\in\mathcal{M}$ is a basic multiplicative matrix of $C_i\in\mathcal{S}$.

With the notations above, we have
\begin{proposition}\label{prop:alphaijk}
Let $H$ be a non-cosemisimple coquasi-Hopf algebra over $\k$ with the dual Chevalley property, and let $\mathrm{Q}(H)=(\mathcal{S}, \mathcal{P})$ be its link quiver. Suppose ${}^1\mathcal{S}=\{C_k\mid k\in J\}$, where $J \subseteq I.$ Then for any $i,j\in I,$ the number of arrows from $C_j$ to $C_i$ is given by $$\mid {}^{\C_i}\mathcal{P}^{\C_j}\mid=\sum_{k\in J}\mid{}^1 \mathcal{P}^{\C_k}\mid \alpha_{ik}^j=\sum_{k\in J}\mid{}^1 \mathcal{P}^{\C_k}\mid\alpha_{ki}^j.$$
\end{proposition}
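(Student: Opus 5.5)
The plan is to count arrows from $C_j$ to $C_i$ through the complete family $\mathcal{P}$ constructed in (\ref{definition:P}). By Lemma \ref{lem:linkquiver=S,P} and Proposition \ref{prop:Pcomplete}(2), ${}^{\C_i}\mathcal{P}^{\C_j}$ is a complete family of non-trivial $(\C_i,\C_j)$-primitive matrices. Hence the number of arrows from $C_j$ to $C_i$ equals $\mid{}^{\C_i}\mathcal{P}^{\C_j}\mid$. By Proposition \ref{prop:Pcomplete}(1), $\mathcal{P}$ is the disjoint union of the sets $\mathcal{P}_{\Y}$ over $\Y\in{}^1\mathcal{P}$. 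Since ${}^1\mathcal{P}=\bigsqcup_{k\in J}{}^1\mathcal{P}^{\C_k}$, this gives
$$\mid{}^{\C_i}\mathcal{P}^{\C_j}\mid=\sum_{k\in J}\sum_{\Y\in{}^1\mathcal{P}^{\C_k}}\mid\{\X\in{}^{\C_i}\mathcal{P}_{\Y}\mid \X \text{ is } (\C_i,\C_j)\text{-primitive}\}\mid .$$
So it suffices to show that each inner count equals $\alpha_{ik}^j$.

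To compute the inner count, fix $\Y\in{}^1\mathcal{P}^{\C_k}$ and use the decomposition (\ref{equationBY}) with $\C=\C_i$ and $\D=\C_k$. By Lemma \ref{Lemma:cap=0}, the elements of ${}^{\C_i}\mathcal{P}_{\Y}$ are the matrices $\Y_{\C_i}^t$, and each $\Y_{\C_i}^t$ is a non-trivial $(\C_i,\E_t)$-primitive matrix. Here $\E_1,\dots,\E_{u_{(\C_i,\C_k)}}$ are the diagonal blocks of $L_{\C_i,\C_k}(\C_i\odot^\prime\C_k)L_{\C_i,\C_k}^{-1}$. The entries of $\C_i\odot^\prime\C_k$ are products $cd$ with $c\in C_i$ and $d\in C_k$, so they span $C_iC_k$. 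By the definition (\ref{eq:multi}) of the product in $\mathbb{Z}\mathcal{S}$, the number of indices $t$ with $\E_t$ representing $C_j$ (taken up to similarity in $\mathcal{M}$) is therefore exactly $\alpha_{ik}^j$. This proves the first equality.

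For the second equality, I would use the symmetric construction of Remark \ref{rm:4constructions}, based on $\left(\begin{smallmatrix}1&\Y\\0&\D\end{smallmatrix}\right)\odot\C$. This requires the analogues of Lemma \ref{Lemma:cap=0} and Proposition \ref{prop:Pcomplete} for $\odot$, which I would verify by the same arguments. In this construction the entries have the form $y c$ and $d c$, so the diagonal blocks decompose $C_kC_i$ rather than $C_iC_k$. The same count then shows that this second complete family contains $\sum_{k\in J}\mid{}^1\mathcal{P}^{\C_k}\mid\alpha_{ki}^j$ non-trivial $(\C_i,\C_j)$-primitive matrices. Any two complete families have the same cardinality, since both equal $\frac{1}{r_ir_j}\dim_{\k}\bigl((C_i\wedge C_j)/(C_i+C_j)\bigr)$ by (\ref{eq:complete}) together with the fact that each summand has dimension $r_ir_j$. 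Hence the two sums agree.

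The main obstacle is the bookkeeping of multiplicities. The blocks $\E_t$ may repeat, and each repeated block must be counted separately. This is justified by the directness of $\sum_t\operatorname{span}(\overline{\Y_{\C_i}^t})$ in Lemma \ref{Lemma:cap=0}, combined with the disjointness in Proposition \ref{prop:Pcomplete}(1). A second point needing care is checking that the $\odot$-version really decomposes $C_kC_i$, with the order of factors reversed relative to $\odot^\prime$, and that it yields a complete family. I expect this check to go through exactly as for $\odot^\prime$, using Proposition \ref{prop:Hopfmod} in place of Corollary \ref{coro:H0fundamental} as appropriate.
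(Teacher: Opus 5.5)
Your proposal is correct and follows the paper's proof. The first equality comes from decomposing $\C_i\odot^\prime\bigl(\begin{smallmatrix}1&\Y\\0&\C_k\end{smallmatrix}\bigr)$ via Remark \ref{remark:ucd} and Proposition \ref{prop:Pcomplete}, and the second from the symmetric $\odot$-construction of Remark \ref{rm:4constructions} plus the invariance of the cardinality of complete families; you just make the multiplicity bookkeeping explicit. One small correction: the $\odot$-analogue you need (right multiplication by $H_0$ on left-coinvariant entries) rests on the mixed isomorphism ${}^{co H_0}(H_1/H_0)\otimes H_0\cong H_1/H_0$, which, like Corollary \ref{coro:H0fundamental}, requires bijectivity of $S$ on the cosemisimple $H_0$ (Lemma \ref{lem:cosstensor}); it does not follow from Proposition \ref{prop:Hopfmod} directly.
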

\begin{proof}
For any non-trivial $(1, \C_k)$-primitive matrix $\Y_{k,m} \in {}^1\mathcal{P}$, where $k \in J$, $1 \leq m \leq \mid{}^1 \mathcal{P}^{\C_k}\mid$, we consider the expression
$$
\C_i \odot^\prime\left(\begin{array}{cc}
1&\Y_{k,m}\\
0&\C_k
 \end{array}\right).$$
According to Remark \ref{remark:ucd} and Proposition \ref{prop:Pcomplete}, we have $$\mid {}^{\C_i}\mathcal{P}^{\C_j}\mid=\sum_{k\in J}\mid{}^1 \mathcal{P}^{\C_k}\mid \alpha_{ik}^j.$$
Symmetrically, by Remark \ref{rm:4constructions}, we can obtain the remaining equality
$$\mid {}^{\C_i}\mathcal{P}^{\C_j}\mid=\sum_{k\in J}\mid{}^1 \mathcal{P}^{\C_k}\mid\alpha_{ki}^j$$
by considering the expression
$$
\left(\begin{array}{cc}
1&\Y_{k,m}\\
0&\C_k
 \end{array}\right)\odot \C_i.
 $$
\end{proof}
\begin{remark}\rm
In the proof of Proposition \ref{prop:alphaijk}, we determine the cardinal number of ${}^{\C_i}\mathcal{P}{}^{\C_j}$ through two distinct constructions. Using Lemma \ref{lem:P1prime>0} (1), the equality $\alpha_{ij}^k=\alpha_{kj^*}^i$ from Lemma \ref{lem:cijkinvariant} and \cite[Proposition 3.1.6]{EGNO15} then demonstrate that the cardinal number resulting from the other two constructions are the same as that from the constructions used in the proof of Proposition \ref{prop:alphaijk}.
\end{remark}

\begin{corollary}\label{coro:cpdspecial}
Let $H$ be a non-cosemisimple coquasi-Hopf algebra over $\k$ with the dual Chevalley property, and let $\mathrm{Q}(H)=(\mathcal{S}, \mathcal{P})$ be its link quiver.
\begin{itemize}
\item[(1)]For any $i,j\in I$, $\mid {}^{\C_i}\mathcal{P}^{\C_j} \mid=\mid {}^{K_jS(\C_j)^TK_j^{-1}}\mathcal{P}^{K_iS(\C_i)^TK_i^{-1}}\mid$,
 where
$K_i, K_j\in GL(\k)$ and $K_iS(\C_i)^TK_i^{-1}, K_jS(\C_j)^TK_j^{-1}\in\mathcal{M}$.
\item[(2)]If all the simple subcoalgebras in ${}^1\mathcal{S}$ are $1$-dimensional, then for any $i\in I$, we have
$\mid{^{\C_i}\mathcal{P}}\mid=\mid{\mathcal{P}^{\C_i}}\mid=\mid{^1\mathcal{P}}\mid;$
\item[(3)]$\mid{}^{\C_i}\mathcal{P}\mid=\mid\mathcal{P}^{\C_i}\mid=1 $ holds for all $i\in I$ if and only if $\mid{}^1\mathcal{P}\mid=1$ and the unique subcoalgebra $C_k\in{}^1\mathcal{S}$ is $1$-dimensional;
\item[(4)]If ${}^1\mathcal{S}=\{C_k\}$, then both $C_k$ and $S(C_k)$ are in the center of $\mathbb{Z}\mathcal{S}$.
\end{itemize}
\end{corollary}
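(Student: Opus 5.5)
The whole corollary should follow from the arrow-count formula of Proposition \ref{prop:alphaijk},
\[
\mid {}^{\C_i}\mathcal{P}^{\C_j}\mid=\sum_{k\in J}n_k\,\alpha_{ik}^j=\sum_{k\in J}n_k\,\alpha_{ki}^j,\qquad n_k:=\mid{}^1\mathcal{P}^{\C_k}\mid,
\]
together with the identities for unital based rings in Lemma \ref{lem:cijkinvariant}, \cite[Proposition 3.1.6]{EGNO15}, and Lemma \ref{lem:P1prime>0}. Write $i^*$ for the index with $C_{i^*}=S(C_i)$; by Lemma \ref{lem:ZS=gr} and the discussion before it, $C\mapsto S(C)$ is the involution of the based ring $\mathbb{Z}\mathcal{S}$.

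\textbf{Part (1).} I need $\sum_k n_k\alpha_{ik}^j=\sum_k n_k\alpha_{j^*k}^{i^*}$. From $\alpha_{ab}^c=\alpha_{b^*a^*}^{c^*}$ (shown in the proof of Lemma \ref{lem:cijkinvariant}) together with \cite[Proposition 3.1.6]{EGNO15}, I expect to get $\alpha_{ik}^j=\alpha_{k i^*}^{\,\cdot}$-type rotations, and in particular $\alpha_{j^*k}^{i^*}=\alpha_{k^* j}^{\,i}$-type identities. Using the second form of the count (with $\alpha_{ki}^j$) and one such rotation, the sum should reduce to the required one. The main obstacle is bookkeeping: one rotation may introduce $k^*$ in place of $k$. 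In that case I would invoke Lemma \ref{lem:P1prime>0}(1), which gives $n_k=\mid\mathcal{P}^{\prime 1}\text{ at }S(C_k)\mid$, and count arrows using the $\mathcal{P}'$-construction of Proposition \ref{prop:Pprimecomplete}. That construction gives $\mid{}^{\C_i}\mathcal{P}^{\C_j}\mid=\sum_k n_k\alpha_{i k^*}^{\,\cdot}$-type expressions, so the indices match after applying $*$.

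\textbf{Part (2).} If every $C_k$ with $k\in J$ is $1$-dimensional, then $C_k$ is invertible in $\mathbb{Z}\mathcal{S}$: by Remark \ref{rm:FP=dim}, $C_kC_{k^*}$ has dimension-sum $1$ and contains $1$, so $C_kC_{k^*}=1$. Hence $C_i\cdot C_k$ is a single basis element, and $\sum_j\alpha_{ik}^j=1$. Summing the first formula over $j$ gives $\mid{}^{\C_i}\mathcal{P}\mid=\sum_k n_k=\mid{}^1\mathcal{P}\mid$. Summing over $i$ instead with the second formula, and using $\sum_i\alpha_{ki}^j=1$ (since $\alpha_{ki}^j=\alpha_{j i^*}^{k}$-type rotation, or simply because left multiplication by an invertible basis element permutes the basis), gives $\mid\mathcal{P}^{\C_j}\mid=\mid{}^1\mathcal{P}\mid$.

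\textbf{Part (3).} The ``if'' direction is part (2). For ``only if'', take $i=0$ (the vertex $\k1$): $\mid{}^{1}\mathcal{P}\mid=1$, so $J=\{k\}$ and $n_k=1$. Then $\mid{}^{\C_i}\mathcal{P}\mid=\sum_j\alpha_{ik}^j$ is the number of simple constituents of $C_iC_k$ counted with multiplicity. Choosing $i=k^*$, the product $C_{k^*}C_k$ contains $1$; if $\dim C_k>1$, the dimension count of Remark \ref{rm:FP=dim} forces at least two constituents, which is a contradiction. Hence $C_k$ is $1$-dimensional.

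\textbf{Part (4).} With $J=\{k\}$ and $n_k\ge1$ (by Lemma \ref{lem:1P>0}), the two formulas give $\alpha_{ik}^j=\alpha_{ki}^j$ for all $i,j$, so $C_iC_k=C_kC_i$ and $C_k$ is central. Applying the anti-involution $*$ then yields $C_{k^*}C_{i^*}=C_{i^*}C_{k^*}$ for all $i$, so $S(C_k)=C_{k^*}$ is central as well.
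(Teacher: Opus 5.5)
Your proposal is correct and follows essentially the paper's route. Each part is read off from the two forms of the arrow count in Proposition~\ref{prop:alphaijk} combined with the based-ring symmetries of $\mathbb{Z}\mathcal{S}$: (3) and (4) are argued exactly as in the paper, and (2) uses invertibility of one-dimensional basis elements of $\mathbb{Z}\mathcal{S}$ where the paper instead remarks that $C_ig$ is again simple. In (1) there is no $k^*$ bookkeeping issue: the single identity $\alpha_{ki}^j=\alpha_{j^*k}^{i^*}$ (cyclic invariance of $\tau(C_kC_iC_{j^*})$, \cite[Proposition 3.1.6]{EGNO15}) turns $\sum_k n_k\alpha_{ki}^j$ directly into $\sum_k n_k\alpha_{j^*k}^{i^*}=\mid{}^{\C_{j^*}}\mathcal{P}^{\C_{i^*}}\mid$, which is what the paper uses, so the fallback through $\mathcal{P}'$ can be dropped.
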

\begin{proof}
\begin{itemize}
\item[(1)]According to \cite[Proposition 3.1.6]{EGNO15}, we have $\alpha_{ki}^j=\alpha_{j^* k}^{i^*}$. Proposition \ref{prop:alphaijk} implies that $$\mid {}^{\C_i}\mathcal{P}^{\C_j} \mid=\mid {}^{K_jS(\C_j)^TK_j^{-1}}\mathcal{P}^{K_iS(\C_i)^TK_i^{-1}}\mid.$$
\item[(2)]For any group-like element $g\in G(H)$, let $C_k=\k g$ be the corresponding $1$-dimensional simple subcoalgebra. Then for any $C_i\in\mathcal{S}$, one can show that $C_i g$ is also a simple subcoalgebra of $H$. It follows from Proposition \ref{prop:alphaijk} that $$\mid {^{\C_i}\mathcal{P}}\mid=\sum_{j\in I} \sum\limits_{k\in J} \mid{}^1 \mathcal{P}^{\C_k}\mid\alpha_{ik}^j=\sum\limits_{k\in J} \mid{}^1 \mathcal{P}^{\C_k}\mid=\mid{^1\mathcal{P}}\mid.$$
Observe that $S(C_k)=\k g^{-1}$ is also a $1$-dimensional simple subcoalgebra, implying that $C_i g^{-1}$ is likewise a simple subcoalgebra of $H$.
By Lemma \ref{lem:cijkinvariant}, we have $\alpha_{jk}^i=\alpha_{ik^*}^j.$ Consequently, $$\mid {\mathcal{P}^{\C_i}}\mid=\sum_{j\in I} \sum\limits_{k\in J} \mid{}^1 \mathcal{P}^{\C_k}\mid\alpha_{jk}^i=\sum_{j\in I} \sum\limits_{k\in J} \mid{}^1 \mathcal{P}^{\C_k}\mid\alpha_{ik^*}^j=\sum\limits_{k\in J} \mid{}^1 \mathcal{P}^{\C_k}\mid=\mid {^{\C_i}\mathcal{P}}\mid.$$
\item[(3)]The ``if" part follows immediately from $(2)$. Conversely, Remark \ref{rm:1once} shows that, for $k\in J$, there is only one $1$ in the summand of $S(C_k)\cdot C_k$ in $\mathbb{Z}\mathcal{S}$. It then follows from Proposition \ref{prop:alphaijk} that $\dim_{\k}(C_k)=1.$
\item[(4)]According to Proposition \ref{prop:alphaijk}, we have $$\mid {}^{\C_i}\mathcal{P}^{\C_j}\mid=\mid{}^1 \mathcal{P}^{\C_k}\mid \alpha_{ik}^j=\mid{}^1 \mathcal{P}^{\C_k}\mid\alpha_{ki}^j.$$It follows that $\alpha_{ik}^t=\alpha_{ki}^t,$
which implies $$C_i\cdot C_k=C_k\cdot C_i$$ for all $i\in I.$ Besides, we have $$S(C_k)\cdot S(C_i)=S(C_i)\cdot S(C_k)$$ for all $i\in I$. Hence, both $C_k$ and $S(C_k)$ lie in the center of $\mathbb{Z}\mathcal{S}$.
\end{itemize}
\end{proof}

\begin{remark}\label{rm:Hopf;pointed}\rm
\begin{itemize}
\item[(1)]In our previous work \cite{YLL24, YL24, YL26}, we characterized the link quivers of Hopf algebras with the dual Chevalley property. Although the proof in the coquasi-Hopf algebra case is different from a technical perspective, it is worth noting that, from the perspective of link quivers, the link quivers of coquasi-Hopf algebras with the dual Chevalley property coincide with those of Hopf algebras with the Chevalley property; they are completely determined by the Grothendieck ring of the coradical and the number of arrows pointing to the simple subcoalgebra $\k1$. From Example \ref{example:H1/H0} and Proposition \ref{prop:Majidmod=YD}, it follows that $(H_1/H_0)^{coH}$ and ${}^{coH}(H_1/H_0)$ admit a left-left and a right-right Yetter-Drinfeld module structure over $H_0$, respectively. This means that the arrows with ending vertex $\k1$ is related to the comodule decomposition of the Yetter-Drinfeld module ${}^{coH}(H_1/H_0)$. It should be emphasized again that, by Lemma \ref{lem:P1prime>0}, we have $\mid{}^1\mathcal{P}\mid=\mid \mathcal{P}^1\mid$.
\item[(2)]Let $G$ be a group and let $\mathfrak{C}$ be the set of its conjugacy classes. A \textit{ramification datum} $\chi$ of $G$ is a formal sum $\sum_{U \in \mathfrak{C}} \chi_U U$ of conjugacy classes with coefficients in $\mathbb{Z}_+$. The corresponding \textit{Hopf quiver} $\mathrm{Q} = \mathrm{Q}(G,\chi)$ is defined as follows: the vertex set is $\mathrm{Q}_0 = G$, and for each $x \in G$ and each conjugacy class $U \in \mathfrak{C}$, there are $\chi_U$ arrows from $x$ to $ux$ for any $u \in U$. According to \cite[Section 3]{Hua09}, the link quiver of a pointed coquasi-Hopf algebra $H$ is a Hopf quiver, which is consistent with (1) and Proposition \ref{prop:alphaijk}. Indeed, in this case the coradical of $H$ is a group algebra, and the comodule decomposition of a Yetter-Drinfeld module over a group algebra is determined by conjugacy classes (see, for example \cite[Theorem 3.3]{Hua12}). More explicitly, if a Yetter-Drinfeld module $V$ over $\k G$ contains, in its comodule decomposition, a simple right comodule $V_h$ with coefficient coalgebra $\k h$ for some $h \in G$, then it follows from (\ref{eq:YDmod2}) in Remark \ref{rm:YD2def} that for all $g \in G$, $V$ contains the simple right comodule $V_{ghg^{-1}}$, whose coefficient coalgebra is $\k ghg^{-1}$. This is primarily because  $g$ is an invertible element whose action on any nonzero element $ v \in V$ is nonzero. However, this property does not necessarily hold in the case of non-pointed cosemisimple coquasi-Hopf algebras.
\end{itemize}
\end{remark}

\subsection{Decomposition into link-indecomposable components}
This subsection is devoted to studying the link-indecomposable components of a coquasi-Hopf algebra with the dual Chevalley property.

Let $\mathrm{Q}=(\mathrm{Q}_0, \mathrm{Q}_1)$ be a quiver, where $\mathrm{Q}_0$ is the set of vertices and $\mathrm{Q}_1$ the set of arrows. For an arrow $\alpha\in \mathrm{Q}_1$, let $\operatorname{s}(\alpha)$ denote its start vertex and $\operatorname{t}(\alpha)$ its end vertex. Recall that a \textit{path} $\beta$ in $\mathrm{Q}$ is a finite sequence of concatenated arrows $\beta=\alpha_n\alpha_{n-1}\cdots \alpha_1$ satisfying $\operatorname{t}(\alpha_i)=\operatorname{s}(\alpha_{i+1})$ for $i=1, 2, \cdots, n-1$.
The \textit{length} of a path is the number of arrows in the sequence; vertices are regarded as paths of length zero.
A \textit{walk} is a nonempty sequence of arrows $\alpha_1, \alpha_2, \cdots, \alpha_m$ such that there exists a set $\{\lambda_i\}_{1\leq i\leq m}$ with $\lambda_i \in \{-1, 1\}$ for which $\alpha_1^{\lambda_1}\alpha_2^{\lambda_2}\cdots \alpha_m^{\lambda_m}$ is a path in $\mathrm{Q}$. A quiver $\mathrm{Q}=(\mathrm{Q}_0,\mathrm{Q}_1)$ is said to be \textit{connected} if for any two vertices $a$ and $b$, there exists a walk from $a$ to $b$.

We present the following definition.
\begin{definition}\emph{(}\cite[Definition 1.1]{Mon95}\emph{)}
A subcoalgebra $H^\prime$ of a coalgebra $H$ is called link-indecomposable if the link quiver $\mathrm{Q}(H^\prime)$ of $H^\prime$ is connected.
A link-indecomposable component of $H$ is a maximal link-indecomposable subcoalgebra.
\end{definition}

Next, we investigate the decomposition of link-indecomposable components of a coquasi-Hopf algebra with the dual Chevalley property, as well as the product between these link-indecomposable components. Before proceeding further, we present the folllowing lemma.
\begin{lemma}\label{lem:CD,CFarrow}
Let $H$ be a non-cosemisimple coquasi-Hopf algebra over $\k$ with the dual Chevalley property, let $\mathcal{S}=\{C_i\mid i\in I\}$ denote the set of all simple subcoalgebras of $H$. For any $i,j,k\in I$, if there is some arrow in the link quiver of $H$ from $C_j$ to $C_i$, then the following hold:
\begin{itemize}
\item[(1)]For every $C_t \in \mathcal{S}$ contained in $C_k\cdot C_j$ there exists some $C_{t^\prime} \in \mathcal{S}$ contained in $C_k\cdot C_i$ with an arrow from $C_t$ to $C_{t^\prime}$, and for every $C_{t^\prime} \in \mathcal{S}$ contained in $C_k\cdot C_i$ there exists some $C_t \in \mathcal{S}$ contained in $C_k\cdot C_j$ with an arrow from $C_t$ to $C_{t^\prime}$;
\item[(2)]For every $C_l \in \mathcal{S}$ contained in $C_j\cdot C_k$ there exists some $C_{l^\prime} \in \mathcal{S}$ contained in $C_i\cdot C_k$ with an arrow from $C_l$ to $C_{l^\prime}$, and for every $C_{l^\prime} \in \mathcal{S}$ contained in $C_i\cdot C_k$ there exists some $C_{l^\prime} \in \mathcal{S}$ contained in $C_j\cdot C_k$ with an arrow from $C_l$ to $C_{l^\prime}$.
\end{itemize}
\end{lemma}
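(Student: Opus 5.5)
The plan is to realize the arrow $C_j\to C_i$ by a primitive matrix and then multiply it by $\C_k$, in the same way as the constructions \eqref{equationBY} and Remark \ref{rm:4constructions}. By Lemma \ref{lem:linkquiver=S,P} and Proposition \ref{prop:Pcomplete}, an arrow from $C_j$ to $C_i$ gives a non-trivial $(\C_i,\C_j)$-primitive matrix $\X$ with $\span(\overline{\X})\subseteq H_1/H_0$ a simple $C_i$-$C_j$-bicomodule. For (1), I would form
$$\C_k\odot^\prime\left(\begin{array}{cc}\C_i&\X\\0&\C_j\end{array}\right),$$
which is a multiplicative matrix by the dual Chevalley property, and conjugate it by $\operatorname{diag}(L_{\C_k,\C_i},L_{\C_k,\C_j})$. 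The diagonal blocks then become $\operatorname{diag}(\G_{t'})_{t'}$ and $\operatorname{diag}(\E_t)_t$, where the $\G_{t'}$ (resp. $\E_t$) are the basic multiplicative matrices of the simple constituents of $C_k\cdot C_i$ (resp. $C_k\cdot C_j$). The upper right block splits into sub-blocks $\Z_{t',t}$. Exactly as in the proof of Lemma \ref{Lemma:cap=0} (using \cite[Remark 2.5]{Li22}), each $\Z_{t',t}$ is a $(\G_{t'},\E_t)$-primitive matrix. So the statement reduces to this: for each fixed $t$ the block column $(\Z_{t',t})_{t'}$ contains a non-trivial $\Z_{t',t}$, and for each fixed $t'$ the block row $(\Z_{t',t})_t$ contains a non-trivial one. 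A non-trivial $\Z_{t',t}$ gives an arrow $C_t\to C_{t'}$ by \cite[Lemma 2.4]{YLL24} and the completeness statement \eqref{eq:complete}.

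To show the non-triviality, I would reduce to the form used in \eqref{equationBY}. By Proposition \ref{prop:Pcomplete}, after replacing $\X$ by a suitable similar matrix, I may assume $\X=\Y^{s}_{\C'}$ for some $\Y\in{}^1\mathcal{P}$ and some $\C'$. Thus the entries of $\X$ lie in $\span(\overline{\C'\odot^\prime\Y})$, and these entries are linearly independent in $H_1/H_0$. The entries of $\C_k\odot^\prime\X$ are then combinations of $c\,(c' y)$ with $c\in C_k$, $c'\in C'$, $y$ an entry of $\Y$. Using the quasi-associativity \eqref{eq:associativity} with the convolution-invertible $\Phi$, these can be rewritten as combinations of $(cc')y$ with invertible coefficients. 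The isomorphism $\phi:H_0\otimes{}^{co H_0}(H_1/H_0)\to H_1/H_0$ of Proposition \ref{prop:Hopfmod} then converts linear independence in $H_1/H_0$ into linear independence of the corresponding elements of $H_0\otimes\span(\overline{\Y})$, which can be read off from the decomposition of $C_k\cdot C'$ into simples.

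Concretely, I would show that the columns of $\C_k\odot^\prime\X$ are linearly independent modulo $H_0$. After conjugation, this says that no full block column $(\Z_{t',t})_{t'}$ has all entries in $H_0$. Otherwise the corresponding columns would be zero in $H_1/H_0$, contradicting independence, as in the appeal to \cite[Corollary 2.6]{YLL24} in Lemma \ref{Lemma:cap=0}. This gives the first half of (1). For the second half, I would run the argument on rows instead, using the mixed fundamental theorem (Corollary \ref{coro:H0fundamental}) with right coinvariants, and start from $\mathcal{P}^{\prime1}$ via Proposition \ref{prop:Pprimecomplete} rather than from ${}^1\mathcal{P}$.

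Part (2) is the mirror image. I would use $\left(\begin{array}{cc}\C_i&\X\\0&\C_j\end{array}\right)\odot\C_k$, which involves multiplication by $C_k$ on the right, together with the two symmetric constructions of Remark \ref{rm:4constructions}. The independence step swaps the roles of Proposition \ref{prop:Hopfmod} and Corollary \ref{coro:H0fundamental}.

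The main obstacle is the independence step. Multiplication $H_0\otimes H_0\to H_0$ is not injective, so independence of the entries of $\C_k\odot^\prime\X$ does not follow formally. It has to come from the free-module structure in Proposition \ref{prop:Hopfmod}, with careful bookkeeping of the reassociator $\Phi$ when moving $C_k$ past $C'$. I would first check that the block structure of $L_{\C_k,\C'}$ makes the $\Phi$-twisted rearrangement triangular with invertible diagonal, so that independence is preserved. If that fails, the fallback is a duality argument: since $1$ occurs once in $S(C_k)\cdot C_k$ (Remark \ref{rm:1once}), left multiplication by $S(C_k)$ recovers $\X$, so no isotypic component of the image can vanish.
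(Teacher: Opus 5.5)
Your reduction is sound. Take $\X\in{}^{\C_i}\mathcal{P}^{\C_j}$ directly, rather than ``replacing $\X$ by a similar matrix'', which need not be possible when there are several arrows $C_j\to C_i$. Then the blocks $\Z_{t',t}$ of $L_{\C_k,\C_i}(\C_k\odot^\prime\X)L_{\C_k,\C_j}^{-1}$ are $(\G_{t'},\E_t)$-primitive, and independence modulo $H_0$ of the columns (resp.\ rows) of $\C_k\odot^\prime\X$ gives the two halves of (1).

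The gap is that this independence, which is the whole content of the lemma on your route, is left unproved. Both fixes you propose also point the wrong way. No triangularity is needed: for a left coinvariant $\bar y$, the Majid bimodule axiom gives $c\cdot(c^\prime\cdot\bar y)=(c_1c^\prime_1)\cdot\bar y_0\,\Phi^{-1}(c_2,c^\prime_2,\bar y_1)$. The scalars $\Phi^{-1}(c^k_{f^\prime f},c^i_{b^\prime b},d_{q^\prime q})$ form a matrix acting on the column indices, and it is invertible simply because $\Phi$ is convolution invertible. Your fallback, multiplying by $S(C_k)$ to recover $\X$, is exactly the Hopf argument the paper says breaks down here, since $\sum_{t,l}\alpha(c_{lt})S(c_{il})(c_{tj}x)\neq\alpha(c_{ij})x$ in general.

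What closes the gap is that you need independence of the \emph{columns} of $\C_k\odot^\prime\C_i$, not of its entries. This holds because $\C_k\odot^\prime\C_i=L_{\C_k,\C_i}^{-1}\operatorname{diag}(\G_{t'})L_{\C_k,\C_i}$, and the columns of a block-diagonal matrix of basic multiplicative matrices are independent. Via $\phi$ (entries $(cc^\prime)\cdot\bar y_q\leftrightarrow cc^\prime\otimes\bar y_q$, with the $\bar y_q$ independent), the columns of $(\C_k\odot^\prime\C_i)\odot^\prime\Y$ are independent. Hence so are those of $\C_k\odot^\prime(\C_i\odot^\prime\Y)$, and those of $\C_k\odot^\prime\X$, since $\X=(\C_i\odot^\prime\Y)R$ with $R$ a full-column-rank block of $L_{\C_i,\D}^{-1}$. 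Rows work the same way, using $\X\in\mathcal{P}^\prime$, a right coinvariant $\bar z$, Corollary \ref{coro:H0fundamental}, and $c\cdot(c^\prime\cdot\bar z)=\Phi(c_1,c^\prime_1,\bar z_{-1})(c_2c^\prime_2)\cdot\bar z_0$. Part (2) is symmetric with right actions.

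The paper avoids all of this by a counting argument. By Proposition \ref{prop:alphaijk}, an arrow $C_j\to C_i$ yields some $u\in J$ with $\alpha_{iu}^j>0$. So any $C_t\subseteq C_kC_j$ lies in $C_kC_iC_u$, hence in $C_{t'}C_u$ for some $C_{t'}\subseteq C_kC_i$, which gives an arrow $C_t\to C_{t'}$. The other half is the same computation after passing to the arrow $S(C_i)\to S(C_j)$ via Corollary \ref{coro:cpdspecial}(1). In that route every independence question is confined to $\C\odot^\prime\Y$ with $\Y$ coinvariant, where $\phi$ settles it at once.
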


\begin{proof}
\begin{itemize}
\item[(1)]Let ${}^1\mathcal{S}=\{C_u\mid u\in J\}$, where $J \subseteq I.$ According to Proposition \ref{prop:alphaijk}, the number of arrows from $C_j$ to $C_i$ is given by $$\mid {}^{\C_i}\mathcal{P}^{\C_j}\mid=\sum_{u\in J}\mid{}^1 \mathcal{P}^{\C_u}\mid \alpha_{iu}^j.$$
This means that there exists some $u \in J$ such that $\alpha_{iu}^j > 0$, which implies that $C_i\cdot  C_u$ contains $C_j$ with a nonzero coefficient. Consequently, for any $C_t \in \mathcal{S}$ contained in $C_k\cdot  C_j$, it must also be contained in $C_k\cdot  C_i\cdot  C_u$. Then there exists some $C_{t^\prime} \in \mathcal{S}$ contained in $C_k\cdot  C_i$ such that $C_{t^\prime}\cdot  C_u$ contains $C_t$ with a nonzero coefficient. It follows that
$$\mid{}^{\C_{t^\prime}}\mathcal{P}^{\C_t}\mid=\sum_{u\in J}\mid {}^1\mathcal{P}^{\C_u}\mid \alpha_{t^\prime u}^t >0.$$
On the other hand, Corollary \ref{coro:cpdspecial} (1) implies that there is an arrow from $S(C_i)$ to $S(C_j)$. It follows from Proposition \ref{prop:alphaijk} that there exists some $u\in J$ such that $\alpha_{uj^*}^{i^*}>0.$ Note that for any $C_{t^\prime}\in\mathcal{S}$ that is contained in $C_k\cdot C_i$, the product $S(C_i)\cdot S(C_k)$ contains $S(C_{t^\prime})$ with a nonzero coefficient. We know that $S(C_{t^\prime})$ is contained in $C_u\cdot S(C_j)\cdot S(C_k)$. Hence, there exists some $C_l\in\mathcal{S}$ contained in $S(C_j)\cdot S(C_k)$ such that $C_u C_l$ contains $S(C_{t^\prime})$ with a nonzero coefficient. Let $C_t=S(C_l)$. Then, by Proposition \ref{prop:alphaijk} and Corollary \ref{coro:cpdspecial} (1), there exists an arrow from $C_t$ to $C_{t^\prime}$.
\item[(2)]Similar to the proof of (1), we can show that (2) holds.
\end{itemize}
\end{proof}

It should be pointed out that although the above result is consistent with the Hopf algebra case (see items (I) and (II) in the proof of \cite[Lemma 3.12]{Li22}; also \cite[Lemma 3.2]{YLL24}), the proof method is completely different. In the context of Hopf algebras, items (I) and (II) in the proof of \cite[Lemma 3.12]{Li22} play a key role in both \cite{Li22} and \cite{YLL24}: it was used in \cite{Li22} to characterize the products of link-indecomposable components, and in \cite{YLL24}
to describe the link quiver of Hopf algebras with the dual Chevalley property. Moreover, the proof in the Hopf algebra case cannot be directly applied to the coquasi-Hopf algebra case. In the proof of the Hopf algebra case, it heavily depends on the fact that
$\sum_{k=1}^r S(c_{ik})(c_{kj}x)=\varepsilon(c_{ij})x,$
where $\C=(c_{ij})_{r\times r}$ is a basic multiplicative matrix and $x\in H$.
In the coquasi-Hopf algebra setting, due to the presence of an reassociator, $ \sum_{t,l=1}^{r} \alpha(c_{lt})S(c_{il})(c_{tj}x)$ is more complicated and generally not equal to $\alpha(c_{ij})x$.

By \cite[Theorem 2.1]{Mon95}, any coalgebra is the direct sum of its link-indecomposable components. Building on this, we now let $H$ be a coquasi-Hopf algebra with the dual Chevalley property and proceed to study its link-indecomposable components.
For each $C \in \mathcal{S}$, let $H_{(C)}$ denote the link-indecomposable component containing $C$. In particular, we denote the link-indecomposable component containing $\k 1$ by $H_{(1)}$.
We have the following proposition, which generalizes not only \cite[Proposition 3.16]{Li22} from Hopf algebras to coquasi-Hopf algebras but also \cite[Theorem 4.1]{Hua09} from the pointed case to the dual Chevalley case.

\begin{proposition}\label{prop:HcHd}
Let $H$ be a non-cosemisimple coquasi-Hopf algebra over $\k$ with the dual Chevalley property and $\mathcal{S}$ be the set of all simple subcoalgebras of $H$. Then:
\begin{itemize}
\item[(1)]For any $C\in \mathcal{S}$, $S(H_{(C)})\subseteq H_{(S(C))}$;
\item[(2)]For any $C, D\in\mathcal{S}$, $H_{(C)}H_{(D)}\subseteq \sum\limits_{E\in\mathcal{S}, E\subseteq CD} H_{(E)}$;
\item[(3)]For any $C\in\mathcal{S}$, $H_{(C)}=CH_{(1)}=H_{(1)}C;$
\item[(4)]$H_{(1)}$ is a coquasi-Hopf subalgebra.
\end{itemize}
\end{proposition}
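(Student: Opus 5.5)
The plan is to first describe the components combinatorially and then lift that description to coalgebra-level statements. By \cite[Theorem 2.1]{Mon95}, $H=\bigoplus H_{(C)}$, where $H_{(C)}$ is the sum of all subcoalgebras whose link quivers are connected and contain $C$. Equivalently, $H_{(C)}$ is the largest subcoalgebra whose coradical is $\bigoplus_{D\sim C}D$. Here $\sim$ is the equivalence relation "joined by a walk in $\mathrm{Q}(H)$". I would therefore reduce each assertion to a statement about walks, together with the elementary fact that a coalgebra map $f$ sends a subcoalgebra $B$ with coradical $B_0$ to a subcoalgebra whose coradical is contained in $f(B_0)$. The latter follows because $f(B_n)\subseteq f(B)_n$ by $\wedge$-compatibility, and $f(B_0)$ is cosemisimple.

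For (1), $S$ is a coalgebra antimorphism, so it preserves wedges with the sides reversed. Hence $S(H_{(C)})$ is a subcoalgebra of $H^{cop}$ with coradical inside $\bigoplus_{D\sim C}S(D)$. Corollary \ref{coro:cpdspecial} (1) says that an arrow $D\to E$ yields an arrow $S(E)\to S(D)$, so $D\sim C$ implies $S(D)\sim S(C)$. The image is link-indecomposable: its quiver is the opposite of a quotient-image of a connected quiver, where one checks that arrows map to arrows or collapse. It therefore lies in $H_{(S(C))}$.

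For (2), multiplication $H\otimes H\to H$ is a coalgebra map. The coalgebra $H_{(C)}\otimes H_{(D)}$ has coradical $\bigoplus C'\otimes D'$ with $C'\sim C$, $D'\sim D$, and its link quiver is connected, being a product of connected quivers. So $H_{(C)}H_{(D)}$ is a subcoalgebra with coradical in $\sum C'D'$. The key combinatorial input is Lemma \ref{lem:CD,CFarrow}. It says that if $C'\to C''$ is an arrow, then every simple constituent of $D'C'$ (resp. $C'D'$) is joined by an arrow to some constituent of $D'C''$ (resp. $C'D''$), and conversely. Iterating along walks, every constituent of $C'D'$ is linked to some constituent of $CD$. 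Then $H_{(C)}H_{(D)}$ decomposes, via its own link components, into pieces each contained in some $H_{(E)}$ with $E\subseteq CD$.

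For (3), apply (2) with $D=\k1$, which gives $H_{(C)}=H_{(C)}\cdot 1\subseteq H_{(C)}H_{(1)}$. The reverse inclusion, and the equality with $CH_{(1)}$, require more. First, $CH_{(1)}\subseteq H_{(C)}$: every simple in $H_{(1)}$ is linked to $\k1$, so by Lemma \ref{lem:CD,CFarrow} every constituent of $CD'$ is linked to $C$, and connectivity passes to the product as in (2). For the reverse inclusion $H_{(C)}\subseteq CH_{(1)}$, I would argue through the coradical filtration. By Proposition \ref{prop:alphaijk}, every arrow into or out of a vertex $E\sim C$ arises from $\C\odot'\!\left(\begin{smallmatrix}1&\Y\\0&\D\end{smallmatrix}\right)$ with $\Y\in{}^1\mathcal{P}$. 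Together with Proposition \ref{prop:Pcomplete}, this shows that $H_1\cap H_{(C)}$ is spanned by products $C''x$ with $C''\sim C$ and $x\in H_{(1)}$. I also need $C''\subseteq CH_{(1)}$; this follows from the walk description, since $C''\subseteq C\cdot D'$ for some $D'\sim 1$ (using $S(C)C''$ and Lemma \ref{lem:CD,CFarrow}). Passing to $\operatorname{gr}H\cong R\# H_0$ (Example \ref{example:coquasiChevalley}), $H_{(C)}$ is generated from its degree-one part, so the graded version follows. A filtration argument then lifts it to $H$. The case $H_{(1)}C$ is symmetric, using the mirrored constructions of Remark \ref{rm:4constructions}.

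For (4), (2) gives $H_{(1)}H_{(1)}\subseteq H_{(1)}$, and (1) gives $S(H_{(1)})\subseteq H_{(1)}$ since $S(\k1)=\k1$. Restricting $\Phi,\alpha,\beta$ yields the coquasi-Hopf structure. Convolution invertibility of $\Phi|$ is automatic because $H_{(1)}$ is a subcoalgebra.

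The main obstacle is the inclusion $H_{(C)}\subseteq CH_{(1)}$ in (3). It needs a generation statement beyond degree one, and the lack of associativity rules out the Hopf argument based on $\sum S(c_{ik})c_{kj}x=\varepsilon(c_{ij})x$. I would try to replace that argument with the fundamental theorem. Corollary \ref{coro:H0fundamental}, applied to $\operatorname{gr}H$ as an $H_0$-module in ${}^{H_0}\mathscr{M}^{H_0}$, gives $\operatorname{gr}H\cong H_0\otimes(\operatorname{gr}H)^{co H_0}$. One then checks that the coinvariants of $\operatorname{gr}H_{(C)}$ lie in $\operatorname{gr}H_{(1)}$ modulo the action of $C$.
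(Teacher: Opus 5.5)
Your arguments for (1), (2) and (4) are essentially the paper's. You bound the coradical of the image of the (anti)coalgebra map, then apply Corollary \ref{coro:cpdspecial}(1), resp.\ Lemma \ref{lem:CD,CFarrow} iterated along walks. In (1), the remark that arrows ``map to arrows or collapse'' would not give connectivity, but you do not need connectivity. The coradical bound, Corollary \ref{coro:cpdspecial}(1), and the fact that any subcoalgebra $B$ of $H=\bigoplus_X H_X$ equals $\bigoplus_X(B\cap H_X)$ already give $S(H_{(C)})\subseteq H_{(S(C))}$, exactly as you conclude (2).

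The genuine gap is the inclusion $H_{(C)}\subseteq CH_{(1)}$ in (3). You are right to single it out. The paper's written argument only proves $CH_{(1)}\subseteq H_{(C)}$ and that $CH_{(1)}$ is link-indecomposable, and maximality of $H_{(C)}$ does not upgrade this to equality. However, your argument for the inclusion fails.

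\textbf{The false step.} The claim that, via $\operatorname{gr}H\cong R\#H_0$, ``$H_{(C)}$ is generated from its degree-one part'' is false in this generality, since $R$ is only coradically graded, not a Nichols algebra. Take $\k(\mathrm{Q},\mathcal{S})$ itself, built as in Theorem \ref{thm:generalizedHopfquiver} on the cyclic Hopf quiver over $\k\mathbb{Z}_n$ with $g\triangleright v=qv$ and $q$ a primitive $n$-th root of unity. It has paths of every length, but its subalgebra generated in degrees $\le 1$ is $\mathcal{B}(V)\#\k\mathbb{Z}_n$, a Taft algebra with nothing in degree $\ge n$.

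\textbf{The fallback.} Corollary \ref{coro:H0fundamental} is the right tool, but you leave the key step as ``one then checks'', and it is aimed at the wrong object: the right coinvariants inside $\operatorname{gr}H_{(C)}$ vanish unless $C\sim\k1$. What you need is that \emph{all} right $H_0$-coinvariants of $\operatorname{gr}H$ lie in $\operatorname{gr}H_{(1)}$. This holds because the projections onto link components are $H_0$-bicomodule maps. So for a coinvariant $r=\sum_X r_X$ you get $r_X\otimes 1=\rho^R(r_X)\in\operatorname{gr}H_X\otimes(H_X)_0$, which forces $\k1\subseteq(H_X)_0$ whenever $r_X\neq0$.

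\textbf{How to close the gap.}
\begin{enumerate}
\item From the coinvariant statement, $\operatorname{gr}H=H_0\cdot\operatorname{gr}H_{(1)}$. Projecting to components gives $\operatorname{gr}H_{(C)}=\sum_{C'\sim C}C'\cdot\operatorname{gr}H_{(1)}$.
\item Use your observation $C'\subseteq CD'$ with $D'\subseteq H_{(1)}$. Combine it with $(AB)B'=A(BB')$ for subcoalgebras $A,B,B'$, which is valid despite $\Phi$ because $(ab)c=\Phi^{-1}(a_1,b_1,c_1)a_2(b_2c_2)\Phi(a_3,b_3,c_3)$, and with (2). This gives $\operatorname{gr}H_{(C)}=C\cdot\operatorname{gr}H_{(1)}$.
\item Since $\bar c\,\bar x=\overline{cx}$ for $c\in H_0$, this reads $(H_{(C)})_n\subseteq C(H_{(1)})_n+(H_{(C)})_{n-1}$. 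Induction on $n$ then gives $H_{(C)}\subseteq CH_{(1)}$.
\item The equality $H_{(C)}=H_{(1)}C$ follows by the mirror argument with the right action.
\end{enumerate}
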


\begin{proof}
\begin{itemize}
\item [(1)]Because $S$ is a coalgebra antimorphism, we immediately find that $S(H_{(C)})$ is a subcoalgebra of $H$. It remains to prove that $S(H_{(C)})$ is link-indecomposable. By Corollary \ref{coro:cpdspecial} (1), we find that the number of arrows from $C_j$ to $C_i$ equals that from $S(C_j)$ to $S(C_i)$. Consequently, $S(H_{(C)})$ is link-indecomposable and is naturally contained in $H_{S(C)}$.
\item[(2)]It suffices to prove that for each simple subcoalgebra $E^\prime$ of $H_{(C)}H_{(D)}$, there exists a walk from $E^\prime$ to some $E \in \mathcal{S}$ contained in $CD$. By \cite[Corollary 4.1.8]{Rad12}, we have $$(H_{(C)}\otimes H_{(D)})_0=(H_{(C)})_0\otimes (H_{(D)})_0.$$ Then, together with the fact that the multiplication $m$ is a coalgebra epimorphism, that $H$ has the dual Chevalley property, and \cite[Corollary 5.3.5]{Mon93}, we can show that $$(H_{(C)}H_{(D)})_0= (H_{(C)})_0(H_{(D)})_0.$$
This implies that each simple subcoalgebra $E^\prime$ of $H_{(C)}H_{(D)}$ must be contained in some $C^\prime D^\prime$,
where $C^\prime\in (H_{(C)})_0, D^\prime\in (H_{(D)})_0$. According to Lemma \ref{lem:CD,CFarrow} we find that for any $E^\prime\in \mathcal{S}$ contained in $C^\prime D^\prime$, there exists some $E\in \mathcal{S}$ contained in $CD$ such that there is a walk from $E$ to $E^\prime.$ This completes the proof.
\item[(3)]Obviously, $CH_{(1)}$ is a coalgebra, and by (2), $CH_{(1)}\subseteq H_{(C)}$. Next we prove that $CH_{(1)}$ is link-indecomposable. For any simple subcoalgebra $E\in (CH_{(1)})_0$, by an argument similar to the proof of (2), we know that $E$ is contained in $CD$ for some simple subcoalgebra $D\in (H_{(1)})_0.$ Note that we can find a walk from $\k1$ to $D$ in the link quiver of $H_{(1)}$ passing through $F_0, F_1, \cdots, F_n$ such that $F_0=\k1$ and $F_n=D.$ Then by Lemma \ref{lem:CD,CFarrow} (1), we can prove by induction on $n$ that we can find a walk from $C$ to $E.$ Therefore, the link quiver of $CH_{(1)}$ is connected, and by definition we have $H_{(C)}=CH_{(1)}$. Similarly, we can prove that $H_{(C)}=H_{(1)}C.$
\item[(4)]This is a direct consequence of (1) and (2).
\end{itemize}
\end{proof}

\begin{remark}\rm
Note that in the Hopf algebra setting, for any pointed Hopf algebra $H$, the link-indecomposable component $H_{(1)}$ containing $\k1$ is a normal Hopf subalgebra by \cite[Theorem 3.2]{Mon95}. If $H$ has the dual Chevalley property, $H_{(1)}$ remains a Hopf subalgebra (\cite[Proposition 3.16]{Li22}), but not necessarily normal when $H$ is non-pointed (see \cite[Example 6.1]{YLL24}). Thus (coquasi-)Hopf algebras with the dual Chevalley property form a non-trivial generalization of pointed (coquasi-)Hopf algebras.
\end{remark}

Next, we define the following equivalence relation on $\mathcal{S}:$
we say that $C$ and $D$ are equivalent if $C(H_{(1)})_0=D(H_{(1)})_0.$ Clearly, by Proposition \ref{prop:HcHd} (3), $C$ and $D$ are equivalent if and only if there exists a walk between $C$ and $D$. Let $\mathcal{S}_0\subseteq \mathcal{S}$ be a full set of chosen pairwise non-equivalent representatives with respect to this equivalence relation. Then we have the following corollary.
\begin{corollary}\label{coro:H=CH(1)}
Let $H$ be a non-cosemisimple coquasi-Hopf algebra over $\k$ with the dual Chevalley property. Then the link-indecomposable decomposition of $H$ is given by:
$$
H=\bigoplus_{C\in\mathcal{S}_0} CH_{(1)}.
$$
\end{corollary}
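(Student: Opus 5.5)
The plan is to combine Montgomery's decomposition theorem \cite[Theorem 2.1]{Mon95} with Proposition \ref{prop:HcHd} (3). By \cite[Theorem 2.1]{Mon95}, $H$ is the direct sum of its link-indecomposable components. Each component contains some simple subcoalgebra, hence equals $H_{(C)}$ for some $C\in\mathcal{S}$. Distinct components are distinct summands of this direct sum, so $H=\bigoplus H_{(C)}$, where $C$ runs over one simple subcoalgebra from each component. By Proposition \ref{prop:HcHd} (3), $H_{(C)}=CH_{(1)}$ for every $C\in\mathcal{S}$. It therefore remains to show that the components are indexed bijectively by $\mathcal{S}_0$.

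The key step is: for $C,D\in\mathcal{S}$, we have $H_{(C)}=H_{(D)}$ if and only if $C$ and $D$ are equivalent, i.e. $C(H_{(1)})_0=D(H_{(1)})_0$. First I show that $(CH_{(1)})_0=C(H_{(1)})_0$. The inclusion $\supseteq$ is clear, since $C(H_{(1)})_0$ is a cosemisimple subcoalgebra of $CH_{(1)}$ by the dual Chevalley property. For $\subseteq$, I use the argument in the proof of Proposition \ref{prop:HcHd} (2): by \cite[Corollary 4.1.8]{Rad12} and \cite[Corollary 5.3.5]{Mon93}, applied to the coalgebra epimorphism $C\otimes H_{(1)}\to CH_{(1)}$, the coradical of the image is contained in the image of the coradical.

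With this in hand, the key step follows. If $H_{(C)}=H_{(D)}$, then taking coradicals gives $C(H_{(1)})_0=(H_{(C)})_0=(H_{(D)})_0=D(H_{(1)})_0$. Conversely, suppose $C(H_{(1)})_0=D(H_{(1)})_0$. Then $D\subseteq (H_{(C)})_0$, since $D=D\cdot\k1\subseteq D(H_{(1)})_0$. So $D$ lies in the component $H_{(C)}$, forcing $H_{(D)}=H_{(C)}$. This matches the remark after Proposition \ref{prop:HcHd}: two simple subcoalgebras are equivalent exactly when a walk joins them.

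Hence the map $\mathcal{S}_0\to\{\text{link-indecomposable components}\}$, $C\mapsto H_{(C)}=CH_{(1)}$, is well defined. It is surjective because every component has the form $H_{(C)}$ and every $C$ is equivalent to a representative in $\mathcal{S}_0$. It is injective because representatives in $\mathcal{S}_0$ are pairwise non-equivalent. Substituting into Montgomery's decomposition gives $H=\bigoplus_{C\in\mathcal{S}_0}CH_{(1)}$.

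The only delicate point is the coradical identity $(CH_{(1)})_0=C(H_{(1)})_0$. There I would simply repeat verbatim the justification already used in Proposition \ref{prop:HcHd} (2). Everything else is bookkeeping.
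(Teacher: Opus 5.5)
Your proof is correct and follows the same route as the paper: Montgomery's decomposition \cite[Theorem 2.1]{Mon95} together with Proposition~\ref{prop:HcHd}~(3), with the link-indecomposable components indexed by the equivalence classes represented in $\mathcal{S}_0$. The paper gives no separate proof and only asserts that this equivalence is ``clear''. Your identity $(CH_{(1)})_0=C(H_{(1)})_0$, taken from the proof of Proposition~\ref{prop:HcHd}~(2), supplies exactly the detail needed to show that equivalent simple subcoalgebras are precisely those lying in a common component.
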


By Remark \ref{rm:Hopf;pointed} (2), a Hopf quiver is a special case of the link quiver of a coquasi-Hopf algebra with the dual Chevalley property.
Recall that in \cite[Section 3]{CR02} (see also \cite[Section 4]{Hua09}), a Hopf quiver $\mathrm{Q}(G, \chi)$ is connected if and only if the union $\cup_{\chi_U\neq0}U$ generates $G$. Inspired by this, we next give a characterization of $(H_{(1)})_0$.
For convenience, for each $C\in\mathcal{S} $ and $\lambda\in\{-1, 1\}$, we define
$$
C^{\lambda}:=
\begin{cases}
C, & \text{if } \lambda=1;\\
S(C), &\text{if } \lambda=-1.
\end{cases}
$$

We have the following proposition.
\begin{proposition}\label{prop:(H1)0}
Let $H$ be a non-cosemisimple coquasi-Hopf algebra over $\k$ with the dual Chevalley property, and let ${}^1\mathcal{S}=\{C\in\mathcal{S}\mid \k1+C\neq \k1\wedge C\}$. Then the link quiver $\mathrm{Q}(H)$ of $H$ is connected if and only if for every $D \in \mathcal{S}$, there exist $C_1, \cdots, C_n \in {}^1\mathcal{S}$ such that $C_1^{\lambda_1} C_2^{\lambda_2} \cdots C_n^{\lambda_n}$ contains $D$ with a nonzero coefficient, where $\{\lambda_i \mid 1 \le i \le n\} \subseteq \{-1, 1\}$. In particular, $(H_{(1)})_0$ is generated by $\{c\in C\mid C\in {}^1\mathcal{S}\}\cup\{S(c)\mid c\in C \text{ for some } C\in {}^1\mathcal{S}\}.$
\end{proposition}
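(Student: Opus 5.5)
The plan is to work entirely at the level of the unital based ring $\mathbb{Z}\mathcal{S}$ and the link quiver, using Proposition \ref{prop:alphaijk} and Proposition \ref{prop:HcHd}. Let $\mathcal{T}\subseteq\mathcal{S}$ denote the set of all $D$ occurring with nonzero coefficient in some product $C_1^{\lambda_1}\cdots C_n^{\lambda_n}$ with $C_i\in{}^1\mathcal{S}$. We allow $n=0$, so that $\k1\in\mathcal{T}$; alternatively, $\k1$ appears in $S(C)\cdot C$ by Remark \ref{rm:1once}, and ${}^1\mathcal{S}\neq\emptyset$ by Lemma \ref{lem:1P>0}. The main claim is that $\mathcal{T}$ is exactly the vertex set of $H_{(1)}$, i.e. $(H_{(1)})_0=\bigoplus_{D\in\mathcal{T}}D$. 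Both the equivalence and the ``in particular'' statement follow from this. Since $H=\bigoplus_{C\in\mathcal{S}_0}CH_{(1)}$ by Corollary \ref{coro:H=CH(1)}, $\mathrm{Q}(H)$ is connected iff $H=H_{(1)}$, iff $\mathcal{S}=\mathcal{T}$. Moreover, the subalgebra generated by the entries of $C$ and $S(C)$ for $C\in{}^1\mathcal{S}$ is the span of the products $C_1^{\lambda_1}\cdots C_n^{\lambda_n}$. Each such product is a cosemisimple subcoalgebra, by the dual Chevalley property, and is a sum of simple subcoalgebras in $\mathcal{T}$. Conversely, every $D\in\mathcal{T}$ lies in such a product.

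For the inclusion $\mathcal{T}\subseteq (H_{(1)})_0$, first note that each $C\in{}^1\mathcal{S}$ is joined to $\k1$ by an arrow, so $C\subseteq H_{(1)}$. By Proposition \ref{prop:HcHd}(1) applied with $C=\k1$, we have $S(C)\subseteq S(H_{(1)})\subseteq H_{(1)}$. Proposition \ref{prop:HcHd}(4) says $H_{(1)}$ is a coquasi-Hopf subalgebra, so all products $C_1^{\lambda_1}\cdots C_n^{\lambda_n}$ lie in $H_{(1)}$. Hence their simple subcoalgebras lie in $(H_{(1)})_0$.

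For the reverse inclusion, I will show that $\mathcal{T}$ is closed under walks, using Proposition \ref{prop:alphaijk}. Suppose there is an arrow from $C_j$ to $C_i$. Then $\alpha_{ik}^j>0$ for some $C_k\in{}^1\mathcal{S}$, so $C_j\subseteq C_i\cdot C_k$. Thus if $C_i\in\mathcal{T}$, then $C_j\in\mathcal{T}$. Conversely, suppose $C_j\in\mathcal{T}$. From $\alpha_{ik}^j=\alpha_{jk^*}^i$ (Lemma \ref{lem:cijkinvariant}) we get $C_i\subseteq C_j\cdot S(C_k)$, so again $C_i\in\mathcal{T}$. Here we use that each summand of a product of elements of $\mathcal{T}$ is in $\mathcal{T}$, which holds because the structure constants are nonnegative. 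Since $H_{(1)}$ is the connected component containing $\k1\in\mathcal{T}$, every vertex of $H_{(1)}$ lies in $\mathcal{T}$.

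The main obstacle is bookkeeping rather than a new idea. One must be careful with the orientation convention relating arrows and $\alpha_{ik}^j$, and with the identification $S(C_k)=C_{k^*}$ under the anti-involution of $\mathbb{Z}\mathcal{S}$. One must also justify that the algebra generated by the stated elements decomposes as the sum of the simple subcoalgebras in $\mathcal{T}$. For the latter I plan to use that the product of subcoalgebras in $H_0$ is a subcoalgebra of $H_0$ whose simple components are read off from (\ref{eq:multi}).
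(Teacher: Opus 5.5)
Your proposal is correct, but it is organized differently from the paper's proof. You prove a single claim, that $\mathcal{T}$ is exactly the vertex set of $H_{(1)}$, whereas the paper proves the two implications by separate inductions.

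Your inclusion $(H_{(1)})_0\subseteq\bigoplus_{D\in\mathcal{T}}D$, via closure of $\mathcal{T}$ under arrows, is essentially the paper's induction along a walk. A forward arrow is handled through Proposition \ref{prop:alphaijk}, as in the paper's Case I. For a reversed arrow you append $S(C_k)$ on the right using $\alpha_{ik}^j=\alpha_{jk^*}^i$. The paper instead prepends $S(G^\prime)$ on the left, using Corollary \ref{coro:cpdspecial}(1) and the anti-involution.

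The real difference is the other inclusion. The paper shows by explicit induction on $n$ that every $D$ occurring in $C_1^{\lambda_1}\cdots C_n^{\lambda_n}$ is joined to $\k1$. At each step it produces an arrow between $D$ and $E$ by applying $E\odot^\prime$ to a block matrix containing a non-trivial $(1,\C_n)$- or $(S(\C_n),1)$-primitive matrix, together with Lemma \ref{lem:CD,CFarrow}. You instead read this off from Proposition \ref{prop:HcHd}(1),(4): $H_{(1)}$ contains ${}^1\mathcal{S}$, is $S$-stable, and is closed under multiplication. This is shorter and cleaner, though not more elementary, since Proposition \ref{prop:HcHd}(2) rests on the same Lemma \ref{lem:CD,CFarrow}.

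Your formulation also gives both inclusions of the ``in particular'' statement at once. The paper only argues explicitly that every simple subcoalgebra of $H_{(1)}$ lies in such a product, by applying the equivalence to $H_{(1)}$. You also handle the vertex $\k1$ explicitly via $\k1\subseteq S(C)C$, which the paper leaves implicit.

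When writing it up, add one point beyond the bookkeeping you flag. For subcoalgebras $A,B,C$ one has $A(BC)=(AB)C$, by (\ref{eq:associativity}) and the convolution invertibility of $\Phi$. So despite non-associativity, the subalgebra generated by the given elements is indeed the sum of the product subcoalgebras $C_1^{\lambda_1}\cdots C_n^{\lambda_n}$.
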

\begin{proof}
For every $D\in\mathcal{S}$ contained in $C_1^{\lambda_1} C_2^{\lambda_2} \cdots C_n^{\lambda_n}$, where $C_i\in{}^1\mathcal{S}$ and $\lambda_i\in\{-1,1\}$ for $1\leq i\leq n,$ we prove by induction on $n$ that there exists a walk from $\k1$ to $D.$ By the definition of ${}^1\mathcal{S}$ and Lemma \ref{lem:P1prime>0} (1), we know that the statement holds trivially when $n=1.$ Now assume $n>1$. Then $D$ must be contained in $E C_n^{\lambda_n}$ for some $E\in\mathcal{S}$ that is itself contained in $C_1^{\lambda_1} C_2^{\lambda_2} \cdots C_{n-1}^{\lambda_{n-1}}$.
If $\lambda_n=1,$ then there exists a non-trivial $(1, \C_n)$-primitive matrix $\Y$, and we consider the expression
$$
E\odot^\prime \left(\begin{array}{cc}
1&\Y\\
0&\C_n
 \end{array}\right);
 $$
 if $\lambda_n=-1,$ then there exists a non-trivial $(S(\C_n), 1)$-primitive matrix $\Y^\prime$, and we consider the expression
$$
E\odot^\prime \left(\begin{array}{cc}
S(\C_n)&\Y^\prime\\
0&1
 \end{array}\right).
 $$
From Lemma \ref{lem:CD,CFarrow}, there exists a walk from $D$ to $E.$ By the induction hypothesis, there exists a walk between
$\k1$ and $E$; therefore, we can find a walk from
$\k1$ to $D$. This shows that $\mathrm{Q}(H)$ is connected.

Conversely, if $\mathrm{Q}(H)$ is connected, then for any $D\in\mathcal{S}$ there exists a walk from $\k1$ to $D$ with vertices $F_0, F_1,\cdots ,F_m\in\mathcal{S}$ such that $F_0=\k 1$ and $F_m=D.$ We now proceed by induction on $m$ to prove that for each $F_i$ with $i>1$, there exist $C_1,C_2,\cdots C_i\in{}^1\mathcal{S}$ such that $C_1^{\lambda_1}C_2^{\lambda_2}\cdots C_{i}^{\lambda_{i}}$ contains $F_i$ with a nonzero coefficient, where $\lambda_j\in \{-1,1\}$ for $1\leq j\leq i$. When $m=1$, there exists an arrow either from $F_1$ to $\k1$ or from $\k 1$ to $F_1$. By Lemma \ref{lem:P1prime>0} (1), the proof follows immediately by taking $C_1=F_1$ or $C_1=S(F_1)$ accordingly. If $m>1$, then there exists an arrow $\X\in\mathcal{P}$ either from $F_{m}$ to $F_{m-1}$ or from $F_{m-1}$ to $F_{m}$. We consider the two cases separately.
\begin{itemize}
\item[Case I:]
If $\X\in {}^{\F_{m-1}}\mathcal{P}^{\F_{m}}$, then by the construction of $\mathcal{P}$ (see Proposition \ref{prop:Pcomplete}) there exists a non-trivial $(1, \G)$-primitive matrix $\Z\in{}^1\mathcal{P}$ such that $\X\in{}^{\F_{m-1}}\mathcal{P}_{\Z}$, where $G\in{}^1\mathcal{S}$ and $\G\in\mathcal{M}$. Setting $C_m=G$, we obtain that $F_{m-1}G$ contains $F_{m}$ with a nonzero coefficient.
By the induction assumption, there exist $C_1,C_2,\cdots C_{m-1}\in{}^1\mathcal{S}$ such that $C_1^{\lambda_1}C_2^{\lambda_2}\cdots C_{m-1}^{\lambda_{m-1}}$ contains $F_{m-1}$ with a nonzero coefficient. Hence, $F_m$ appears with a nonzero coefficient in the product $C_1^{\lambda_1}C_2^{\lambda_2}\cdots C_{m-1}^{\lambda_{m-1}}C_m$.
\item[Case II:]If $\X\in {}^{\F_{m-1}}\mathcal{P}^{\F_{m}}$, then by Corollary \ref{coro:cpdspecial} (1), there exists an arrow $\X^\prime\in\mathcal{P}$ from $S(F_{m-1})$ to $S(F_m)$. Similar to the proof in Case I, we can show that there exists some $G^\prime \in {}^1\mathcal{S}$ such that $S(F_{m-1}) G^\prime$ contains $S(F_m)$ with a nonzero coefficient. Since $S$ is an anti-involution on $\mathbb{Z}\mathcal{S}$, it follows that $S(G^\prime)F_{m-1} $ contains $F_m$ with a nonzero coefficient. Setting $C_0=G^\prime$, we finally obtain that $C_0^{-1}C_1^{\lambda_1}C_2^{\lambda_2}\cdots C_{m-1}^{\lambda_{m}}$ contains $F_{m}$ with a nonzero coefficient.
\end{itemize}
By Proposition \ref{prop:HcHd} (3), $H_{(1)}$ is a coquasi-Hopf algebra with connected link quiver. Then for any simple subcoalgebra $D$ of $H_{(1)}$, there exist $C_1, \cdots, C_n \in {}^1\mathcal{S}$ such that $C_1^{\lambda_1} C_2^{\lambda_2} \cdots C_n^{\lambda_n}$ contains $D$ with a nonzero coefficient. This means that $(H_{(1)})_0$ is generated by $$\{c\in C\mid C\in {}^1\mathcal{S}\}\cup\{S(c)\mid c\in C \text{ for some } C\in {}^1\mathcal{S}\}.$$
\end{proof}
As a direct corollary of Proposition \ref{prop:(H1)0}, we have:
\begin{corollary}\label{coro:H(1)pointed}
Let $H$ be a non-cosemisimple coquasi-Hopf algebra over $\k$ with the dual Chevalley property, and let $\mathrm{Q}(H)$ be its link quiver. If all the simple subcoalgebras in ${}^1\mathcal{S}$ are one-dimensional, then the link-indecomposable component $H_{(1)}$ containing $\k 1$ is a pointed coquasi-Hopf algebra.
\end{corollary}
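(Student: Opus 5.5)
The plan is to deduce the corollary from Proposition \ref{prop:(H1)0} together with Proposition \ref{prop:HcHd}. By Proposition \ref{prop:HcHd} (4), $H_{(1)}$ is a coquasi-Hopf subalgebra of $H$. Its coradical $(H_{(1)})_0$ equals $H_0\cap H_{(1)}$, and this is a coquasi-Hopf subalgebra because $H$ has the dual Chevalley property. So $H_{(1)}$ is itself a coquasi-Hopf algebra with the dual Chevalley property. It therefore suffices to show that every simple subcoalgebra of $H_{(1)}$ is one-dimensional, since then the underlying coalgebra of $H_{(1)}$ is pointed.

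By the last assertion of Proposition \ref{prop:(H1)0}, $(H_{(1)})_0$ is generated as an algebra by the elements of the subcoalgebras $C\in{}^1\mathcal{S}$ and their images under $S$. By hypothesis each $C\in{}^1\mathcal{S}$ is $\k g$ for a group-like $g$. Since $S$ is a coalgebra antimorphism with $\varepsilon S=\varepsilon$, the element $S(g)$ is again group-like; indeed $S(\k g)=\k g^{-1}$, as already used in Corollary \ref{coro:cpdspecial} (2).

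Hence $(H_{(1)})_0$ is spanned by products of group-like elements. The multiplication $m$ is a coalgebra map, so any product of group-likes is group-like. Thus $(H_{(1)})_0$ is spanned by group-like elements, and so it equals the span of the group-likes it contains.

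It remains to see that every simple subcoalgebra $D$ of $H_{(1)}$ is one-dimensional. $D$ lies in $(H_{(1)})_0=\operatorname{span} G$ for a set $G$ of group-likes. Distinct group-likes are linearly independent, so $\operatorname{span} G=\bigoplus_{g\in G}\k g$ is a direct sum of one-dimensional simple subcoalgebras. A simple subcoalgebra of a cosemisimple coalgebra must be one of its simple summands, so $D=\k g$ for some $g\in G$. This shows $H_{(1)}$ is pointed.

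The argument is essentially formal. The only step needing care is the passage from ``generated by group-likes'' to ``spanned by group-likes'', which depends on the product of group-likes being group-like because $m$ is a coalgebra morphism. No associativity is required here, so the reassociator $\Phi$ causes no difficulty. In the degenerate case where $H_{(1)}$ is cosemisimple, the same reasoning applies.
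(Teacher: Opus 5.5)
Your proposal is correct and follows exactly the route the paper intends: the paper states this corollary as a direct consequence of Proposition \ref{prop:(H1)0}. Its generation statement, together with $S(\k g)=\k g^{-1}$ and the fact that $m$ is a coalgebra map (so products of group-likes in any bracketing are group-like), shows that $(H_{(1)})_0$ is spanned by group-likes. Your additional remarks, that $H_{(1)}$ inherits the dual Chevalley property and that no associativity is needed, are correct and simply make explicit what the paper leaves implicit.
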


\section{Generalized Hopf quivers}\label{section4}
In this section, we present a method for constructing coquasi-Hopf algebras with the dual Chevalley property using quivers, generalizing the Hopf quiver theory. For a quiver $Q$ and a family of simple coalgebras indexed by the vertices of $\mathrm{Q}$, we define a modified generalized path coalgebra (with link quiver $\mathrm{Q}$), then characterize when it admits a graded coquasi-Hopf algebra structure with the dual Chevalley property, and classify such coquasi-Hopf algebra structures over it. We also prove the generalized dual Gabriel's theorem for coquasi-Hopf algebras with
the dual Chevalley property.

\subsection{A modified generalized path coalgebra}\label{subsection4.1}
Here we introduce the modified generalized path coalgebra, an important quiver-related coalgebra for our purposes.

Let $\mathrm{Q}=(\mathrm{Q}_0, \mathrm{Q}_1)$ be a quiver, where $\mathrm{Q}_0$ is the set of vertices and $\mathrm{Q}_1$ the set of arrows. Let $\mathcal{S} = \{C_{i} \mid i \in \mathrm{Q}_0\}$ be a collection of simple coalgebras indexed by the vertices of $\mathrm{Q}$, where each $C_{i}$ is equipped with a comultiplication $\Delta_i$ and a counit $\varepsilon_i$. We define $\mathcal{M}$ as a set of representatives for the similarity classes of basic multiplicative matrices across all coalgebras $C_{i}\in\mathcal{S}$. More specifically, let
$\mathcal{M}=\{\C_{i}\mid i \in \mathrm{Q}_0\},$ where each
$\C_{i}=(c_{jk}^{i})_{r_{i}\times r_i}$ is a basic multiplicative matrix of $C_{i}\in\mathcal{S}.$

Now we introduce the definition of $\mathcal{M}$-paths.
The elements of $\{c_{jk}^{i}\mid 1\leq j,k\leq r_i, i\in \mathrm{Q}_0\}$ are referred to as \textit{$\mathcal{M}$-paths of length 0}.
For $n \geq 1$, a \textit{$\mathcal{M}$-path of length $n$} is a formal sequence of the form $c_{i1}^{t(\alpha_n)}\alpha_n\alpha_{n-1}\cdots \alpha_1 c_{1j}^{s(\alpha_{1})}$, where $\alpha_n\alpha_{n-1}\cdots \alpha_1$ is a path in $\mathrm{Q}$ of length $n$, $1\leq i\leq r_{t(\alpha_n)}$, and $1\leq j\leq r_{s(\alpha_1)}$.

Let $\k(\mathrm{Q}, \mathcal{M})$ denote the $\k$-linear space spanned by all $\mathcal{M}$-paths.
We remark that $\k(\mathrm{Q}, \mathcal{M})$ carries a coalgebra structure, with the comultiplication and counit defined as follows:
\begin{itemize}
\item for any $\mathcal{M}$-path $c_{jk}^{i}$ of length $0$,
\begin{eqnarray*}
\Delta(c_{jk}^{i})=\sum\limits_{t=1}^{r_i}c_{jt}^{i}\otimes c_{tk}^{i},\;\;\varepsilon(c_{jk}^i)=\delta_{j,k};
\end{eqnarray*}
\item and for each $\mathcal{M}$-path $c_{i1}^{t(\alpha_n)}\alpha_n\alpha_{n-1}\cdots \alpha_1 c_{1j}^{s(\alpha_{1})}$ of length $n\geq 1$,
 \begin{eqnarray*}
&&\Delta(c_{i1}^{t(\alpha_n)}\alpha_n\alpha_{n-1}\cdots \alpha_1 c_{1j}^{s(\alpha_{1})})\\
&=&\sum\limits_{k=2}^{n-1}\sum\limits_{l_k=1}^{r_{s(\alpha_k)}}c_{i1}^{t(\alpha_n)}\alpha_n\alpha_{n-1}\cdots \alpha_{k}c_{1l_k}^{s(\alpha_{k})}\otimes c_{l_k 1}^{t(\alpha_{k-1})}\alpha_{k-1}\alpha_{k-2}\cdots\alpha_1c_{1j}^{s(\alpha_1)}  \\
&&+\sum\limits_{l=1}^{r_{t(\alpha_{n})}}c_{il}^{t(\alpha_n)}\otimes c_{l1}^{t(\alpha_n)}\alpha_n\alpha_{n-1}\cdots \alpha_1 c_{1j}^{s(\alpha_1)}+\sum\limits_{l=1}^{r_{s(\alpha_1)}}c_{i1}^{t(\alpha_n)}\alpha_n\alpha_{n-1}\cdots \alpha_1 c_{1l}^{s(\alpha_1)}\otimes c_{lj}^{s(\alpha_1)},\\
&&\varepsilon(c_{i1}^{t(\alpha_n)}\alpha_n\alpha_{n-1}\cdots \alpha_1 c_{1j}^{s(\alpha_{1})})=0.
\end{eqnarray*}
\end{itemize}
This type of coalgebra is called a \textit{modified generalized path coalgebra} associated with the quiver $\mathrm{Q}$ and the set of representatives for the similarity classes of basic multiplicative matrices $\mathcal{M}.$

\begin{remark}\rm
If all the simple coalgebras in $\mathcal{S}$ are $1$-dimensional, then the modified generalized path coalgebra defined above reduces to the path coalgebra as introduced in \cite[Definition 2.1]{CR02}.
Moreover, a modified generalized path coalgebra can be seen as a subcoalgebra of the generalized path coalgebra introduced by Li and Liu \cite{LL08} (see also \cite[Subsection 5.12]{LS07}), provided each $\mathcal{M}$-path $c_{i1}^{t(\alpha_n)}\alpha_n\alpha_{n-1}\cdots \alpha_1 c_{1j}^{s(\alpha_{1})}$ of length $n\geq 1$ is identified with $c^{t(\alpha_n)}_{i1} \alpha_{n}c^{s(\alpha_n)}_{11}\alpha_{n-1}\cdots c^{t(\alpha_i)}_{11}\alpha_ic^{s(\alpha_i)}_{11}  \cdots c^{t(\alpha_1)}_{11}\alpha_1 c^{s(\alpha_1)}_{1j}.$ Since the full generalized path coalgebra is too large for our purposes, we restrict to this subcoalgebra, which we refer to as the modified generalized path coalgebra.
\end{remark}

Note that $\k(\mathrm{Q}, \mathcal{M})$ is naturally a graded coalgebra with respect to the length grading. We have $\k(\mathrm{Q}, \mathcal{M})=\bigoplus_{n\geq 0}\k(\mathrm{Q}_n, \mathcal{M}),$ where $\k(\mathrm{Q}_n, \mathcal{M})$ is the $\k$-linear space with basis the set of all $\mathcal{M}$-paths of length $n$. The coradical of $\k(\mathrm{Q}, \mathcal{M})$ is $\k(\mathrm{Q}_0, \mathcal{M})=\bigoplus_{i\in\mathrm{Q}_0} C_i,$ and $\k(\mathrm{Q}, \mathcal{S})_n=\bigoplus_{i=0}^n \k(\mathrm{Q}_i, \mathcal{S})$. For any $\alpha\in\mathrm{Q}_1$, $1\leq i\leq r_{t(\alpha)},$ and $ 1\leq j\leq r_{s(\alpha)}$, we have
$$
\Delta(c_{i1}^{t(\alpha)}\alpha c_{1j}^{s(\alpha)})=\sum\limits_{k=1}^{r_{t(\alpha)}}c_{ik}^{t(\alpha)}\otimes c_{k1}^{t(\alpha)}\alpha c_{1j}^{s(\alpha)}+\sum\limits_{l=1}^{r_{s(\alpha)}} c_{i1}^{t(\alpha)}\alpha c_{1l}^{s(\alpha)}\otimes c_{lj}^{s(\alpha)}.
$$
It is straightforward to verify that the matrix $\X_{\mathcal{M},\alpha}:=\left(c_{i1}^{t(\alpha)}\alpha c_{1j}^{s(\alpha)} \right)_{r_{t(\alpha)}\times r_{s(\alpha)}}$ is a non-trivial $(\C^{t(\alpha)}, \C^{s(\alpha)})$-primitive matrix and that
\begin{eqnarray*}
\k(\mathrm{Q}, \mathcal{M})_1/\k(\mathrm{Q}, \mathcal{M})_0
=\operatorname{span}_{\k}\{c_{i1}^{t(\alpha)}\alpha c_{1j}^{s(\alpha)}\mid 1\leq i\leq r_{t(\alpha)}, 1\leq j\leq r_{s(\alpha)}, \alpha\in\mathrm{Q}_1\}.
\end{eqnarray*}
According to Lemma \ref{lem:linkquiver=S,P}, we obtain the following result.
\begin{lemma}\label{lem:k(Q,S)link}
With the notations above, the link quiver of the coalgebra $\k(\mathrm{Q}, \mathcal{M})$ is precisely the original quiver $\mathrm{Q}.$
\end{lemma}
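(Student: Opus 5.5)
The plan is to combine Lemma \ref{lem:linkquiver=S,P} with an explicit description of the simple subcoalgebras of $\k(\mathrm{Q},\mathcal{M})$ and of a complete family of non-trivial primitive matrices. By that lemma, the link quiver of a coalgebra is $(\mathcal{S},\mathcal{P})$. Here $\mathcal{S}$ is its set of simple subcoalgebras, and the arrows from $D$ to $C$ form a complete family of non-trivial $(\mathcal{C},\mathcal{D})$-primitive matrices. So it suffices to check two things: the simple subcoalgebras are exactly the $C_i$ ($i\in\mathrm{Q}_0$), and for each pair $i,j$ the matrices $\X_{\mathcal{M},\alpha}$ with $\alpha\colon j\to i$ form such a complete family.

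\emph{Vertices.} The length grading is a coalgebra grading, and the comultiplication of any $\mathcal{M}$-path of length $n\geq1$ has its two extreme terms with a length-$0$ factor. Hence $\k(\mathrm{Q},\mathcal{M})$ is coradically graded in the sense that its coradical is $\k(\mathrm{Q}_0,\mathcal{M})=\bigoplus_{i\in\mathrm{Q}_0}C_i$. I would verify this via the standard criterion: the degree-$0$ part is cosemisimple, and any element of degree $\geq1$ with $\Delta(x)\in H_0\otimes x'+x''\otimes H_0$ behaviour forces the filtration $\bigoplus_{k\le n}$ to be the coradical filtration. In particular, $\bigoplus_{k\leq 1}\k(\mathrm{Q}_k,\mathcal{M})$ is the wedge $H_0\wedge H_0$. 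The simple subcoalgebras are thus the $C_i$, distinct for distinct $i$ because they are different direct summands, so the vertex set is $\mathrm{Q}_0$.

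\emph{Arrows.} From the displayed formula, each $\X_{\mathcal{M},\alpha}$ with $s(\alpha)=j$ and $t(\alpha)=i$ is a non-trivial $(\C_i,\C_j)$-primitive matrix, as already noted. Its entries are linearly independent modulo $H_0$, since they are distinct basis paths of length one. Next I would compute $(C_i\wedge C_j)/(C_i+C_j)$. An element of $H_1$ lies in $C_i\wedge C_j$ iff its coproduct lands in $C_i\otimes H+H\otimes C_j$. Projecting onto the length-one component, only the paths $c^{i}_{k1}\alpha c^{j}_{1l}$ with $\alpha\colon j\to i$ survive, because the coproduct formula attaches $c^{t(\alpha)}$ on the left and $c^{s(\alpha)}$ on the right. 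Therefore
$$(C_i\wedge C_j)/(C_i+C_j)=\bigoplus_{\alpha\colon j\to i}\operatorname{span}(\overline{\X_{\mathcal{M},\alpha}}),$$
with each summand of dimension $r_ir_j$. This is exactly condition (\ref{eq:complete}), so the family $\{\X_{\mathcal{M},\alpha}\}_{\alpha\colon j\to i}$ is complete. The number of arrows from $C_j$ to $C_i$ in the link quiver is then $\frac{1}{r_ir_j}\cdot r_ir_j\cdot\#\{\alpha\colon j\to i\}$, which matches $\mathrm{Q}$.

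The main obstacle is the vertex step: rigorously showing that the coradical is $\bigoplus_i C_i$ and $H_1=H_0\oplus\k(\mathrm{Q}_1,\mathcal{M})$. This is non-trivial because the coalgebra is only a subcoalgebra of a cotensor-type construction. I would first try to exhibit it as a subcoalgebra of the cotensor coalgebra $\operatorname{Cot}_{H_0}(M)$, where $M=\bigoplus_\alpha \operatorname{span}(\X_{\mathcal{M},\alpha})$ is an $H_0$-bicomodule. This embedding should follow from the remark identifying $\k(\mathrm{Q},\mathcal{M})$ inside the Li--Liu generalized path coalgebra. Coradical gradedness of cotensor coalgebras then yields the claim directly. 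If that identification is awkward, I would fall back on the elementary argument that any simple subcoalgebra $E$ satisfies $E=\Delta$-stable with $\varepsilon$ nonzero only in degree $0$, whence $E\subseteq\k(\mathrm{Q}_0,\mathcal{M})$.
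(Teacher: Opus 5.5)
Your proposal is correct and follows essentially the paper's route. The paper also notes that $\k(\mathrm{Q},\mathcal{M})_n=\bigoplus_{k\le n}\k(\mathrm{Q}_k,\mathcal{M})$, that each $\X_{\mathcal{M},\alpha}$ is a non-trivial $(\C_{t(\alpha)},\C_{s(\alpha)})$-primitive matrix, and that length-one paths span $H_1/H_0$, then reads off $\mathrm{Q}$ via Lemma~\ref{lem:linkquiver=S,P}; your explicit computation of $(C_i\wedge C_j)/(C_i+C_j)$ is just a more detailed version of that step.

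On the vertex step you flag: your \emph{fallback} only yields $H_0=\bigoplus_i C_i$, but your arrow count also needs $H_1=H_0\oplus\k(\mathrm{Q}_1,\mathcal{M})$. That follows from injectivity of the component $\Delta_{n-1,1}$ on length-$n$ paths, or from Lemma~\ref{lem:cotensorcoalg}, which gives an isomorphism (not just an embedding) $\k(\mathrm{Q},\mathcal{M})\cong\operatorname{CoT}_{\k(\mathrm{Q}_0,\mathcal{M})}(\k(\mathrm{Q}_1,\mathcal{M}))$.
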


Given a coalgebra $H$ and an $H$-bicomodule $M$ with structure maps $\rho^L_M$ and $\rho^R_M$, one defines the \textit{cotensor coalgebra} (see \cite[p. 1526]{Nic78} and \cite[Subsection 1.2]{VZ04})
\begin{eqnarray}\label{eq:cotensor}
\operatorname{CoT}_H(M):=H\oplus M\oplus M^{\Box 2}\oplus \cdots \oplus M^{\Box n}\oplus \cdots,
\end{eqnarray}
where $M^{\Box 0}:=H$, $M^{\Box 1}:=M$, and for each $n\geq 2$, $M^{\Box n}$ is the kernel of the $\k$-linear map:
$$
M^{\otimes n}\xrightarrow{f}\bigoplus\limits_{i=1}^{n-1}M^{\otimes i}\otimes H \otimes M^{\otimes n-i},
$$
with $f=(f_1, f_2, \cdots , f_{n-1})$ and $f_i=\id^{\otimes i-1}\otimes \rho^R_M\otimes \id^{\otimes n-i}-\id^{\otimes i}\otimes \rho_M^L\otimes \id^{\otimes n-i-1}$ for $i=1, 2, \cdots, n-1.$
The coalgebra structure on $\operatorname{CoT}_H(M)$ is defined by:
\begin{eqnarray*}
\varepsilon \mid_H=\varepsilon_H,\;\;\varepsilon \mid_{M^{\Box i}}=0\;\; (i\geq 1),\\
\Delta\mid_H=\Delta_H,\;\;\Delta\mid_M=\rho^L_M+\rho^R_M,
\end{eqnarray*}
and for $\sum m^1\otimes m^2\otimes \cdots \otimes m^n\in M^{\Box n}$, $n\geq 2,$
\begin{eqnarray*}
&&\Delta(\sum m^1\otimes m^2\otimes \cdots \otimes m^n)\\
&:=&\sum \rho_M^L(m^1)\otimes  m^2\otimes \cdots \otimes m^n+ \sum m^1\otimes (m^2\otimes \cdots \otimes m^n)+\cdots \\
&&+\sum(m^1\otimes m^2\otimes \cdots \otimes m^{n-1})\otimes m^n+\sum m^1\otimes m^2\otimes \cdots \otimes m^{n-1}\otimes \rho_M^R(m^n)\\
&\in&\bigoplus\limits_{i=0}^{n}M^{\Box i} \otimes M^{\Box (n-i)}.
\end{eqnarray*}

We have the following universal property.
\begin{lemma}\emph{(}\cite[Lemma 1.2]{VZ04}\emph{)} \label{lemma:universal}
Given a coalgebra $H$ and an $H$-bicomodule $M$, let $\psi:X\longrightarrow CoT_H(M)$ be a coalgebra map. Define $\psi_n=p_n\psi:X\longrightarrow M^{\Box n}$ as the projection onto the
$n$-th component. Then $\psi_0:X\longrightarrow H$ is a coalgebra map, and $\psi_1:X\longrightarrow M$ is an $H$-bicomodule map, where the $H$-bicomodule structure on $X$ is induced via $\psi_0$. For $n\geq2$, $\psi_n$ coincides with the $H$-bicomodule map obtained by the composition
$$X\xrightarrow{\Delta^{n-1}} X\otimes X \otimes \cdot\cdot\cdot\otimes X\xrightarrow{\psi_1^{\otimes n}} M^{\otimes n}.$$
Thus, $\psi$ is uniquely determined by $\psi_0$ and $\psi_1$.
Conversely, let $\psi_0:X\longrightarrow H$ be a coalgebra map, and $\psi_1:X\longrightarrow M$ an $H$-bicomodule map. Define
$\psi_n:X\longrightarrow M^{\otimes n}$ by the composition above. Then each $\psi_n$ is an $H$-bicomodule map with $\operatorname{Im}(\psi_n)\subseteq M^{\Box n}$. If for each $x\in X$ there are only finite $i$ such that $\psi_i(x)\neq 0$, then
$\psi=\sum_{i\geq0}\psi_i:X\longrightarrow CoT_H(M)$ is a coalgebra map.
\end{lemma}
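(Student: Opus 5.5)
The plan is to work throughout with the graded pieces of $\operatorname{CoT}_H(M)$. The key observation is that its comultiplication is homogeneous: $\Delta(M^{\Box n})\subseteq\bigoplus_{i=0}^n M^{\Box i}\otimes M^{\Box(n-i)}$. So for any map $\psi$ into $\operatorname{CoT}_H(M)$, the identity $\Delta\psi=(\psi\otimes\psi)\Delta_X$ splits into the family of identities
$$(p_i\otimes p_{n-i})\Delta\,\psi=(\psi_i\otimes\psi_{n-i})\Delta_X,\qquad 0\le i\le n .$$
I will use these component identities in both directions.

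\emph{Direct part.} Suppose $\psi$ is a coalgebra map.
\begin{itemize}
\item The $(0,0)$ component, $(p_0\otimes p_0)\Delta=\Delta_H p_0$, gives $\Delta_H\psi_0=(\psi_0\otimes\psi_0)\Delta_X$. Together with $\varepsilon\mid_{M^{\Box i}}=0$ for $i\ge1$, this shows that $\psi_0$ is a coalgebra map.
\item The $(0,1)$ and $(1,0)$ components restrict to $\rho^L_M$ and $\rho^R_M$ on $M$. They give $\rho_M^L\psi_1=(\psi_0\otimes\psi_1)\Delta_X$ and $\rho_M^R\psi_1=(\psi_1\otimes\psi_0)\Delta_X$. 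This is exactly the statement that $\psi_1$ is an $H$-bicomodule map for the bicomodule structure on $X$ induced by $\psi_0$.
\item For $n\ge2$, I will use the explicit formula for $\Delta$ on $M^{\Box n}$. By induction on $n$, the component of $\Delta^{n-1}$ landing in $M\otimes\cdots\otimes M$ (one factor in each slot) sends $\sum m^1\otimes\cdots\otimes m^n$ to itself. That is, $p_1^{\otimes n}\Delta^{n-1}$ restricted to $M^{\Box n}$ is the inclusion into $M^{\otimes n}$, and it vanishes on the other graded pieces for degree reasons.
\item Applying this to $\psi$ and using $\Delta^{n-1}\psi=\psi^{\otimes n}\Delta_X^{n-1}$ gives $\psi_n=\psi_1^{\otimes n}\Delta_X^{n-1}$. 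Uniqueness of $\psi$ given $\psi_0,\psi_1$ follows immediately.
\end{itemize}

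\emph{Converse part.} Now start from a coalgebra map $\psi_0$ and a bicomodule map $\psi_1$, and define $\psi_n:=\psi_1^{\otimes n}\Delta_X^{n-1}$.
\begin{itemize}
\item \emph{Image in $M^{\Box n}$.} For each $i$, the map $f_i\psi_n$ equals
$$\bigl(\psi_1^{\otimes i-1}\otimes(\rho^R_M\psi_1\otimes\psi_1-\psi_1\otimes\rho^L_M\psi_1)\otimes\psi_1^{\otimes n-i-1}\bigr)\Delta_X^{n-1}.$$
Using the bicomodule property of $\psi_1$, both terms become $\psi_1^{\otimes i}\otimes\psi_0\otimes\psi_1^{\otimes n-i}$ applied to $\Delta_X^{n}$, so $f_i\psi_n=0$ by coassociativity.
\item \emph{Bicomodule maps.} Each $\psi_n$ is a bicomodule map because the outer factors $\psi_1$ are, combined again with coassociativity of $\Delta_X$.
\item \emph{Coalgebra map.} The finiteness hypothesis makes $\psi=\sum_i\psi_i$ well defined. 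It then suffices to check the component identities above, together with $\varepsilon\psi=\varepsilon_H\psi_0=\varepsilon_X$. For $0<i<n$ the identity $(p_i\otimes p_{n-i})\Delta\psi_n=(\psi_i\otimes\psi_{n-i})\Delta_X$ reduces to splitting $\Delta_X^{n-1}$ as $(\Delta_X^{i-1}\otimes\Delta_X^{n-i-1})\Delta_X$, by coassociativity. The extreme terms $i=0$ and $i=n$ amount to the left and right $H$-colinearity of $\psi_n$ already established.
\end{itemize}

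The main obstacle is bookkeeping rather than any conceptual difficulty. In the direct part, the step needing care is that $p_1^{\otimes n}\Delta^{n-1}$ restricted to $M^{\Box n}$ is the identity, which requires tracking the iterated comultiplication on cotensor powers. In the converse, the delicate point is the boundary terms ($i=0,n$), where $\rho^L_M,\rho^R_M$ enter and the colinearity of $\psi_n$ must be invoked rather than mere coassociativity. I would first prove a small sublemma identifying the components of $\Delta$ on $M^{\Box n}$, and then derive both directions from it.
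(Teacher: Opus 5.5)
Your argument is correct, and it is the standard componentwise verification of the universal property of the cotensor coalgebra. The paper gives no proof of its own; it simply quotes \cite[Lemma 1.2]{VZ04}. The two points you leave implicit are both immediate: the $n=0$ component identity is just $\Delta_H\psi_0=(\psi_0\otimes\psi_0)\Delta_X$, and the colinearity of $\psi_n$ in the direct part follows from the $(0,n)$ and $(n,0)$ components of $\Delta\psi=(\psi\otimes\psi)\Delta_X$ (or from your converse computation).
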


Observe that $\k(\mathrm{Q}_1, \mathcal{M})$ naturally inherits a  $\k(\mathrm{Q}_0, \mathcal{M})$-bicomodule structure from the comultiplication. We now present an alternative definition of the modified generalized path coalgebra as a cotensor coalgebra.
\begin{lemma}\label{lem:cotensorcoalg}
There exists a coalgebra isomorphism $\k(\mathrm{Q}, \mathcal{M})\cong \operatorname{CoT}_{\k(\mathrm{Q}_0, \mathcal{M})}(\k(\mathrm{Q}_1, \mathcal{M})).$
\end{lemma}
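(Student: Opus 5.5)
We would build the isomorphism through the universal property of the cotensor coalgebra, Lemma \ref{lemma:universal}, with $X=\k(\mathrm{Q},\mathcal{M})$, $H=\k(\mathrm{Q}_0,\mathcal{M})$ and $M=\k(\mathrm{Q}_1,\mathcal{M})$.

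\textbf{Defining the components.} Let $\psi_0:\k(\mathrm{Q},\mathcal{M})\to\k(\mathrm{Q}_0,\mathcal{M})$ and $\psi_1:\k(\mathrm{Q},\mathcal{M})\to\k(\mathrm{Q}_1,\mathcal{M})$ be the projections onto the length-$0$ and length-$1$ components.
\begin{itemize}
\item[(a)] Since $\k(\mathrm{Q},\mathcal{M})$ is graded, with $\Delta(\k(\mathrm{Q}_n,\mathcal{M}))\subseteq\bigoplus_{i+j=n}\k(\mathrm{Q}_i,\mathcal{M})\otimes\k(\mathrm{Q}_j,\mathcal{M})$, the projection $\psi_0$ is a coalgebra map.
\item[(b)] $\psi_1$ is a $\k(\mathrm{Q}_0,\mathcal{M})$-bicomodule map for the structure induced by $\psi_0$. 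Explicitly, $(\psi_0\otimes\psi_1)\Delta=(\id\otimes\psi_1)\rho^L\psi_1$, and likewise on the right.
\end{itemize}
Both facts are read off from the displayed comultiplication by comparing homogeneous components. Lemma \ref{lemma:universal} then gives a coalgebra map $\psi=\sum_n\psi_n$. This sum is finite on each element because an $\mathcal{M}$-path of length $m$ has $\psi_n=\psi_1^{\otimes n}\Delta^{n-1}$ vanishing for $n>m$, since every tensor factor must have length exactly $1$.

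\textbf{Computing $\psi_n$ on a path.} Take an $\mathcal{M}$-path $p=c_{i1}^{t(\alpha_n)}\alpha_n\cdots\alpha_1c_{1j}^{s(\alpha_1)}$. The only terms of $\Delta^{n-1}(p)$ whose factors all have length one come from cutting at every vertex. This gives
$$\psi_n(p)=\sum_{l_2,\dots,l_n}c^{t(\alpha_n)}_{i1}\alpha_nc^{s(\alpha_n)}_{1l_n}\otimes c^{t(\alpha_{n-1})}_{l_n1}\alpha_{n-1}c^{s(\alpha_{n-1})}_{1l_{n-1}}\otimes\cdots\otimes c^{t(\alpha_1)}_{l_21}\alpha_1c^{s(\alpha_1)}_{1j},$$
while $\psi_m(p)=0$ for $m\neq n$. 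Hence $\psi$ preserves the grading. For injectivity, distinct $\mathcal{M}$-paths give images whose index-$1$ terms, namely $l_k=1$ for all $k$, are distinct pure tensors of basis elements of $M$. Those terms are linearly independent, with at most one such term per path, so $\psi$ is injective on each graded piece.

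\textbf{Surjectivity (main obstacle).} We must show $\psi_n(\k(\mathrm{Q}_n,\mathcal{M}))=M^{\Box n}$. The plan is a dimension count, possibly after restricting to a fixed sequence of arrows.
\begin{itemize}
\item[(a)] Because $C_i$ is simple of dimension $r_i^2$, the cotensor $C_i$-coproducts decompose by arrows. The piece $\operatorname{span}(\X_{\mathcal{M},\alpha})\cong V_{t(\alpha)}\otimes V_{s(\alpha)}^*$ is a simple bicomodule of dimension $r_{t(\alpha)}r_{s(\alpha)}$, where $V_i$ denotes the simple $C_i$-comodule.
\item[(b)] Using $(U\otimes V_k^*)\Box_{C_k}(V_k\otimes W)\cong U\otimes W$, the component of $M^{\Box n}$ indexed by a path $\alpha_n\cdots\alpha_1$ has dimension $r_{t(\alpha_n)}r_{s(\alpha_1)}$. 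Components indexed by non-composable arrow sequences vanish.
\item[(c)] This equals the number of $\mathcal{M}$-paths over $\alpha_n\cdots\alpha_1$. Combined with injectivity, it gives bijectivity, so $\psi$ is an isomorphism of coalgebras.
\end{itemize}
The delicate step is the cotensor identity over a simple (matrix) coalgebra in (b). We would verify it concretely via the dual-basis description of $C_k$-bicomodules, or alternatively check directly that elements of the kernel of $f$ must have the summed form above.
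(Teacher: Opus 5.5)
Your proposal is correct and is essentially the paper's proof: the map $\psi$ you obtain from Lemma~\ref{lemma:universal} is exactly the paper's $F$, namely the identity in lengths $0$ and $1$ and the displayed sum over $k_2,\dots,k_n$ in length $n\geq 2$. The paper only asserts that checking $F$ is a coalgebra isomorphism is straightforward. You supply the omitted details: the coalgebra-map property and $\operatorname{Im}\psi_n\subseteq M^{\Box n}$ come from the universal property, and bijectivity comes from injectivity together with the per-path dimension count, which rests on the cotensor of the simple right and left $C_k$-comodules being one-dimensional.
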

\begin{proof}
Define a map
$$F:\k(\mathrm{Q}, \mathcal{M}) \rightarrow \operatorname{CoT}_{\k(\mathrm{Q}_0, \mathcal{M})}(\k(\mathrm{Q}_1, \mathcal{M})$$
by setting $$F\mid_{ \k(\mathrm{Q}_0, \mathcal{M})\oplus \k(\mathrm{Q}_1, \mathcal{M})}=\id,$$ and for any $\mathcal{M}$-path $c_{i1}^{t(\alpha_n)}\alpha_n\alpha_{n-1}\cdots \alpha_1 c_{1j}^{s(\alpha_{1})}$ of length $n\geq 2$,
\begin{eqnarray*}
&&F(c_{i1}^{t(\alpha_n)}\alpha_n\alpha_{n-1}\cdots \alpha_1 c_{1j}^{s(\alpha_{1})})\\
&=&\sum \limits_{k_2, \cdots, k_n}c_{i1}^{t(\alpha_n)}\alpha_n  c_{1k_n}^{s(\alpha_{n})} \otimes c_{k_n1}^{t(\alpha_{n-1})}\alpha_{n-1}  c_{1k_{n-1}}^{s(\alpha_{n-1})}\otimes \cdots \otimes c_{k_31}^{t(\alpha_2)}\alpha_{2}  c_{1k_{2}}^{s(\alpha_{2})}\otimes c_{k_{2} 1}^{t(\alpha_{1})}\alpha_1 c_{1j}^{s(\alpha_1)}.
\end{eqnarray*}
It is straightforward to show that $F$ is a coalgebra isomorphism.
\end{proof}

\begin{remark}\rm
Assume $\mathcal{M}_1=\{\C^1_i\mid i\in \mathrm{Q}_0\}$ and $\mathcal{M}_2=\{\C^2_i\mid i\in \mathrm{Q}_0\}$ are two sets of representatives of basic multiplicative matrices over the similarity classes of the coalgebras $C_i\in\mathcal{S}$. For an arrow $\alpha\in \mathrm{Q}_1,$ by \cite[Lemma 2.4]{Li22}, there exist invertible matrices $K, L$ over $\k$ such that $K\C^1_{t(\alpha)} K^{-1}=\C^2_{t(\alpha)}$ and $L \C^1_{s(\alpha)} L^{-1}=\C^2_{s(\alpha)}$. Then by \cite[Remark 2.5]{Li22} and \cite[Remark 3.2]{LZ19}, $K\X_{\mathcal{M}_1, \alpha}L^{-1}$ is a non-trivial $(\C_{t(\alpha)}^2, \C_{s(\alpha)}^2)$-primitive matrix.
Denote $\operatorname{span}(K\X_{\mathcal{M}_1, \alpha}L^{-1})$ and $\operatorname{span}(\X_{\mathcal{M}_2, \alpha})$ by the linear spaces spanned by the entries of the respective matrices. These spaces are isomorphic as $C_{t(\alpha)}$-$C_{s(\alpha)}$-bicomodules, whence $$\k(\mathrm{Q}_1, \mathcal{M}_1)\cong \k(\mathrm{Q}_1, \mathcal{M}_2)$$ as $\k(\mathrm{Q}_0, \mathcal{M}_1)$-bicomodules. Consequently, Lemmas \ref{lemma:universal} and \ref{lem:cotensorcoalg} imply that $$\k(\mathrm{Q}, \mathcal{M}_1)\cong \k(\mathrm{Q}, \mathcal{M}_2)$$ as coalgebras. Thus, up to coalgebra isomorphism, the modified generalized path coalgebra does not depend on the choice of representatives of basic multiplicative matrices. We may therefore denote it unambiguously by $\k(\mathrm{Q}, \mathcal{S})$.
\end{remark}

We can now state the generalized dual Gabriel's theorem for an arbitrary coalgebra $H$, which is essentially the dual of \cite[Theorem 1.9]{ARS95} but without the assumption of finite-dimensionality when $H$ is pointed.
\begin{proposition}\label{prop:CoalgGabriel}
Let $H$ be a coalgebra over $\k$, and let $\mathrm{Q}(H)=(\mathcal{S}, \mathcal{P})$ be its link quiver. Then there is a graded coalgebra embedding $\psi:\operatorname{gr}H\hookrightarrow \k(\mathrm{Q}(H), \mathcal{S})$ such that $gr(H) \supseteq \k(\mathrm{Q}(H)_0, \mathcal{S})\oplus \k(\mathrm{Q}(H)_1, \mathcal{S})$.
\end{proposition}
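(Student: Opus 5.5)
We plan to use the cotensor description of the modified generalized path coalgebra, Lemma \ref{lem:cotensorcoalg}, together with the universal property, Lemma \ref{lemma:universal}. Write $\operatorname{gr}H=\bigoplus_{n\geq0}H_n/H_{n-1}$. Its degree-zero part is $H_0=\bigoplus_{C\in\mathcal{S}}C$. By the description of the link quiver via complete families of primitive matrices (\eqref{eq:complete} and Lemma \ref{lem:linkquiver=S,P}), we have
$$H_1/H_0\cong\bigoplus_{\X\in\mathcal{P}}\operatorname{span}(\overline{\X})$$
as $H_0$-bicomodules. Here each $\X\in{}^{\C}\mathcal{P}^{\D}$ spans a simple $C$-$D$-bicomodule of dimension $rs$.

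First, we identify $H_0$ with $\k(\mathrm{Q}(H)_0,\mathcal{S})$ by the identity on basic multiplicative matrices. Next, we build an $H_0$-bicomodule isomorphism
$$\theta:H_1/H_0\to\k(\mathrm{Q}(H)_1,\mathcal{S})$$
by sending the entry $\overline{x_{ij}}$ of $\overline{\X}$, for an arrow $\X\in{}^{\C}\mathcal{P}^{\D}$, to the path $c_{i1}\X d_{1j}$. Both matrices $\overline{\X}$ and $\X_{\mathcal{M},\X}$ are non-trivial $(\C,\D)$-primitive with linearly independent entries, so they satisfy the same coproduct formulas. Hence $\theta$ is a bicomodule map, and it is bijective on each summand. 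This uses the remark that the coalgebra is independent of the choice of $\mathcal{M}$, so we may take the vertex matrices of $\k(\mathrm{Q}(H),\mathcal{S})$ to be exactly those chosen in $H$.

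Now we set $X=\operatorname{gr}H$ and define:
\begin{itemize}
\item $\psi_0:\operatorname{gr}H\to H_0$ as the projection onto degree $0$, which is a coalgebra map because $\operatorname{gr}H$ is graded;
\item $\psi_1=\theta\circ p_1$, where $p_1$ is the projection onto degree $1$; this is an $H_0$-bicomodule map for the structure induced by $\psi_0$, again by the grading.
\end{itemize}
Since $\psi_n$ is built from $\Delta^{n-1}$, it vanishes outside degree $n$, so each element has only finitely many nonzero components. Lemma \ref{lemma:universal} then gives a coalgebra map
$$\psi:\operatorname{gr}H\to\operatorname{CoT}_{H_0}(\k(\mathrm{Q}(H)_1,\mathcal{S}))\cong\k(\mathrm{Q}(H),\mathcal{S}).$$
This map is graded and restricts to isomorphisms in degrees $0$ and $1$, which yields the asserted containment $\operatorname{gr}H\supseteq\k(\mathrm{Q}(H)_0,\mathcal{S})\oplus\k(\mathrm{Q}(H)_1,\mathcal{S})$ once we identify via $\psi$.

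The main step is injectivity of $\psi$. We use the standard fact (Heyneman–Radford) that a coalgebra map which is injective on the first term of the coradical filtration is injective. It therefore suffices to show that $(\operatorname{gr}H)_1=H_0\oplus H_1/H_0$, which holds because $\operatorname{gr}H$ is coradically graded. Indeed, an element $x$ of degree $n\geq2$ in $\operatorname{gr}H$ with $\Delta(x)$ having no components in degrees $(i,n-i)$ for $0<i<n$ would lift to an element of $H_{n}$ lying in $H_0\wedge H_0=H_1$, so its class is zero. Since $\psi$ is injective on $H_0\oplus H_1/H_0$ by construction, $\psi$ is injective.

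I expect the delicate point to be checking carefully that $\operatorname{gr}H$ is coradically graded without finite-dimensionality. This is where I would spend the effort, arguing directly with the wedge description of $H_n$.
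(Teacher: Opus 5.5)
Your proposal is correct and follows the paper's proof. Both identify $H_1/H_0$ with $\k(\mathrm{Q}(H)_1,\mathcal{S})$ as $H_0$-bicomodules, take $\psi_0,\psi_1$ to be the degree-$0$ and degree-$1$ projections, apply Lemma \ref{lemma:universal} through the cotensor description, and obtain injectivity from the Heyneman--Radford theorem \cite[Theorem 5.3.1]{Mon93}.

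The one step to tighten is your check that $(\operatorname{gr}H)_1=H_0\oplus H_1/H_0$. A lift $y\in H_n$ ($n\ge 2$) of such a class does not in general lie in $H_0\wedge H_0$. It only satisfies $\Delta(y)\in H_0\otimes H+H\otimes H_{n-2}$, because the lower-filtration error terms need not vanish. This gives $y\in H_0\wedge H_{n-2}=H_{n-1}$, which is exactly what makes the class zero.
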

\begin{proof}
According to \cite[Lemma 2.8 and Corollary 2.11]{YLL24}, we know that $\k(\mathrm{Q}(H)_1, \mathcal{S})\cong H_1/H_0$ as $H_0$-bicomodules. To obtain a coalgebra map $$\psi:\operatorname{gr}H\hookrightarrow \operatorname{CoT}_{\k(\mathrm{Q}(H)_0, \mathcal{S})}(\k(\mathrm{Q}(H)_1, \mathcal{S})),$$ by Lemma \ref{lemma:universal}, we need to
construct a coalgebra map $\psi_0:\operatorname{gr}H\longrightarrow \k(\mathrm{Q}(H)_0, \mathcal{S})$, and a $\k(\mathrm{Q}(H)_0, \mathcal{S})$-bicomodule map $\psi_1:\operatorname{gr}H\longrightarrow \k(\mathrm{Q}(H)_1, \mathcal{S})$, such that for each $x\in \operatorname{gr}H$ there are only finite $i$ with $\psi_i(x)\neq0$, where $\psi_i$ is defined as in lemma \ref{lemma:universal}. In fact, we take $\psi_0$ and $\psi_1$ to be the canonical projections: $\psi_0:\operatorname{gr}H\longrightarrow H_0$, $\psi_1:\operatorname{gr}H\longrightarrow H_1/H_0$. Clearly $\psi_n(H_m/H_{m-1})=0$ for $n\neq m$. Then $$\psi=\sum\psi_i:\operatorname{gr}H\longrightarrow \operatorname{CoT}_{\k(\mathrm{Q}(H)_0, \mathcal{S})}(\k(\mathrm{Q}(H)_1, \mathcal{S}))$$ is a graded coalgebra map. By \cite[Theorem 5.3.1]{Mon93}, it is injective.
\end{proof}
It should be noted that the generalized dual Gabriel's theorem in \cite{LL08} does not require the condition $gr(H) \supseteq \k(\mathrm{Q}(H)_0, \mathcal{S})\oplus \k(\mathrm{Q}(H)_1, \mathcal{S})$. What we present here is a modified version.
In the next subsection, we will discuss the conditions under which $\k(\mathrm{Q}, \mathcal{S})$ admits a graded coquasi-Hopf algebra structure (with the dual Chevalley property) and present a coquasi-Hopf algebraic version of the generalized dual Gabriel's theorem.

\subsection{Coquasi-Hopf algebra structures on modified generalized
path coalgebras}
In this subsection, we let $H$ be a cosemisimple coquasi-Hopf algebra over $\k$, and denote by $\mathcal{S} = \{C_i \mid i \in I\}$ the set of all its simple subcoalgebras. We set $\mathcal{M}=\{\C_i\mid i\in I\}$, where each $\C_i \in \mathcal{M}$ is a basic multiplicative matrix of $C_i \in \mathcal{S}$. Suppose that the fusion rules of $\mathbb{Z}\mathcal{S}$ are given by
$$C_i\cdot C_j=\sum\limits_{t\in I}\alpha_{ij}^tC_t\;\;\text{for all }C_i, C_j\in\mathcal{S},$$ where $\alpha_{ij}^t\in\mathbb{Z}_+$.
Let $\{V_i \mid i \in I\}$ be a complete set of representatives of the isomorphism classes of simple left $H$-comodules such that the coefficient coalgebra of $V_i$ is $C_i$ for all $i \in I$; i.e., $\rho^L_{V_i}(V_i) \subseteq C_i \otimes V_i$.

Recall that the \textit{dimension vector} of a finite-dimensional left comodule $M$ over $H$ is defined as $\underline{\dim}(M) \in \mathbb{Z}_{+}^{(I)}$, with $\underline{\dim}(M)_i$ equal to the multiplicity of the simple left comodule $V_i$ in a Jordan-H$\ddot{\textrm{o}}$lder series of $M$. It should be noted that every finite-dimensional comodule over $H$ is semisimple; in particular, any finite-dimensional left-left Yetter-Drinfeld module over $H$ is also semisimple as a comodule. Therefore, in this case, the dimension vector is precisely the multiplicity of each simple comodule in its decomposition as a comodule.
We define
$$
\Lambda_H=\{\underline{\dim}(M) \mid M\text{ is a nonzero finite-dimensional Yetter-Drinfeld module in } {}^H_H\mathcal{YD}\}.
$$
We assemble the above notations into a datum $(H, \{C_i\}_{i\in I}, (n_1, n_2, \cdots), \{\alpha_{ij}^t\}_{i,j,t\in I})$ with $(n_1, n_2, \cdots) \in \Lambda_H$, and call this datum a \textit{cosemisimple datum} of $H$.

With the notations introduced above, we give the following definition.
\begin{definition}\label{def:generalizedHopfquiver}
Let $H$ be a cosemisimple coquasi-Hopf algebra over $\k$ with a cosemisimple datum $(H, \{C_i\}_{i\in I}, (n_1, n_2, \cdots), \{\alpha_{ij}^t\}_{i,j,t\in I})$. A generalized Hopf quiver associated with this datum has vertex set $\{C_i \mid i \in I\}$, and for all $i, j \in I$, there are $\sum_{k\in I}n_k \alpha_{jk}^i$ arrows from $C_j$ to $C_i$.
\end{definition}
Obviously, every Hopf quiver (see Remark \ref{rm:Hopf;pointed} (2)) is a generalized Hopf quiver, while the latter also includes examples that are not Hopf quivers.
\begin{example}\rm
Let $H(e_{\pm 1}, f_{\pm 1})$ be the Hopf algebra generated by $\{e_{i}, f_{i}\mid i\in \mathbb{Z}\}$ with relations
\begin{eqnarray*}
1=e_0+f_0, \;\;e_ie_j=e_{i+j},\;\;f_if_j=f_{i+j},\;\;e_if_j=f_je_i=0\;\;\;\;(i, j\in\mathbb{Z}).
\end{eqnarray*}
The comultiplication, counit and the antipode are given for all $i\in\mathbb{Z}$ by
\begin{eqnarray*}
&\Delta(e_i)=e_i\otimes e_i+f_i\otimes f_{-i},\;\;\varepsilon(e_i)=1,\;\;S(e_i)=e_{-i},\\
&\Delta(f_i)=e_i\otimes f_i+f_i\otimes e_{-i},\;\;\varepsilon(f_i)=0,\;\;S(f_i)=f_{i}.
\end{eqnarray*}
Let $g = e_0 - f_0$; it is clear that $g$ is a group-like element of order $2$. For any $i \ge 1$, set $C_i = \operatorname{span}\{e_i, f_i, e_{-i}, f_{-i}\}$. One can show that each $C_i$ is a simple subcoalgebra with basic multiplicative matrix $\mathcal{C}_i$, where $$
\C_i=\left(\begin{array}{cc}
e_i&f_i\\
f_{-i}&e_{-i}
 \end{array}\right).
$$ We then obtain $$H(e_{\pm 1}, f_{\pm 1})=\k1\oplus \k g\oplus \bigoplus_{i\geq 1}C_i,$$ from which it follows that the set $\mathcal{S}$ of all simple subcoalgebras is $\{\k1, \k g\} \cup \{C_i \mid i \ge 1\}$. Let $M=\span\{u, v\}$ be a left-left Yetter-Drinfeld module over $H(e_{\pm 1}, f_{\pm 1})$ with actions and coactions defined as follows:
\begin{eqnarray*}
&e_i \vartriangleright u=u,\;\; e_i \vartriangleright v=v,\;\;f_i \vartriangleright u=0,\;\; f_i \vartriangleright v=0,\;\;\;(i\in\mathbb{Z})&\\
&\rho_M^L(u)=e_1\otimes u+f_1\otimes v,\;\;\rho_M^L(v)=f_{-1}\otimes u+e_{-1}\otimes v.&
\end{eqnarray*}
Clearly, $M$ is a simple left $C_1$-comodule. Besides, we have $$C_1\cdot C_1=\k 1+\k g+C_2\;\;\text{and }\;C_1\cdot C_i=C_{i+1}+C_{i-1}\;\;\text{for }i\geq2 $$ in $\mathbb{Z}\mathcal{S}.$ Then, by definition, the following quiver is a generalized Hopf quiver
\begin{eqnarray*}
\begin{tikzpicture}
\filldraw [black] (4,0) circle (1pt) node[anchor=north]{$C_4$};
\filldraw [black] (-4,0) circle (1pt)node[anchor=north]{$\k1$};
\filldraw [black] (2,0) circle (1pt) node[anchor=north]{$C_3$};
\filldraw [black] (0,0) circle (1pt) node[anchor=north]{$C_2$};
\filldraw [black] (-2,0) circle (1pt)node[anchor=north]{$C_1$};
\filldraw [black] (6,0) circle (1pt) node[anchor=west]{$\cdots$};
\filldraw [black] (-2,1.4) circle (1pt) node[anchor=south]{$\k g$};
\draw[thick, ->] (-3.8,0.1).. controls (-3.8,0.1) and (-3.8,0.1) .. node[anchor=south]{}(-2.2,0.1) ;
\draw[thick, ->] (-2.2,-0.1).. controls (-2.2,-0.1) and (-2.2,-0.1) .. node[anchor=south]{}(-3.8,-0.1);
\draw[thick, ->] (-1.8,0.1).. controls (-1.8,0.1) and (-1.8,0.1) .. node[anchor=south]{}(-0.2,0.1) ;
\draw[thick, ->] (-0.2,-0.1).. controls (-0.2,-0.1) and (-0.2,-0.1) .. node[anchor=south]{}(-1.8,-0.1);
\draw[thick, ->] (0.2,0.1).. controls (0.2,0.1) and (0.2,0.1) .. node[anchor=south]{}(1.8,0.1) ;
\draw[thick, ->] (1.8,-0.1).. controls (1.8,-0.1) and (1.8,-0.1) .. node[anchor=south]{}(0.2,-0.1);
\draw[thick, ->] (2.2,0.1).. controls (2.2,0.1) and (2.2,0.1) .. node[anchor=south]{}(3.8,0.1) ;
\draw[thick, ->] (3.8,-0.1).. controls (3.8,-0.1) and (3.8,-0.1) .. node[anchor=south]{}(2.2,-0.1);
\draw[thick, ->] (4.2,0.1).. controls (4.2,0.1) and (4.2,0.1) .. node[anchor=south]{}(5.8,0.1) ;
\draw[thick, ->] (5.8,-0.1).. controls (5.8,-0.1) and (5.8,-0.1) .. node[anchor=south]{}(4.2,-0.1);
\draw[thick, ->] (-1.9,1.3).. controls (-1.9,1.3) and (-1.9,1.3) .. node[anchor=south]{}(-1.9,0.2) ;
\draw[thick, ->] (-2.1,0.2).. controls (-2.1,0.2) and (-2.1,0.2) .. node[anchor=south]{}(-2.1,1.3);
\end{tikzpicture}.
\end{eqnarray*}
Observe that $H(e_{\pm 1}, f_{\pm 1})$ is exactly the coradical of the Hopf algebra $H(e_{\pm 1}, f_{\pm 1}, u, v)$ defined in \cite[Definition 5.1]{YL24}, and the quiver above is the link quiver of $H(e_{\pm 1}, f_{\pm 1}, u, v)$.
\end{example}

\begin{proposition}\label{prop:triangularHopfquiver}
Let $H$ be a coquasitriangular cosemisimple coquasi-Hopf algebra over $\k$, and let $C_i\cdot C_j=\sum_{t\in I}\alpha_{ij}^tC_t$ be the fusion rules of $\mathbb{Z}\mathcal{S}$. Suppose $\mathrm{Q}$ is a quiver with vertex set $\{C_i\mid i\in I\}$, and let $\sum_{k\in I}n_k \alpha_{jk}^i$ denote the number of arrows from $C_j$ to $C_i$, where $(n_1, n_2,\cdots )\in\mathbb{Z}_+^{(I)}$ satisfies $\sum_{i\in I}n_i<\infty$. Then $\mathrm{Q}$ is a generalized Hopf quiver.
\end{proposition}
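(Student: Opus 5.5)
We have to show that $(n_1,n_2,\cdots)\in\Lambda_H$: we need a nonzero finite-dimensional left-left Yetter-Drinfeld module over $H$ whose dimension vector is $(n_i)_{i\in I}$. Once such a module exists, the quiver in the statement matches Definition \ref{def:generalizedHopfquiver} for the cosemisimple datum $(H,\{C_i\}_{i\in I},(n_1,n_2,\cdots),\{\alpha_{ij}^t\})$, and we are done. We tacitly assume $(n_i)\neq 0$, since $\Lambda_H$ only contains dimension vectors of nonzero modules. If all $n_i=0$ the quiver has no arrows, and it is still realized by the zero vector. This degenerate case should be mentioned and handled by convention.

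The construction uses the coquasitriangular structure through Example \ref{ex:coquaitriangular}. Let $R$ be the coquasitriangular form on $H$. For each $i\in I$, fix the simple left $H$-comodule $V_i$ with coefficient coalgebra $C_i$. Then put
$$M:=\bigoplus_{i\in I}V_i^{\oplus n_i}.$$
Because $\sum_i n_i<\infty$, $M$ is a finite-dimensional left $H$-comodule, and it is nonzero. By Example \ref{ex:coquaitriangular}, the action $h\vartriangleright m:=R(m_{-1},h)m_0$ makes $M$ an object of ${}^H_H\mathcal{YD}$: axiom (\ref{eq:YD1}) comes from (\ref{eq:CQT3}), axiom (\ref{eq:YD2}) from the normalization of $R$, and axiom (\ref{eq:YDmod3}) from (\ref{eq:CQT1}).

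Next we compute the dimension vector. The Yetter-Drinfeld structure does not change the underlying comodule. By Lemma \ref{lem:cosstensor} every finite-dimensional comodule over $H$ is semisimple, and the $V_i$ represent distinct isomorphism classes. Hence the multiplicity of $V_i$ in $M$ is exactly $n_i$, so $\underline{\dim}(M)=(n_1,n_2,\cdots)\in\Lambda_H$.

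The only step needing care is the Yetter-Drinfeld verification in Example \ref{ex:coquaitriangular}. There, (\ref{eq:YD2}) needs $R(a,1_H)=\varepsilon(a)$. If this is not already part of the definition, I would derive it from (\ref{eq:CQT3}) by setting $b=c=1_H$ and using (\ref{eq:Phi(1)=e}) together with the convolution invertibility of $R$. Everything else is bookkeeping, so the real content is simply that a coquasitriangular structure turns every comodule into a Yetter-Drinfeld module. This makes $\Lambda_H$ equal to all nonzero finitely supported vectors in $\mathbb{Z}_+^{(I)}$.
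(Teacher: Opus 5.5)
Your proof is correct and follows the paper's own argument: reduce to showing $(n_1,n_2,\cdots)\in\Lambda_H$, then use Example \ref{ex:coquaitriangular} to equip the comodule $\bigoplus_i V_i^{\oplus n_i}$ with the Yetter-Drinfeld action $h\vartriangleright m=R(m_{-1},h)m_0$. Your two additions are valid refinements that the paper leaves implicit: you derive $R(a,1_H)=\varepsilon(a)$ from (\ref{eq:CQT3}) together with convolution invertibility, and you observe that the all-zero vector lies outside $\Lambda_H$, so the statement tacitly needs $(n_i)\neq 0$.
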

\begin{proof}
By definition, we only need to prove that $(n_1, n_2,\cdots )\in\Lambda_H$. In fact, from Example \ref{ex:coquaitriangular} we know that any comodule with dimension vector $(n_1, n_2,\cdots )$ admits a Yetter-Drinfeld module structure. Therefore $(n_1, n_2,\cdots )\in\Lambda_H$ and the proof is complete.
\end{proof}

In fact, the link quiver of any coquasi-Hopf algebra with the dual Chevalley property is a generalized Hopf quiver.
\begin{lemma}\label{lem:Hopflink=Hopfquiver}
Let $H$ be a coquasi-Hopf algebra over $\k$ with the dual Chevalley property. Then the link quiver of $H$ is a generalized Hopf quiver associated with a cosemisimple datum of $H_0$.
\end{lemma}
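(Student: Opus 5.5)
The plan is to show that the number of arrows in $\mathrm{Q}(H)$ has exactly the form required by Definition \ref{def:generalizedHopfquiver} for a cosemisimple datum of $H_0$. The case where $H$ is cosemisimple is trivial: $\mathrm{Q}(H)$ has no arrows, and we may take the datum given by the dimension vector $(1,0,0,\dots)$ of the trivial Yetter-Drinfeld module $\k$. (If one prefers, the zero vector can be allowed formally; otherwise this choice already yields zero arrows after the multiplicity count below is corrected. In the cosemisimple case, the simplest consistent choice is to treat $\Lambda_{H_0}$ as containing the required vector, and I would check this convention at the end.) So assume $H$ is non-cosemisimple. In that case $H_0$ is a cosemisimple coquasi-Hopf algebra whose simple subcoalgebras are exactly the set $\mathcal{S}=\{C_i\}_{i\in I}$. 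By Lemma \ref{lem:ZS=gr}, its fusion coefficients $\alpha_{ij}^t$ are the ones appearing in Proposition \ref{prop:alphaijk}.

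The main input is Proposition \ref{prop:alphaijk}. It says that the number of arrows from $C_j$ to $C_i$ equals $\sum_{k\in J}|{}^1\mathcal{P}^{\C_k}|\,\alpha_{ik}^j$, and also equals $\sum_{k\in J}|{}^1\mathcal{P}^{\C_k}|\,\alpha_{ki}^j$. Definition \ref{def:generalizedHopfquiver} instead requires $\sum_k n_k\alpha_{jk}^i$ arrows from $C_j$ to $C_i$. To match the two, I would use the dual family $\mathcal{P}^{\prime1}$ of arrows starting at $\k1$, together with Lemma \ref{lem:P1prime>0}, which gives $|{}^1\mathcal{P}^{\D}|=|{}^{S(\D)}\mathcal{P}^{\prime 1}|$. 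I would also use Lemma \ref{lem:cijkinvariant}, which gives $\alpha_{ik}^j=\alpha_{jk^*}^i$. Combining these, $\sum_k |{}^1\mathcal{P}^{\C_k}|\alpha_{ik}^j=\sum_k |{}^1\mathcal{P}^{\C_k}|\alpha_{jk^*}^i$. Setting $n_{k^*}:=|{}^1\mathcal{P}^{\C_k}|$, which is the number of arrows from $\k1$ to $C_{k^*}$, the count becomes $\sum_m n_m\alpha_{jm}^i$, as required.

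It then remains to show that $(n_m)_{m\in I}\in\Lambda_{H_0}$, that is, that it is the dimension vector of a nonzero finite-dimensional Yetter-Drinfeld module over $H_0$. The natural candidate is $V=(H_1/H_0)^{co H_0}$. By Example \ref{example:H1/H0}, $H_1/H_0$ is an $H_0$-Majid bimodule, so by Proposition \ref{prop:Majidmod=YD}, $V$ lies in ${}^{H_0}_{H_0}\mathcal{YD}$. Its left comodule structure is the restriction of $\rho^L$. Arguing as in Proposition \ref{prop:Pprimecomplete} (the analogue of the decomposition ${}^{co H_0}(H_1/H_0)=\bigoplus_{\Y\in{}^1\mathcal{P}}\span(\overline{\Y})$), we get $V=\bigoplus_{\Z\in\mathcal{P}^{\prime1}}\span(\overline{\Z})$. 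Each $\span(\overline{\Z})$, for $\Z$ an $(\F,1)$-primitive matrix, is a left $F$-comodule of dimension $r_F$, hence a single copy of the simple comodule $V_F$. Therefore $\underline{\dim}(V)_m=|{}^{\C_m}\mathcal{P}^{\prime1}|=|{}^1\mathcal{P}^{S(\C_m)}|$, which is $n_m$ under the indexing above. The module $V$ is nonzero by Lemma \ref{lem:1P>0}.

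The main obstacle is finiteness. $\Lambda_{H_0}$ consists of finite-dimensional Yetter-Drinfeld modules, but $|{}^1\mathcal{P}|$ could be infinite. I would handle this in one of two ways. One option is to note that the intended datum only needs $V$ to be locally finite, and to verify whether the paper's convention permits this. The other is to argue that when the count is finite, $V$ is finite-dimensional. I would state the finiteness assumption explicitly if needed. The second delicate point is bookkeeping: I need to check that left versus right coinvariants and the index involution $k\mapsto k^*$ line up with the convention $\alpha_{jk}^i$ in Definition \ref{def:generalizedHopfquiver}. If the indices are mismatched, I would instead use the second expression from Proposition \ref{prop:alphaijk} together with ${}^{co H_0}(H_1/H_0)$ and right-right Yetter-Drinfeld modules.
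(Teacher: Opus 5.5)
Your main argument is the paper's own. The paper rewrites the count from Proposition~\ref{prop:alphaijk}, using Lemma~\ref{lem:P1prime>0} and $\alpha_{ik}^j=\alpha_{jk^*}^i$, as $\sum_t |{}^{\C_t}\mathcal{P}^{\prime 1}|\,\alpha_{jt}^i$. It then identifies these multiplicities with the comodule decomposition of the left-left Yetter--Drinfeld module $(H_1/H_0)^{co H_0}=\bigoplus_{\Z\in\mathcal{P}^{\prime 1}}\operatorname{span}(\overline{\Z})$, coming from Example~\ref{example:H1/H0} and Proposition~\ref{prop:Majidmod=YD}. Your index bookkeeping with $n_{k^*}$ and $S(C_m)=C_{m^*}$ is correct.

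One side claim is wrong: in the cosemisimple case, the trivial module $\Bbbk$ does not give an arrowless quiver.
\begin{itemize}
\item Its dimension vector is supported at $\Bbbk 1$. Since $\alpha_{j1}^i=\delta_{ij}$, it produces exactly one loop at every vertex.
\item More generally, if $n_k\neq 0$ then $C_j\cdot C_k\neq 0$ for every $j$. So every nonzero vector in $\Lambda_{H_0}$ yields arrows out of every vertex.
\item Hence an arrowless quiver is never a generalized Hopf quiver under Definition~\ref{def:generalizedHopfquiver}.
\end{itemize}
The cosemisimple case must therefore be excluded, or the zero vector admitted. The paper's proof tacitly excludes it, since it invokes Proposition~\ref{prop:alphaijk}, which assumes $H$ is non-cosemisimple.

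Your finiteness worry is also genuine, and it cannot be argued away. $\Lambda_{H_0}$ is defined via finite-dimensional modules with dimension vectors in $\mathbb{Z}_+^{(I)}$, so a locally finite reading is not available. The argument, yours and the paper's alike, therefore needs $(H_1/H_0)^{co H_0}$ to be finite-dimensional. That means only finitely many arrows start at $\Bbbk 1$, or equivalently, by Lemma~\ref{lem:P1prime>0}, end there; this holds for instance when $H$ is finite-dimensional. The paper passes over this point without comment.
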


\begin{proof}
Let $\mathrm{Q}(H)=(\mathcal{S}, \mathcal{P})$ denote the link quiver of $H$, and set ${}^1\mathcal{S}=\{C_k\mid k\in J\}$, $\mathcal{S}^1=\{C_t\mid t\in T\}$ with $J, T \subseteq I$. Then, by Proposition \ref{prop:alphaijk}, we have that $$\mid {}^{\C_i}\mathcal{P}^{\C_j}\mid=\sum_{k\in J}\mid{}^1 \mathcal{P}^{\C_k}\mid \alpha_{ik}^j.$$
It follows from Lemma \ref{lem:P1prime>0} that $S(C_k)\in \mathcal{S}^{1}$ if and only if $k\in J,$ and $$\mid{^1\mathcal{P}^{\C_k}}\mid=\mid{}^{K S(\C_k)^TK^{-1}}\mathcal{P}^{ 1}\mid,$$ where $K S(\C_k)^TK^{-1}\in\mathcal{M}$ is a basic multiplicative matrix of $S(C_k)$ with $K\in\mathrm{GL}(\k)$. Lemma \ref{lem:cijkinvariant} implies that $\alpha_{ik}^j=\alpha_{jk^*}^i$, and thus
\begin{eqnarray*}
\mid {}^{\C_i}\mathcal{P}^{\C_j}\mid=\sum_{k\in J}\mid{}^{K S(\C_k)^TK^{-1}}\mathcal{P}^{ 1}\mid \alpha_{jk^*}^i
=\sum_{t\in T}\mid{}^{\C_t}\mathcal{P}^{ 1}\mid \alpha_{jt}^i.
\end{eqnarray*}
A similar argument to the one in the proof of Proposition \ref{prop:Pcomplete} (1) shows that
\begin{eqnarray*}
(H_1/H_0)^{co H_0} = \bigoplus_{\Y\in\mathcal{P}^1}\operatorname{span}(\overline{\Y}).
\end{eqnarray*}
This means that $\mid{}^{\C_t}\mathcal{P}^{1}\mid$ equals the multiplicity of the simple comodule $V_t$ in the comodule decomposition of $(H_1/H_0)^{\text{co} H_0}$. Moreover, by Example \ref{example:H1/H0} and Proposition \ref{prop:Majidmod=YD}, $(H_1/H_0)^{\text{co} H_0}$ admits a left-left Yetter-Drinfeld module structure over $H_0$, and we can then complete the proof.
\end{proof}

\begin{example} \label{ex:32dim}\emph{(}\cite[Example 6.2]{YLL24} \emph{)}\rm
Let $H$ be the $32$-dimensional Hopf algebra generated by $z, y, t, p_1, p_2$ subject to the following relations:
\begin{eqnarray*}
    &z^{2}=1,\;\; y^{2}=1,\;\; t^{2}=1,\;\;  z y=y z,\;\;  t z=z t,\;\; t y=y t,\\
    &z p_{1}=p_{1} z,\;\; y p_{1}=p_{1} y,\;\; t p_{1}=-p_{1} t,\;\; z p_{2}=p_{2} z,\;\; y p_{2}=p_{2} y,\;\; t p_{2}=-p_{2} t,\\
    &p_{1}^{2}=0,\;\; p_{2}^{2}=0,\;\;  p_{1} p_{2}+p_{2} p_{1}=0.
\end{eqnarray*}
The coalgebra structure and antipode are given by:
\begin{eqnarray*}
    &\Delta(z)=z \otimes z,\;\; \Delta(y)=y \otimes y,\;\; \varepsilon(z)=\varepsilon(y)=1,\\
    &\Delta(t)=\frac{1}{2}\left[(1+y) t \otimes t+(1-y) t \otimes z t\right],\;\;  \varepsilon(t)=1,\\
    &S(z)=z,\;\;  S(y)=y,\;\;  S(t)=\frac{1}{2}\left[(1+y) t+(1-y) z t\right], \\
    &\Delta\left(p_{1}\right)=p_{1} \otimes 1+\frac{1}{2}\left(1+z\right) t \otimes p_{1}+\frac{1}{2}\left(1-z\right) y t \otimes p_{2},\;\;\varepsilon(p_1)=0,\\
    &\Delta\left(p_{2}\right)=p_{2} \otimes 1+\frac{1}{2}\left(1+z\right) y t \otimes p_{2}+\frac{1}{2}\left(1-z\right) t \otimes p_{1},\;\;\varepsilon(p_2)=0,\\
    &S(p_1)=-\frac{1}{4}\left[(1+y)t+(1-y)zt\right]\left[(1+z)p_1+y(1-z)p_2\right],\\
    &S(p_2)=-\frac{1}{4}\left[(1+y)t+(1-y)zt\right]\left[y(1+z)p_2+(1-z)p_1\right].
\end{eqnarray*}
Denote $E=\operatorname{span}\{t, zt, yt, zyt\}$. Then $\mathcal{S}=\{\k 1, \k  z, \k  y, \k  zy,  E\}$. We give the corresponding multiplicative matrix $\E$ of $E$, where
$$
\E=\frac{1}{2}\left(\begin{array}{cc}
t+yt&t-yt\\
zt-zyt&zt+zyt
 \end{array}\right).
$$
In this example, $\mathcal{P}=\{\X_1, \X_2, \X_3, \X_4, \X_5, \X_6, \X_7, \X_8\}$, where
$$
\X_1=\frac{1}{2}\left(\begin{array}{c}
p_1+p_2\\
p_1-p_2
 \end{array}\right),
\X_2=\frac{1}{2}\left(\begin{array}{c}
p_1z-p_2z\\
p_1z+p_2z
 \end{array}\right),
$$
$$
 \X_3=\frac{1}{2}\left(\begin{array}{c}
p_1y+p_2y\\
p_2y-p_1y
 \end{array}\right),
 \X_4=\frac{1}{2}\left(\begin{array}{c}
p_2zy-p_1zy\\
p_1zy+p_2zy
 \end{array}\right),
$$
$$
\X_5=\frac{1}{4}\left(\begin{array}{cc}
(p_1+p_2)(1+y)t+(p_1-p_2)(1-y)zt&(p_1+p_2)(1-y)t+(p_1-p_2)(1+y)zt
 \end{array}\right),
$$
$$
\X_6=\frac{1}{4}\left(\begin{array}{cc}
(p_1+p_2)(1-y)zt+(p_1-p_2)(1+y)t&(p_1+p_2)(1+y)zt+(p_1-p_2)(1-y)t
 \end{array}\right),
$$
$$
\X_7=\frac{1}{8}\left(\begin{array}{cc}
(p_1-p_2)(1+y)t-(p_1+p_2)(1-y)z t&(p_1-p_2)(1-y)t-(p_1+p_2)(1+y)z t
 \end{array}\right),
 $$
 $$
\X_8=\frac{1}{8}\left(\begin{array}{cc}
(p_1-p_2)(1-y)z t+(p_1+p_2)(1+y)t&(p_1+p_2)(1+y)z t-(p_1+p_2)(1-y)t
 \end{array}\right).
 $$
The link quiver of $H$ is shown below:
$$
\begin{tikzpicture}
\filldraw [black] (0,0) circle (1pt) node[anchor=south]{$E$};
\filldraw [black] (0,2) circle (1pt)node[anchor=south]{$\k  1$};
\filldraw [black] (0,-2) circle (1pt) node[anchor=north]{$\k  z$};
\filldraw [black] (2,0) circle (1pt)node[anchor=west]{$\k y$};
\filldraw [black] (-2,0) circle (1pt)node[anchor=east]{$\k  zy$};
\draw[thick, ->] (-0.1,0.2) .. controls (-0.6,0.67) and (-0.6,1.34) .. node[anchor=west]{$\X_5$}(-0.1,1.8) ;
\draw[thick, ->] (0.1,1.8) .. controls (0.6,1.34) and (0.6,0.67) .. node[anchor=west]{$\X_1$}(0.1,0.2) ;
\draw[thick, ->] (-0.1,-1.8) .. controls  (-0.6,-1.34)and  (-0.6,-0.67)..  node[anchor=east]{$\X_2$}(-0.1,-0.2);
\draw[thick, ->] (0.1,-0.2) .. controls (0.6,-0.67) and (0.6,-1.34) .. node[anchor=east]{$\X_6$}(0.1,-1.8) ;
\draw[thick, ->] (0.2,0.1) .. controls (0.67,0.6) and (1.34,0.6) .. node[anchor=north]{$\X_8$}(1.8,0.1) ;
\draw[thick, ->] (1.8,-0.1) .. controls (1.34,-0.6) and (0.67,-0.6) ..node[anchor=north]{$\X_3$}(0.2,-0.1) ;
\draw[thick, ->] (-1.8,0.1) .. controls (-1.34,0.6) and (-0.67,0.6) ..  node[anchor=south]{$\X_4$}(-0.2,0.1);
\draw[thick, ->]  (-0.2,-0.1).. controls (-0.67,-0.6) and (-1.34,-0.6)  ..node[anchor=south]{$\X_7$}(-1.8,-0.1)  ;
\end{tikzpicture}
$$
Clearly, the quiver above is not a Hopf quiver, although by
Lemma \ref{lem:Hopflink=Hopfquiver} it can be realized as a generalized Hopf quiver.
\end{example}

Let $H$ be a cosemisimple coquasi-Hopf algebra, and let $\mathrm{Q}$ be a generalized Hopf quiver associated with a cosemisimple datum of $H$. The previous subsection shows that one can construct a modified generalized path coalgebra $\k(\mathrm{Q}, \mathcal{S})$. Next we prove that this modified generalized path coalgebra admits a graded coquasi-Hopf algebra structure with respect to the length grading, and that only generalized Hopf quivers admit such a coquasi-Hopf algebra structure. We emphasize that if $\k(\mathrm{Q}, \mathcal{S})$ admits a graded coquasi-Hopf algebra structure with respect to the length grading, then $\k(\mathrm{Q}, \mathcal{S})$ clearly has the dual Chevalley property.
\begin{theorem}\label{thm:generalizedHopfquiver}
Let $\mathrm{Q}=(\mathrm{Q}_0, \mathrm{Q}_1)$ be a quiver, and let $\mathcal{S}=\{C_i\mid i\in\mathrm{Q}_0\}$ be a family of simple coalgebras indexed by the vertex set of $\mathrm{Q}$. Set $H=\sum_{i\in \mathrm{Q}_0}C_i$. Then the modified generalized path coalgebra $\k(\mathrm{Q}, \mathcal{S})$ admits a graded coquasi-Hopf algebra structure with respect to the length grading if and only if $H$ is a cosemisimple coquasi-Hopf algebra and $\mathrm{Q}$ is a generalized Hopf quiver associated with a cosemisimple datum of $H$. Moreover, the set of such graded coquasi-Hopf algebra structures on $\k(\mathrm{Q}, \mathcal{S})$ is in one-to-one correspondence with the set of left-left Yetter-Drinfeld module structures over $H$ on $\k(\mathrm{Q}_1,\mathcal{S}){}^{co H}$, or equivalently, with the set of $H$-Majid bimodule structures on $\k(\mathrm{Q}_1,\mathcal{S})$.
\end{theorem}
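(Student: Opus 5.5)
For the ``only if'' direction, I would start from a graded coquasi-Hopf algebra structure on $A:=\k(\mathrm{Q},\mathcal{S})$ with respect to the length grading.

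\textbf{Step 1 (the degree-zero part is cosemisimple).} Since the structure is graded, the degree-zero part $A_0=\k(\mathrm{Q}_0,\mathcal{S})=H$ is closed under multiplication and contains $1$. The restrictions of $\Phi$, $\alpha$ and $\beta$ to $H$, together with $S|_H$, make $H$ a coquasi-Hopf subalgebra. Grading forces $S(H)\subseteq H$, and the axioms restrict because $\Delta(H)\subseteq H\otimes H$. As $H$ is the coradical of $A$, it is cosemisimple, so $A$ has the dual Chevalley property.

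\textbf{Step 2 (the quiver is a generalized Hopf quiver).} By Lemma~\ref{lem:k(Q,S)link}, the link quiver of $A$ is $\mathrm{Q}$. Lemma~\ref{lem:Hopflink=Hopfquiver} then shows that $\mathrm{Q}$ is a generalized Hopf quiver associated with a cosemisimple datum of $A_0=H$.

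\textbf{Step 3 (the Majid bimodule).} Graded multiplication restricts to actions $H\otimes A_1\to A_1$ and $A_1\otimes H\to A_1$. These make $A_1=\k(\mathrm{Q}_1,\mathcal{S})$ an $H$-Majid bimodule, as in Example~\ref{example:H1/H0}. Via Proposition~\ref{prop:Majidmod=YD}, this is the same as a Yetter--Drinfeld structure on $A_1^{co H}$.

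For the ``if'' direction, suppose $\mathrm{Q}$ is a generalized Hopf quiver with datum $(H,\{C_i\},(n_k),\{\alpha^t_{ij}\})$.

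\textbf{Step 4 (building the Majid bimodule).} Choose a Yetter--Drinfeld module $N\in{}^H_H\mathcal{YD}$ with $\underline{\dim}(N)=(n_k)$. By Proposition~\ref{prop:Majidmod=YD}, $M:=N\otimes H$ is an $H$-Majid bimodule.

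\textbf{Step 5 (identifying $M$ with the arrow bicomodule).} I need to show that $M\cong\k(\mathrm{Q}_1,\mathcal{S})$ as $H$-bicomodules. The $C_i$-$C_j$ isotypic part of $N\otimes H$ should have multiplicity $\sum_k n_k\alpha_{jk}^i$, matching the number of arrows. The left coaction is $n_{-1}h_1$, so $V_k\otimes C_j$ contributes copies of $C_kC_j$ on the left and $C_j$ on the right. The count then follows from Lemma~\ref{lem:cijkinvariant}, possibly after the $*$-reindexing used in the proof of Lemma~\ref{lem:Hopflink=Hopfquiver}. I would carry out this count as in Proposition~\ref{prop:alphaijk}, using multiplicative matrices, and I expect the convention bookkeeping here to be fiddly.

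\textbf{Step 6 (extending to the cotensor coalgebra).} Transport the Majid bimodule structure to $\k(\mathrm{Q}_1,\mathcal{S})$. By Lemma~\ref{lem:cotensorcoalg}, $A\cong\operatorname{CoT}_H(M)$. Extend the structure to the cotensor coalgebra as in Huang's construction for Hopf quivers \cite{Hua09} and as in the coquasi version of Nichols' theorem. Multiplication $M^{\Box m}\otimes M^{\Box n}\to M^{\Box(m+n)}$ is defined via the universal property, Lemma~\ref{lemma:universal}: put $\psi_0=m_H\circ(\pi_0\otimes\pi_0)$ and let $\psi_1$ be the sum of $p_L(\pi_0\otimes\pi_1)$ and $p_R(\pi_1\otimes\pi_0)$. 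Extend $\Phi$, $\alpha$ and $\beta$ by zero outside degree $0$, as in Example~\ref{example:coquasiChevalley}.

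\textbf{Step 7 (verifying the axioms).} Quasi-associativity~(\ref{eq:associativity}) should follow by uniqueness in Lemma~\ref{lemma:universal}: both sides are coalgebra maps $A^{\otimes3}\to A$ whose degree-$0$ and degree-$1$ components agree, by the Majid bimodule axioms. The antipode would be built from a coalgebra antimorphism, again using the universal property applied to $A^{cop}$.

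I expect the main obstacle to be this verification: showing that the coquasi-antipode axioms (\ref{eq:alpha,beta}) hold on all of $A$, not just in degrees $0,1$. An alternative is to appeal to the bosonization $R\#H$ of \cite{AP13} with $R$ the cotensor (shuffle) Hopf algebra of $N$ in ${}^H_H\mathcal{YD}$, which would also yield bijectivity.

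\textbf{Step 8 (the one-to-one correspondence).} Any graded structure is determined by its degree $\le1$ data, because multiplication is a coalgebra map into a cotensor coalgebra, by Lemma~\ref{lemma:universal}. That data is exactly the Majid bimodule structure, so the correspondence follows from Proposition~\ref{prop:Majidmod=YD}.
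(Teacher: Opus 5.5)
Your architecture is the paper's. The ``only if'' direction goes through Lemmas \ref{lem:k(Q,S)link} and \ref{lem:Hopflink=Hopfquiver}. In the ``if'' direction you take the Majid bimodule $N\otimes H$ from Proposition \ref{prop:Majidmod=YD} and transport it to $M=\k(\mathrm{Q}_1,\mathcal{S})$. You assemble the multiplication on $\operatorname{CoT}_H(M)$ from $m(\pi_0\otimes\pi_0)$ and $p_L\oplus p_R$ via Lemma \ref{lemma:universal}, extend $\Phi,\alpha,\beta$ by zero, and get quasi-associativity from uniqueness of coalgebra maps into the cotensor coalgebra.

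The gap is that you never produce the coquasi-antipode, and this is the real content of the ``if'' direction. Invoking Lemma \ref{lemma:universal} for $(\operatorname{CoT}_H(M))^{cop}$ presupposes a specific $H$-bicomodule map $S'_1=T\pi_1$, and you do not say what $T\colon M\to M$ is. The difficulty is already in degree one, not in higher degrees as you anticipate. Since $\alpha',\beta'$ vanish on $M$, the axioms (\ref{eq:alpha,beta}) on $x\in M$ become two simultaneous constraints, with $\cdot$ the Majid actions: $\alpha(x_{-1})\,S(x_{-2})\cdot x_0+\alpha(x_1)\,T(x_0)\cdot x_2=0$ and $\beta(x_{-1})\,x_{-2}\cdot T(x_0)+\beta(x_1)\,x_0\cdot S(x_2)=0$. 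Nothing in your outline shows that such a $T$ exists.

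These identities cannot be propagated from degree $0$ by the uniqueness trick you used for associativity. The two sides of (\ref{eq:alpha,beta}) are not coalgebra maps; for instance $a\mapsto\alpha(a)1$ is one only when $\alpha=\varepsilon$. The Hopf shortcut \cite[Lemma 5.2.10]{Mon93} is also unavailable: the paper points out right after the proof that it fails in the coquasi setting. As written, your ``if'' direction yields only a graded coquasi-bialgebra.

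The paper fills this step explicitly.
\begin{itemize}
\item Writing ${}^{co H}M=\bigoplus\operatorname{span}(\X_\gamma)$ with each $\X_\gamma=(x_{1i})$ non-trivial $(1,\C)$-primitive, it sets $T(x_{1i})=-\sum_{j,k}\alpha(1)\beta(c_{kj})\,x_{1k}\cdot S(c_{ji})$.
\item Since $M=H\cdot{}^{co H}M$ by Proposition \ref{prop:Hopfmod}, it extends by $T(d_{jk}\cdot x_{1i})=\beta(1)\sum_{l,t}T(x_{1t})\cdot S(d_{jl})\,f(d_{lk},c_{ti})$. Here $f$ is the twist of \cite[Proposition 3.49]{BCPV19}, needed because $S$ is anti-multiplicative only up to $f$.
\item It checks that $T\pi_1$ is a bicomodule map for the $S$-twisted coactions.
\item It verifies (\ref{eq:alpha,beta}) on $M$ by a long computation using the pentagon identity and the auxiliary maps $\gamma,\delta$.
\item Only then does it pass to $M^{\Box n}$, using the kernel condition that defines the cotensor powers.
\end{itemize}

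Your bosonization fallback does not replace this. First, it needs the shuffle Hopf algebra of $N$ in the non-strict braided category ${}^H_H\mathcal{YD}$, and a coalgebra identification of its bosonization with $\operatorname{CoT}_H(M)$. That is a coquasi analogue of Nichols' theorem, which Subsection \ref{subsection4.3} notes is available only for pointed $H$. Second, the machinery of \cite{AP13} is built around preantipodes, so you would still have to produce $(S',\alpha',\beta')$ on a typically infinite-dimensional algebra.

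A smaller point concerns Step 5. $N\otimes H$ contains $\sum_k n_k\alpha_{kj}^i$ simple $C_i$-$C_j$-bicomodules. Lemma \ref{lem:cijkinvariant} cannot turn this into $\sum_k n_k\alpha_{jk}^i$; that would need commutativity of $\mathbb{Z}\mathcal{S}$. Instead, apply Corollary \ref{coro:H0fundamental} to the Majid bimodule $N\otimes H$. Its right coinvariants are $N\otimes 1\cong N$, so $N\otimes H\cong H\otimes N$ as bicomodules, and there the count $\sum_k n_k\alpha_{jk}^i$ is immediate.
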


\begin{proof}
``Only if part": If $\k(\mathrm{Q}, \mathcal{S})$ is a graded coquasi-Hopf algebra, then it follows immediately that $\k(\mathrm{Q}, \mathcal{S})_0 = H$ is a coquasi-Hopf subalgebra and is clearly cosemisimple. By Lemmas \ref{lem:k(Q,S)link} and \ref{lem:Hopflink=Hopfquiver}, we know that the link quiver of $\k(\mathrm{Q}, \mathcal{S})$ is $\mathrm{Q}$, and that this quiver is a generalized link quiver associated with a cosemisimple datum of $H$.

``If part": We set $M = \k(\mathrm{Q}_1,\mathcal{S})$ and first show that $M$ forms an $H$-Majid bimodule. From the definition of a generalized Hopf quiver, we know that
$M^{coH}$ has a left-left Yetter-Drinfeld module structure over $H$. Then, by Proposition \ref{prop:Majidmod=YD}, $M^{coH}\otimes H$ has an $H$-Majid bimodule structure, whose bicomodule structures are respectively as follows:
\begin{eqnarray*}
    &\rho _{M^{coH}\otimes H}^{L}\left( m\otimes h\right) :=m_{-1}h_{1}\otimes
(m_{0}\otimes h_{2}), & \\
&\rho _{M^{coH}\otimes H}^{R}\left( m\otimes h\right) :=(m\otimes h_{1})\otimes
h_{2}.&
\end{eqnarray*}
Therefore, the definition of a generalized Hopf quiver shows that $M$ is isomorphic to $M^{co H} \otimes H$ as H-bicomodules, which implies that $M$ admits an $H$-Majid bimodule structure, denoted by $(M, \rho_M^L, \rho_M^R, p_L, p_R)$. According to Lemma \ref{lem:cotensorcoalg}, $\k(\mathrm{Q}, \mathcal{S})\cong \operatorname{CoT}_H(M)$ as coalgebras. Next, we prove that $\operatorname{CoT}_H(M)$ admits a graded coquasi-Hopf algebra structure. We will divide this proof into several steps.\\
$\bullet$ Step 1: $\operatorname{CoT}_H(M)$ admits a graded coquasi-bialgebra structure.\\
Let $m^\prime_0$ be the composition
$$\operatorname{CoT}_H(M)\otimes \operatorname{CoT}_H(M)\xrightarrow{\pi_0\otimes \pi_0} H\otimes H\xrightarrow{m} H,$$
and let $m^\prime_1$ be the composition
$$\operatorname{CoT}_H(M)\otimes \operatorname{CoT}_H(M)\xrightarrow{\pi_0\otimes \pi_1\oplus \pi_1\otimes \pi_0} (H\otimes M)\oplus (M\otimes H)\xrightarrow{p_L\oplus p_R} M,$$
where $\pi_0$ and $\pi_1$ are the projections. For $n\geq 2, $ define $m^\prime_n$ as the composition
$$
\operatorname{CoT}_H(M)\otimes \operatorname{CoT}_H(M)\xrightarrow{\Delta^{n-1}} (\operatorname{CoT}_H(M)\otimes \operatorname{CoT}_H(M))^{\otimes n} \xrightarrow{(m^\prime_1)^{\otimes n}} M^{\otimes n}.
$$
Finally, set $m^\prime=\sum_{n\geq 0} m_n^\prime.$ Moreover, we define
$\Phi^\prime: (\operatorname{CoT}_H(M))^{\otimes 3}\rightarrow \k$ by $$\Phi^\prime (a\otimes b\otimes c):=
\begin{cases}
\Phi(a,b,c), & \text{if } a,b,c \in H; \\
0, & \text{otherwise}.
\end{cases} $$
Next, we prove that the coalgebra $\operatorname{CoT}_H(M)$, together with the unit $1_H$, and the newly defined multiplication $m^\prime$ and reassociator $\Phi^\prime$, forms a coquasi-bialgebra. Note that $m^\prime_n\mid_{M^{\Box s}\otimes M^{\Box t}}=0$ for $s+t\neq n,$ so by Lemma \ref{lemma:universal}, $m^\prime$ is a coalgebra map. Because $\varepsilon \mid_{M^{\Box n}}=0$ for all $ n\geq 1$ and $\Phi$ is convolution-invertible, $\Phi^\prime$ is also convolution-invertible and satisfies (\ref{eq:penta}) and $(\ref{eq:Phi(1)=e})$. By the definition of the multiplication on $\operatorname{CoT}_H(M)$ and the fact that $1_H\cdot x= x \cdot 1_H =x$ for all $x\in M$, one can show that (\ref{eq:1h=h1=h}) holds. In order to prove that $\operatorname{CoT}_H(M)$ satisfies the remaining condition (\ref{eq:associativity}), we set $X=(\operatorname{CoT}_H(M))^{\otimes 3}$ and consider the two maps
$$m^\prime (\id\otimes m^\prime): X\rightarrow \operatorname{CoT}_H(M)$$ and $$\Phi^{\prime} *m^\prime(m^\prime\otimes \id)* (\Phi^{\prime})^{-1}: X\rightarrow \operatorname{CoT}_H(M).$$
Since $m^\prime$ is a coalgebra map, it follows that $m^\prime (\id\otimes m^\prime)$ is also a coalgebra map. We then show that $\Phi^{\prime} *m^\prime(m^\prime\otimes \id)* (\Phi^{\prime})^{-1}$ is a coalgebra map as well. Indeed, for any $a\otimes b\otimes c\in X$, we have
\begin{eqnarray*}
 && ((\Phi^{\prime} *m^\prime(m^\prime\otimes \id)* (\Phi^{\prime})^{-1}) \otimes (\Phi^{\prime} *m^\prime(m^\prime\otimes \id)* (\Phi^{\prime})^{-1})) \Delta(a\otimes b \otimes c)\\
 &=&\Phi^{\prime}(a_1,b_1, c_1)(a_2b_2)c_2(\Phi^{\prime})^{-1}(a_3,b_3,c_3)\otimes \Phi^{\prime}(a_4,b_4,c_4)(a_5b_5)c_5(\Phi^{\prime})^{-1}(a_6, b_6,c_6)\\
 &=&\Phi^{\prime}(a_1,b_1, c_1)(a_2b_2)c_2\otimes (a_3b_3)c_3 (\Phi^{\prime})^{-1}(a_4, b_4,c_4)\\
 &=&\Delta(\Phi^{\prime} *m^\prime(m^\prime\otimes \id)* (\Phi^{\prime})^{-1})(a\otimes b\otimes c).
\end{eqnarray*}
By the constructions of $m^\prime$ and $\Phi^\prime$, we see that $$(m^\prime (\id\otimes m^\prime))_i= (\Phi^{\prime} *m^\prime(m^\prime\otimes \id)* (\Phi^{\prime})^{-1})_i$$ for $i=0,1.$ Lemma \ref{lemma:universal} implies (\ref{eq:associativity}) holds.\\
$\bullet$ Step 2: The construction of $(S^\prime,\alpha^\prime,\beta^\prime)$.\\
Suppose $(S,\alpha,\beta)$ is a coquasi-antipode of $H$. Extend $\alpha$ and $\beta$ to functions on $\operatorname{CoT}_H(M)$ by setting $\alpha(a)=\beta(a)=0$ whenever $a$ is not in $H,$ and denote these extensions by $\alpha^\prime$ and $\beta^\prime.$
Let $Y=(\operatorname{CoT}_H(M))^{cop}$, and define $S^\prime_0$ as the composition
$$
Y\xrightarrow{\pi_0} H\xrightarrow{S}H.
$$
Clearly, $S^\prime_0$ is a coalgebra map. Next we construct a map $T:M\rightarrow M$ such that the composition $$Y\xrightarrow{\pi_1}M\xrightarrow{T}M$$ is an $H$-bicomodule map, and we denote this composition by $S^\prime_1$. According to Proposition \ref{prop:Hopfmod}, we have $$ H\otimes {}^{co H}M\cong M,\;\;h\otimes m\mapsto h\cdot m.$$ By \cite[Lemma 2.8, Corollary 2.11 and Lemma 2.17]{YLL24}, we know that
\begin{eqnarray*}
{}^{co H}M = \bigoplus_{\gamma\in\Gamma}\operatorname{span}(\X_{\gamma}),
\end{eqnarray*}
where each $\X_{\gamma}$ is a non-trivial $(1, \C^{(\gamma)})$-primitive matrix over $\operatorname{CoT}_H(M)$ for some basic multiplicative matrix $\C^{(\gamma)}$ over $H$. We first specify the image of $T$ on elements of a non-trivial $(1,\C)$-primitive matrix $\X=(x_{1i})_{1\times r}$, where $\C=(c_{ij})_{r\times r}$, and then extend it to all of $M$ by considering the expressions $h\cdot x_{1i}$ for any $h\in H$ and $ 1\leq i\leq r$. For any $1\leq i\leq r$, we define
$$T(x_{1i}):=-\sum\limits_{j,k=1}^r\alpha(1)\beta(c_{kj})x_{1k}\cdot S(c_{ji}).$$
In order to extend $T$ to arbitrary elements of $M$, inspired by \cite[Proposition 3.49]{BCPV19}, we introduce the following linear maps. For any $ h, g \in H $, set
\begin{eqnarray*}
\gamma(h,g) := \Phi(S(g_2), S(h_2), h_4) \alpha(h_3) \Phi^{-1}(S(g_1)S(h_1), h_5, g_4) \alpha(g_3),
\end{eqnarray*}
and then define
\begin{eqnarray*}
f(h,g) := \Phi^{-1}(S(g_1)S(h_1), h_3g_3, S(h_5g_5)) \beta(h_4g_4) \gamma(h_2, g_2).
\end{eqnarray*}
Now we define $T: M \rightarrow M$ by specifying its action on all elements of the form $d_{jk}\cdot x_{1i}$, where $d_{jk}$ runs over all entries of a basic multiplicative matrix $\D = (d_{ij})_{s \times s}$ over $H$ and $1\leq i\leq r$, as follows:
\begin{eqnarray*}
T(d_{jk}\cdot x_{1i})&:=&\beta(1)\sum\limits_{l=1}^s\sum\limits_{t=1}^rT(x_{1t})\cdot  S(d_{jl}) f(d_{lk},c_{ti})\\
&=&-\sum\limits_{l=1}^s\sum\limits_{t,m,n=1}^r(\beta(c_{mn})x_{1m}\cdot S(c_{nt}))\cdot S(d_{jl})f(d_{lk},c_{ti}).
\end{eqnarray*}
A routine computation gives
\begin{eqnarray*}
 &&\rho^R_M(T(d_{jk}\cdot x_{1i}))\\
 &=&-\sum\limits_{l,w=1}^s\sum\limits_{t,m,n,u,v=1}^r
 (\beta(c_{mn})x_{1u}\cdot S(c_{vt}))\cdot S(d_{wl})f(d_{lk},c_{ti})\otimes (c_{um}S(c_{nv}))S(d_{jw})\\
 &=&\sum\limits_{w=1}^s (-\sum\limits_{l=1}^s\sum\limits_{t,u,v=1}^n(\beta(c_{uv})x_{1u}\cdot S(c_{vt}))\cdot S(d_{wl})f(d_{lk},c_{ti}))\otimes S(d_{jw})\\
 &=&\sum\limits_{w=1}^s T(d_{wk}\cdot x_{1s})\otimes S(d_{jw}).
\end{eqnarray*}
It follows from \cite[Proposition 3.49]{BCPV19} that
\begin{eqnarray*}
 && \rho^L_M(T(d_{jk}\cdot x_{1i}))\\&=&\sum\limits_{l,v=1}^s\sum\limits_{t,m,n,u=1}^rS(c_{ut})S(d_{vl})  \otimes (-\beta(c_{mn})(x_{1m}\cdot S(c_{nu}))\cdot S(d_{jv})f(d_{lk},c_{ti}))\\
&=&\sum\limits_{l,v,a,b=1}^s\sum\limits_{t,u,m,n,e,f=1}^r
 f^{-1}(d_{av}, c_{eu})S(c_{ut})S(d_{vl}) f(d_{lk},c_{ti}))\\
 &&\otimes (-\beta(c_{mn})x_{1m}\cdot S(c_{nf}))\cdot S(d_{jb})f(d_{ba},c_{fe})\\
 &=&\sum\limits_{a=1}^s\sum\limits_{e=1}^r S(d_{ak} c_{ei})\otimes T(d_{ja}\cdot x_{1e}).
\end{eqnarray*}
Since $S$ is a coalgebra antimorphism, we can use $S$ to equip $M$ with a new comodule structure such that $S^\prime_1$ is an $H$-bicomodule map. For $n\geq 2, $ define $S^\prime_n$ as the composition
$$
(\operatorname{CoT}_H(M))^{cop}\xrightarrow{\Delta^{n-1}} ((\operatorname{CoT}_H(M))^{cop})^{\otimes n} \xrightarrow{(S^\prime_1)^{\otimes n}} M^{\otimes n}.
$$
Set $S^\prime=\sum_{i=0}S^\prime_i$. Since $S^\prime_n\mid_{M^{\Box m}}=0$ for $m\neq n$, it follows from Lemma \ref{lemma:universal} that $$S^\prime:\operatorname{CoT}_H(M)\rightarrow \operatorname{CoT}_H(M)$$ is a coalgebra antimorphism.\\
$\bullet$ Step 3: $(S^\prime,\alpha^\prime,\beta^\prime)$ is a coquasi-antipode for $\operatorname{CoT}_H(M).$\\
By construction, (\ref{eq:Phialphabeta=e}) clearly holds, so it suffices to verify (\ref{eq:alpha,beta}). We first prove that all elements in $M$ satisfy (\ref{eq:alpha,beta}).
Indeed, \cite[Proposition 3.49]{BCPV19} yields
\begin{eqnarray*}
&&(\id*\beta^\prime*S^{\prime})(d_{jk}\cdot x_{1i})\\
&=&-\sum\limits_{l,t,u=1}^s \sum\limits_{v,m,n=1}^rd_{jl}\cdot \beta(d_{lt})((\beta(c_{mn})x_{1m}\cdot S(c_{nv}))\cdot S(d_{tu})f(d_{uk},c_{vi}))\\
&&+\sum\limits_{l,t=1}^s\sum\limits_{m,n=1}^r(d_{jl}\cdot  x_{1m})\cdot\beta(d_{lt}c_{mn})S(d_{tk}c_{ni})\\
&=&-\sum\limits_{d,l,t,u=1}^s \sum\limits_{a,b,v,m,n=1}^rd_{jl}\cdot \beta(d_{lt})\beta(c_{mn})( x_{1a}\cdot (S(c_{bv})S(d_{du}))\Phi(c_{am},S(c_{nb}), S(d_{td})) \\&& f(d_{uk},c_{vi}))+\sum\limits_{l,t=1}^s\sum\limits_{m,n=1}^r(d_{jl}\cdot x_{1m})\cdot\beta(d_{lt}c_{mn})S(d_{tk}c_{ni})\\
&=&-\sum\limits_{d,l,t,u=1}^s \sum\limits_{a,b,v,m,n=1}^rd_{jl}\cdot \beta(d_{lt})\beta(c_{mn})( x_{1a}\cdot (\sum\limits_{c,h=1}^r\sum\limits_{e,g=1}^sf(d_{de},c_{bc})S(d_{eg}c_{ch})
f^{-1}(d_{gu},c_{hv}))\\&&  \Phi(c_{am},S(c_{nb}), S(d_{td}))f(d_{uk},c_{vi}))+\sum\limits_{l,t=1}^s\sum\limits_{m,n=1}^r(d_{jl}\cdot x_{1m})\cdot\beta(d_{lt}c_{mn})S(d_{tk}c_{ni})\\
&=&-\sum\limits_{d,e,g,l,t=1}^s \sum\limits_{a,b,c,h,m,n=1}^r\beta(d_{lt})\beta(c_{mn})  f(d_{de},c_{bc})
\Phi(c_{am},S(c_{nb}), S(d_{td}))\\&&
(\sum\limits_{p,y=1}^r\sum\limits_{w,q=1}^s
(d_{jw}\cdot x_{1p})\cdot S(d_{qg}c_{yh})\Phi^{-1}(d_{wl}, c_{pa}, S(d_{eq}c_{cy})) )\varepsilon(d_{gk})\varepsilon(c_{hi})
\\&&+\sum\limits_{l,t=1}^s\sum\limits_{m,n=1}^r(d_{jl}\cdot x_{1m})\cdot\beta(d_{lt} c_{mn})S(d_{tk}c_{ni})\\
&=&-\sum\limits_{w,q =1}^s\sum\limits_{p,y=1}^r(d_{jw}\cdot x_{1p})\cdot S(d_{qk}c_{yi})(\sum\limits_{d,e,l,t=1}^s \sum\limits_{a,b,c,m,n=1}^r
\sum\limits_{u,v=1}^s\sum\limits_{g,h=1}^r f^{-1}(d_{eu},c_{cg})
\\&&\Phi^{-1}(d_{wl}, c_{pa}, S(c_{gh})S(d_{uv})) f(d_{vq}, c_{hy})\beta(d_{lt})
\Phi(c_{am},S(c_{nb}), S(d_{td}))\beta(c_{mn})  \\&&f(d_{de},c_{bc})
 )
+\sum\limits_{l,t=1}^s\sum\limits_{m,n=1}^r(d_{jl}\cdot x_{1m})\cdot\beta(d_{lt} c_{mn})S(d_{tk}c_{ni})\\
&=&-\sum\limits_{w,q =1}^s\sum\limits_{p,y=1}^r(d_{jw}\cdot x_{1p})\cdot S(d_{qk}c_{yi})(\sum\limits_{d,l,t,v=1}^s \sum\limits_{a,b,m,n,h=1}^r
\Phi^{-1}(d_{wl}, c_{pa}, S(c_{bh})S(d_{dv}))\\&&\beta(d_{lt})
\Phi(c_{am},S(c_{nb}), S(d_{td}))\beta(c_{mn}) f(d_{vq}, c_{hy}))\\&&+\sum\limits_{l,t=1}^s\sum\limits_{m,n=1}^r(d_{jl}\cdot x_{1m})\cdot\beta(d_{lt} c_{mn})S(d_{tk}c_{ni}).
\end{eqnarray*}
Note that by (\ref{eq:penta}), we have
\begin{eqnarray}\label{eq:penta2}
\Phi^{-1}(a,b_1,c_1d_1)\Phi(b_2,c_2,d_2)= \Phi(a_1b_1,c_1,d_1)\Phi^{-1}(a_2,b_2,c_2)\Phi^{-1}(a_3,b_3c_3,d_2).
\end{eqnarray}
By setting $a=d_{wl}, b=c_{pm},c=S(c_{nh}),d=S(d_{tv})$ in (\ref{eq:penta2}) and defining for any $g,h\in H$,
$$\delta(h, g) := \Phi(h_1 g_1, S(g_5), S(h_4)) \beta(h_3) \Phi^{-1}(h_2, g_2, S(g_4)) \beta(g_3),$$
 we obtain from \cite[Proposition 3.49]{BCPV19} that
\begin{eqnarray*}
&&\sum\limits_{d,l,t,v=1}^s \sum\limits_{a,b,m,n,h=1}^r
\Phi^{-1}(d_{wl}, c_{pa}, S(c_{bh})S(d_{dv}))\beta(d_{lt})
\Phi(c_{am},S(c_{nb}), S(d_{td}))\beta(c_{mn})\\&& f(d_{vq}, c_{hy})\\
&=&\sum\limits_{v=1}^s\sum\limits_{h=1}^r\delta(d_{wv}, c_{ph})f(d_{vq}, c_{hy})\\
&=&\beta(d_{wq}c_{py}).
\end{eqnarray*}
It follows that $$(\id*\beta^\prime*S^{\prime})(d_{jk}\cdot x_{1i})=0=\beta^\prime (d_{jk}\cdot x_{1i}).$$
Meanwhile, from \cite[Proposition 3.49]{BCPV19}, we also have
\begin{eqnarray*}
&&(S^\prime*\alpha^\prime*\id)(d_{jk}\cdot x_{1i})\\
&=&\sum\limits_{l,t=1}^s S(d_{jl}) \cdot \alpha(d_{lt})(d_{tk}\cdot x_{1i})
-\sum\limits_{l,t=1}^s\sum\limits_{m,n=1}^r\sum\limits_{u=1}^s\sum\limits_{v,w,y=1}^r((\beta(c_{wy})x_{1w}\cdot S(c_{yv}))\cdot S(d_{ju})\\&&f(d_{ul},c_{vm}))\cdot\alpha(d_{lt}c_{mn})(d_{tk}c_{ni})\\
&=&\sum\limits_{l,t=1}^{s}\sum\limits_{a,d=1}^s\sum\limits_{m=1}^r\alpha(d_{lt})(S(d_{al}) d_{td})\cdot x_{1m}\Phi^{-1}(S(d_{ja}),d_{dk},c_{mi})-\sum\limits_{d,l,t,u=1}^s\sum\limits_{a,b,m,n,v,w,y=1}^r \\&&\beta(c_{wy})f(d_{ul},c_{vm})\alpha(d_{lt}c_{mn})\Phi(c_{aw}, S(c_{yb}), S(d_{jd})) (x_{1a}\cdot (S(c_{bv})S(d_{du})))(d_{tk}c_{ni})\\
&=&\sum\limits_{a,d=1}^s\sum\limits_{m=1}^rx_{1m}\alpha(d_{ad}) \Phi^{-1}(S(d_{ja}),d_{dk},c_{mi})-\sum\limits_{d,l,t,u=1}^s\sum\limits_{a,b,m,n,v,w,y=1}^r \beta(c_{wy})f(d_{ul},c_{vm})\\
&&\alpha(d_{lt}c_{mn})\Phi(c_{aw}, S(c_{yb}), S(d_{jd})) (x_{1a}\cdot (\sum\limits_{g,h=1}^s\sum\limits_{c,e=1}^r f( d_{dg},c_{bc}) S(d_{gh}c_{ce})f^{-1}(d_{hu},c_{ev}) ) )\\&&(d_{tk}c_{ni})\\
&=&\sum\limits_{a,d=1}^s\sum\limits_{m=1}^rx_{1m}\alpha(d_{ad}) \Phi^{-1}(S(d_{ja}),d_{dk},c_{mi})-\sum\limits_{d,g,l,t=1}^s\sum\limits_{a,b,c,m,n,w,y=1}^r \beta(c_{wy})\alpha(d_{lt}c_{mn})\\
&&\Phi(c_{aw}, S(c_{yb}), S(d_{jd}))f( d_{dg},c_{bc})(x_{1a}\cdot S(d_{gl}c_{cm}))(d_{tk}c_{ni})\\
&=&\sum\limits_{a,d=1}^s\sum\limits_{m=1}^rx_{1m}\alpha(d_{ad}) \Phi^{-1}(S(d_{ja}),d_{dk},c_{mi})-\sum\limits_{d,f,g,l,q,t=1}^s\sum\limits_{a,b,c,e,m,n,u,w,y,z=1}^r \beta(c_{wy})\\
&&\alpha(d_{lt}c_{mn})\Phi(c_{aw}, S(c_{yb}), S(d_{jd}))f( d_{dg},c_{bc})(
 (x_{1e}\cdot (S(d_{fl}c_{um})(d_{tq}c_{nz})))\\
&&\Phi(c_{ea}, S(d_{gf}c_{cu}), d_{qk}c_{zi}) )\\
&=&
\sum\limits_{a,d=1}^s\sum\limits_{m=1}^rx_{1m}\alpha(d_{ad}) \Phi^{-1}(S(d_{ja}),d_{dk},c_{mi})-\sum\limits_{d,f,g,q=1}^s\sum\limits_{a,b,c,e,u,w,y,z=1}^r x_{1e} \beta(c_{wy})\\
&&\Phi(c_{aw}, S(c_{yb}), S(d_{jd}))f( d_{dg},c_{bc})
 \alpha(d_{fq}c_{uz})\Phi(c_{ea}, S(d_{gf}c_{cu}), d_{qk}c_{zi}) \\
 &=&
\sum\limits_{a,d=1}^s\sum\limits_{m=1}^rx_{1m}\alpha(d_{ad}) \Phi^{-1}(S(d_{ja}),d_{dk},c_{mi})-\sum\limits_{d,f,g,q=1}^s\sum\limits_{a,b,c,e,u,w,y,z=1}^r x_{1e} \beta(c_{wy})\\
&&\Phi(c_{aw}, S(c_{yb}), S(d_{jd}))f( d_{dg},c_{bc})
 \alpha(d_{fq}c_{uz})(\sum\limits_{p,h=1}^s\sum\limits_{m,n=1}^r
f^{-1}(d_{gp},c_{cm})
\\&&\Phi(c_{ea}, S(c_{mn})S(d_{ph}), d_{qk}c_{zi}) f(d_{hf},c_{nu}))\\
 &=&
\sum\limits_{a,d=1}^s\sum\limits_{m=1}^rx_{1m}\alpha(d_{ad}) \Phi^{-1}(S(d_{ja}),d_{dk},c_{mi})-\sum\limits_{d,h,q=1}^s\sum\limits_{a,b,e,n,w,y,z=1}^r x_{1e}\\
&&\Phi(c_{ea}, S(c_{bn})S(d_{dh}), d_{qk}c_{zi})
\Phi(c_{aw}, S(c_{yb}), S(d_{jd}))\beta(c_{wy})\gamma(d_{hq},c_{nz})
\end{eqnarray*}
Set $h=d_{jk}, g=c_{mi}$, it follows from (\ref{eq:penta}), (\ref{eq:alpha,beta}) and (\ref{eq:Phialphabeta=e}) that
\begin{eqnarray*}
&&\Phi(g_1, S(g_{5})S(h_2),h_4g_7)\Phi(g_2,S(g_4),S(h_1))\beta(g_3)\gamma(h_3, g_6)\\
&=&\Phi^{-1}(S(g_6),S(h_3),h_9g_{11})\Phi(g_1,S(g_5),S(h_2)(h_{10}g_{12}))\Phi(g_2S(g_4),S(h_1),h_{11}g_{13})\beta(g_3)\\
&&\Phi(S(g_8), S(h_5),h_7)\alpha(h_{6})\Phi^{-1}(S(g_7)S(h_4),h_8,g_{10})\alpha(g_9)\\
&=&\Phi^{-1}(S(g_4),S(h_2),h_8g_9)\Phi(g_1,S(g_3),S(h_1)(h_9g_{10}))\beta(g_2)\Phi(S(g_6),S(h_4),h_6)\alpha(h_5)\\
&&\Phi^{-1}(S(g_5)S(h_3),h_7,g_8)\alpha(g_7)\\
&=&\Phi^{-1}(S(g_4),S(h_3)h_5,g_6)\Phi^{-1}(S(h_2),h_6,g_7)\Phi(g_1,S(g_3),S(h_1)(h_7g_8))\beta(g_2)\alpha(h_4)\alpha(g_5)\\
&=&\Phi^{-1}(S(h_2),h_4,g_5)\Phi(g_1,S(g_3),S(h_1)(h_5g_6))\beta(g_2)\alpha(h_3)\alpha(g_4)\\
&=&\Phi^{-1}(S(h_4),h_6,g_5)\Phi(S(h_3),h_7,g_6)\Phi(g_1,S(g_3), (S(h_2)h_8)g_7)\Phi^{-1}(S(h_1),h_9,g_8)\\
&&\beta(g_2)\alpha(g_4)\alpha(h_5)\\
&=&\Phi(g_1,S(g_3),(S(h_2)h_4)g_5)\Phi^{-1}(S(h_1),h_5,g_6)\beta(g_2)\alpha(g_4)\alpha(h_3)\\
&=&\alpha(h_2)\Phi^{-1}(S(h_1),h_3,g).
\end{eqnarray*}
As a result,  $$(S^\prime*\alpha^\prime*\id)(d_{jk}\cdot x_{1i})=0=\alpha^{\prime}(d_{jk}\cdot x_{1i}).$$ Therefore, all elements in $M$ satisfy equation (\ref{eq:alpha,beta}). For any $\sum m^1\otimes m^2\otimes \cdots \otimes m^n\in M^{\Box n}$, we have
\begin{eqnarray*}
   && (\id*\beta^\prime*S^\prime)(\sum m^1\otimes m^2\otimes \cdots \otimes m^n)\\
   &=& \sum m^1_{-2} \beta( m_{-1}^1) S^\prime(m_{0}^1\otimes m^2\otimes \cdots \otimes m^n) +\sum m^1_0 \beta(m^1_1)S^\prime(m^2\otimes \cdots \otimes m^n)+\cdots\\&&+\sum (m^1\otimes m^2\otimes \cdots \otimes m^{n}_0)\beta( m^{n}_1) S( m^n_2)
   \\
   &=& \sum m_{-n-1}^1 S^\prime_1(m^n) \otimes m_{-n}^1 S^\prime_1(m^{n-1})\otimes \cdots \otimes m_{-3}^1 S^\prime_1(m^2) \otimes m_{-2}^1\beta(m_{-1}^1) S^\prime_1(m_0^1)\\
   &&+ \sum m_{-n+1}^1 S^\prime_1(m^n)\otimes m^1_{-n+2}S^\prime_1(m^{n-1}) \otimes \cdots \otimes m_{-1}^1 S^\prime_1(m^2_0) \otimes m_{0}^1\beta(m_1^1) S(m^2_{-1}) \\&&+
   \sum m_0^1 S(m^n_1)\otimes m_1^1 S^\prime_1(m^n_0) \otimes\cdots \otimes m_{n-2}^1 S^\prime_1 m^3\otimes  m^1_{n-1} \beta(m_{n}^1) S^\prime (m^2)+\cdots \\
   &&+\sum m^1 S(m_{n+1}^n) \otimes m^2 S(m_{n}^1)\otimes m^{n-1} S(m^n_{3}) \otimes m^n_0 \beta(m^n_1)S(m^n_{2}).
\end{eqnarray*}
By the definition of $M^{\Box n}$, one can show that $$(\id*\beta^\prime*S^\prime)(\sum m^1\otimes m^2\otimes \cdots \otimes m^n)=0=\beta^\prime(\sum m^1\otimes m^2\otimes \cdots \otimes m^n).$$
A similar argument yields
$$(S^\prime*\alpha^\prime*\id)(\sum m^1\otimes m^2\otimes \cdots \otimes m^n)=0=\alpha^\prime(\sum m^1\otimes m^2\otimes \cdots \otimes m^n).$$ This completes the proof.
\end{proof}

In the Hopf algebra case, if the coradical $H_0$ of a bialgebra $H$ is a Hopf subalgebra, then by \cite[Lemma 5.2.10]{Mon93}, the bialgebra $H$ also admits an antipode. However, in the coquasi-Hopf algebra case, the proof of \cite[Lemma 5.2.10]{Mon93} fails. In fact, even if $\alpha$ and $\beta$ are convolution invertible, we cannot obtain the above conclusion by mimicking the proof of \cite[Lemma 5.2.10]{Mon93}, let alone the fact that we do not know whether $\alpha$ and $\beta$ are convolution invertible.

\begin{remark}\rm\label{rm:multi}
For the reader's convenience, we now describe explicitly the multiplication on $\operatorname{CoT}_{H}(\k(\mathrm{Q}_1, \mathcal{S}))$ from the proof of Theorem~\ref{thm:generalizedHopfquiver}, as it applies to any $\mathcal{M}$-path $c_{i1}^{t(\alpha_n)}\alpha_n\alpha_{n-1}\cdots \alpha_1 c_{1j}^{s(\alpha_{1})}$ of length $n\geq 1$ and any $\mathcal{M}$-path $c_{k1}^{t(\beta_m)}\beta_m\beta_{m-1}\cdots \beta_1 c_{1l}^{s(\beta_{1})}$ of length $m\geq 1$.
Let $D_l^n$ denote the set of all $n$-sequences consisting of $(n-l)$ zeros and $l$ ones. Then $\mid D_l^n\mid = \binom{n}{l}$. Given $d=(d_1,d_2,\cdots,d_{m+n}) \in D_{n}^{m+n}$, let $\bar{d} \in D^{n+m}_{m}$ be the complement sequence obtained from $d$ by replacing each $0$ with $1$ and each $1$ with $0$. Note that $\Delta^{m+n-1}(c_{i1}^{t(\alpha_n)}\alpha_n\alpha_{n-1}\cdots \alpha_1 c_{1j}^{s(\alpha_{1})})$ contains a maximal component as a summand belonging to $\k(\mathrm{Q}_{d_1}, \mathcal{S})\otimes \k(\mathrm{Q}_{d_2}, \mathcal{S})\otimes \cdots \otimes \k(\mathrm{Q}_{d_{m+n}}, \mathcal{S})$, which we denote by $d(c_{i1}^{t(\alpha_n)}\alpha_n\alpha_{n-1}\cdots \alpha_1 c_{1j}^{s(\alpha_{1})})$.
Then we have
\begin{eqnarray*}
&&(c_{i1}^{t(\alpha_n)}\alpha_n\alpha_{n-1}\cdots \alpha_1 c_{1j}^{s(\alpha_{1})})\cdot (c_{k1}^{t(\beta_m)}\beta_m\beta_{m-1}\cdots \beta_1 c_{1l}^{s(\beta_{1})})\\
&=&\sum\limits_{d\in D_{n}^{m+n}} d(c_{i1}^{t(\alpha_n)}\alpha_n\alpha_{n-1}\cdots \alpha_1 c_{1j}^{s(\alpha_{1})}) \cdot^\prime \bar{d}(c_{k1}^{t(\beta_m)}\beta_m\beta_{m-1}\cdots \beta_1 c_{1l}^{s(\beta_{1})}),
\end{eqnarray*}
where $\cdot^\prime$ denotes the multiplication analogous to that on the tensor product of algebras induced by the Majid bimodule action. In particular, when $\k(\mathrm{Q},\mathcal{S})$ is pointed, i.e., $H = \k G$, the multiplicative formula above agrees with the one given in \cite[Subsection 2.4]{HLY11}.
\end{remark}
For a generalized Hopf quiver $\mathrm{Q}$, a coquasi-Hopf subalgebra of $\k(\mathrm{Q}, \mathcal{S})$ is called \textit{large} if it contains $ \k(\mathrm{Q}_0, \mathcal{S})\oplus \k(\mathrm{Q}_1, \mathcal{S})$.
Next, we can give the generalized dual Gabriel's theorem for coquasi-Hopf algebras with the dual Chevalley property.
\begin{theorem}\label{thm:Gabriel}
Let $H$ be a coquasi-Hopf algebra over $\k$ with the dual Chevalley property, and let $\mathrm{Q}(H)=(\mathcal{S},\mathcal{P})$ be its link quiver. Then there exists a coquasi-Hopf algebra structure on $\k(\mathrm{Q}(H),\mathcal{S})$ such that $\operatorname{gr}(H)$ can be embedded into it as a large coquasi-Hopf subalgebra.
\end{theorem}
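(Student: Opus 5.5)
The plan is to combine the coalgebra-level embedding of Proposition \ref{prop:CoalgGabriel} with the construction in the proof of Theorem \ref{thm:generalizedHopfquiver}. We may assume $H$ is not cosemisimple; otherwise $\mathrm{Q}(H)$ has no arrows and $\operatorname{gr}H=H_0=\k(\mathrm{Q}(H),\mathcal{S})$. By Example \ref{example:coquasiChevalley}, $\operatorname{gr}H$ is a graded coquasi-Hopf algebra with the dual Chevalley property, with coradical $H_0$ and associated link quiver $\mathrm{Q}(H)$. By Lemma \ref{lem:Hopflink=Hopfquiver}, $\mathrm{Q}(H)$ is a generalized Hopf quiver attached to a cosemisimple datum of $H_0$.

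The first step is to choose the Majid bimodule structure. By Example \ref{example:H1/H0}, $M:=H_1/H_0$ is an $H_0$-Majid bimodule, via the actions induced by the multiplication of $\operatorname{gr}H$ in degrees $(0,1)$ and $(1,0)$. The proof of Proposition \ref{prop:CoalgGabriel} (through \cite[Lemma 2.8, Corollary 2.11]{YLL24}) gives an $H_0$-bicomodule isomorphism $M\cong\k(\mathrm{Q}(H)_1,\mathcal{S})$. Transporting the actions along this isomorphism makes $\k(\mathrm{Q}(H)_1,\mathcal{S})$ an $H_0$-Majid bimodule. By the correspondence in Theorem \ref{thm:generalizedHopfquiver}, this determines a graded coquasi-Hopf algebra structure on $\k(\mathrm{Q}(H),\mathcal{S})\cong\operatorname{CoT}_{H_0}(M)$ (Lemma \ref{lem:cotensorcoalg}). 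Here $m'$ is built from $m'_0=m|_{H_0}$ and $m'_1=p_L\oplus p_R$, and $\Phi'$, $S'$, $\alpha'$, $\beta'$ extend $\Phi$, $S$, $\alpha$, $\beta$ by zero off $H_0$.

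The second step is to show that the graded coalgebra embedding $\psi:\operatorname{gr}H\hookrightarrow\operatorname{CoT}_{H_0}(M)$ of Proposition \ref{prop:CoalgGabriel} preserves this structure.

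For multiplicativity, both $\psi\circ m_{\operatorname{gr}}$ and $m'\circ(\psi\otimes\psi)$ are coalgebra maps $\operatorname{gr}H\otimes\operatorname{gr}H\to\operatorname{CoT}_{H_0}(M)$. By the uniqueness part of Lemma \ref{lemma:universal}, it suffices to compare their components in degrees $0$ and $1$.
\begin{itemize}
\item In degree $0$, both components are the multiplication of $H_0$ applied to the degree-$0$ parts, because $\operatorname{gr}H$ is graded and $\psi_0$ is the projection.
\item In degree $1$, both are given by the actions of $H_0$ on $H_1/H_0$ in degrees $(0,1)$ and $(1,0)$. This holds by our choice of Majid bimodule structure and because $\psi_1$ is the projection.
\end{itemize}
The unit is preserved trivially. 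The reassociator satisfies $\Phi'\circ\psi^{\otimes3}=\operatorname{gr}\Phi$, since both vanish outside $H_0^{\otimes3}$ (Example \ref{example:coquasiChevalley}) and agree with $\Phi$ on it. The same holds for $\alpha$ and $\beta$.

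For the antipode, I would again use Lemma \ref{lemma:universal}, applied to the coalgebra maps $\psi\circ S_{\operatorname{gr}}$ and $S'\circ\psi$ from $(\operatorname{gr}H)^{cop}$. These agree in degree $0$. In degree $1$, $S_{\operatorname{gr}}$ restricted to $H_1/H_0$ must coincide with the map $T$ of Step 2 of the proof of Theorem \ref{thm:generalizedHopfquiver}. This is the step I expect to be the main obstacle. Since the antipode is only unique up to a convolution-invertible element, I would first normalize $(S_{\operatorname{gr}},\alpha,\beta)$ on $\operatorname{gr}H$ so that it restricts to the given $(S,\alpha,\beta)$ on $H_0$. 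I would then show that $T$ is forced by the identity $(\id*\beta*S)(x)=\beta(x)1=0$ on $(1,\C)$-primitive matrices, which is how $T(x_{1i})$ was defined. It extends to $d_{jk}\cdot x_{1i}$ by the twisted anti-multiplicativity of the coquasi-antipode, with the correction $f$ taken from \cite[Proposition 3.49]{BCPV19}. If matching $S$ exactly proves delicate, a fallback suffices: $\psi(\operatorname{gr}H)$ is a subcoalgebra closed under $m'$, containing $1$ and with the restricted $\Phi',\alpha',\beta'$, so it is a coquasi-bialgebra isomorphic to $\operatorname{gr}H$, which already carries a coquasi-antipode.

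Finally, $\psi$ is injective by Proposition \ref{prop:CoalgGabriel}, and its image contains $H_0\oplus\k(\mathrm{Q}(H)_1,\mathcal{S})$ because $\psi_0$ and $\psi_1$ are isomorphisms in degrees $0$ and $1$. Hence $\operatorname{gr}H$ embeds into $\k(\mathrm{Q}(H),\mathcal{S})$ as a large coquasi-Hopf subalgebra.
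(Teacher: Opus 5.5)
Your proposal is correct and follows the same approach as the paper. The paper applies Lemma \ref{lem:Hopflink=Hopfquiver} and then Theorem \ref{thm:generalizedHopfquiver}, using the $H_0$-Majid bimodule structure on $\k(\mathrm{Q}(H)_1,\mathcal{S})$ induced from $H_1/H_0$; it then uses Lemma \ref{lemma:universal} to get $\psi m_{\operatorname{gr}H}=m'(\psi\otimes\psi)$ and $\psi S_{\operatorname{gr}H}=S'\psi$, with $\Phi',\alpha',\beta'$ matching by construction.

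The antipode step you flag as the main obstacle does go through as you describe. In $\operatorname{gr}H$ the axiom $(\id*\beta*S)(x_{1i})=0$ gives exactly the paper's formula for $T(x_{1i})$, and twisted anti-multiplicativity together with $f(h,1)=\alpha(1)\varepsilon(h)$ gives its formula on $d_{jk}\cdot x_{1i}$. So you do not need the fallback, which in any case would not by itself show that the image is closed under $S'$.
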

\begin{proof}
Lemma \ref{lem:Hopflink=Hopfquiver} yields $\mathrm{Q}(H)$ is a generalized Hopf quiver associated with a cosemisimple datum of $H_0$. Then, by Theorem \ref{thm:generalizedHopfquiver}, $\k(\mathrm{Q}(H),\mathcal{S})$ admits a coquasi-Hopf algebra structure whose multiplication is induced by the $H_0$-Majid-bimodule structure on $\k(\mathrm{Q}_1,\mathcal{S})$.
We only need to prove that the $\psi$ constructed in the proof of Proposition \ref{prop:CoalgGabriel} is a coquasi-Hopf algebra morphism. Consider the coalgebra maps $\psi m_{\operatorname{gr}H}, m^\prime(\psi\otimes \psi), \psi S_{\operatorname{gr}H}$ and $S^\prime \psi $. Using  Lemma \ref{lemma:universal}, one can show that $\psi m_{\operatorname{gr}H}= m^\prime(\psi\otimes \psi)$ and $\psi S_{\operatorname{gr}H}=S^\prime \psi$. Moreover, from the construction of
 $\alpha^\prime, \beta^\prime, \Phi^\prime$, it follows that  $\psi$ is a coquasi-Hopf algebra morphism.
\end{proof}

In particular, the above conclusions also yield the following results in the Hopf algebra setting, which can be regarded as a generalization of \cite[Theorem 3.3]{CR02} and \cite[Theorem 4.5]{VZ04}.
\begin{corollary}\label{coro:Hopfversion}
\begin{itemize}
\item[(1)] Let $\mathrm{Q}=(\mathrm{Q}_0, \mathrm{Q}_1)$ be a quiver, and let $\mathcal{S}=\{C_i\mid i\in\mathrm{Q}_0\}$ be a family of simple coalgebras indexed by the vertex set of $\mathrm{Q}$.
Then the modified generalized path coalgebra $\k(\mathrm{Q}, \mathcal{S})$ admits a graded Hopf algebra structure with respect to the length grading if and only if $H = \sum_{i\in \mathrm{Q}_0} C_i$ is a cosemisimple Hopf algebra and $\mathrm{Q}$ is a generalized Hopf quiver associated with a cosemisimple datum of $H$. Moreover, such structures are in one-to-one correspondence with the set of left-left Yetter-Drinfeld module structures over $H$ on $\k(\mathrm{Q}_1,\mathcal{S}){}^{co H}$, or equivalently, with the set of $H$-Hopf bimodule structures on $\k(\mathrm{Q}_1,\mathcal{S})$.
\item[(2)]
Let $H$ be a Hopf algebra over $\k$ with the dual Chevalley property, and let $\mathrm{Q}(H)=(\mathcal{S},\mathcal{P})$ be its link quiver. Then there exists a Hopf algebra structure on $\k(\mathrm{Q}(H),\mathcal{S})$ such that $\operatorname{gr}(H)$ can be embedded into it as a large Hopf subalgebra.
\end{itemize}
\end{corollary}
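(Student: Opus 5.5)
Corollary~\ref{coro:Hopfversion} is the specialization of Theorems~\ref{thm:generalizedHopfquiver} and~\ref{thm:Gabriel} to trivial reassociator. I would prove it by running those proofs again with $\Phi=\varepsilon\otimes\varepsilon\otimes\varepsilon$ and $\alpha=\beta=\varepsilon$, and checking at each step that the structures produced are genuinely Hopf. A Hopf algebra is exactly a coquasi-Hopf algebra with this trivial data. Then ${}^H_H\mathscr{M}^H_H$ is the category of Hopf bimodules and ${}^H_H\mathcal{YD}$ is the usual Yetter--Drinfeld category, so Proposition~\ref{prop:Majidmod=YD} becomes the classical Schauenburg equivalence.

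\emph{Part (1), ``only if''.} Suppose $\k(\mathrm{Q},\mathcal{S})$ is a graded Hopf algebra. Its degree-zero part $H=\bigoplus_i C_i$ is a subalgebra, and since the antipode preserves the coradical it is a Hopf subalgebra. It is cosemisimple by construction. By Lemmas~\ref{lem:k(Q,S)link} and~\ref{lem:Hopflink=Hopfquiver}, $\mathrm{Q}$ is a generalized Hopf quiver for a cosemisimple datum of $H$. This is an instance of a Hopf algebra with the dual Chevalley property, so those lemmas apply.

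\emph{Part (1), ``if''.} Run Step~1 of the proof of Theorem~\ref{thm:generalizedHopfquiver} with $\Phi$ trivial. This gives $\Phi'$ trivial, and the multiplication $m'$ is associative, so $\operatorname{CoT}_H(M)$ is a graded bialgebra. For the antipode there are two routes.
\begin{itemize}
\item[(a)] Take $\alpha'=\beta'=\varepsilon$ in Steps~2--3. One must check that the extensions of $\alpha,\beta$ by zero are then the counit. This holds because $\varepsilon$ already vanishes on $M^{\Box n}$ for $n\ge1$, and the $S'$ built there is an ordinary antipode.
\item[(b)] Invoke \cite[Lemma 5.2.10]{Mon93} directly. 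A bialgebra whose coradical is a Hopf subalgebra has an antipode. As remarked after the theorem, this argument is valid precisely in the Hopf setting.
\end{itemize}
Route (b) avoids the long computations. For the bijection of structures, I would apply the equivalence of Proposition~\ref{prop:Majidmod=YD} with trivial $\Phi$. Combined with Lemma~\ref{lemma:universal}, it shows that graded multiplications on $\operatorname{CoT}_H(M)$ extending that of $H$ correspond to the pairs $(p_L,p_R)$, i.e.\ to Hopf bimodule structures on $M$. These in turn correspond to Yetter--Drinfeld structures on $M^{co H}$.

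\emph{Part (2).} By Lemma~\ref{lem:Hopflink=Hopfquiver} and part~(1), $\k(\mathrm{Q}(H),\mathcal{S})$ carries a graded Hopf structure whose multiplication is induced by the Hopf bimodule $H_1/H_0\cong\k(\mathrm{Q}(H)_1,\mathcal{S})$ (Example~\ref{example:H1/H0}). Take the embedding $\psi$ from Proposition~\ref{prop:CoalgGabriel}. Exactly as in Theorem~\ref{thm:Gabriel}, the maps $\psi m_{\operatorname{gr}H}$ and $m'(\psi\otimes\psi)$ are coalgebra maps that agree in degrees $0$ and $1$, so by Lemma~\ref{lemma:universal} they are equal. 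A bialgebra map between Hopf algebras automatically commutes with the antipodes, so $\psi$ is a Hopf embedding. Its image contains degrees $0$ and $1$, so it is a large Hopf subalgebra.

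\emph{Main obstacle.} The delicate point is the ``only if'' direction in the Hopf case: one must verify that the Yetter--Drinfeld structure used in Lemma~\ref{lem:Hopflink=Hopfquiver} comes from the trivial-$\Phi$ version of Proposition~\ref{prop:Majidmod=YD}. This needs care but is direct. Since no new computation beyond specialization seems required, I expect the proof to be short, citing the two theorems together with the remark on $\Phi=\varepsilon^{\otimes3}$, $\alpha=\beta=\varepsilon$.
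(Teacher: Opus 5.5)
Your proposal is correct and matches the paper. The paper gives no separate argument; it obtains this corollary by specializing Theorems~\ref{thm:generalizedHopfquiver} and~\ref{thm:Gabriel} to $\Phi=\varepsilon\otimes\varepsilon\otimes\varepsilon$, $\alpha=\beta=\varepsilon$, which is your route (a). Your two shortcuts are sound simplifications available only because $\Phi$ is trivial: obtaining the antipode from \cite[Lemma 5.2.10]{Mon93} (the paper itself remarks this works only in the Hopf setting), and deducing $\psi S_{\operatorname{gr}H}=S'\psi$ from uniqueness of convolution inverses rather than from Lemma~\ref{lemma:universal}.
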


\begin{remark}\rm
Let $H$ be a cosemisimple coquasi-Hopf algebra and $V$ a finite-dimensional left-left Yetter-Drinfeld module over $H$. Consider the bosonization $B(V)\#H$ of $H$ and the Nichols algebra $B(V)$ of $V$. Then, by Theorem \ref{thm:Gabriel}, there exists a generalized Hopf quiver $\mathrm{Q}$ associated with a cosemisimple datum $(H, \{C_i\}_{i\in I}, (n_1, n_2, \cdots), \{\alpha_{ij}^t\}_{i,j,t\in I})$ such that $B(V)\#H$ is a large coquasi-Hopf subalgebra of $\k(\mathrm{Q},\mathcal{S})$, where $(n_1,n_2,\cdots)$ is the dimension vector of $V$. Moreover, if there exists a left-left Yetter-Drinfeld module $U$ over $H$ with $\underline{\dim}(U)=(0,0,\cdots , n_k,0,0,\cdots)$, then by Corollary \ref{coro:cpdspecial} (4) and Theorem \ref{thm:generalizedHopfquiver}, both $C_k$ and $S(C_k)$ lie in the center of $\mathbb{Z}\mathcal{S}$.
\end{remark}

Combining Proposition \ref{prop:(H1)0} and Theorem \ref{thm:generalizedHopfquiver}, we can characterize when a generalized Hopf quiver is connected.
\begin{corollary}\label{coro:quiverconnect}
Let $H$ be a cosemisimple coquasi-Hopf algebra over $\k$ with a cosemisimple datum $(H, \{C_i\}_{i\in I}, (n_1, n_2, \cdots), \{\alpha_{ij}^t\}_{i,j,t\in I})$, and let $\mathrm{Q}$ be a generalized Hopf quiver with vertex set $\{C_i \mid i \in I\}$, and, for all $i, j \in I$, $\sum_{k\in I}n_k \alpha_{jk}^i$ arrows from $C_j$ to $C_i$, where $(n_1, n_2, \cdots) \in \Lambda_H$. Then $\mathrm{Q}$ is connected if and only if $H$ is generated by $ \bigcup_{i\in I, n_i\neq 0} C_i\oplus S(C_i)$.
\end{corollary}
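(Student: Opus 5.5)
The plan is to reduce the statement to Proposition \ref{prop:(H1)0} by realizing $\mathrm{Q}$ as the link quiver of a concrete coquasi-Hopf algebra with the dual Chevalley property. Since $(n_1,n_2,\cdots)\in\Lambda_H$, Theorem \ref{thm:generalizedHopfquiver} gives a graded coquasi-Hopf algebra structure on $A:=\k(\mathrm{Q},\mathcal{S})$ with $A_0=H$. By Lemma \ref{lem:k(Q,S)link} its link quiver is $\mathrm{Q}$. If all $n_i=0$, then $\mathrm{Q}$ has no arrows and $A=H$ is cosemisimple. In that case $\mathrm{Q}$ is connected iff $H=\k1$, which is the subalgebra generated by the empty family, so this degenerate case can be handled directly. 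Otherwise $A$ is non-cosemisimple with the dual Chevalley property, and Proposition \ref{prop:(H1)0} applies.

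The next step is to identify ${}^1\mathcal{S}$ for $A$ in terms of the datum. The number of arrows from $C_j$ to $\k1$ is $\sum_k n_k\alpha_{jk}^{1}$. Since $\alpha_{jk}^1\neq 0$ iff $C_j=S(C_k)$ (Remark \ref{rm:1once}), and then it equals $1$, this number is $n_{k}$ with $C_k=S(C_j)$. Hence $\mathcal{S}^1=\{S(C_k)\mid n_k\neq0\}$. By Lemma \ref{lem:P1prime>0}(1), $D\in{}^1\mathcal{S}$ iff $S(D)\in\mathcal{S}^1$, so ${}^1\mathcal{S}=\{S^{-1}S(C_k)\}$ up to the $S^2$-invariance of Corollary \ref{coro:S2C=C}, i.e. ${}^1\mathcal{S}=\{C_k\mid n_k\neq 0\}$. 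I would double-check the orientation convention here, which direction counts as ``ending at $\k1$'', against Lemma \ref{lem:Hopflink=Hopfquiver}. Either way, the set $\{C\}\cup\{S(C)\}$ for $C\in{}^1\mathcal{S}$ is $\bigcup_{n_i\neq0}C_i\oplus S(C_i)$, which is symmetric under $S$.

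With this in hand, Proposition \ref{prop:(H1)0} says that $\mathrm{Q}$ is connected iff every simple $D\subseteq H$ occurs in some product $C_1^{\lambda_1}\cdots C_n^{\lambda_n}$ with $C_i\in{}^1\mathcal{S}$. It remains to show this is equivalent to $H$ being generated, as an algebra, by $X:=\bigcup_{n_i\neq0}C_i+S(C_i)$. The subalgebra $B$ generated by $X$ is spanned by $1$ and products of elements of $X$. Since products of subcoalgebras are subcoalgebras (multiplication is a coalgebra map) and $H$ is cosemisimple, $B$ is the sum of the simple subcoalgebras contained in the products $C_1^{\lambda_1}\cdots C_n^{\lambda_n}$, together with $\k1$.

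For the equivalence, note that $H=B$ iff every simple $D$ lies in $B$. Because a sum of subcoalgebras containing a simple subcoalgebra $D$ must contain it in one summand's simple decomposition (cosemisimplicity, direct sum of simple components), this happens iff each $D$ appears in some such product with nonzero coefficient in $\mathbb{Z}\mathcal{S}$. For $D=\k1$, it appears in $C\,S(C)$. The main obstacle is this bookkeeping. In particular, I must justify that the simple constituents of the subcoalgebra $C_1^{\lambda_1}\cdots C_n^{\lambda_n}\subseteq H$ are exactly the basis elements with nonzero coefficient in the corresponding product in $\mathbb{Z}\mathcal{S}$, which follows from the definition (\ref{eq:multi}) via coefficient coalgebras of tensor products of comodules, Lemma \ref{lem:ZS=gr}. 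I would also confirm the ${}^1\mathcal{S}$ identification step.
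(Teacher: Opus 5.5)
Your proposal is correct and is essentially the paper's argument, which just combines Theorem~\ref{thm:generalizedHopfquiver} (so that $\mathrm{Q}$ is the link quiver of the graded coquasi-Hopf algebra $\k(\mathrm{Q},\mathcal{S})$, which is non-cosemisimple because $\Lambda_H$ excludes the zero vector, so your all-$n_i=0$ case never arises) with Proposition~\ref{prop:(H1)0}; your translation of ``every $D$ occurs in some $C_1^{\lambda_1}\cdots C_n^{\lambda_n}$'' into algebra generation is the bookkeeping the paper leaves implicit. The orientation slip you suspected is real but harmless: the arrows $C_j\to\k1$ give ${}^1\mathcal{S}=\{S(C_k)\mid n_k\neq 0\}$, while the $n_k$ arrows $\k1\to C_k$ give $\mathcal{S}^1=\{C_k\mid n_k\neq0\}$, and since $S^2(C)=C$ the set $\bigcup_{C\in{}^1\mathcal{S}}C\cup S(C)$ equals $\bigcup_{n_i\neq 0}C_i\cup S(C_i)$ either way.
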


Let $H$ be a coquasi-Hopf algebra over $\k$. Recall from \cite[Definition 2.3]{HLYY20} that a convolution-invertible linear map
$J:\;H\otimes H\to \k$
is called a \textit{twisting} on $H$ if
$J(h,1)=\varepsilon(h)=J(1,h)$
for all $h\in H$. Given a twisting $J$, we can define
a new coquasi-Hopf algebra $H^{J}$ which coincides with $H$ as a coalgebra, while the multiplication $``\circ^J"$ on $H^{J}$ is given by
\begin{equation}
 a\circ^J b:=J(a_1,b_1)a_2b_2J^{-1}(a_3,b_3)
\end{equation}
for all $a,b\in H$. The reassociator $\Phi^{J}$ and the coquasi-antipode $(S^J,\alpha^J,\beta^{J})$ are given as:
$$\Phi^J(a,b,c)=J(b_1,c_1)J(a_1,b_2c_2)\Phi(a_2,b_3,c_3)J^{-1}(a_3b_4,c_4)J^{-1}(a_4,b_5),$$
$$S^J=S,\;\;\;\;\alpha^J(a)=J^{-1}(S(a_1),a_3)\alpha(a_2),\;\;\;\;\beta^J(a)=J(a_1,S(a_3))\beta(a_2)$$
for all $a,b,c \in H$. Two coquasi-Hopf algebras $H$ and $H^\prime$ are called \textit{twist equivalent} if there is a twisting $J$ on $H$ such that we have a coquasi-Hopf algebra isomorphism
$H^{J}\cong H^\prime.$ A coquasi-Hopf algebra $H$ is called \textit{genuine} if it is not twist equivalent to a Hopf algebra.
\begin{example}\rm\label{ex:Ku}
According to \cite[Theorem 6.1]{NS08}, there are exactly four twisted-inequivalent $8$-dimensional quasi-Hopf algebras over $\mathbb{C}$ with five simple objects $\{a_1, a_2, a_3, a_4, m\}$ and fusion rules: $$\{a_1, a_2, a_3, a_4\} \cong \mathbb{Z}_2 \times \mathbb{Z}_2,\;\; m^2 \cong \sum\limits_{i=1}^4 a_i,\;\; m a_i \cong a_i m \cong m,\;\; i=1,\cdots,4.$$ Their representation categories are Tambara-Yamagami categories. These algebras are $Q_8$, $D_8$, the Kac algebra $K$, and its twist version $K_u$ (see \cite[Section 6]{NS08}). Hence $(K_u)^*$ is a genuine cosemisimple coquasi-Hopf algebra.
\end{example}
\begin{corollary}\label{coro:genuine}
Given a genuine cosemisimple coquasi-Hopf algebra $H$ and a generalized Hopf quiver $\mathrm{Q}$ associated with a cosemisimple datum of $H$, one can obtain a genuine coquasi-Hopf algebra structure on $\k(\mathrm{Q},\mathcal{S})$.
\end{corollary}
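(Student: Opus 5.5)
The plan is to apply Theorem \ref{thm:generalizedHopfquiver} to get a graded coquasi-Hopf structure on $\k(\mathrm{Q},\mathcal{S})\cong\operatorname{CoT}_H(M)$, with $M=\k(\mathrm{Q}_1,\mathcal{S})$. Its degree-zero part is $H$ with the original reassociator $\Phi$, which is genuine. The task is then to prove by contradiction that this graded algebra, call it $A$, is also genuine.

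Suppose there were a twisting $J$ on $A$ and a Hopf algebra $B$ with a coquasi-Hopf isomorphism $A^J\cong B$.

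\textbf{Step 1.} I would show that $H=A_0$, the coradical of $A$, is preserved by the isomorphism. Twisting does not change the coalgebra, so $A^J$ and $A$ have the same coradical $H$. A coquasi-Hopf isomorphism is in particular a coalgebra isomorphism, so it maps coradical onto coradical. Hence $H$ is sent onto $B_0$, the coradical of $B$.

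\textbf{Step 2.} I would check that $H$ is closed under the twisted multiplication $a\circ^J b=J(a_1,b_1)a_2b_2J^{-1}(a_3,b_3)$. This holds because for $a,b\in H$ all Sweedler components stay in $H$, and $H$ is a subalgebra of $A$. The restriction $J|_{H\otimes H}$ is still normalized. I also need it to be convolution invertible on $H\otimes H$. For this I would use that restricting to a subcoalgebra is an algebra map of convolution algebras, i.e.\ $(J|)*(J^{-1}|)=(J*J^{-1})|=\varepsilon$. Hence $J|_{H\otimes H}$ is a twisting on $H$.

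The twisted reassociator restricts correctly as well: $\Phi^J|_{H^{\otimes 3}}=(\Phi|_H)^{J|_H}$. So $(A^J)_0=H^{J|_H}$ as coquasi-bialgebras, and Step 1 then gives a coquasi-bialgebra isomorphism $H^{J|_H}\cong B_0$.

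\textbf{Step 3.} I would identify $B_0$ as a Hopf algebra. Its reassociator is trivial. It is closed under multiplication because it is the image of the subalgebra $H^{J|_H}$ under the isomorphism. A coquasi-antipode of $H^{J|}$ transports to $B_0$; with trivial $\Phi$ this makes $B_0$ a Hopf algebra. (Equivalently, $B_0$ is a cosemisimple bialgebra whose comodule category is rigid, which is inherited from $H$ through Lemma \ref{lem:cosstensor}.) Therefore $H$ is twist equivalent to the Hopf algebra $B_0$, contradicting the genuineness of $H$.

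\textbf{Main obstacle.} The delicate point is Step 3, making sure $B_0$ is really a Hopf algebra and not just a bialgebra with trivial reassociator. I would handle it first by transporting the coquasi-antipode through the isomorphism, and fall back on the rigidity argument above if that is not clean. The remaining checks are routine.
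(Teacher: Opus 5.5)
Your proposal is correct and follows the paper's argument: restrict the twisting to $H\otimes H$, observe that $H^{J|_{H\otimes H}}$ is isomorphic to the coradical of the Hopf algebra, and conclude that this contradicts the genuineness of $H$. Your write-up spells out the routine verifications that the paper leaves implicit. For your Step 3, no rigidity fallback is needed: with trivial reassociator, the axiom (\ref{eq:Phialphabeta=e}) reduces to $\alpha*\beta=\beta*\alpha=\varepsilon$, and then (\ref{eq:alpha,beta}) shows that $\beta*S*\alpha$ is a genuine antipode.
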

\begin{proof}
Suppose, for contradiction, that $\k(\mathrm{Q},\mathcal{S})$ with the coquasi-Hopf algebra structure constructed in the proof of Theorem \ref{thm:generalizedHopfquiver} is not genuine. Then there exist a twisting $J$ and a Hopf algebra $H^\prime$ such that $\k(\mathrm{Q},\mathcal{S})^{J}\cong H^\prime.$ Define $J^\prime:=J\mid_{H\otimes H}$. Then $H^{J^\prime}$ is isomorphic to $H^\prime_0$, which is a contradiction.
\end{proof}

The bijectivity of the coquasi-antipode constructed in the proof of Theorem \ref{thm:generalizedHopfquiver} is proved as follows.
\begin{proposition}\label{prop:k(Q,S)antipodebijective}
Let $H$ be a cosemisimple coquasi-Hopf algebra over $\k$ with a coquasi-antipode $(S, \alpha, \beta)$, and let $\mathrm{Q}$ be a generalized Hopf quiver associated with a cosemisimple datum of $H$. Then the coquasi-antipode $(S^\prime, \alpha^\prime, \beta^\prime)$ of $\k(\mathrm{Q}, \mathcal{S})$ constructed in the proof of Theorem \ref{thm:generalizedHopfquiver} is bijective. Moreover, the category of finite-dimensional right $\k(\mathrm{Q}, \mathcal{S})$-comodules forms a tensor category.
\end{proposition}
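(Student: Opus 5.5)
The plan is to show that $S^\prime$ is bijective by exploiting its graded structure. Then we deduce the tensor category statement from the description of duals given in Section~\ref{section2}.

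\textbf{Grading and reduction to two graded pieces.} By construction $S^\prime=\sum_{n\geq 0}S^\prime_n$ with $S^\prime_n\mid_{M^{\Box m}}=0$ for $m\neq n$, so $S^\prime$ preserves the length grading of $\operatorname{CoT}_H(M)\cong\k(\mathrm{Q},\mathcal{S})$. It therefore suffices to show that each restriction $S^\prime\mid_{M^{\Box n}}: M^{\Box n}\to M^{\Box n}$ is bijective. In degree $0$, $S^\prime\mid_H=S$ is bijective by Lemma~\ref{lem:cosstensor}.

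\textbf{Degree $1$.} Here $S^\prime\mid_M=T$, and we show $T$ is injective and surjective using the decomposition $M\cong H\otimes{}^{co H}M$ of Proposition~\ref{prop:Hopfmod}. On a generator $x_{1i}$ of a non-trivial $(1,\C)$-primitive matrix, $T(x_{1i})=-\sum\alpha(1)\beta(c_{kj})x_{1k}\cdot S(c_{ji})$. Since $\alpha(1)\beta(1)=1$ and $\beta$ restricted to a simple coalgebra can be handled via the right coinvariant decomposition of Corollary~\ref{coro:H0fundamental}, $T$ maps ${}^{co H}M$ into $M^{co H}$, up to the twist by $S$. The formula $T(d_{jk}\cdot x_{1i})=\beta(1)\sum T(x_{1t})\cdot S(d_{jl})f(d_{lk},c_{ti})$ then exhibits $T$ as a composite of the following maps:
\begin{itemize}
\item the isomorphism $H\otimes{}^{co H}M\cong M$;
\item $S\otimes(\text{the map on coinvariants})$;
\item the convolution-invertible correction $f$ (invertible by \cite[Proposition 3.49]{BCPV19});
\item the isomorphism $M^{co H}\otimes H\cong M$ of Corollary~\ref{coro:H0fundamental}, with the factors written in swapped order.
\end{itemize}

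The main obstacle is showing that the coinvariant part $x_{1i}\mapsto\sum_{j,k}\beta(c_{kj})x_{1k}\cdot S(c_{ji})$ is injective. My first attempt is to check that $\overline{x}\mapsto$ this element lands in $M^{co H}$ (by the right coaction computation already carried out in the proof of Theorem~\ref{thm:generalizedHopfquiver}). I would then construct an explicit inverse ${}M^{co H}\to{}^{co H}M$ of the form $y\mapsto \sum \alpha(\cdot)\,S^{-1}(\cdot)\cdot y$, using (\ref{eq:alpha,beta}) and (\ref{eq:Phialphabeta=e}). If this is too messy, a fallback is available when $M$ is finite-dimensional in each bidegree $C_i$-$C_j$ (the arrow counts are finite). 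Since $S$ permutes simple subcoalgebras, with $C\mapsto S(C)$ an anti-involution, $T$ maps the finite-dimensional ${}^{C_i}M^{C_j}$ into ${}^{S(C_j)}M^{S(C_i)}$. By Corollary~\ref{coro:cpdspecial}(1) these spaces have equal dimension, so injectivity alone suffices. Injectivity in turn follows from surjectivity of the identity $(S^\prime*\alpha^\prime*\id)=\alpha^\prime$ on $M$, proved in Step~3: if $T(m)=0$, that identity forces $m=0$ via the nondegeneracy of the $H$-action.

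\textbf{Degrees $n\geq 2$ and the tensor category conclusion.} For $n\geq 2$, $S^\prime_n=(S^\prime_1)^{\otimes n}\circ\Delta^{n-1}$ restricted to $M^{\Box n}$ equals, up to the reversal of tensor factors coming from the $cop$, the map $m^1\otimes\cdots\otimes m^n\mapsto T(m^n)\otimes\cdots\otimes T(m^1)$. This map is bijective on $M^{\otimes n}$, and it carries $M^{\Box n}$ onto $M^{\Box n}$ because $T$ intertwines the bicomodule structures twisted by $S$. Finally, since $S^\prime$ is bijective, right duals exist via $(S^\prime)^{-1}$. The category $\mathscr{M}^{\k(\mathrm{Q},\mathcal{S})}$ is locally finite abelian, and its unit $\k$ is simple, so it is rigid monoidal and hence a tensor category.
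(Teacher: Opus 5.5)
Your grading reduction is sound, and the degree-$0$ step ($S^\prime\mid_H=S$ bijective) is fine. Your factorization of $T$ through $H\otimes{}^{co H}M\cong M$, the invertible $f$-twist, and $M^{co H}\otimes H\cong M$ correctly reduces degree $1$ to bijectivity of $T_0\colon{}^{co H}M\to M^{co H}$, $x\mapsto-\alpha(1)\beta(x_1)\,x_0\cdot S(x_2)$. (The last isomorphism is the right-module version of Proposition~\ref{prop:Hopfmod}, not Corollary~\ref{coro:H0fundamental}.) Your treatment of $n\geq 2$ is correct and cleaner than the paper's appeal to Lemma~\ref{lemma:universal}: you use $S^\prime\mid_{M^{\Box n}}=T^{\otimes n}\circ\mathrm{rev}$, and $T$ and $S$ intertwine $f_i$ with $f_{n-i}$, so $M^{\Box n}$ is preserved in both directions. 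The tensor-category conclusion is also fine. The gap is that bijectivity of $T_0$, which you yourself flag as the main obstacle, is never established, and it is the substantive content of the proposition.

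Neither of your routes closes it.

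\textbf{The fallback.} If $T(m)=0$, colinearity ($\rho^L(T(m))=S(m_1)\otimes T(m_0)$, with $S$ injective) kills the term $T(m_0)\alpha(m_1)\cdot m_2$. So $(S^\prime*\alpha^\prime*\id)(m)=0$ only yields $S(m_{-2})\alpha(m_{-1})\cdot m_0=0$. For $m=h\cdot x$ with $x\in{}^{co H}M$, this reads $\alpha(h_2)\Phi^{-1}(S(h_1),h_3,x_1)\,x_0=0$. For trivial $\Phi$ and $\alpha=\varepsilon$ this is just $\varepsilon(h)x=0$, whose solution set $(\ker\varepsilon)\cdot{}^{co H}M$ is nonzero, so it does not force $m=0$. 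Passing to the left subcomodule $\ker T$ repairs this in the Hopf case. In general, however, it would need convolution-invertibility of $(h,c)\mapsto\alpha(h_2)\Phi^{-1}(S(h_1),h_3,c)$, whose restriction to $c=1$ is $\alpha$, and that is not available.

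\textbf{The explicit inverse.} Your first attempt is only a guess, and with $S^{-1}(\cdot)$ acting on the left it is not the inverse. Already for $H=\k G$, it sends $T_0(x)$, for $x$ of right degree $g$, to a multiple of $(g\cdot x)\cdot g^{-1}$ rather than to $x$.

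\textbf{What is needed.} The paper uses the right action. It defines $T^\prime(y_{i1})=-\sum_{j,k}\beta(1)\alpha(S^{-1}(d_{kj}))\,y_{j1}\cdot S^{-1}(d_{ik})$ on each non-trivial $(\D,1)$-primitive matrix $(y_{i1})$, and extends it to elements $y_{i1}\cdot e_{jk}$ via $f^{-1}$. It then checks $TT^\prime=T^\prime T=\id$ using (\ref{eq:alpha,beta}) and (\ref{eq:Phialphabeta=e}) for the coquasi-Hopf algebra $H^{op}$ with coquasi-antipode $(S^{-1},\alpha S^{-1},\beta S^{-1})$.
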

\begin{proof}
By Lemma \ref{lem:cosstensor}, $S$ is bijective. For our purpose, we next construct a coalgebra map $S^{\prime\prime}: (\operatorname{CoT}_H(M))^{cop}\rightarrow \operatorname{CoT}_H(M)$ such that $S^{\prime\prime}S^\prime=S^\prime S^{\prime\prime}=\id.$ Let $X=(\operatorname{CoT}_H(M))^{cop}$, and define $S^{\prime\prime}_0$ as the composition
$$
X\xrightarrow{\pi_0} H\xrightarrow{S^{-1}}H.
$$
Clearly, $S^{\prime\prime}_0$ is a coalgebra map. We then construct a map $T^\prime:M\rightarrow M$ such that the composition $$X\xrightarrow{\pi_1}M\xrightarrow{T^\prime}M$$ is an $H$-bicomodule map, and we denote this composition by $S^{\prime\prime}_1$. Analogous to the construction of $S^\prime_1$, we first define the values of $T^\prime$ on a non-trivial $(\D,1)$-primitive matrix $\Y=(y_{i1})_{s\times 1}$, where $\D=(d_{ij})_{s\times s}$, and then extend it to the whole $M.$ For any $1\leq i\leq s$, we define
$$T^\prime(y_{i1}):=-\sum\limits_{j,k=1}^s\beta(1)\alpha(S^{-1}(d_{kj})) y_{j1}\cdot S^{-1}(d_{ik}).$$
It is straightforward to show that $(T^\prime (y_{i1}))_{1\times s}$ is a non-trivial $(1,S^{-1}(\D)^T)$-primitive matrix.
Through calculation, we know that
\begin{eqnarray*}
    TT^\prime(y_{i1})&=&-\sum\limits_{l,t=1}^r\alpha(1)\beta(S^{-1}(d_{tl})) T^\prime(y_{l1}) d_{it}\\
    &=&-\sum\limits_{l,t=1}^r\alpha(1)\beta(S^{-1}(d_{tl})) (-\sum\limits_{j,k=1}^r\beta(1)\alpha(S^{-1}(d_{kj})) y_{j1}\cdot S^{-1}(d_{lk})) d_{it}\\
&=&\sum\limits_{j,k,l,m.n,p,t=1}^r\beta(S^{-1}(d_{tl}))\alpha(S^{-1}(d_{kj})) \Phi^{-1}(d_{jm}, S^{-1}(d_{nk}),d_{ip})y_{m1}(S^{-1}(d_{ln})d_{pt}).
\end{eqnarray*}
Note that $(H^{op}, m^{op}, \mu, \Delta, \varepsilon, (\Phi^{-1})^{321}, S^{-1}, \alpha S^{-1}, \beta S^{-1} )$ remains a coquasi-Hopf algebra. Then (\ref{eq:alpha,beta}) and (\ref{eq:Phialphabeta=e}) yield $$TT^\prime (y_{i1})=y_{i1}.$$
A similar argument shows that $$T^\prime T (x_{1i})=x_{1i},$$
where $x_{1i}$ runs over all entries of a non-trivial $(1,\C)$-primitive matrix $\X=(x_{1i})_{1\times r}$. Proposition \ref{prop:Hopfmod} implies $ M^{co H} \otimes H\cong M$ via the map $m\otimes h\mapsto m\cdot h.$
This means that we only need to define $T^\prime : M \rightarrow M$ on all elements of the form $y_{i1} \cdot e_{jk}$, where $e_{jk}$ runs over all entries of a basic multiplicative matrix $\E = (e_{ij})_{t \times t}$ over $H$ and $1 \leq i \leq s$ as follows:
\begin{eqnarray*}
T^\prime (y_{i1} \cdot e_{jk})&:=& \sum\limits_{u=1}^t \sum\limits_{l=1}^s \alpha(1) f^{-1}(S^{-1}(e_{ju}), S^{-1}(d_{il})) S^{-1}(e_{uk})\cdot T^\prime (y_{l1}).
\end{eqnarray*}
It follows that
\begin{eqnarray*}
TT^\prime (y_{i1} \cdot e_{jk}) &=& \sum\limits_{u=1}^t \sum\limits_{l=1}^s \alpha(1)f^{-1}(S^{-1}(e_{ju}), S^{-1}(d_{il})) T(S^{-1}(e_{uk})\cdot T^\prime (y_{l1})) \\
&=&\sum\limits_{u,m=1}^t \sum\limits_{l,n=1}^sf^{-1}(S^{-1}(e_{ju}), S^{-1}(d_{il})) f(S^{-1} (e_{um}), S^{-1}(d_{ln}) )y_{n1}\cdot e_{mk}\\
&=&y_{i1} \cdot e_{jk}.
\end{eqnarray*}
By a similar argument, we can show that $$T^\prime T(d_{jk}\cdot x_{1i})=d_{jk}\cdot x_{1i},$$
where $d_{jk}$ runs over all entries of a basic multiplicative matrix $\D = (d_{ij})_{s \times s}$ over $H$, and $x_{1i}$ runs over all entries of a non-trivial $(1,\C)$-primitive matrix $\X=(x_{1i})_{1\times r}$.
This means that $TT^\prime=T^\prime T=\id.$
From Lemma \ref{lemma:universal}, one can get a coalgebra map $S^{\prime\prime}:  (\operatorname{CoT}_H(M))^{cop}\rightarrow \operatorname{CoT}_H(M)$ such that $S^{\prime\prime}S^\prime=S^{\prime }S^{\prime\prime}=\id.$
\end{proof}
It is well known that the antipode of a Hopf algebra with the dual Chevalley property is necessarily bijective; see \cite[Corollary 3.6]{Rad77}. However, this argument does not extend to the coquasi-Hopf algebra setting.
But as a corollary of Proposition \ref{prop:k(Q,S)antipodebijective}, the coquasi-antipode of a coquasi-Hopf algebra with the dual Chevalley property is always injective.
\begin{corollary}\label{coro:Sbijective}
Let $H$ be a (not necessarily finite-dimensional) coquasi-Hopf algebra over $\k$ with the dual Chevalley property, and let $(S,\alpha,\beta)$ be a coquasi-antipode. Then $S$ is injective.
\end{corollary}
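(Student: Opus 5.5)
The plan is to reduce injectivity of $S$ to injectivity on $H_1$, and then to use the graded picture, where Theorem \ref{thm:Gabriel} and Proposition \ref{prop:k(Q,S)antipodebijective} do the work. Since $S$ is a coalgebra antimorphism, it is a coalgebra map $H\to H^{cop}$, and $H^{cop}$ has the same coradical filtration as $H$. By the standard result \cite[Theorem 5.3.1]{Mon93} (Heyneman--Radford), a coalgebra map is injective as soon as its restriction to the first term $H_1$ of the coradical filtration is injective. It therefore suffices to show that $S|_{H_1}$ is injective.

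First, handle $H_0$. By the dual Chevalley property, $H_0$ is a coquasi-Hopf subalgebra. It is cosemisimple, and $S|_{H_0}$ is a coquasi-antipode of $H_0$. Two facts are needed here.
\begin{itemize}
\item By Lemma \ref{lem:cosstensor}, $S|_{H_0}$ is bijective onto $H_0$.
\item Because $S$ is a coalgebra antimorphism with $S(H_0)\subseteq H_0$, it preserves the coradical filtration; in particular $S(H_1)\subseteq H_1$.
\end{itemize}
So $S$ induces a map $\overline{S}:H_1/H_0\to H_1/H_0$. This map is precisely the degree-one component of the coquasi-antipode $\operatorname{gr}S$ of $\operatorname{gr}H$ from Example \ref{example:coquasiChevalley}.

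Next, show that $\operatorname{gr}S$ is injective. By Theorem \ref{thm:Gabriel}, the map $\psi$ of Proposition \ref{prop:CoalgGabriel} embeds $\operatorname{gr}H$ into $\k(\mathrm{Q}(H),\mathcal{S})$, endowed with the coquasi-Hopf structure of Theorem \ref{thm:generalizedHopfquiver}. The proof of that theorem gives the intertwining relation
\[
\psi\circ\operatorname{gr}S=S'\circ\psi .
\]
Proposition \ref{prop:k(Q,S)antipodebijective} shows that $S'$ is bijective. Combined with the injectivity of $\psi$, this makes $\operatorname{gr}S$ injective, and in particular $\overline{S}$ is injective on $H_1/H_0$.

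Finally, conclude. Let $x\in H_1$ with $S(x)=0$. Then $\overline{S}(\bar x)=\overline{S(x)}=0$, so $\bar x=0$, that is, $x\in H_0$. Injectivity of $S|_{H_0}$ then forces $x=0$, so $S|_{H_1}$ is injective, and the Heyneman--Radford step gives injectivity of $S$ on all of $H$.

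The main obstacle is the identification $\psi\circ\operatorname{gr}S=S'\circ\psi$. A coquasi-antipode is only unique up to conjugation by a convolution-invertible $U$, so one must check that the specific $S'$ built from the Majid bimodule structure on $H_1/H_0$ restricts to $\operatorname{gr}S$, and not to some conjugate of it. I would verify this as in the proof of Theorem \ref{thm:Gabriel}, via Lemma \ref{lemma:universal}: both sides are coalgebra maps $(\operatorname{gr}H)^{cop}\to\k(\mathrm{Q}(H),\mathcal{S})$, so it suffices to compare their degree-$0$ and degree-$1$ components. In degree $1$, the defining formula for $T$ on $(1,\C)$-primitive matrices has to be matched against $\overline{S}$; this is where the care is needed. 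If that matching fails, I would fall back on the observation that any conjugate $U*S'*U^{-1}$ of $S'$ is still injective in degree $1$, because it agrees with $S'$ up to invertible $H_0$-coefficient operations, and injectivity of $\overline{S}$ is all the argument requires.
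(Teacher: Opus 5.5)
Your proposal is correct and follows the paper's proof. Both reduce to injectivity of $S|_{H_1}$ via \cite[Theorem 5.3.1]{Mon93}, and both obtain injectivity in degrees $0$ and $1$ from Theorem \ref{thm:Gabriel} together with the bijectivity of $S'$ in Proposition \ref{prop:k(Q,S)antipodebijective}; your explicit check that $\psi\circ\operatorname{gr}S=S'\circ\psi$ (comparing degree-$0$ and degree-$1$ components via Lemma \ref{lemma:universal}) spells out a step that the paper's one-line proof leaves implicit, so the conjugation fallback is not needed.
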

\begin{proof}
According to Theorem \ref{thm:Gabriel} and Proposition \ref{prop:k(Q,S)antipodebijective}, $\operatorname{gr}H$ admits a coquasi-antipode that is bijective on $H_0\oplus H_1/H_0$. This means that the $S\mid_{H_1}$ is injective. Then by \cite[Theorem 5.3.1]{Mon93}, $S$ is injective.
\end{proof}

\begin{example}\label{ex:DwGquiver}\rm
Let $(D^\omega(G))^*$ be the cosemisimple coquasi-Hopf algebra defined in Example \ref{ex:DwG} over an algebraically closed field of characteristic $0$. \cite[Theorem 9.4]{MN01} tells us that if $G$ is of odd order, then $(D^\omega(G))^*$ is not genuine. However, this statement does not hold for groups of even order: \cite[Example 9.5]{MN01} provides the simplest example of a genuine twisted quantum double, namely $(D^\omega(\mathbb{Z}_2))^*$, where $\omega$ is the nontrivial 3-cocycle on $\mathbb{Z}_2$. In \cite[Theorem 4.1]{LN14}, the authors provided a necessary and sufficient condition for $(D^\omega(G))^*$ to be genuine when $G$ is abelian and $\omega$ is an abelian cocycle, using the total Frobenius-Schur indicator.
Moreover, by \cite[Proposition 10.22]{BCPV19}, $(D^\omega(G))^*$ is a coquasitriangular coquasi-Hopf algebra. Proposition \ref{prop:triangularHopfquiver}  yields that for any finite tuple $(n_1, n_2, \cdots) \in \mathbb{Z}_+^{(I)}$, where $|I|$ equals the number of simple subcoalgebras of $(D^\omega(G))^*$, we can construct a generalized Hopf quiver $Q$. If $(D^\omega(G))^*$ is genuine, then by Theorem \ref{thm:generalizedHopfquiver} and Corollary \ref{coro:genuine}, one can obtain a genuine coquasi-Hopf algebra $H$ whose coradical is $(D^\omega(G))^*$. According to Example \ref{ex:DwG} and Proposition \ref{prop:k(Q,S)antipodebijective}, $\mathcal{M}^H$ is a tensor category that contains the module category of the fixed-point subalgebra $V^G$ of a holomorphic vertex operator algebra $V$ as a semisimple tensor subcategory. Theorem \ref{thm:Gabriel} tells us that the tensor category we constructed has a certain maximality property in some sense.
\end{example}

\subsection{Some remarks}\label{subsection4.3}
We conclude this section with some remarks.
\begin{itemize}
\item[(1)]The so-called quantum shuffle Hopf algebras are precisely the cotensor Hopf algebras of a Hopf bimodule $M$ over a Hopf algebra $H$. Let $U_q^{\geq 0}(\mathfrak{g})$ denote the upper triangular part of the quantized enveloping algebra corresponding to a symmetrizable Cartan matrix. Rosso \cite{Ros98} proved that $U_q^{\geq 0}(\mathfrak{g})$ is isomorphic, as a Hopf algebra, to the subalgebra generated by elements of degree $0$ and degree $1$ of the cotensor Hopf algebra associated with an appropriate Hopf bimodule over the group algebra of $\mathbb{Z}^n$.
    When both $H$ and $M$ are finite-dimensional, the structure of the cotensor Hopf algebra becomes particularly transparent. In this case, $H^*$ naturally inherits a Hopf algebra structure, and $M^*$ becomes a Hopf bimodule over $H^*$. Moreover, the graded dual of the cotensor Hopf algebra $\operatorname{CoT}_H(M)$ is isomorphic to the tensor Hopf algebra $T_{H^*}(M^*)$, where
    $$T_{H^*}(M^*) = H^*\oplus M^* \oplus (M^* \otimes_{H^*} M^*)\oplus \cdots \oplus (M^*)^{\otimes_H^n} \oplus \cdots.$$
    In particular, Hopf algebra structures can be constructed via covering quivers \cite{GS98} (using the path algebra structure) and Hopf quivers (using the path coalgebra structure), by taking $T_H(M)$ and $\operatorname{CoT}_H(M)$, respectively.
    Let $\mathrm{Q}=(\mathrm{Q}_0, \mathrm{Q}_1)$ be a finite generalized Hopf quiver associated with a cosemisimple datum of a finite-dimensional cosemisimple Hopf algebra $H$, and let $M=\k(\mathrm{Q}_1,\mathcal{S})$. Corollary \ref{coro:Hopfversion} allows us to endow $\k(\mathrm{Q}, \mathcal{S})$ with a Hopf algebra structure with the dual Chevalley property, namely that arising from $\operatorname{CoT}_H(M)$. Moreover, every coradically graded Hopf algebra (with the dual Chevalley property) can be realized as a large Hopf subalgebra of such a Hopf algebra. Considering the graded dual, we can obtain examples of (infinite-dimensional) radically graded Hopf algebras (with the Chevalley property) that are of interest in \cite{HQWZ26}.

     Moreover, Bulacu \cite{Bul20} extended the tensor Hopf algebra construction of Nichols \cite{Nic78} to the quasi-Hopf algebra context. Given a quasi-Hopf algebra $H$ and a quasi-Hopf $H$-bimodule $M$, Bulacu first endowed $T_H(M)$ with a quasi-Hopf algebra structure. He then proved that this quasi-Hopf algebra is isomorphic to the biproduct quasi-Hopf algebra of $T(V)$ and $H$, where $V$ is a suitable set of coinvariants of $M$ and $T(V)$ is the tensor Hopf algebra of $V$ built inside the braided monoidal category of left $H$-Yetter-Drinfeld modules. For the definitions of quasi-Hopf $H$-bimodules and left $H$-Yetter-Drinfeld modules over a quasi-Hopf algebra $H$, we refer the reader to \cite{BCPV19}. To the best of our knowledge, the parallel general results concerning $\operatorname{Cot}_H(M)$ have not been extended to the case of coquasi-Hopf algebras unless $H$ is a pointed cosemisimple coquasi-Hopf algebra \cite{Hua09}, and the proof method in \cite{Hua09} does not work for the non-pointed case. In Theorem \ref{thm:generalizedHopfquiver}, we actually describe the coquasi-Hopf coalgebra structure of $\operatorname{Cot}_H(M)$ when $H$ is a cosemisimple coquasi-Hopf algebra, and we believe that the proof method can be analogously extended to arbitrary coquasi-Hopf algebras $H$. Since this is unrelated to the main theme of the present paper, we will focus on the general theory of $\operatorname{Cot}_H(M)$ in a future work.
\item[(2)]The quiver approach provides a useful tool for characterizing indecomposable objects in comodule categories. Chen and Zhang \cite{CZ07} embedded the quantized algebra $U_q(sl_2)$ into a path coalgebra and subsequently used quiver method to describe its comodules, where $q$ is not a root of unity. Similarly, we can embed a coradically graded coquasi-Hopf algebra into a modified generalized path coalgebra in order to describe its comodules. Note that such modified generalized path coalgebra is equivalent to the path coalgebra $\k \mathrm{Q}(H).$ We briefly describe how to characterize comodules of Loewy length $2$ here.

    Let $H$ be a coradically graded coquasi-Hopf algebra, and let $\mathrm{Q}(H)=(\mathcal{S},\mathcal{P})$ be its link quiver. Now let $M = (m_1, m_2, \dots, m_n)$, where $\{m_i \mid 1 \leq i \leq n\}$ forms a basis of a right $H$-comodule. The key observation is that $$\rho^R(M)=M\;\widetilde{\otimes}\;\B:=\left(\sum\limits_{k=1}^n m_{k}\otimes b_{ki}\right)_{1\times n},$$
where $\B=(b_{ij})_{n \times n}$ forms a multiplicative matrix. For example, suppose there exists a subquiver of $\mathrm{Q}(H)$ of the form
$$
\begin{tikzpicture}
\filldraw [black] (0,1) circle (0.5pt) node[anchor=south]{${}_{C}$};
\filldraw [black] (-2,0) circle (0.5pt)node[anchor=east]{${}_{D}$};
\filldraw [black] (2,0) circle (0.5pt) node[anchor=west]{${}_{E}$};
\filldraw [black] (0,-1) circle (0.5pt)node[anchor=north]{${}_{F}$};
\draw[thick, ->] (-1.9,0.1).. controls (-1.9,0.1) and (-1.9,0.1) .. node[anchor=south]{}(-0.1,0.9);
\draw[thick, ->] (1.9,0.1).. controls (1.9,0.1) and (1.9,0.1) .. node[anchor=south]{}(0.1,0.9);
\draw[thick, ->] (-1.9,-0.1).. controls (-1.9,-0.1) and (-1.9,-0.1) .. node[anchor=south]{}(-0.1,-0.9);
\end{tikzpicture},
$$
where $C, D, E, F \in \mathcal{S}$ are distinct simple subcoalgebras of $H$ with $\dim_{\k}(C) = r^2$, $\dim_{\k}(D) = s^2$, $\dim_{\k}(E) = t^2$, and $\dim_{\k}(F) = u^2$, respectively. We know that there exist a non-trivial $(\C, \D)$-primitive matrix $\X_{r \times s} = (x_{ij})_{r \times s} \in \mathcal{P}$, a non-trivial $(\C, \E)$-primitive matrix $\Y_{r \times t} = (y_{ij})_{r \times t} \in \mathcal{P}$, and a non-trivial $(\F, \D)$-primitive matrix $\Z_{u \times s} = (z_{ij})_{u \times s} \in \mathcal{P}$. Clearly,
\begin{eqnarray*}
&U = \operatorname{span}\{c_{11},\dots,c_{1r}, x_{11},\dots,x_{1s}\},\\
&V= \operatorname{span}\{c_{11},\dots,c_{1r}, x_{11},\dots,x_{1s}, y_{11},\dots,y_{1t}\},\\
&W(k) = \operatorname{span}\{c_{11},\dots,c_{1r},\; k f_{11},\dots,k f_{1u},\; x_{11}+k z_{11},\dots,x_{1s}+k z_{1s}\},\;\;k \in \k^\times
\end{eqnarray*}
are three kinds of indecomposable comodules, with multiplicative matrix
$$\left(\begin{array}{cc}
\C & \X\\
0&  \D
 \end{array}\right),\;\;\begin{pmatrix}
\C & \X & \Y \\
& \D & \\
& & \E
\end{pmatrix},\;\; \begin{pmatrix}
\C & & \X \\
& \F & \Z \\
& & \D
\end{pmatrix},$$
respectively. Besides, we know that $W(k)\cong W(l)$ as right $H$-comodule for all non-zero $k, l\in\k.$ If $D=E$, then $$\{V(k)=\span\{c_{11},\dots,c_{1r}, y_{11}+kx_{11},\dots,y_{1s}+kx_{1s}\}\mid k\in \k\}$$ forms a class of pairwise non-isomorphic indecomposable comodules. More complicated cases are considered similarly.
\item[(3)]In \cite{GLS17}, Geiss, Leclerc, and Schr\"{o}er introduced a family of Iwanaga-Gorenstein algebras defined by quivers with relations determined by symmetrizable Cartan matrices, extending the classical path algebras of quivers associated with symmetric Cartan matrices. They demonstrated that the representation theory of these algebras bears a striking resemblance to that of modulated graphs in the sense of Dlab and Ringel \cite{DR74}, and that these algebras are, in fact, tensor algebras. Ultimately, they obtained new representation-theoretic realizations of all finite root systems, valid over an arbitrary ground field (not necessarily algebraically closed). From the perspective of quivers, in the theory of Dlab and Ringel, each vertex of the quiver is equipped with a division algebra, whereas Geiss, Leclerc, and Schr\"{o}er placed at each vertex a commutative symmetric algebra, and assigned to each arrow the corresponding bimodule. Generalizations of the representation theory of modulated graphs have previously been formulated in \cite{Li12}. It is worth mentioning that Coelho and Liu \cite{CL00} introduced the notion of a generalized path algebra. From the quiver perspective, their idea is to place a simple algebra at each vertex and the corresponding free bimodule on each arrow. Generalized path algebras are very useful for describing the representations and structure of non-elementary algebras. For example, Li and Lin \cite{LL12} studied the Ext quiver of an Artinian $\k$-algebra that is splitting over $\k$, and characterized such Artinian algebras using the generalized Gabriel theorem. The construction we consider in this section can be understood as the dual situation of their setting in some sense.
\end{itemize}
\section{Applications to finite integral tensor categories with the Chevalley property}\label{section5}
In this section, we mainly employ the quiver approach to study the classification of coquasi-Hopf algebras with the dual Chevalley property, thereby obtaining classification results for the corresponding tensor categories.
\subsection{Representation types}
In this subsection, we classify finite-dimensional coquasi-Hopf algebras with the dual Chevalley property according to their corepresentation type by using the quiver method. For general background on representation theory, the reader is referred to \cite{ASS06,ARS95}.

Recall that a finite-dimensional algebra $A$ is of \textit{finite representation type} if there are finitely many non-isomorphic indecomposable $A$-modules. $A$ is \textit{tame} if it is not of finite type, and for every $d>0$, there exist finitely many $A$-$\k[T]$-bimodules $M_i$ free of finite rank as right $\k[T]$-modules, such that all but finite number of indecomposable $A$-modules of dimension $d$ are isomorphic to
$M_i\otimes_{\k[T]}\k[T]/(T-\lambda)$ for some $\lambda\in\k.$ $A$ is a \textit{wild} algebra if there exists a finitely generated $A$-$\k\langle X, Y\rangle$-bimodule $B$ free as a right $\k\langle X, Y\rangle$-module such that the functor $B\otimes_{\k\langle X, Y\rangle}\_$ from the category of finitely generated $\k\langle X, Y\rangle$-modules to the category of finitely generated $A$-modules, preserves indecomposability and reflects isomorphisms. A finite-dimensional coalgebra $C$ is of \textit{finite} (resp., \textit{tame}, \textit{wild}) \textit{corepresentation type} if the dual algebra $C^*$ is of finite (resp. tame, wild) representation type. See \cite{ARS95}  for details.

According to Drozd's fundamental result \cite{Dro79}, every finite-dimensional (co)algebra belongs to precisely one of the following three classes: (co)algebras of finite (co)representation type, tame (co)representation type, or wild (co)representation type.
It is well-known that a finite $\k$-linear abelian category $\mathscr{C}$ is equivalent to the category ${}_A\mathscr{M}$ of finite-dimensional modules over a finite-dimensional $\k$-algebra $A.$ In fact, one can take this algebra to be $\operatorname{End}(P)^{op}$, where $P$ is a projective generator of $\mathscr{C}$. We call a finite tensor category is of \textit{finite representation type, tame representation type, or wild representation type} if
$\operatorname{End}(P)^{op}$ is of finite, tame, or wild representation type, respectively.
By Lemma \ref{lem:integertensor}, the classification of finite integral tensor categories according to their representation type is equivalent to the classification of finite-dimensional coqausi-Hopf algebras according to their corepresentation type.

In order to establish the relationship between link quivers and corepresentation type, we
 first introduce the definition of a separated quiver.
\begin{definition}\emph{(}\cite[\textsection X. 2]{ARS95}\emph{)}
Let $\mathrm{Q}=(\mathrm{Q}_0,\mathrm{Q}_1)$ be a quiver with $\mathrm{Q}_0=\{1,2,\cdots,n\}$. The separated quiver $\mathrm{Q}_{s}$ of $\mathrm{Q}$ has $2n$ vertices $\{1, 2, \cdots, n, 1^\prime, 2^\prime, \cdots, n^\prime\}$, and for each arrow $i \rightarrow j$ in $\mathrm{Q}$, there is an arrow $i \rightarrow j^\prime$ in $\mathrm{Q}_s$.
\end{definition}

From the proofs of \cite[Proposition 5.2]{YLL24} and \cite[Theorem 4.2]{YL24}, we have the following lemma.
\begin{lemma}\label{lem:separatedquiver}
Let $H$ be a finite-dimensional coalgebra over $\k$, and let $\mathrm{Q}(H)$ be its link quiver. If $H$ is of finite corepresentation type, then the separated quiver $\mathrm{Q}(H)_s$ is a finite disjoint union of Dynkin diagrams; if $H$ is of tame corepresentation type, then $\mathrm{Q}(H)_s$ is a finite disjoint union of Dynkin diagrams and Euclidean diagrams, containing at least one Euclidean component.
\end{lemma}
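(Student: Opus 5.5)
The plan is to reduce the statement to the classical correspondence between representation type and separated quivers of radical-square-zero algebras. We pass from $H$ to the subcoalgebra $H_1$ (the coradical layer plus the first layer), whose link quiver coincides with $\mathrm{Q}(H)$. The key observation is that $H_1$ is a subcoalgebra of $H$, so finite-dimensional $H_1$-comodules form a full subcategory of $H$-comodules closed under subobjects and quotients. Dually, $H_1^*$ is a quotient algebra of $H^*$, namely $H^*/\mathrm{Rad}(H^*)^2$ (up to the usual identification of the coradical filtration with the dual of the radical filtration for finite-dimensional coalgebras). It follows that if $H$ is of finite (resp. tame) corepresentation type, then $H_1$ is of finite or tame type. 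Indeed, a quotient of a tame algebra is tame or finite, and a quotient of a representation-finite algebra is representation-finite, since modules over a quotient are modules over the original algebra with the same indecomposability and isomorphism classes.

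Next we identify $\mathrm{Q}(H_1)$ with $\mathrm{Q}(H)$. By Remark~\ref{rm:link=ext} the link quiver is the Ext quiver, and $\operatorname{Ext}^1$ between simple comodules only sees $H_1$: a nonsplit extension of simples has Loewy length $2$, so its coefficient coalgebra lies in $H_1$. Alternatively, this follows from the description via $(C\wedge D)/(C+D)\subseteq H_1/H_0$. We then apply the classical result for radical-square-zero algebras, \cite[Theorem X.2.4 and \S X.2]{ARS95}. For $A=H_1^*$, which is basic-Morita-equivalent to a radical-square-zero algebra with Ext quiver $\mathrm{Q}(H)^{op}$ (opposite orientation is harmless for separated quivers' underlying graphs), the functor $\mathrm{mod}\,A\to\mathrm{rep}(\mathrm{Q}_s)$ is a stable equivalence. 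Under this equivalence, $A$ is representation-finite iff $\mathrm{Q}_s$ is a disjoint union of Dynkin diagrams, and $A$ is tame iff $\mathrm{Q}_s$ is a union of Dynkin and Euclidean diagrams with at least one Euclidean component. Here we use Gabriel's and Nazarova/Donovan–Freislich's theorems over algebraically closed $\k$. Finiteness of the union follows because $H$ is finite-dimensional, so the quiver is finite.

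For the tame case we must show at least one component is Euclidean only when $H_1$ is genuinely tame. If $H$ is tame but $H_1$ were representation-finite, we would only get Dynkin components. So the statement as written presumably uses that for the relevant coalgebras tameness of $H$ forces tameness of $H_1$. A direct argument would instead cite the proofs in \cite[Proposition 5.2]{YLL24} and \cite[Theorem 4.2]{YL24}, where the separated quiver condition is derived from $H$ itself. I would handle this by stating the tame conclusion as: $\mathrm{Q}(H)_s$ is a union of Dynkin and Euclidean diagrams, and, when $H_1$ is not of finite type, at least one component is Euclidean. This last point, the exclusion of the case where $H$ is tame but $\mathrm{Q}(H)_s$ is purely Dynkin, is the main obstacle. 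I expect to resolve it by following the cited proofs, which work with the specific coalgebras at hand rather than arbitrary ones.
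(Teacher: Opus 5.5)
Your argument is correct up to the last clause, and it is the standard route; the paper itself gives no proof and only defers to earlier work. The steps all hold:
\begin{itemize}
\item $H_1^*\cong H^*/\operatorname{Rad}(H^*)^2$.
\item Representation-finiteness descends to quotient algebras, and wildness lifts from them, so by Drozd a quotient of a tame algebra is finite or tame.
\item $C\wedge D\subseteq H_1$ gives $\mathrm{Q}(H_1)=\mathrm{Q}(H)$.
\item The separated-quiver criterion for radical-square-zero algebras then shows: finite type implies $\mathrm{Q}(H)_s$ is a finite union of Dynkin diagrams, and tame type implies every component of $\mathrm{Q}(H)_s$ is Dynkin or Euclidean.
\end{itemize}
Your weakened tame statement (a Euclidean component exists whenever $H_1$ is not of finite type) is also correct.

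The step you flag as the main obstacle cannot be supplied. For arbitrary finite-dimensional coalgebras the clause ``containing at least one Euclidean component'' is false, precisely because $H$ can be tame while $H_1$ is representation-finite. Let $\mathrm{Q}$ have vertices $1,2,3$ and arrows $1\to 2$, $2\to 3$, $1\to 3$, and let $H$ be its $7$-dimensional path coalgebra. Then $H^*$ is the path algebra of an acyclic quiver of Euclidean type $\tilde{A}_2$, hence tame, and $\mathrm{Q}(H)=\mathrm{Q}$. Reversing all arrows would not change the underlying graph of the separated quiver, so conventions do not matter here. The arrows of $\mathrm{Q}_s$ are $1\to 2'$, $1\to 3'$ and $2\to 3'$. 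So $\mathrm{Q}_s$ is the path $2',1,3',2$ of type $A_4$ together with the isolated vertices $1'$ and $3$, and it has no Euclidean component.

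Following the cited proofs therefore cannot rescue the lemma as stated; the clause should be dropped. This costs nothing downstream: the proof of Proposition~\ref{prop:corep=quiver} only uses ``finite $\Rightarrow$ Dynkin'' and ``not Dynkin/Euclidean $\Rightarrow$ wild''.

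Your instinct that the specific coalgebras matter is right, and the missing idea in that setting is Perron--Frobenius. Let $H$ be a non-cosemisimple coquasi-Hopf algebra with the dual Chevalley property. By Proposition~\ref{prop:alphaijk} and Lemma~\ref{lem:FPeq}, the vector with entry $\sqrt{\dim_{\k}C_j}$ at both $j$ and $j'$ is a strictly positive eigenvector of the adjacency matrix of $\mathrm{Q}(H)_s$. Its eigenvalue is the rank $r(H)$ of Definition~\ref{def:rank}. Hence every component has spectral radius $r(H)$. Tameness forces $r(H)\le 2$, while $r(H)=1$ gives finite type by Proposition~\ref{prop:corep=quiver}(1). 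So $r(H)=2$, and in fact \emph{every} component is Euclidean.
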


The following proposition provides a description of the link quiver of a finite-dimensional non-cosemisimple coquasi-Hopf algebra with the dual Chevalley property that is of finite or tame corepresentation type, which generalizes \cite[Theorem 4.2]{YL24}.
\begin{proposition}\label{prop:corep=quiver}
Let $H$ be a finite-dimensional non-cosemisimple coquasi-Hopf algebra over $\k$ with the dual Chevalley property, and let $\mathrm{Q}(H)=(\mathcal{S},\mathcal{P})$ be its link quiver.
\begin{itemize}
\item[(1)]$H$ is of finite corepresentation type if and only if $\mid{}^1\mathcal{P}\mid=1$ and ${}^1\mathcal{S}=\{\k g\}$ for some group-like element $g\in G(H)$.
  \item[(2)]If $H$ is of tame corepresentation type, then one of the following two cases occurs:
  \begin{itemize}
  \item[(i)]$\mid{}^1\mathcal{P}\mid=2$ and $\dim_{\k}(C)=1$ for all $C\in {}^1\mathcal{S}$;
  \item[(ii)]$\mid{}^1\mathcal{P}\mid=1$ and ${}^1\mathcal{S}=\{C\}$ for some $C\in\mathcal{S}$ with $\dim_{\k}(C)=4$.
  \end{itemize}
  \item[(3)]If one of the following holds, then $H$ is of wild corepresentation type:
  \begin{itemize}
  \item[(i)]$\mid{}^1\mathcal{P}\mid\geq3$;
  \item[(ii)]$\mid{}^1\mathcal{P}\mid=2$ and there exists some $C\in{}^1\mathcal{S}$ with $\dim_{\k}(C)\geq 4$;
  \item[(iii)]$\mid{}^1\mathcal{P}\mid=1$ and ${}^1\mathcal{S}=\{C\}$ for some $C\in\mathcal{S}$ with $\dim_{\k}(C)\geq 9$.
  \end{itemize}
\end{itemize}
\end{proposition}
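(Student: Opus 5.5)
The plan is to read off the shape of $\mathrm{Q}(H)$ from the quantities $\mid{}^1\mathcal{P}\mid$ and ${}^1\mathcal{S}$ using Proposition \ref{prop:alphaijk}, then compare with Lemma \ref{lem:separatedquiver} for the necessary directions. For the sufficiency direction in (1) we will work with $\operatorname{gr}H$. The key inequality comes from counting arrows. Fix a vertex $C_i$. By Proposition \ref{prop:alphaijk}, the number of arrows starting at $C_i$ is
$$\mid\mathcal{P}^{\C_i}\mid=\sum_{j}\sum_{k\in J}\mid{}^1\mathcal{P}^{\C_k}\mid\alpha_{ki}^j .$$
By Remark \ref{rm:FP=dim} we have $\sum_j\alpha_{ki}^j\sqrt{\dim C_j}=\sqrt{\dim C_k}\sqrt{\dim C_i}$. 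Taking $C_i=\k1$ gives $\mid\mathcal{P}^{1}\mid=\mid{}^1\mathcal{P}\mid$. We also get a weighted count: in the separated quiver $\mathrm{Q}(H)_s$, the vertex $C_i$ has neighbours $C_j'$ whose weighted multiplicities $\sum_{j}(\#\text{arrows})\sqrt{\dim C_j}$ equal $\sqrt{\dim C_i}\sum_{k\in J}\mid{}^1\mathcal{P}^{\C_k}\mid\sqrt{\dim C_k}$.

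This yields an additive function on each component of $\mathrm{Q}(H)_s$. Set $d(C_i)=d(C_i')=\sqrt{\dim C_i}$ and $\lambda=\sum_{k\in J}\mid{}^1\mathcal{P}^{\C_k}\mid\sqrt{\dim C_k}$. Then $\sum_{\text{neighbours}}d=\lambda d$ at unprimed vertices. The symmetric count using $\alpha_{ik}^j$ (the second formula in Proposition \ref{prop:alphaijk}) and Lemma \ref{lem:FPeq} gives the same relation at primed vertices. Thus $d$ is a positive $\lambda$-eigenvector of the adjacency matrix of every component, so $\lambda$ is the spectral radius there. Standard facts then apply: a finite connected graph has spectral radius $<2$ iff it is Dynkin, and $=2$ iff it is Euclidean. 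By Lemma \ref{lem:separatedquiver}, finite type forces $\lambda<2$ and tame type forces $\lambda\le 2$. Since $\lambda$ is a sum of terms $\sqrt{\dim C_k}\in\{1,2,3,\dots\}$, one per element of ${}^1\mathcal{P}$, we get the following. The condition $\lambda<2$ means exactly one arrow ends at $\k1$ and it starts at a one-dimensional $\k g$, which gives the ``only if'' of (1). The condition $\lambda=2$ gives either two arrows from one-dimensional vertices, or one arrow from a $4$-dimensional $C$, which gives (2). Every situation listed in (3) gives $\lambda\ge3$, hence $\lambda>2$, so the type is neither finite nor tame; by Drozd's trichotomy it is wild, which gives (3).

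For the ``if'' of (1), assume $\mid{}^1\mathcal{P}\mid=1$ with ${}^1\mathcal{S}=\{\k g\}$. By Corollary \ref{coro:cpdspecial}(3) every vertex has exactly one incoming and one outgoing arrow, so $\mathrm{Q}(H)$ is a disjoint union of oriented cycles; finite dimensionality rules out infinite lines. Here I would argue as in \cite[Theorem 4.2]{YL24}. By Theorem \ref{thm:Gabriel}, $\operatorname{gr}H$ embeds as a large subcoalgebra of $\k(\mathrm{Q}(H),\mathcal{S})$. Since $H$ is finite-dimensional, the basic form of the dual of each link-indecomposable component is a Nakayama algebra: a cyclic quiver with an admissible ideal. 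Such algebras are representation-finite. Passing from $\operatorname{gr}H$ to $H$ needs care. I would use that the link quiver is the Ext-quiver (Remark \ref{rm:link=ext}), so $H^*$ is Morita equivalent to a bound quiver algebra on disjoint cycles; every such algebra is Nakayama and hence of finite type.

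The main obstacle is the additive-function step at the primed vertices: the two counts must match the same $\lambda$. If that symmetry fails, the fallback is to bound the Tits form directly using the vector $d$.
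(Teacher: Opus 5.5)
Your proposal is correct. For the ``if'' part of (1) you reach the paper's argument: the link quiver is the Ext quiver, so the basic algebra of $H^*$ has a disjoint union of oriented cycles as its quiver, hence is Nakayama and representation-finite. The detour through $\operatorname{gr}H$ and Theorem~\ref{thm:Gabriel} is not needed.

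For the necessity directions your route is genuinely different. The paper proceeds case by case:
\begin{itemize}
\item $\mid{}^1\mathcal{P}\mid\geq2$ via \cite[Proposition 5.2]{YLL24};
\item $\dim_{\k}(C_k)\geq 9$ via a vertex of degree $\geq4$ in $\mathrm{Q}(H)_s$;
\item $\dim_{\k}(C_k)=4$ via \cite[Proposition 5.5]{YLL24};
\item (3) by appeal to \cite[Theorem 4.2]{YL24}.
\end{itemize}
In effect it imports Hopf-case arguments that are only asserted to carry over. You instead observe that $d=(\sqrt{\dim_{\k}(C)})$ is a strictly positive eigenvector of the adjacency matrix of $\mathrm{Q}(H)_s$, with eigenvalue $\lambda=\sum_{k\in J}\mid{}^1\mathcal{P}^{\C_k}\mid\sqrt{\dim_{\k}(C_k)}$. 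Hence every component has spectral radius exactly $\lambda$. The classical characterization of connected loopless multigraphs with spectral radius $<2$ (Dynkin) or $=2$ (Euclidean, including the Kronecker graph) then turns Lemma~\ref{lem:separatedquiver} into $\lambda<2$, resp.\ $\lambda=2$.

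This handles (1)--(3) uniformly. Since $\lambda$ is exactly the rank $r(H)$ of Definition~\ref{def:rank}, it proves Corollary~\ref{coro:corep=rank} directly. It also gives more: in the tame case every component of $\mathrm{Q}(H)_s$ is Euclidean, and in the finite case every component is a single edge. The paper's route avoids spectral graph theory but is less self-contained.

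Two bookkeeping corrections.

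First, you swapped start and end. The number of arrows from $C_i$ to $C_j$ is $\mid{}^{\C_j}\mathcal{P}^{\C_i}\mid=\sum_{k\in J}\mid{}^1\mathcal{P}^{\C_k}\mid\alpha_{jk}^i$, so your displayed sum with $\alpha_{ki}^j$ counts arrows \emph{ending} at $C_i$. With the correct count:
\begin{itemize}
\item at an unprimed vertex $C_i$ the eigen-relation is $\sum_j\sqrt{\dim_{\k}(C_j)}\,\alpha_{jk}^i=\sqrt{\dim_{\k}(C_k)}\sqrt{\dim_{\k}(C_i)}$, which is precisely Lemma~\ref{lem:FPeq};
\item at a primed vertex $C_j'$ it is $\sum_i\alpha_{jk}^i\sqrt{\dim_{\k}(C_i)}=\sqrt{\dim_{\k}(C_j)}\sqrt{\dim_{\k}(C_k)}$, which is Remark~\ref{rm:FP=dim}.
\end{itemize}
So the obstacle you flag does not arise.

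Second, ``tame forces $\lambda\leq2$'' does not by itself give (2); you must rule out $\lambda=1$. This follows from the Euclidean component guaranteed by Lemma~\ref{lem:separatedquiver}, which has spectral radius exactly $2$. Alternatively, it follows from the ``if'' part of (1) together with Drozd's trichotomy.
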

\begin{proof}
Suppose $\mid{}^1\mathcal{P}\mid=1$ and ${}^1\mathcal{S}=\{\k g\}$ for some group-like element $g\in G(H)$. By Remark \ref{rm:link=ext}, the link quiver $\mathrm{Q}(H)$ of $H$ coincides with the algebra version of the Ext quiver $\Gamma(H^*)^{\mathrm{a}}$ of $H^*$. Note that $H^*$ is Morita equivalent to a basic algebra $\mathcal{B}(H^*)$. According to Corollary \ref{coro:cpdspecial} (3), every vertex of the Ext quiver of $\mathcal{B}(H^*)$ is the start vertex of one arrow and the end vertex of one arrow, which implies that $\mathcal{B}(H^*)$ is a Nakayama algebra (see \cite[\textsection V. 2. Theorem 2.6]{ASS06}). By \cite[\textsection VI. Theorem 2.1]{ARS95}, the Nakayama algebra $\mathcal{B}(H^*)$ is of finite representation type, hence $H$ is of finite corepresentation type. Conversely, if $H$ is of finite corepresentation type, then by Lemma \ref{lem:separatedquiver} the separated quiver $\mathrm{Q}(H)_s$ is a finite disjoint union of Dynkin diagrams. If $\mid{}^1\mathcal{P}\mid\geq2$, a similar argument to that in the proof of \cite[Proposition 5.2]{YLL24} shows that $H$ is not of finite corepresentation type. If $\mid{}^1\mathcal{P}\mid=1$ and $C_k$ is the unique simple subcoalgebra contained in ${}^1\mathcal{S}$. We claim that $C_k$ is $1$-dimensional. Indeed, Lemma \ref{lem:FPeq} and Remark \ref{rm:FP=dim} yield
$$\sqrt{\dim_{\k}(C_k)}\left(\sum\limits_{t\in I} \sqrt{\dim_{\k}(C_t)}\right)=\sum\limits_{i\in I}\sum\limits_{t\in I} \sqrt{\dim_{\k}(C_i)}\alpha_{i,k}^t.$$
If $\dim_{\k}(C_k)\geq9$, then since $1 \cdot C_k=C_k$, it follows from Proposition \ref{prop:alphaijk} that the separated quiver of $\mathrm{Q}(H)$ contains a vertex which is the end vertex of at least $4$ arrows, and hence is not the union of Dynkin diagrams. If $\dim_{\k}(C_k)= 4$, then using Lemma \ref{lem:cijkinvariant} and a similar argument as in the proof of \cite[Proposition 5.5]{YLL24}, one can show that $\mathrm{Q}(H)$ is not a union of Dynkin diagrams. Consequently, if $H$ is of finite corepresentation type, then $\mid{}^1\mathcal{P}\mid=1$ and the unique simple subcoalgebra contained in ${}^1\mathcal{S}$ is $1$-dimensional. The proof of (1) is complete.

Clearly, (2) is the negation of (3), so it suffices to prove (3). Using similar arguments as in the proof of (1) and \cite[Theorem 4.2]{YL24}, we can establish (3).
\end{proof}

Inspired by \cite[Definition 2.6]{HLYY21}, we define the rank of a coquasi-Hopf algebra with the dual Chevalley property.
\begin{definition}\label{def:rank}
Let $H$ be a coquasi-Hopf algebra over $\k$ with the dual Chevalley property, and let $\mathrm{Q}(H)=(\mathcal{S},\mathcal{P})$ be its link quiver where $\mathcal{S}=\{C_i\mid i\in I\}$, and suppose ${}^1\mathcal{S}=\{C_k\mid k\in J\}$, where $J \subseteq I.$ The rank of $H$ is defined to be the natural number $\sum_{k\in J} \mid{}^1\mathcal{P}^{\C_k}\mid \sqrt{\dim_{\k}(C_k)}$.
\end{definition}
\begin{remark}\rm
Recall that in case $H$ is a Hopf algebra over $\k$ with the dual Chevalley property,
the \textit{rank} of $H$ is defined to be $n$ if $\dim_{\k}(\k\otimes_{H_{0}} H_1)=n+1$ and $H$ is generated by $H_1$ as an algebra \cite{KR06}. According to \cite[Corollary 2.11]{YLL24}, this rank equals the dimension of the space spanned by all entries of non-trivial $(1,\C)$-primitive matrices, where $C$ runs over $ {}^1\mathcal{S}$. In this respect, Definition \ref{def:rank} may be seen as a generalization from the case of Hopf algebras to that of coquasi-Hopf algebras by omitting the generation condition. Moreover, if $H$ is a pointed coquasi-Hopf algebra, then by Remark \ref{rm:Hopf;pointed} (2) our definition coincides with \cite[Definition 2.6]{HLYY21}.
\end{remark}
Clearly, if a finite-dimensional coquasi-Hopf algebra with the dual Chevalley property has rank 0, then it is cosemisimple and hence of finite corepresentation type.
As a consequence of Proposition \ref{prop:corep=quiver}, we have
\begin{corollary}\label{coro:corep=rank}
Let $H$ be a finite-dimensional coquasi-Hopf algebra over $\k$ with the dual Chevalley property, and let $r(H)$ be its rank.
\begin{itemize}
\item[(1)]$H$ is of finite corepresentation type if and only if $r(H)\leq 1$;
  \item[(2)]If $H$ is of tame corepresentation type, then $r(H)=2$;
  \item[(3)]If $r(H)\geq 3$, then $H$ is of wild corepresentation type.
\end{itemize}
\end{corollary}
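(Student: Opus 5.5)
The plan is to reduce the corollary to Proposition \ref{prop:corep=quiver} by expressing the rank in terms of the data appearing there. Write ${}^1\mathcal{S}=\{C_k\mid k\in J\}$. By Definition \ref{def:rank},
$$r(H)=\sum_{k\in J}\mid{}^1\mathcal{P}^{\C_k}\mid\sqrt{\dim_{\k}(C_k)},$$
and each $\sqrt{\dim_{\k}(C_k)}$ is a positive integer. Two simple facts will be used throughout:
\begin{itemize}
\item[(a)] $r(H)=0$ if and only if ${}^1\mathcal{P}=\emptyset$, which by Lemma \ref{lem:1P>0} happens if and only if $H$ is cosemisimple;
\item[(b)] $r(H)\geq \mid{}^1\mathcal{P}\mid$, with equality exactly when every $C\in{}^1\mathcal{S}$ is one-dimensional.
\end{itemize}

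For (1): a cosemisimple $H$ has rank $0$ and is trivially of finite corepresentation type, since every comodule is semisimple and there are finitely many simples. So we may assume $H$ is non-cosemisimple. In that case, $r(H)=1$ holds if and only if there is a single $k\in J$ with $\mid{}^1\mathcal{P}^{\C_k}\mid=1$ and $\dim_{\k}C_k=1$. This says precisely that $\mid{}^1\mathcal{P}\mid=1$ and ${}^1\mathcal{S}=\{\k g\}$ for some group-like $g$. Proposition \ref{prop:corep=quiver}(1) then gives the equivalence.

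For (3): assume $r(H)\geq 3$, so $H$ is non-cosemisimple. I will split into cases on $\mid{}^1\mathcal{P}\mid$.
\begin{itemize}
\item[(i)] If $\mid{}^1\mathcal{P}\mid\geq3$, Proposition \ref{prop:corep=quiver}(3)(i) applies directly.
\item[(ii)] If $\mid{}^1\mathcal{P}\mid=2$, then $r(H)\geq3$ forces some $C\in{}^1\mathcal{S}$ to have $\sqrt{\dim C}\geq2$, i.e. $\dim C\geq4$, and case (3)(ii) applies.
\item[(iii)] If $\mid{}^1\mathcal{P}\mid=1$, then ${}^1\mathcal{S}=\{C\}$ and $r(H)=\sqrt{\dim C}\geq3$, so $\dim C\geq 9$, and case (3)(iii) applies.
\end{itemize}
In every case $H$ is wild.

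For (2): if $H$ is tame, it is not of finite type, so by (1) we have $r(H)\geq 2$. It is also not wild, since Drozd's trichotomy makes the three types mutually exclusive, so by (3) we have $r(H)\leq 2$. Hence $r(H)=2$. Alternatively, one can read this off Proposition \ref{prop:corep=quiver}(2): case (i) gives $r(H)=2\cdot1$ and case (ii) gives $r(H)=1\cdot2$.

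I expect no real obstacle. The only point needing care is the bookkeeping in (3): matching $r(H)\geq3$ to the exact hypotheses of Proposition \ref{prop:corep=quiver}(3) requires using that $\sqrt{\dim C}$ is an integer, so the dimensions $4$ and $9$ thresholds are exactly what the rank bound produces.
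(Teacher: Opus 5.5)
Your proposal is correct and follows the same route as the paper, which presents this corollary as a direct consequence of Proposition \ref{prop:corep=quiver}. Your bookkeeping supplies the translation the paper leaves implicit: each term $\mid{}^1\mathcal{P}^{\C_k}\mid\sqrt{\dim_{\k}(C_k)}$ with $k\in J$ is a positive integer, $r(H)=0$ exactly in the cosemisimple case by Lemma \ref{lem:1P>0}, and Drozd's trichotomy handles part (2).
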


Next we classify finite-dimensional coquasi-Hopf algebra with the dual Chevalley property of finite corepresentation type. We begin by providing a detailed quiver description for this case.
\begin{lemma}\label{lem:finite=}
Let $H$ be a finite-dimensional non-cosemisimple coquasi-Hopf algebra over $\k$ with the dual Chevalley property, and let $\mathrm{Q}(H)$ be its link quiver. Then the following statements are equivalent:
\begin{itemize}
  \item[(1)]$H$ is of finite corepresentation type;
  \item[(2)]There is exactly one arrow $C\rightarrow \k1$ in $\mathrm{Q}(H)$ whose end vertex is $\k1$ and $\dim_{\k}(C)=1$;
  \item[(3)]There is exactly one arrow $\k1\rightarrow D$ in $\mathrm{Q}(H)$ whose start vertex is $\k1$ and $\dim_{\k}(D)=1$;
  \item[(4)]Every vertex in $\mathrm{Q}(H)$ is the start vertex of of exactly one arrow and the end vertex of exactly one arrow; in other words, $\mathrm{Q}(H)$ is a disjoint union of basic cycles;
  \item[(5)]$H_{(1)}$ is a pointed coquasi-Hopf algebra and the link quiver of $H_{(1)}$ is a basic cycle.
\end{itemize}
\end{lemma}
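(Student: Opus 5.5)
The plan is to route every statement through (1) using Proposition \ref{prop:corep=quiver} (1). That result says $H$ is of finite corepresentation type if and only if $|{}^1\mathcal{P}|=1$ and ${}^1\mathcal{S}=\{\k g\}$ for some $g\in G(H)$. Recall that ${}^1\mathcal{P}$ is the set of arrows ending at $\k1$, and its cardinality counts them, since each arrow corresponds to one non-trivial primitive matrix in a complete family. So (1)$\Leftrightarrow$(2) is just a reformulation: exactly one arrow $C\to\k1$ with $\dim_\k C=1$ means precisely $|{}^1\mathcal{P}|=1$ and ${}^1\mathcal{S}=\{C\}$ with $C=\k g$.

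For (2)$\Leftrightarrow$(3) I will use Lemma \ref{lem:P1prime>0}. It gives $|{}^1\mathcal{P}|=|\mathcal{P}^{\prime 1}|$, where $\mathcal{P}^{\prime1}$ counts the arrows starting at $\k1$. It also gives $D\in{}^1\mathcal{S}$ if and only if $S(D)\in\mathcal{S}^1$, with $|{}^1\mathcal{P}^{\D}|=|{}^{KS(\D)^TK^{-1}}\mathcal{P}^{\prime1}|$. Since $S$ is a coalgebra antimorphism, $\dim_\k S(D)=\dim_\k D$; injectivity on simple subcoalgebras follows from $S$ inducing the anti-involution on $\mathbb{Z}\mathcal{S}$. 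Hence a unique arrow $\k g\to\k1$ corresponds to a unique arrow $\k1\to\k g^{-1}$, and conversely. I will fix the direction conventions carefully, because ${}^{\C}\mathcal{P}^{\D}$ denotes arrows $D\to C$.

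For (2)$\Rightarrow$(4) I will invoke Corollary \ref{coro:cpdspecial} (3), which gives $|{}^{\C_i}\mathcal{P}|=|\mathcal{P}^{\C_i}|=1$ for all $i$. That is exactly the statement that $\mathrm{Q}(H)$ is a disjoint union of basic cycles; finiteness of $H$ guarantees finitely many vertices, so every component is a finite cycle rather than an infinite line. For (4)$\Rightarrow$(2), the same corollary (the ``only if'' direction) shows that $|{}^1\mathcal{P}|=1$ and that the unique $C_k\in{}^1\mathcal{S}$ is one-dimensional.

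For (2)$\Rightarrow$(5), Corollary \ref{coro:H(1)pointed} makes $H_{(1)}$ pointed, and $H_{(1)}$ is a coquasi-Hopf subalgebra by Proposition \ref{prop:HcHd} (4). Its link quiver is the connected component of $\mathrm{Q}(H)$ containing $\k1$, which by (4) is a basic cycle. The link quiver of the subcoalgebra $H_{(1)}$ agrees with the corresponding full subquiver, because $H_{(1)}$ is a link-indecomposable component and hence a direct summand. For (5)$\Rightarrow$(2), if the link quiver of $H_{(1)}$ is a basic cycle, then exactly one arrow of $H_{(1)}$ ends at $\k1$. All arrows into $\k1$ in $\mathrm{Q}(H)$ lie in $H_{(1)}$, since $H=\bigoplus_{C\in\mathcal{S}_0}CH_{(1)}$ by Corollary \ref{coro:H=CH(1)} and the link quiver respects this decomposition. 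Pointedness of $H_{(1)}$ then makes the source vertex one-dimensional.

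The only delicate point is this last compatibility of link quivers with the component decomposition. I expect it to follow from the definition of $(C\wedge D)/(C+D)$ together with the direct sum decomposition, so I regard everything else as bookkeeping on top of Proposition \ref{prop:corep=quiver} and Corollary \ref{coro:cpdspecial}.
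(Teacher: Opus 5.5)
Your proposal is correct and follows the paper's proof step for step: (1)$\Leftrightarrow$(2) from Proposition \ref{prop:corep=quiver} (1), (2)$\Leftrightarrow$(3) from Lemma \ref{lem:P1prime>0}, (2)$\Leftrightarrow$(4) from Corollary \ref{coro:cpdspecial} (3), and (2)$\Leftrightarrow$(5) from Corollary \ref{coro:H(1)pointed}. Your extra remarks make explicit what the paper leaves implicit: that the link quiver of $H_{(1)}$ is the connected component of $\mathrm{Q}(H)$ containing $\k 1$ (because link-indecomposable components are direct summands), and that $\dim_{\k} S(D)=\dim_{\k} D$, which is better justified by the bijectivity of $S$ on the cosemisimple $H_0$ than by $S$ being a coalgebra antimorphism.
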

\begin{proof}
Proposition \ref{prop:corep=quiver} (1) shows the equivalence of (1) and (2). The equivalence of (2) and (3) follows from Lemma \ref{lem:P1prime>0}. By Corollary \ref{coro:cpdspecial} (3), (2) and (4) are equivalent. Corollary \ref{coro:H(1)pointed} establishes the equivalence between (2) and (5).
\end{proof}
Since the link quiver of a non-cosemisimple coquasi-Hopf algebra with the dual Chevalley property $H$ coincides with that of $\operatorname{gr} H$, the following corollary follows immediately.

\begin{corollary}
Let $H$ be a finite-dimensional non-cosemisimple coquasi-Hopf algebra over $\k$ with the dual Chevalley property. Then $H$ is of finite corepresentation type if and only if $\operatorname{gr} H$ is of finite corepresentation type.
\end{corollary}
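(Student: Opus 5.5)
The statement to prove is: $H$ is of finite corepresentation type if and only if $\operatorname{gr}H$ is.

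The plan is to reduce both properties to a single quiver criterion via Lemma \ref{lem:finite=}. That lemma applies verbatim to any finite-dimensional non-cosemisimple coquasi-Hopf algebra with the dual Chevalley property. So the argument needs three things: $\operatorname{gr}H$ lies in this class, $\operatorname{gr}H$ has the same link quiver as $H$, and the criterion in Lemma \ref{lem:finite=} (4) depends only on the link quiver.

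First, I will note that $\operatorname{gr}H=\bigoplus_{n\ge0}H_n/H_{n-1}$ is finite-dimensional, with $\dim\operatorname{gr}H=\dim H$. By Example \ref{example:coquasiChevalley} (\cite[Proposition 6.2]{AP13}), it is a graded coquasi-Hopf algebra with the dual Chevalley property. Its coradical is $H_0$, because a coradically graded coalgebra has coradical equal to its degree-zero part. Since $H$ is non-cosemisimple, $H_1/H_0\neq 0$, so $\operatorname{gr}H\neq(\operatorname{gr}H)_0$ and $\operatorname{gr}H$ is non-cosemisimple as well.

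Next, I will show that $\mathrm{Q}(H)=\mathrm{Q}(\operatorname{gr}H)$. Both coalgebras have the same set $\mathcal{S}$ of simple subcoalgebras, namely those of $H_0$. By Lemma \ref{lem:linkquiver=S,P}, the arrows from $D$ to $C$ are counted by $\frac{1}{rs}\dim_\k\big((C\wedge D)/(C+D)\big)$, and by (\ref{eq:complete}) this is the $C$-$D$-isotypic part of the $H_0$-bicomodule $H_1/H_0$. The degree-one component of $\operatorname{gr}H$ is exactly $H_1/H_0$, with the same $H_0$-bicomodule structure (\ref{comodulestructure}). In $\operatorname{gr}H$, the first term of the coradical filtration is $H_0\oplus H_1/H_0$. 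Computing the wedge $C\wedge D$ inside $\operatorname{gr}H$ therefore yields the same bicomodule quotient, so the arrow counts agree. Alternatively, Remark \ref{rm:link=ext} lets one phrase this as equality of the Ext quivers, since $\operatorname{Ext}^1$ between simple comodules is governed by $H_1/H_0$.

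Finally, I will apply Lemma \ref{lem:finite=} to both $H$ and $\operatorname{gr}H$. Each is of finite corepresentation type if and only if its link quiver is a disjoint union of basic cycles. Since the two link quivers coincide, the two conditions are equivalent. The only point needing care is the identification of link quivers. The main obstacle is justifying that the wedge computation in $\operatorname{gr}H$ reproduces $H_1/H_0$ as a bicomodule. I would handle it by citing \cite[Corollary 2.11]{YLL24} together with Proposition \ref{prop:CoalgGabriel}, which already identifies $\operatorname{gr}(H)_1$ with $\k(\mathrm{Q}(H)_1,\mathcal{S})$.
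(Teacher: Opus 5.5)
Your proposal is correct and takes the same route as the paper. The paper deduces the corollary immediately from two facts: the link quiver of $H$ coincides with that of $\operatorname{gr}H$, and Lemma \ref{lem:finite=} characterizes finite corepresentation type purely in terms of the link quiver. You additionally spell out the verifications the paper leaves implicit: that $\operatorname{gr}H$ is again a finite-dimensional non-cosemisimple coquasi-Hopf algebra with the dual Chevalley property, and that the link quiver depends only on $H_0$ and the $H_0$-bicomodule $H_1/H_0$.
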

\begin{remark}\rm
Suppose that $H$ is a coradically graded coquasi-Hopf algebra of finite corepresentation type. Then by Theorem \ref{thm:Gabriel} and Lemma \ref{lem:finite=}, there exists a unique generalized Hopf quiver $\mathrm{Q}$ associated with $(H_0, \{C_i\}_{i\in I}, (0,0,\cdots ,n_k,\cdots,0,0), \{\alpha_{ij}^t\}_{i,j,t\in I})$ with $n_k=1$ and $\dim_{\k}(C_{k})=1$ such that $H$ is a large coquasi-Hopf subalgebra of $\k(\mathrm{Q}, \mathcal{S})$. In particular, if $H_0$ is genuine, a similar argument as in the proof of Corollary \ref{coro:genuine} shows that $H$ is genuine. Therefore, by means of bosonization, we can construct a finite-dimensional genuine coquasi-Hopf algebra with the dual Chevalley property of rank $1$ on a genuine cosemisimple coquasi-Hopf algebra. Corollary \ref{coro:corep=rank} tells us that this coquasi-Hopf algebra is of finite corepresentation type.
\end{remark}

Recall that a finite-dimensional coalgebra $H$ is said to be \textit{coNakayama}, if the dual algebra $H^*$ is a Nakayama algebra. According to \cite[\textsection V. 2. Theorem 2.6]{ASS06} and Remark \ref{rm:link=ext}, $H$ is coNakayama if and only if the link quiver of $H$ is the start vertex of at most one arrow and the end vertex of at most one arrow. We have the following corollary.
\begin{corollary}\label{coro:nakayama}
A finite-dimensional coquasi-Hopf algebra $H$ over $\k$ with the dual Chevalley property is of finite corepresentation type if and only if $H$ is coNakayama. A finite-dimensional quasi-Hopf algebra $H$ over $\k$ with the Chevalley property is of finite representation type if and only if $H$ is a Nakayama algebra.
\end{corollary}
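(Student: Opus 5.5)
The plan is to reduce both assertions to statements about the link quiver, using Lemma \ref{lem:finite=} together with the quiver characterization of Nakayama (co)algebras recalled just before the corollary.

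\textbf{Coalgebra statement.} If $H$ is cosemisimple, it is trivially both of finite corepresentation type and coNakayama: its link quiver has no arrows, and $H^*$ is semisimple, hence Nakayama. So assume $H$ is non-cosemisimple. By Remark \ref{rm:link=ext} and \cite[\textsection V. 2. Theorem 2.6]{ASS06}, $H$ is coNakayama exactly when every vertex of $\mathrm{Q}(H)$ is the start of at most one arrow and the end of at most one arrow.

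For the direction ``finite $\Rightarrow$ coNakayama'', Lemma \ref{lem:finite=} (4) says every vertex of $\mathrm{Q}(H)$ is the start of exactly one arrow and the end of exactly one arrow. This is stronger than the coNakayama condition.

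For the converse, suppose $H$ is coNakayama. Since $H$ is non-cosemisimple, Lemma \ref{lem:1P>0} gives $|{}^1\mathcal{P}|\ge 1$, so at least one arrow ends at $\k1$. The coNakayama condition then forces $|{}^1\mathcal{P}|=1$, with ${}^1\mathcal{S}=\{C_k\}$ and $|{}^1\mathcal{P}^{\C_k}|=1$.

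The remaining point, and the main obstacle, is to show $\dim_{\k} C_k=1$. By Proposition \ref{prop:alphaijk}, the number of arrows from $C_j$ to $C_i$ is $\alpha_{ik}^j$. So the number of arrows starting at $C_i$ is $\sum_j\alpha_{ik}^j$, which is the number of simple constituents of $C_i\cdot C_k$, counted with multiplicity. Taking $C_i=S(C_k)$, Remark \ref{rm:1once} shows that $S(C_k)\cdot C_k$ contains $\k1$ exactly once. Comparing square roots of dimensions via Remark \ref{rm:FP=dim}: if $\dim C_k=r^2$ with $r\ge2$, then $S(C_k)\cdot C_k$ has total square-root dimension $r^2$. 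Since $\k1$ occurs only once, the product cannot be $\k1$ alone, so it has at least two constituents. Hence $S(C_k)$ is the start of at least two arrows, contradicting the coNakayama condition. Therefore $C_k=\k g$ for some group-like $g$. Lemma \ref{lem:finite=} (2) (equivalently Proposition \ref{prop:corep=quiver} (1)) then gives finite corepresentation type.

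\textbf{Quasi-Hopf statement.} Let $H$ be a finite-dimensional quasi-Hopf algebra with the Chevalley property. Then $H^*$ is a finite-dimensional coquasi-Hopf algebra with the dual Chevalley property. $H$ is of finite representation type iff $H^*$ is of finite corepresentation type, by definition. $H$ is Nakayama iff $H^*$ is coNakayama, by definition, since $(H^*)^*\cong H$. The second statement therefore follows from the first.
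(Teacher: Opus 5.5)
Your proof is correct. The forward direction and the dualization to quasi-Hopf algebras are what the paper intends. For non-cosemisimple $H$, Lemma \ref{lem:finite=} (4) makes $\mathrm{Q}(H)$ a disjoint union of basic cycles, which is stronger than the coNakayama condition. For the quasi-Hopf statement, finite (co)representation type and (co)Nakayama are both defined through the dual.

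Where you differ is the converse. The paper states the corollary as an immediate consequence of Lemma \ref{lem:finite=} and the quiver description of coNakayama coalgebras. For the converse it needs nothing specific to coquasi-Hopf algebras: if $H$ is coNakayama then $H^*$ is Nakayama, and Nakayama algebras are of finite representation type by \cite[\textsection VI. Theorem 2.1]{ARS95}. This is the same fact already used in the proof of Proposition \ref{prop:corep=quiver} (1).

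You instead use Lemma \ref{lem:1P>0}, Proposition \ref{prop:alphaijk} and the dimension count in $S(C_k)\cdot C_k$ to show that coNakayama forces $|{}^1\mathcal{P}|=1$ and ${}^1\mathcal{S}=\{\k g\}$. You then invoke Lemma \ref{lem:finite=} (2)$\Rightarrow$(1). This is valid but roundabout: that implication is proved, via Proposition \ref{prop:corep=quiver} (1), by showing $H^*$ is Morita equivalent to a Nakayama algebra and citing the same theorem of \cite{ARS95}. You could have applied that theorem directly to your hypothesis. What your detour buys is an explicit fusion-ring reason why, in the coNakayama case, the unique arrow into $\k1$ must come from a group-like. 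That information is also recovered a posteriori from Lemma \ref{lem:finite=} once finiteness is known.

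One small slip. By Proposition \ref{prop:alphaijk}, $\alpha_{ik}^j$ counts arrows from $C_j$ to $C_i$. So $\sum_j\alpha_{ik}^j$ is the number of arrows ending at $C_i$, not starting there. Your argument therefore shows that $S(C_k)$ is the end vertex of at least two arrows. Since the coNakayama condition bounds both in- and out-degree, the contradiction is unaffected.
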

From Lemma \ref{lem:integertensor}, Remark \ref{rm:link=ext}, Lemma \ref{lem:finite=}, Corollary \ref{coro:nakayama} and \cite[\textsection VI. Theorem 2.1]{ARS95} we obtain the following corollary.
\begin{corollary}
Let $\mathscr{C}$ be a finite integral tensor categories with the Chevalley property of finite representation type. Then the Ext quiver of $\mathscr{C}$ is a disjoint union of basic cycles. Moreover, every indecomposable object $X$ is uniserial, that is, it has a unique composition series and hence is a quotient of an indecomposable projective object.
\end{corollary}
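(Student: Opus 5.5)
The statement has two parts, and I will handle them in turn, working throughout with a basic algebra Morita equivalent to the dual of a coquasi-Hopf algebra.

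\emph{The Ext quiver.} Since $\mathscr{C}$ is a finite integral tensor category, Lemma \ref{lem:integertensor} (in its dual form) gives a finite-dimensional coquasi-Hopf algebra $H$ with $\mathscr{C}\simeq\mathscr{M}^H$ as tensor categories. The Chevalley property of $\mathscr{C}$ means that $H$ has the dual Chevalley property, and finite representation type of $\mathscr{C}$ means that $H$ is of finite corepresentation type. If $H$ is cosemisimple, then $\mathscr{C}$ is semisimple and the Ext quiver has no arrows. This is a degenerate case. The statement then holds in the sense that every indecomposable is simple, hence uniserial and projective; I would note that "basic cycles" must be read as allowing this arrowless case (or that a non-semisimple hypothesis is implicit). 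Otherwise, by Remark \ref{rm:link=ext} the Ext quiver of $\mathscr{C}$, i.e.\ that of $H$, coincides with the link quiver $\mathrm{Q}(H)$. By Lemma \ref{lem:finite=} (1)$\Leftrightarrow$(4), $\mathrm{Q}(H)$ is a disjoint union of basic cycles, which settles the first claim.

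\emph{Uniseriality.} By Corollary \ref{coro:nakayama}, $H$ is coNakayama, so $H^*$ is a Nakayama algebra. Because $\mathscr{C}\simeq\mathscr{M}^H\simeq{}_{H^*}\mathscr{M}$ as abelian categories, indecomposable objects of $\mathscr{C}$ correspond to indecomposable $H^*$-modules. Composition series and projectivity are preserved by this equivalence. Equivalently, I can pass to the basic algebra $\mathcal{B}(H^*)\cong\operatorname{End}(P)^{op}$, which is Nakayama by \cite[\textsection V. 2. Theorem 2.6]{ASS06} since its quiver consists of cycles. Then \cite[\textsection VI. Theorem 2.1]{ARS95} says that every indecomposable module over a Nakayama algebra is uniserial and is a quotient of an indecomposable projective $P(S)$, where $S$ is its top. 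Transporting this back through the equivalence gives the second claim.

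\emph{Main obstacle.} The only real care needed is the dictionary between $\mathscr{C}$ and modules over $H^*$. I must check that the left/right conventions (comodules versus modules, and $\operatorname{End}(P)^{op}$) do not reverse arrows in a way that matters. Here they do not: a disjoint union of basic cycles is stable under reversing all arrows, and uniseriality is left-right symmetric for Nakayama algebras.
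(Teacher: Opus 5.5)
Your proof is correct and follows essentially the same route as the paper. The paper realizes $\mathscr{C}$ as $\mathscr{M}^H$ via Lemma~\ref{lem:integertensor}, identifies the Ext quiver with the link quiver by Remark~\ref{rm:link=ext}, gets the cycle structure from Lemma~\ref{lem:finite=}, and obtains uniseriality from Corollary~\ref{coro:nakayama} together with \cite[\textsection VI. Theorem 2.1]{ARS95}. Your remark that the first claim needs $\mathscr{C}$ to be non-semisimple is a fair correction to the statement as written: Lemma~\ref{lem:finite=} assumes $H$ is non-cosemisimple, and an arrowless quiver is not literally a disjoint union of basic cycles.
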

Before proceeding further, we give an example of a pointed coquasi-Hopf algebra whose link quiver is a basic cycle.
\begin{example} \label{ex:M(n,s,q)}\emph{(}\cite[Example 2.8]{HLYY21}\emph{)}\rm
Let $\k$ an algebraically closed field with characteristic
$0$. For $n\geq 2$, consider the following Hopf quiver $\mathrm{Q}(\mathbb{Z}_{n},g)$:
$$
\begin{tikzpicture}
\filldraw [black] (0,0) circle (1pt) node[anchor=south]{1};
\filldraw [black] (-3,-1) circle (1pt)node[anchor=east]{$g^{n-1}$};
\filldraw [black] (3,-1) circle (1pt) node[anchor=west]{$g$,};
\filldraw [black] (0,-1) circle (0pt)node{$\cdots\cdots$};
\draw[thick, ->] (0.1,-0.01) .. controls (0.1,-0.01) and (0.1,-0.01) .. node[anchor=west]{}(2.9,-0.9) ;
\draw[thick, ->] (2.85,-1) .. controls (2.85,-1) and (2.85,-1) .. node[anchor=west]{}(0.55,-1) ;
\draw[thick, ->] (-0.55,-1) .. controls (-0.55,-1) and (-0.55,-1) .. node[anchor=west]{}(-2.85,-1) ;
\draw[thick, ->] (-2.9,-0.9) .. controls (-2.9,-0.9) and (-2.9,-0.9) .. node[anchor=west]{}(-0.1,-0.01) ;
\end{tikzpicture}
$$
Let $1\leq s\leq n-1$ be a natural number, let $q$ be an $n^{2}$-th primitive root of unity, and set $\mathbbm{q}:=q^{n}$. Let $p_{i}^{l}$ denote the path in $\mathrm{Q}(\mathbb{Z}_{n},g)$ starting from $g^{i}$ with length $l$; thus $p_{i}^{0}=g^{i}$.
Let $\Phi_{s}$ be the $3$-cocycle on $\mathbb{Z}_{n}$ defined by
\begin{eqnarray*}
\Phi_{s}(g^{i},g^{j},g^{k})=\mathbbm{q}^{si[\frac{j+k}{n}]},\;\; 0\leq i,j,k\leq  n-1,
\end{eqnarray*}
where $[x]$ denotes the integer part of $x$. For any $h\in \k$, define $l_h = 1+h+\cdots+h^{l-1}$ and $l!_h = 1_h\cdots l_h$.
The Gaussian binomial coefficient is defined by
$\binom{l+m}{l}_h:=\frac{(l+m)!_h}{l!_h\, m!_h}$.
Let $(a,b)$ denote the greatest common divisor of $a$ and $b$.
The rank $1$ pointed coquasi-Hopf algebra $M(n,s,q)$ is, as a coalgebra, $$M(n,s,q)=\bigoplus_{i< \frac{n^{2}}{(n^{2},s)}} k \mathrm{Q}(\mathbb{Z}_{n},g)_{i},$$ which is a subcoalgebra of path coalgebra $\k\mathrm{Q}(\mathbb{Z}_{n},g)$. The reassociator, the multiplication, and the coquasi-antipode are given as follows:
\begin{eqnarray*}
&\Phi(p_i^{l},p_j^{m}, p_k^{t})=\delta_{l+m+t,0}\Phi_{s}(g^{i},g^{j},g^{k}),\\
&\label{eq2.10}p_i^{l}\cdot p_j^{m}=\mathbbm{q}^{-sjl}q^{-sjl}\mathbbm{q}^{s(i+l')[m+j-(m+j)']/n}
\binom{l+m}{l}_{\mathbbm{q}^{-s}q^{-s}}p_{i+j}^{l+m},\\
&\alpha(p_i^{l})=\delta_{l,0}1,\;\;\;\;\beta(p_i^{l})=\delta_{l,0}\frac{1}{\Phi_{s}(g^{i},g^{n-i},g^{i})},\\
&S(g^{i})=g^{n-i},\;\;\;\;S(p_{0}^{1})=\mathbbm{q}^{-s}p_{n-1}^{1},\end{eqnarray*}
for $0\leq l,m,t<\frac{n^{2}}{(n^{2},s)}$ and $0\leq i,j,k\leq n-1$, where $\delta_{a,b}$ is the Kronecker
delta and $l'$ means the remainder of $l$ divided by $n$. According to \cite[Corollary 3.11]{HLY11}, $M(n,s,q)$ is genuine. Moreover, the set $\{ M(n,0,q)\mid q^n=1\}$ are the usual Hopf algebras, called generalized Taft algebras, which is generated by $g$ and $x$ with relations
$$
g^n=1, \;\;\;\;x^d=0,\;\;\;\;xg=qgx,
$$
where $q$ is an $n$-th root of unit of order $d$.
Its comultiplication $\Delta$, counit $\varepsilon$, and the antipode $S$ are given by
$$
\Delta(g)=g\otimes g,\;\; \varepsilon(g)=1,\;\;
\Delta(x)=x\otimes 1+g\otimes x,\;\; \varepsilon(x)=0,\;\;
S(g)=g^{-1},\;\;S(x)=-g^{-1}x.
$$
\end{example}

Now we classify finite-dimensional coquasi-Hopf algebras with the dual Chevalley property of finite corepresentation type. This classification contains the corresponding result for Hopf algebras with the dual Chevalley property given in \cite[Theorem 5.15]{YLL24}.
\begin{proposition}\label{prop:finite}
Let $\k$ be an algebraically closed field with characteristic $0$. Then a finite-dimensional coquasi-Hopf algebra $H$ over $\k$ with the dual Chevalley property is of finite corepresentation type if and only if one of the following conditions holds:
\begin{itemize}
  \item[(1)] $H$ is cosemisimple;
  \item[(2)] $H$ is not cosemisimple and $\operatorname{gr}H_{(1)}\cong M(n,s,q)$.
\end{itemize}
\end{proposition}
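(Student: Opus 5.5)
The plan is to reduce the statement to the pointed case via the link-indecomposable component $H_{(1)}$, and then invoke the known classification of finite-dimensional pointed coquasi-Hopf algebras of finite corepresentation type, namely \cite{HLY11} together with \cite[Example 2.8]{HLYY21}. The cosemisimple case is trivial, since all comodules are semisimple and $H$ is finite-dimensional, so there are finitely many simple comodules. From now on assume $H$ is not cosemisimple.

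For the ``only if'' direction, suppose $H$ has finite corepresentation type. By Lemma~\ref{lem:finite=} (1)$\Rightarrow$(5), $H_{(1)}$ is a pointed coquasi-Hopf algebra whose link quiver is a basic cycle. By Proposition~\ref{prop:HcHd} (4), $H_{(1)}$ is a coquasi-Hopf subalgebra, and it still has the dual Chevalley property (being pointed). Its coradical filtration is the restriction of that of $H$, so $\operatorname{gr}H_{(1)}$ is a coradically graded pointed coquasi-Hopf algebra whose link quiver is a basic cycle. Hence $\operatorname{gr}H_{(1)}$ has finite corepresentation type by Lemma~\ref{lem:finite=} applied to $H_{(1)}$. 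Equivalently, by Corollary~\ref{coro:corep=rank}, it has rank $1$.

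The Hopf quiver is then $\mathrm{Q}(\mathbb{Z}_n,g)$, where $G(H_{(1)})=\langle g\rangle\cong\mathbb{Z}_n$ is generated by the unique group-like element $g$ with an arrow to $\k1$; this uses Proposition~\ref{prop:(H1)0}. By Theorem~\ref{thm:Gabriel}, $\operatorname{gr}H_{(1)}$ embeds as a large subalgebra of a graded structure on the path coalgebra $\k\mathrm{Q}(\mathbb{Z}_n,g)$. Large subcoalgebras of this path coalgebra of finite dimension are truncations $\bigoplus_{i<N}\k\mathrm{Q}_i$, because each length-$l$ piece is spanned by the paths $p^l_i$. The coquasi-Hopf structures on such truncations over $(\k\mathbb{Z}_n,\Phi)$ with $\mathrm{char}\,\k=0$ are classified in \cite{HLY11} (see also \cite{HLYY21}): up to isomorphism they are exactly the $M(n,s,q)$. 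This gives $\operatorname{gr}H_{(1)}\cong M(n,s,q)$.

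For the ``if'' direction, suppose $\operatorname{gr}H_{(1)}\cong M(n,s,q)$. The link quiver of $H_{(1)}$ equals that of its graded version, which is a basic cycle, so in $\mathrm{Q}(H)$ there is exactly one arrow ending at $\k1$, and its source is one-dimensional. Lemma~\ref{lem:finite=} (2)$\Rightarrow$(1) then shows that $H$ has finite corepresentation type.

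The main obstacle is the identification step in the ``only if'' direction. It has two parts. First, we must match the reassociator on $\k\mathbb{Z}_n$ with some $\Phi_s$, which requires that every 3-cocycle on $\mathbb{Z}_n$ is cohomologous to some $\Phi_s$; the resulting twist equivalence must then be handled carefully, so the isomorphism class should be taken up to the twisting used in \cite{HLY11}. Second, we must ensure the truncation length equals $n^2/(n^2,s)$. For the second part, I would compute the multiplication of Remark~\ref{rm:multi} on the arrows $p^1_i$, which yields quantum binomial coefficients in $\mathbbm{q}^{-s}q^{-s}$. The nilpotency order of $p^1_0$ in $\operatorname{gr}H_{(1)}$ (which is generated in degrees $0$ and $1$, as in the rank-$1$ pointed setting) is then forced to be the order of this root of unity, and this order is $n^2/(n^2,s)$.
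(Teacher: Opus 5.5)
Your proposal is correct and follows the paper's route. You use Lemma~\ref{lem:finite=} (with Corollary~\ref{coro:H(1)pointed}) to reduce to $H_{(1)}$ being a pointed coquasi-Hopf algebra of finite corepresentation type whose link quiver is a basic cycle, and then invoke the pointed classification of \cite{HLY11}; the paper simply cites \cite[Corollary 3.13]{HLY11} at that point, so your extra identification steps (the Gabriel embedding, the truncation, matching $\Phi$ with some $\Phi_s$, and the truncation length) are what that corollary already packages. If you keep the truncation step, note that a finite-dimensional large subcoalgebra of the cycle's path coalgebra need not be a truncation: it could contain $p^2_0$ but not $p^2_1$. It is a truncation here only because it is also a subalgebra, and multiplication by group-likes permutes the paths $p^l_i$ of each fixed length up to nonzero scalars.
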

\begin{proof}
By Corollary \ref{coro:H(1)pointed} and Lemma \ref{lem:finite=}, a finite-dimensional non-cosemisimple coquasi-Hopf algebra with the dual Chevalley property is of finite corepresentation type if and only if the link-indecomposable component containing $\k 1$ is a pointed coquasi-Hopf algebra of finite corepresentation type. By \cite[Corollary 3.13]{HLY11}, we can complete the proof.
\end{proof}
\begin{remark}\rm
\begin{itemize}
\item[(1)]It should be noted that there is very little general theory available at present for finding all the liftings of non-pointed Hopf algebras with the dual Chevalley property, and even the pointed case has not been fully resolved. As stated in \cite[Subsection 3.7 (1)]{HLY11}, since the reassociators get involved in the lifting method, even if $M(n,s,q)$ is pointed and of rank $1$, finding all liftings of $M(n,s,q)$ becomes extremely complicated. In our subsequent work, we will study lifting methods for coquasi-Hopf algebras with the dual Chevalley property, which can be seen as a quasi-analogue of the lifting methods for Hopf algebras \cite{AAGM14, AG19}. It is worth pointing out that Angiono \cite{Ang10} determined the liftings when $n$ is coprime to $2,3,5,7.$
\item[(2)]Let $H$ be a finite-dimensional non-cosemisimple coquasi-Hopf algebra with the dual Chevalley property of finite corepresentation type, and let $\mathcal{S}$ be the set of all simple subcoalgebras of $H.$ According to Corollary \ref{coro:H=CH(1)}, there exists a subset $\mathcal{S}_0\subseteq\mathcal{S}$ such that $H\cong \bigoplus_{C\in\mathcal{S}_0} CH_{(1)}$ with $\operatorname{gr}H_{(1)}\cong M(n,s,q)$. Thus $H$ is generated in degree $1$ of its coradical filtration.
\item[(3)]A standard dualization process yields parallel classification results in Proposition \ref{prop:finite} for finite-dimensional quasi-Hopf algebras with the Chevalley property of finite representation type.
\end{itemize}
\end{remark}

We subsequently give the classification of finite integral tensor categories with the Chevalley property that are of finite representation type.
\begin{theorem}\label{thm:finite}
Let $\k$ be an algebraically closed field with characteristic $0$, and
let $\mathscr{C}$ be a finite integral tensor categories over $ \k$ with the Chevalley property. Then $\mathscr{C}$ is of finite representation type if and only if either $\mathscr{C}$ is a fusion category (i.e., a finite semisimple tensor category), or is tensor equivalent to $\mathscr{M}^H$ for some finite-dimensional coquasi-Hopf algebra $H$ with the dual Chevalley property such that $\operatorname{gr}H_{(1)}\cong M(n,s,q)$.
\end{theorem}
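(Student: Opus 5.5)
The plan is to reduce Theorem \ref{thm:finite} to Proposition \ref{prop:finite} by passing between finite integral tensor categories and finite-dimensional coquasi-Hopf algebras. The link is Lemma \ref{lem:integertensor}. By that lemma, a finite integral tensor category $\mathscr{C}$ is tensor equivalent to ${}_A\mathscr{M}$ for some finite-dimensional quasi-Hopf algebra $A$. Dualizing, $\mathscr{C}\simeq\mathscr{M}^H$ with $H=A^*$ a finite-dimensional coquasi-Hopf algebra. The reassociator and coquasi-antipode of $H$ come from the dual structure of $A$, and its coquasi-antipode is bijective by \cite[Corollary 4.4]{BC03}.

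\textbf{Step 1: transport the two hypotheses to $H$.}
\begin{itemize}
\item[(a)] By the observation in Section \ref{section2}, $\mathscr{M}^H$ has the Chevalley property exactly when $H$ has the dual Chevalley property. The Chevalley property is invariant under tensor equivalence, so it passes from $\mathscr{C}$ to $H$.
\item[(b)] Representation type is defined through $\operatorname{End}(P)^{op}$ for a projective generator $P$. This algebra is Morita equivalent to the dual algebra $H^*=A$, since $\mathscr{M}^H\simeq {}_{H^*}\mathscr{M}$ for finite-dimensional $H$. Hence $\mathscr{C}$ is of finite representation type if and only if $H$ is of finite corepresentation type. Here I would note that Drozd's trichotomy and the finite/tame/wild type of an algebra are Morita invariant, since Morita equivalences preserve indecomposability and isomorphism classes.
\end{itemize}

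\textbf{Step 2: apply Proposition \ref{prop:finite} to $H$.} Over a field of characteristic $0$, $H$ is of finite corepresentation type if and only if either $H$ is cosemisimple, or $H$ is non-cosemisimple with $\operatorname{gr}H_{(1)}\cong M(n,s,q)$. In the cosemisimple case, $\mathscr{M}^H$ is a semisimple finite tensor category by Lemma \ref{lem:cosstensor}, that is, a fusion category. Conversely, any fusion category is of finite representation type, because every indecomposable object is simple and there are finitely many simple objects. This gives the first alternative of Theorem \ref{thm:finite}.

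\textbf{Step 3: the converse direction.} Suppose $\mathscr{C}\simeq\mathscr{M}^H$ for a finite-dimensional $H$ with the dual Chevalley property and $\operatorname{gr}H_{(1)}\cong M(n,s,q)$. Proposition \ref{prop:finite} gives finite corepresentation type, and Step 1(b) transfers this back to $\mathscr{C}$. If $H$ happens to be cosemisimple, then $\mathscr{C}$ falls under the fusion case; otherwise Proposition \ref{prop:finite}(2) applies directly.

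The only step I expect to need real care is Step 1(b), the identification of the paper's definition of representation type via $\operatorname{End}(P)^{op}$ with the corepresentation type of $H$. I would handle it by taking $P$ to be the regular comodule, or a basic projective generator. Then $\operatorname{End}(P)^{op}$ is either isomorphic to $H^*$ or Morita equivalent to it, after which Morita invariance of representation type finishes the argument. Everything else is a direct citation of earlier results.
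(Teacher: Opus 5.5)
Your proposal is correct and follows the paper's proof: you use Lemma \ref{lem:integertensor} to realize $\mathscr{C}$ as $\mathscr{M}^H$ for a finite-dimensional coquasi-Hopf algebra $H$ with the dual Chevalley property, transfer finite representation type to finite corepresentation type, and apply Proposition \ref{prop:finite}. You also give more detail than the paper, which calls the ``if'' part trivial and passes from $\mathscr{C}$ to $H$ without comment; specifically, you justify the $\operatorname{End}(P)^{op}$ definition via Morita invariance and handle the possibly cosemisimple case in the converse.
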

\begin{proof}
The ``if part" is trivial; we only need to prove the ``only if part".
According to Lemma \ref{lem:integertensor}, $\mathscr{C}$ is tensor equivalent to $\mathscr{M}^H$ for some finite-dimensional coquasi-Hopf algebra $H$ with the dual Chevalley property. Since $\mathscr{C}$ is of finite representation type, it follows that $H$ is of finite corepresentation type. Now the theorem follows immediately from Proposition \ref{prop:finite}.
\end{proof}

For tame corepresentation type, the quiver method alone is insufficient for a complete classification; in other words, the condition in Proposition \ref{prop:corep=quiver} (2) is not sufficient. Even for Hopf algebras with the dual Chevalley property and pointed coquasi-Hopf algebras, there are few classification results concerning tame corepresentation type; currently, only some necessary conditions for the coradically graded case have been obtained. See, for example, \cite[Theorem 5.2]{YL24} and \cite[Theorem 6.5]{HLY15}. The reassociator renders Hopf-type proofs inapplicable. A key step in the Hopf case is to decompose a radically graded Hopf algebra $H$ as the bosonization $R_H\#H/J_H$, where $R_H$ is a local tame subalgebra, so that Ringel's remarkable classification result for local algebras \cite{Rin75} can be applied. This decomposition fails in the quasi-Hopf setting: the analogue of
$R_H$ is not an associative subalgebra. Moreover, the methods for the elementary (pointed) case also break down. The authors in \cite{HLY15} applied Gelaki's method \cite{Gel05} to construct graded elementary quasi-Hopf algebras of tame type, but this method is also inapplicable to non-elementary quasi-Hopf algebras. Consequently, we do not solve the tame corepresentation type classification here; instead we provide only necessary conditions for tame corepresentation type.
\begin{lemma}\label{lem:1tame}
Let $H$ be a finite-dimensional coquasi-Hopf algebra over $\k$ with the dual Chevalley property of tame corepresentation type.
Then $H_{(1)}$ is of tame corepresentation type.
\end{lemma}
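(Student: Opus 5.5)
The plan is to exploit the link-indecomposable decomposition of Corollary \ref{coro:H=CH(1)}, namely $H=\bigoplus_{C\in\mathcal{S}_0} CH_{(1)}$, which is a direct sum of subcoalgebras. Since $H$ is tame it is not cosemisimple: a cosemisimple coalgebra is of finite corepresentation type. So Proposition \ref{prop:HcHd} applies, and $H_{(1)}$ is a coquasi-Hopf subalgebra and a direct summand of $H$ as a coalgebra. Dually, $H^*\cong\prod_{C\in\mathcal{S}_0}(CH_{(1)})^*$ is a finite product of algebras. The category of $H^*$-modules is therefore the product of the module categories of the factors. Consequently $H^*$ is of finite representation type if and only if every factor is, and $H^*$ is wild as soon as one factor is wild; the latter holds because a wild factor gives a representation embedding into $H^*$-modules via the projection onto that factor.

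By Drozd's trichotomy it suffices to exclude two cases for $H_{(1)}$. First, suppose $H_{(1)}$ were wild. Then $(H_{(1)})^*$ is a wild direct factor of $H^*$, so $H^*$ is wild. This contradicts Drozd's theorem, which says tame and wild are mutually exclusive.

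Second, suppose $H_{(1)}$ were of finite corepresentation type. I would use Proposition \ref{prop:HcHd}(3), $H_{(C)}=CH_{(1)}=H_{(1)}C$, to show that each component $CH_{(1)}$ is then also of finite corepresentation type, so that $H$ is of finite type, a contradiction. The cleanest route is through quivers. First, $H_{(1)}$ is itself a coquasi-Hopf algebra with the dual Chevalley property and is non-cosemisimple, since $|{}^1\mathcal{P}|\geq 1$ by Lemma \ref{lem:1P>0} and all arrows at $\k1$ lie in $H_{(1)}$. The sets ${}^1\mathcal{S}$ and ${}^1\mathcal{P}$ computed inside $H_{(1)}$ coincide with those of $H$, because arrows ending at $\k1$ start at vertices of the component of $\k1$. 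Proposition \ref{prop:corep=quiver}(1) applied to $H_{(1)}$ then gives $|{}^1\mathcal{P}|=1$ with ${}^1\mathcal{S}=\{\k g\}$. Applying Proposition \ref{prop:corep=quiver}(1) to $H$ itself, $H$ would be of finite corepresentation type, contradicting tameness. Hence $H_{(1)}$ is tame.

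The main obstacle is the bookkeeping needed to check that ${}^1\mathcal{P}$ for $H_{(1)}$ agrees with that of $H$. The concern is that the coradical filtration of the subcoalgebra $H_{(1)}$ coincides with the intersection of the coradical filtration of $H$ with $H_{(1)}$. I would handle this by noting that $H_{(1)}$ is a direct summand coalgebra, so its wedge products, and hence its primitive matrices, are computed within $H_{(1)}$ exactly as in $H$.
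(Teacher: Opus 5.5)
Your proposal is correct and follows the paper's proof: by Drozd's trichotomy, you rule out finite type for $H_{(1)}$ via Proposition~\ref{prop:corep=quiver}(1), using that $H$ and $H_{(1)}$ have the same ${}^1\mathcal{S}$ and ${}^1\mathcal{P}$, and you rule out wild type because $(H_{(1)})^*$ is a direct factor (hence a quotient) of $H^*$, which is the paper's observation that $\mathscr{M}^{H_{(1)}}$ embeds in $\mathscr{M}^{H}$. Your write-up also makes explicit points the paper leaves tacit, namely the non-cosemisimplicity of $H$ and $H_{(1)}$, and the fact that wedges computed in $H_{(1)}$ agree with those in $H$ because of the direct-sum decomposition into link-indecomposable components.
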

\begin{proof}
Since $H$ is of tame corepresentation type, it follows from Proposition \ref{prop:corep=quiver} that either $\mid {}^1\mathcal{P}\mid>1$ or there exists $C\in{}^1\mathcal{S}$ with $\dim_{\k}(C)>1$.
Hence $H_{(1)}$ is not of finite corepresentation type.
Moreover, there is an inclusion from $\mathscr{M}^{H_{(1)}}$ to $\mathscr{M}^{H}$. Consequently, $H_{(1)}$ is not of wild corepresentation type. By a fundamental result of \cite{Dro79}, $H_{(1)}$ is therefore of tame corepresentation type.
\end{proof}
According to Proposition \ref{prop:corep=quiver}, if $H$ is a coquasi-Hopf algebra with the dual Chevalley property, then the following two cases occur:
\begin{itemize}
  \item[(i)]$\mid{}^1\mathcal{P}\mid=2$ and $\dim_{\k}(C)=1$ for all $C\in {}^1\mathcal{S}$;
  \item[(ii)]$\mid{}^1\mathcal{P}\mid=1$ and ${}^1\mathcal{S}=\{C\}$ for some $C\in\mathcal{S}$ with $\dim_{\k}(C)=4$.
  \end{itemize}
Proposition \ref{prop:(H1)0} yields that in case (1),
$H_{(1)}$ is pointed, while in case (2),
$H_{(1)}$ is non-pointed.

Before addressing case (i), we first present the following example.
\begin{example}\emph{(}\cite[Section 5]{HLY15}\emph{)}\rm
Let $\k$ be an algebraically closed field with characteristic $0$, and let $W = \mathbb{Z}_m \times \mathbb{Z}_n=\langle g_1,g_2|g_{1}^{m}=g_{2}^{n}=1, g_{1}g_{2}=g_{2}g_{1}\rangle$ with $m$ even and $m \mid n$. Suppose $g, h$ generate $W$ with orders $o(g), o(h)$, and let $q, p$ be $o(g)$-th and $o(h)$-th primitive roots of unity, respectively. Choose integers $l_1 \mid m$, $l_2 \mid n$, set $q_2 := q^{l_1}$, $p_1 := p^{l_2}$, and assume that $p_1 q_2$ is an $l$-th primitive root of unity. The elementary Hopf algebra $H=H(m,n,l_{1},l_{2},g,h)$ is defined to be an associative algebra
generated by elements $x,y$ and $g,h$, with relations
$$g,h \;\text{generate}\; \mathbb{Z}_{m}\times \mathbb{Z}_{n},
\;\;x^{2}=y^{2}=(xy)^{l}+(-q_{2})^{l}(yx)^{l}=0,$$
$$gxg^{-1}=q^{-1}x,\;gyg^{-1}=y,\;hxh^{-1}=x,\;hyh^{-1}=p^{-1}y.$$
The comultiplication $\Delta$, counit $\varepsilon$, and antipode
$S$ are given by
$$\Delta(g)=g\otimes g,\;\;\Delta(h)=h\otimes h,\;\;\varepsilon(g)=\varepsilon(h)=1,$$
$$\Delta(x)=x\otimes 1+ g^{\frac{o(g)}{2}}h^{l_{2}}\otimes x,
\;\;\Delta(y)=y\otimes 1+ g^{l_{1}}h^{\frac{o(h)}{2}}\otimes y,\;\;\varepsilon(x)=\varepsilon(y)=0,$$
$$S(g)=g^{-1},\;\;S(h)=h^{-1},\;\;S(x)=-g^{\frac{o(g)}{2}}h^{-l_{2}}x,\;\;
S(y)=-g^{-l_{1}}h^{\frac{o(h)}{2}}y.$$
Set $X:=g^{-\frac{o(g)}{2}}h^{-l_{2}}x,\;Y:=g^{-l_{1}}h^{-\frac{o(h)}{2}}y$ and
assume that $m=\mathbbm{m}^2$ and $n=\mathbbm{n}^2$. For any natural number $l$, let $\zeta_{l}$ be an $l$-th primitive root of unity. Let $k\mathbb{Z}_l$ be the group algebra of the cyclic group and for all
$0\leq i\leq l-1$, define
$$1_{i}:=\frac{1}{l}\sum_{j=0}^{l-1}(\zeta_{l}^{l-i})^{j}g^{j}.$$ Set \begin{eqnarray*}
J_{a,b,c}&=&\sum_{x_{1},x_{2}=1}^{m}\sum_{y_{1},y_{2}=1}^{n}\zeta_{m}^{ax_{1}(y_{1}-y_{1}')}\zeta_{\mathbbm{m}(\mathbbm{m},\mathbbm{n})}^{bx_{2}(y_{1}-y_{1}')}
\zeta_{n}^{cx_{2}(y_{2}-y_{2}'')}1_{x_{1}}1_{x_{2}}\otimes 1_{y_{1}}1_{y_{2}}\notag\\
&=& \sum_{x_{1},x_{2},y_{1},y_{2}}\zeta_{m}^{(ax_{1}+bx_{2})(y_{1}-y_{1}')}
\zeta_{n}^{cx_{2}(y_{2}-y_{2}'')}1_{x_{1}}1_{x_{2}}\otimes 1_{y_{1}}1_{y_{2}} \ \end{eqnarray*}
for some $0\leq a,b<\mathbbm{m}$ and $0\leq c< \mathbbm{n}$. Consider the subalgebra $A(H, J_{a,b,c})\subset H$ which is generated by $X,Y$ and $\mathbbm{g}_{i}:=g_{i}^{\mathbbm{m}_{i}},\; \mathbbm{h}_{i}:=h_{i}^{\mathbbm{n}_{i}}$ for $i=1,2$. Assume that $g=h_{1}h_{2}^{\sigma}$ and $h=h_{2}$ for some $0\leq \sigma < n$. Then by \cite[Proposition 5.5]{HLY15}, $A(H, J_{a,b,c})$ is a quasi-Hopf subalgebra of $H^{J}$ if and only if
$$a=0,\;\;\mathbbm{n}|l_{2},\;\;l_{1}+b\equiv 0(\mathbbm{m}),\;\;c+\sigma l_{1}\equiv 0(\mathbbm{n}).$$ Moreover, \cite[Proposition 5.6]{HLY15} yields that if $b\neq 0$ or $c\neq 0$, then the quasi-Hopf algebra $(A(H),\Phi_{0,b,c})$ is genuine.
\end{example}

\begin{proposition}\label{prop:tame1-dim}
Let $H$ be a finite-dimensional coradically graded coquasi-Hopf algebra over an algebraically closed field with characteristic $0$ of tame corepresentation type. If $\mid{}^1\mathcal{P}\mid=2$ and $\dim_{\k}(C)=1$ for all $C\in {}^1\mathcal{S}$, then $(H_{(1)})^*$ is twist equivalent to one of the following quasi-Hopf algebras: \begin{itemize}
\item[(i)]  $H(m,n,l_{1},l_{2},g,h)$
for some $m,n,l_{1},l_{2}$ and two generators $g,h$ of $\mathbbm{Z}_{m}\times \mathbbm{Z}_{n}$;
\item[(ii)]  $A(H,J)$ for some $H=H(m,n,l_{1},l_{2},g,h)$ and twist $J\in H\otimes H$.
\end{itemize}
\end{proposition}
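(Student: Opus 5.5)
The plan is to reduce the statement to the classification of graded elementary tame quasi-Hopf algebras due to Huang, Liu and Ye \cite{HLY15}, using the quiver description of $H_{(1)}$ established earlier.

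First, $H_{(1)}$ is a pointed, coradically graded coquasi-Hopf algebra of tame corepresentation type. By Proposition \ref{prop:HcHd} (4), $H_{(1)}$ is a coquasi-Hopf subalgebra of $H$. Since every simple subcoalgebra in ${}^1\mathcal{S}$ is one-dimensional, Corollary \ref{coro:H(1)pointed} shows that $H_{(1)}$ is pointed. Proposition \ref{prop:(H1)0} further shows that its coradical $(H_{(1)})_0$ is the group algebra $\k G$ of the group $G$ generated by the group-likes $g$ with $\k g\in{}^1\mathcal{S}$ (together with their inverses).

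Because $H$ is coradically graded and $H_{(1)}$ is a subcoalgebra, $H_{(1)}$ inherits the grading: $(H_{(1)})_n=H_{(1)}\cap H_n$. Hence $H_{(1)}$ is coradically graded as well. By Lemma \ref{lem:1tame}, $H_{(1)}$ is of tame corepresentation type.

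Second, we identify the relevant invariants. The arrows of $\mathrm{Q}(H_{(1)})$ ending at $\k1$ are exactly those in ${}^1\mathcal{P}$, since all of them lie in the component of $\k1$. So $H_{(1)}$ is a pointed coquasi-Hopf algebra of rank $2$ in the sense of Definition \ref{def:rank}, and its link quiver is a connected Hopf quiver $\mathrm{Q}(G,\chi)$ with $\sum_U\chi_U|U|=2$ (Remark \ref{rm:Hopf;pointed} (2)).

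Dualizing, $(H_{(1)})^*$ is a finite-dimensional radically graded elementary quasi-Hopf algebra of tame representation type. Its Ext quiver is the Hopf quiver above, and it is generated by $\k G$ together with two degree-one elements. This also gives us the structure of the group: since the two arrows into $\k1$ start at group-likes and generate $G$, the group $G$ is abelian, generated by two elements.

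Third, we invoke the classification. By \cite[Theorem 6.5]{HLY15}, every graded elementary quasi-Hopf algebra of tame type is twist equivalent either to some $H(m,n,l_1,l_2,g,h)$ or to some $A(H,J)$ as in the example preceding the statement. Applying this to $(H_{(1)})^*$ yields exactly the alternatives (i) and (ii).

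The main obstacle is checking that the hypotheses of \cite[Theorem 6.5]{HLY15} are met. That result is formulated for basic (elementary) graded quasi-Hopf algebras over abelian groups with the relevant parity conditions, for example $m$ even and $m\mid n$. I would first try to derive these hypotheses from tameness via Lemma \ref{lem:separatedquiver}.

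For the Hopf quiver $\mathrm{Q}(G,\chi)$ of rank $2$, the separated quiver must be a union of Dynkin and Euclidean diagrams with at least one Euclidean component. This forces the two ramification classes to be two (possibly equal) central elements. It should also pin down the shape $\widetilde{A}$ that underlies the HLY classification, which rules out the cyclic situations handled instead by $M(n,s,q)$.

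If the direct derivation is awkward, the fallback is to quote the exact form of \cite[Theorem 6.5]{HLY15}. That theorem is stated precisely for "graded elementary tame quasi-Hopf algebras", and we have just shown that $(H_{(1)})^*$ is such an algebra.
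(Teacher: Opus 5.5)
Your proposal is correct and follows the paper's route. The paper obtains pointedness of $H_{(1)}$ from Corollary~\ref{coro:H(1)pointed} and tameness from Lemma~\ref{lem:1tame}, notes that $H_{(1)}$ is link-indecomposable and coradically graded, and then applies \cite[Theorem 6.5]{HLY15} directly to $(H_{(1)})^*$, which is exactly your ``fallback''. None of your extra analysis of the group or the separated quiver is needed; as written, your argument that $G$ is abelian is too quick, since the real reason is connectedness (a conjugacy class of size two cannot generate the group, so both ramification elements must be central), but the citation does not depend on it.
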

\begin{proof}
According to Corollary \ref{coro:H(1)pointed} and Lemma \ref{lem:1tame}, $H_{(1)}$ is a link-indecomposable pointed coradically graded coquasi-Hopf algebra of tame corepresentation type. \cite[Theorem 6.5]{HLY15} completes the proof.
\end{proof}
Next, we consider the case where $H_{(1)}$ is non-pointed.
\begin{proposition}\label{prop:tameC}
Let $H$ be a finite-dimensional coquasi-Hopf algebra over $\k$ with the dual Chevalley property of tame corepresentation type with $H_{(1)}$ non-pointed, and let $\mathrm{Q}(H)=(\mathcal{S},\mathcal{P})$ be its link quiver.
\begin{itemize}
\item[(1)]Then ${}^1\mathcal{P}=\{\X\}$ for some non-trivial $(1,\C)$-primitive matrix, where $\dim_{\k}(C)=4$. Moreover, both $C$ and $S(C)$ are in the center of $\mathbb{Z}\mathcal{S}$, and $(H_{(1)})_0$ is generated by $\{c\in C\mid C\in {}^1\mathcal{S}\}\cup\{S(c)\in C\mid C\in {}^1\mathcal{S}\}.$
\item[(2)]There exist invertible $4\times 4$ matrices $K, K^\prime$ over $\k$ such that
$
\C\odot^\prime\X=(\X\odot\C)K
$
and
$S(\C)\odot^\prime\X=(\X\odot S(\C))K^\prime.$
\end{itemize}
\end{proposition}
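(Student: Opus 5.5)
We first settle part (1) using the tame dichotomy, and then obtain part (2) by comparing two of the four constructions of complete families of primitive matrices from Remark \ref{rm:4constructions}.

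\textbf{Part (1).} By Proposition \ref{prop:corep=quiver} (2), $H$ falls into one of two cases.

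\emph{Case (i)} is $\mid{}^1\mathcal{P}\mid=2$ with every $C\in{}^1\mathcal{S}$ one-dimensional. Corollary \ref{coro:H(1)pointed} then says $H_{(1)}$ is pointed, which contradicts the hypothesis. So we are in case (ii).

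\emph{Case (ii)} gives ${}^1\mathcal{P}=\{\X\}$ with $\X$ a non-trivial $(1,\C)$-primitive matrix and $\dim_{\k}(C)=4$. Hence ${}^1\mathcal{S}=\{C\}$.
\begin{itemize}
\item Corollary \ref{coro:cpdspecial} (4) puts $C$ and $S(C)$ in the center of $\mathbb{Z}\mathcal{S}$.
\item The "in particular" part of Proposition \ref{prop:(H1)0} gives the generation statement for $(H_{(1)})_0$.
\end{itemize}

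\textbf{Part (2).} Since $\C$ is $2\times 2$, $\X$ is a $1\times 2$ matrix. Consider the two block matrices
$$\C\odot^\prime\begin{pmatrix}1&\X\\0&\C\end{pmatrix}\quad\text{and}\quad \begin{pmatrix}1&\X\\0&\C\end{pmatrix}\odot\C.$$
After conjugation by $L_{\C,\C}$, as in \eqref{equationBY}, each has upper-right block $\C\odot^\prime\X$, respectively $\X\odot\C$, of size $2\times 4$. Both blocks are $(\C,\C\odot^\prime\C)$-primitive.

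The multiplicative matrices $\C\odot^\prime\C$ and $\C\odot\C$ both represent the comodule $V_C\otimes V_C$, so they are similar. The key claim is then:
\begin{itemize}
\item the entries of $\C\odot^\prime\X$ and of $\X\odot\C$ span the same subspace of $H_1/H_0$, namely the whole ${}^{\C}\mathcal{P}$-part coming from arrows into $C$;
\item this subspace is a direct sum of simple $C$-$E_i$-bicomodules.
\end{itemize}

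To justify the claim we count. By Proposition \ref{prop:alphaijk}, $\mid{}^{\C_i}\mathcal{P}^{\C_j}\mid=\alpha_{iC}^j=\alpha_{Ci}^j$, so both constructions (Proposition \ref{prop:Pcomplete} and its mirror in Remark \ref{rm:4constructions}) produce complete families with the same ends. Since ${}^1\mathcal{P}$ is a singleton, the whole span $\sum\operatorname{span}(\overline{\X'})$ over arrows ending at $C$ equals $C\cdot\operatorname{span}(\overline{\X})$ by Proposition \ref{prop:Hopfmod}. Analogously, it equals $\operatorname{span}(\overline{\X})\cdot C$, using the mixed fundamental theorem (Corollary \ref{coro:H0fundamental}) together with centrality.

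Having equal spans, we compare the two as $(\C,\cdot)$-primitive row spaces. Two primitive matrices with the same left matrix $\C$, whose entries span the same space and whose right multiplicative matrices are similar, differ by an invertible scalar matrix on the right; this follows from \cite[Remark 2.5]{Li22} and the uniqueness up to similarity in \cite[Lemma 2.4]{Li22}. This yields $\C\odot^\prime\X=(\X\odot\C)K$.

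The second identity, $S(\C)\odot^\prime\X=(\X\odot S(\C))K'$, is proved the same way, using the centrality of $S(C)$ and Lemma \ref{lem:P1prime>0}.

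\textbf{Main obstacle.} The hardest step is the equality of spans $C\cdot\operatorname{span}(\overline{\X})=\operatorname{span}(\overline{\X})\cdot C$ in $H_1/H_0$. Equal fusion multiplicities only show the two spaces are isomorphic as bicomodules, not equal as subspaces. I would try to prove that ${}^{co H_0}(H_1/H_0)$, which here is exactly $\operatorname{span}(\overline{\X})$, is stable under the adjoint-type action. Concretely, I would use the isomorphism of Proposition \ref{prop:Majidmod=YD} to rewrite $c\cdot x$ as $(c_1\vartriangleright x)\cdot c_2$ up to reassociator factors. This expression lies in $\operatorname{span}(\overline{\X})\cdot C$, which gives one inclusion. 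Equality then follows by a dimension count, since both spaces have dimension $4\cdot 2=8$ by the linear independence in Lemma \ref{Lemma:cap=0}.
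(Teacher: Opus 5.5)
\textbf{Part (1).} This coincides with the paper's argument: rule out case (i) of Proposition \ref{prop:corep=quiver} (2) via Corollary \ref{coro:H(1)pointed}, then apply Corollary \ref{coro:cpdspecial} (4) and Proposition \ref{prop:(H1)0}.

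\textbf{Part (2), the failing step.} The paper only points to the Hopf case \cite[Lemma 7.3]{YL24}. Your plan is the right shape: prove $\operatorname{span}(\overline{\C\odot^\prime\X})=\operatorname{span}(\overline{\X\odot\C})$ in $H_1/H_0$, then extract $K$. However, the step you single out as the crux fails as written.
\begin{itemize}
\item Proposition \ref{prop:Majidmod=YD} defines the Yetter--Drinfeld action $\vartriangleright$ on the \emph{right} coinvariants $M^{coH}$. The identity $l\cdot n=(l_1\vartriangleright n)\cdot l_2$ it yields holds only for $n\in M^{coH}$.
\item The entries of $\overline{\X}$ are \emph{left} coinvariants: $\rho^L(\overline{x_{1j}})=1\otimes\overline{x_{1j}}$, while $\rho^R(\overline{x_{1j}})=\sum_t\overline{x_{1t}}\otimes c_{tj}$. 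So $c_1\vartriangleright\overline{x}$ is not defined for them.
\item That proposition controls the entries of the matrix in $\mathcal{P}^{\prime 1}$ (the arrow $\k 1\to S(C)$), not $\X$.
\item Your earlier appeal to Corollary \ref{coro:H0fundamental} ``together with centrality'' has the same sidedness problem. Centrality in $\mathbb{Z}\mathcal{S}$ only controls isomorphism types, as you noticed.
\end{itemize}
Two small slips as well: $\X\odot\C$ is $(\C,\C\odot\C)$-primitive, not $(\C,\C\odot^\prime\C)$-primitive; and conjugating by $L_{\C,\C}$ turns the upper-right block into $(\C\odot^\prime\X)L_{\C,\C}^{-1}$.

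\textbf{The repair.} It is more elementary than your proposed route. Put $M=H_1/H_0$ and ${}^CM=\{m\in M\mid \rho^L(m)\in C\otimes M\}$.
\begin{itemize}
\item The map $\phi\colon H_0\otimes{}^{coH_0}M\to M$ is a left comodule isomorphism, and ${}^{coH_0}M=\operatorname{span}(\overline{\X})$. Hence ${}^CM=C\cdot\operatorname{span}(\overline{\X})$, which has dimension $8$.
\item Since $p_R$ is a left comodule map and $\overline{x}$ is left coinvariant, $\rho^L(\overline{x}\cdot c)=c_1\otimes \overline{x}\cdot c_2$. Thus $\operatorname{span}(\overline{\X})\cdot C\subseteq {}^CM$.
\item Equality follows once the $8$ entries of $\overline{\X\odot\C}$ are linearly independent. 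For this you need the mirror image of Corollary \ref{coro:H0fundamental}: ${}^{coH_0}M\otimes H_0\cong M$ via the right action. It follows from Proposition \ref{prop:mixedHopfmod} applied to $H_0^{op,cop}$, using that $S$ is bijective. Lemma \ref{Lemma:cap=0} only treats $\C\odot^\prime\Y$.
\end{itemize}

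\textbf{Extracting $K$.} $K$ comes from the common \emph{left} matrix, not from similarity of the right ones. Let $V$ be the simple left $C$-comodule with coaction matrix $\C$.
\begin{itemize}
\item Each column of $\overline{\C\odot^\prime\X}$, and each column of $\overline{\X\odot\C}$, defines a comodule map $V\to{}^CM\cong V^{4}$.
\item Each family of four such maps is therefore a basis of $\operatorname{Hom}^C(V,{}^CM)$, and the change-of-basis matrix is $K$.
\item The relation $K^{-1}(\C\odot\C)K=\C\odot^\prime\C$ then follows as a consequence.
\end{itemize}
For $K^\prime$, run the same argument with a basic multiplicative matrix of $S(C)$ in place of $\C$; this is $S(\C)^T$ up to conjugation, as in Lemma \ref{lem:P1prime>0}. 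Centrality is not needed.

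\textbf{What is proved.} This argument gives the identities in $H_1/H_0$, i.e.\ in $\operatorname{gr}H$. That matches the coradically graded setting of the cited Hopf source, but you should say so explicitly. For non-graded $H$ it only shows that the difference of the two sides is a trivial primitive matrix.
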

\begin{proof}
By Proposition \ref{prop:corep=quiver} and Corollary \ref{coro:H(1)pointed}, we know that ${}^1\mathcal{P}=\{\X\}$ for some non-trivial $(1,\C)$-primitive matrix, where $\dim_{\k}(C)=4$.
(1) follows from Corollary \ref{coro:cpdspecial} (4) together with Proposition \ref{prop:(H1)0}. A similar argument as in the proof of \cite[Lemma 7.3]{YL24} shows that (2) holds.
\end{proof}

As a corollary of Theorem \ref{thm:Gabriel}, Propositions \ref{prop:corep=quiver} and \ref{prop:tameC}, we have
\begin{corollary}\label{coro:tameC}
Let $H$ be a finite-dimensional coradically graded coquasi-Hopf algebra over $\k$ of tame corepresentation type, with $H_{(1)}$ non-pointed. Then there exists a unique generalized Hopf quiver $\mathrm{Q}$ associated with $(H_0, \{C_i\}_{i\in I}, (0,0,\cdots ,n_k,\cdots,0,0), \{\alpha_{ij}^t\}_{i,j,t\in I})$ with $n_k=1$ and $\dim_{\k}(C_{k})=4$ such that $H$ is a large coquasi-Hopf subalgebra of $\k(\mathrm{Q}, \mathcal{S})$.
\end{corollary}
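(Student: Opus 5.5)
We argue through the link quiver and Theorem \ref{thm:Gabriel}. Since $H$ is coradically graded, $H\cong\operatorname{gr}H$, and $H$ has the dual Chevalley property (Example \ref{example:coquasiChevalley}). Theorem \ref{thm:Gabriel} then gives a coquasi-Hopf algebra structure on $\k(\mathrm{Q}(H),\mathcal{S})$ into which $H$ embeds as a large coquasi-Hopf subalgebra. It therefore suffices to show that $\mathrm{Q}(H)$ is a generalized Hopf quiver associated with a cosemisimple datum of $H_0$ whose dimension vector is concentrated at a single index $k$, with $n_k=1$ and $\dim_\k(C_k)=4$.

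First we pin down ${}^1\mathcal{P}$. The algebra $H$ is non-cosemisimple: otherwise it would be of finite corepresentation type. Since $H$ is of tame corepresentation type, Proposition \ref{prop:corep=quiver}(2) leaves two possibilities. The first is $|{}^1\mathcal{P}|=2$ with all $C\in{}^1\mathcal{S}$ one-dimensional; then Corollary \ref{coro:H(1)pointed} makes $H_{(1)}$ pointed, which contradicts the hypothesis. Hence the second case holds. This is also Proposition \ref{prop:tameC}(1): ${}^1\mathcal{P}=\{\X\}$ with $\X$ a non-trivial $(1,\C)$-primitive matrix, ${}^1\mathcal{S}=\{C\}$, and $\dim_\k(C)=4$.

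Next we translate this into the datum. Lemma \ref{lem:Hopflink=Hopfquiver} and its proof show that $\mathrm{Q}(H)$ is the generalized Hopf quiver of the datum with $n_t=|{}^{\C_t}\mathcal{P}^{1}|$. This $n_t$ is the multiplicity of $V_t$ in the Yetter-Drinfeld module $(H_1/H_0)^{coH_0}$, so the vector $(n_t)$ lies in $\Lambda_{H_0}$. By Lemma \ref{lem:P1prime>0}(1), $n_t\neq 0$ only for $C_t=S(C)$, and there $n_t=|{}^1\mathcal{P}^{\C}|=1$. Since $S$ is a coalgebra anti-isomorphism onto its image (Lemma \ref{lem:cosstensor}), we also get $\dim_\k S(C)=\dim_\k C=4$. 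So the datum is $(H_0,\{C_i\},(0,\dots,0,n_k,0,\dots),\{\alpha_{ij}^t\})$ with $C_k=S(C)$, $n_k=1$ and $\dim_\k(C_k)=4$. Alternatively, the first formula of Proposition \ref{prop:alphaijk} expresses the arrow counts through $C$ directly, $\sum_k n_k\alpha_{ik}^j$ with $n$ supported at $C$, but the definition's convention $\sum_k n_k\alpha_{jk}^i$ is matched by $S(C)$. Applying Theorem \ref{thm:Gabriel} with this quiver gives existence.

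The main obstacle is uniqueness. The quiver is $\mathrm{Q}(H)$, which is intrinsic to $H$: the embedding requires $H\supseteq\k(\mathrm{Q}_0,\mathcal{S})\oplus\k(\mathrm{Q}_1,\mathcal{S})$, so the link quiver of the path coalgebra and that of $H$ coincide (Lemma \ref{lem:k(Q,S)link}). It remains to show that the datum producing a given quiver has a unique dimension vector of this shape. Suppose two single-index vectors, supported at $k$ and $k'$ with value $1$, give the same quiver. Then for every $j$ and $i$ we have $\alpha_{jk}^i=\alpha_{jk'}^i$. Taking $j$ with $C_j=\k1$ gives $C_k=C_{k'}$. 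Hence the datum, and therefore $\mathrm{Q}$, is unique.
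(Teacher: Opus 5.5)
Your proposal is correct and takes the same route as the paper, which derives the corollary directly from Theorem \ref{thm:Gabriel} together with Propositions \ref{prop:corep=quiver} and \ref{prop:tameC}. You also supply two details the paper leaves implicit, and both are sound: the dimension vector is supported at $S(C)$ (by Lemma \ref{lem:P1prime>0} and the proof of Lemma \ref{lem:Hopflink=Hopfquiver}), and $S(C)$ is still $4$-dimensional; and uniqueness holds because a large embedding forces $\mathrm{Q}\cong\mathrm{Q}(H)$, while the arrows out of $\k1$ recover the vector $(n_t)$.
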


\begin{example}\rm
According to \cite[Theorem 5.2]{YL24}, the Hopf algebra $H$ defined in Example \ref{ex:32dim} is a link-indecomposable non-pointed Hopf algebra with the dual Chevalley property of tame corepresentation type. Indeed, we have $H\cong (\k\langle x,y\rangle/(x^2, y^2, (xy)^2+(yx)^2))^*\# H_0.$
\end{example}

\begin{remark}\rm
Let $H$ be a finite-dimensional coquasi-Hopf algebra with the dual Chevalley property. By the general degeneration theory \cite{Gei95}, the tameness of $\operatorname{gr} H$ implies that $H$ is also of tame corepresentation type.
\end{remark}

At the end of this subsection, we start from some known Hopf algebras to construct examples of coquasi-Hopf algebras with the dual Chevalley property of finite or tame corepresentation type.
\begin{example}\rm
Let $H$ be a $16$-dimensional Hopf algebra (see \cite[Example 6.1]{YLL24}) over an algebraically closed field $\k$ generated by $x,y,z,u$ with relations
\begin{eqnarray*}
 &x^2=y^2=1, \;z^2=\frac{1}{2}(1+x+y-xy),\;yx=xy,\;zx=yz,\;zy=xz,\label{H8multi}\\
   &u^2=0,\;\; ux=-xu,\;\; uy=-yu,\;\; uz=\sqrt{-1}xzu,
\end{eqnarray*}
and with coalgebra structure and antipode given by
\begin{eqnarray*}
&\Delta(x)=x\otimes x,\; \Delta(y)=y\otimes y,\;\varepsilon(x)=\varepsilon(y)=1,\;\;S(x)=x,\;\;S(y)=y,\label{H8comulti1}\\
&\Delta(z)=\frac{1}{2}(1\otimes 1+1\otimes x+ y\otimes 1-y\otimes x)(z\otimes z),\; \varepsilon(z)=1,\;\;S(z)=z,\label{H8comulti2}\\
&\Delta(u)=x\otimes u+u\otimes 1,\;\; \varepsilon(u)=0,\;\;S(u)=-xu.
\end{eqnarray*}
A basis of $H$ is $\{1,x,y,xy,z,xz,yz,xyz,u,xu,yu,xyu,zu,xzu,yzu,xyzu\}.$ We define linear functions $\Phi,\alpha,\beta$ pointwise on the basis elements as follows:
\begin{eqnarray*}
&\Phi(a,b,c):=\left\{
\begin{aligned}
-\varepsilon(a)\varepsilon(b)\varepsilon(c),~~~ &\text{if} ~~~  a,b,c \in \{z,xz,yz,xyz\}; \\
\varepsilon(a)\varepsilon(b)\varepsilon(c),\;\;~~~&\text{otherwise},
\end{aligned}
\right.\\
&\alpha(a):=\varepsilon(a),\\
&\beta(a):=\left\{\begin{aligned}
-\varepsilon(a),~~~ &\text{if} ~~~  a \in \{z,xz,yz,xyz\}; \\
\varepsilon(a),\;\;~~~&\text{otherwise}.
\end{aligned}
\right.
\end{eqnarray*}
Then the tuple $H(x,y,z,u)=(H, m, \mu, \Delta, \varepsilon, \Phi, S, \alpha, \beta)$ forms a genuine coradically graded coquasi-Hopf algebra, whose coradical is $(K_u)^*$ in Example \ref{ex:Ku}, where $m,\mu,\Delta,\varepsilon, S$ coincide with those of $H.$ Denote $E = \operatorname{span}\{z,xz, yz, xyz\}$. Then
$
\mathcal{S} = \{\k 1, \k x, \k y, \k xy, E\}.
$
We give the corresponding multiplicative matrix $\E$ of $E$, where
$$
\mathcal{E} = \frac{1}{2}\begin{pmatrix}
z + yz & z - yz \\
xz - xyz & xz + xyz
\end{pmatrix}.
$$
In this example,
$
\mathcal{P} = \{(u), (xu), (yu), (xyu), \X\},
$
where
$$
\mathcal{X} = \frac{1}{2}\begin{pmatrix}
zu + yzu & zu - yzu \\
xyzu - xzu & -xzu - xyzu
\end{pmatrix}
$$
is a non-trivial $(\E, \E)$-primitive matrix. The link quiver of $H$ is shown below:
$$
\begin{tikzpicture}
\filldraw [black] (6,0) circle (1pt) node[anchor=south]{$\k  x$};
\filldraw [black] (6,-1.5) circle (1pt)node[anchor=north]{$\k  1$};
\filldraw [black] (9,0) circle (1pt) node[anchor=south]{$\k xy$};
\filldraw [black] (9,-1.5) circle (1pt)node[anchor=north]{$\k  y$};
\filldraw [black] (13,-0.7) circle (1pt)node[anchor=west]{$E$};
\filldraw [black] (11.2,-0.7) circle (0pt)node[anchor=west]{$\X$};
\draw[thick, ->] (5.9,-1.4) .. controls (5.67,-1) and (5.67,-0.5) .. node[anchor=east]{$(u)$}(5.9,-0.1) ;
\draw[thick, ->] (6.1,-0.1) .. controls (6.33,-0.5) and(6.33,-1) .. node[anchor=west]{$(xu)$}(6.1,-1.4);
\draw[thick, ->] (8.9,-1.4) .. controls (8.67,-1) and (8.67,-0.5).. node[anchor=east]{$(yu)$}(8.9,-0.1);
\draw[thick, ->] (9.1,-0.1) .. controls (9.33,-0.5) and (9.33,-1) ..  node[anchor=west]{$(xyu)$}(9.1,-1.4);
\draw[thick, ->] (12.9,-0.6) arc (0:340:0.6);
\end{tikzpicture}.
$$
According to Lemma \ref{lem:finite=}, $H(x,y,z,u)$ is of finite corepresentation type.
\end{example}

\begin{example}\rm
Let $\k$ be an algebraically closed field of characteristic $0$. We consider the following $64$-dimensional Hopf algebra $H$, which is a Hopf subalgebra of the Hopf algebra $\mathfrak{U}_{17}(I_{17})$ in \cite[Definition 6.17]{ZGH21}.
As an algebra, $H$ is generated by $x$, $y$, $t$, $p_1$, $p_2$ satisfying the following relations:
\begin{eqnarray*}
&x^4 = 1,\;\; y^2 = 1,\;\; t^2 = 1,\;\; xy = yx,\;\; tx = x^{-1}t,\;\; ty = yt,\\
&xp_1 = p_1x,\;\; yp_1 = p_1y,\;\; tp_1 = -p_1t,\;\;xp_2 = -p_2x,\;\; yp_2 = p_2y,\;\; tp_2 = -p_2t,\\
&p_1^2 = 0,\;\; p_2^2 = 0,\;\; p_1p_2 + p_2p_1 = 0,
\end{eqnarray*}
and its coalgebra structure and antipode is given by
\begin{eqnarray*}
 &\Delta(x) = x \otimes x,\;\; \Delta(y) = y \otimes y, \;\; \varepsilon(x) = \varepsilon(y) = 1, \;\;S(x)=x^3,\;\;S(y)=y,\\
&\Delta(t) = \frac{1}{2}\left[(1+y)t \otimes t + (1-y)t \otimes x^2t\right], \;\;\varepsilon(t) = 1, \;\;S(t)=\frac{1}{2}\left[(1+y)t + (1-y)x^2t\right],\\
&\Delta\left(p_{1}\right)=p_{1} \otimes 1+\frac{1}{2}\left(1+x^2\right) t \otimes p_{1}+\frac{1}{2}\left(1-x^2\right) y t \otimes p_{2},\;\;\varepsilon(p_1)=0,\\
    &\Delta\left(p_{2}\right)=p_{2} \otimes 1+\frac{1}{2}\left(1+x^2\right) y t \otimes p_{2}+\frac{1}{2}\left(1-x^2\right) t \otimes p_{1},\;\;\varepsilon(p_2)=0,\\
    &S(p_1)=-\frac{1}{4}\left[(1+y)t+(1-y)x^2t\right]\left[(1+x^2)p_1+y(1-x^2)p_2\right],\\
    &S(p_2)=-\frac{1}{4}\left[(1+y)t+(1-y)x^2t\right]\left[y(1+x^2)p_2+(1-x^2)p_1\right].
\end{eqnarray*}
Note that the coradical of $H$ has a basis $\{x^iy^jt^k\mid 1\leq i\leq 4, 1\leq j\leq 2, 1\leq k\leq 2\}.$ We define linear functions $\Phi,\alpha,\beta$ pointwise on the basis elements of $H_0$ as follows:
\begin{eqnarray*}
&\Phi(a,b,c):=\left\{
\begin{aligned}
-\varepsilon(a)\varepsilon(b)\varepsilon(c),~~~ &\text{if} ~~~  a,b,c \in \{x^iy^jt^k\mid i=1 \text{ or } 3, 1\leq j\leq 2, 1\leq k\leq 2\}; \\
\varepsilon(a)\varepsilon(b)\varepsilon(c),\;\;~~~&\text{otherwise},
\end{aligned}
\right.\\
&\alpha(a):=\varepsilon(a),\\
&\beta(a):=\left\{\begin{aligned}
-\varepsilon(a),~~~ &\text{if} ~~~  a \in \{x^iy^jt^k\mid i=1 \text{ or } 3, 1\leq j\leq 2, 1\leq k\leq 2\}; \\
\varepsilon(a),\;\;~~~&\text{otherwise}.
\end{aligned}
\right.
\end{eqnarray*}
Then $H(x,y,t,p_1,p_2)=(H, m, \mu,\Delta, \varepsilon, \operatorname{gr}\Phi, S, \operatorname{gr}\alpha, \operatorname{gr}\beta)$ forms a coradically graded coquasi-Hopf algebra, where $m,\mu,\Delta,\varepsilon, S$ coincide with those of $H.$ Note that $H(x,y,t,p_1,p_2)_{(1)}$ is exactly the Hopf algebra defined in Example \ref{ex:32dim}. Since $H(x,y,t,p_1,p_2)$ is isomorphic to $H$ as a bialgebra, then by \cite[Theorem 5.2]{YL24} and the fact that $$H\cong (\k\langle x,y\rangle/(x^2, y^2, (xy)^2+(yx)^2))^*\# H_0,$$ $H(x,y,t,p_1,p_2)$ is of tame corepresentation type.
\end{example}

\subsection{Finite braided integral tensor categories with the Chevalley property}
In this subsection, we focus on finite braided integral tensor categories with the Chevalley property. See \cite[Definition 8.1.1]{EGNO15} for the definition of braided categories. By Lemma \ref{lem:integertensor} and \cite[Proposition 10.1]{BCPV19}, it suffices to study finite-dimensional coquasitriangular coquasi-Hopf algebras with the dual Chevalley property.
Note that, as a special case of braided tensor categories, the classification of finite symmetric integral tensor categories with the Chevalley property, i.e., the classification of finite-dimensional (co)triangular (co)quasi-Hopf algebras with the (dual) Chevalley property, has been studied intensively by Andruskiewitsch, Etingof, and Gelaki \cite{AEG01, EG03, EG21a, EG21b}. For the general background on braided tensor categories and coquasitriangular coquasi-Hopf algebras, see \cite{BCPV19,EGNO15}.

Let $(H, R)$ be a coquasitriangular coquasi-Hopf algebra with the dual Chevalley property; see Example \ref{ex:coquaitriangular}. There is a bijection between braided structures on $\mathscr{M}^H$ and coquasitriangular structures on $H$. We first introduce a special class of coquasitriangular structures.
A coquasitriangular structure $R$ over a coquasi-Hopf algebra $H$ with the dual Chevalley property is said to be \textit{concentrated} if $R(a,b)=0$ for all homogeneous elements $a,b$ unless $a,b\in H_0$.

In fact, starting from a coquasitriangular structure $R$ on $H$, we can construct a concentrated coquasitriangular structure on $\operatorname{gr}H$. Let $\operatorname{gr}(R):\operatorname{gr}H \otimes \operatorname{gr}H \rightarrow \k$ be the linear map defined for homogeneous elements $a,b\in \operatorname{gr}H$ by
$$
\operatorname{gr}(R)(a,b):=\left\{
\begin{aligned}
R(a,b), &\text{ if} ~~~ a,b\in H_0; \\
0,\;\;\;\;&\text{ otherwise}.
\end{aligned}
\right.
$$
We have the following lemma.
\begin{lemma}\label{lem:grHcoquasi}
Let $(H, R)$ be a coquasitriangular coquasi-Hopf algebra over $\k$ with the dual Chevalley property. Then $(\operatorname{gr}H, \operatorname{gr}(R))$ remains coquasitriangular.
\end{lemma}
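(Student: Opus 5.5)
Write $H=\bigcup_n H_n$ for the coradical filtration. By Example \ref{example:coquasiChevalley}, $\operatorname{gr}H=\bigoplus_n H_n/H_{n-1}$ is a graded coquasi-Hopf algebra with reassociator $\operatorname{gr}\Phi$ concentrated in degree $0$. The plan is to check that $\operatorname{gr}(R)$ is convolution invertible and satisfies (\ref{eq:CQT1})--(\ref{eq:CQT3}), working one homogeneous degree at a time.

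\textbf{Reduction to degree $0$.} The coproduct of $\operatorname{gr}H$ is graded: for $a\in H_n/H_{n-1}$, every Sweedler component $a_1\otimes\cdots\otimes a_k$ has degrees summing to $n$. Since $\operatorname{gr}(R)$, $\operatorname{gr}\Phi$ and their inverses vanish outside degree $0$, every term of a convolution expression built from them can be nonzero only when all arguments have degree $0$. Restricted to $H_0$, these maps are just $R|_{H_0\otimes H_0}$ and $\Phi|_{H_0^{\otimes 3}}$.

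\textbf{Convolution inverse and (\ref{eq:CQT2}), (\ref{eq:CQT3}).} Define $\operatorname{gr}(R)^{-1}$ as $R^{-1}|_{H_0\otimes H_0}$ extended by zero. Here $R^{-1}|_{H_0\otimes H_0}$ inverts $R|_{H_0\otimes H_0}$ because $H_0\otimes H_0$ is a subcoalgebra, so the restriction of a convolution inverse is still an inverse. By the reduction above, convolution invertibility reduces to degree $0$, where it holds.

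In (\ref{eq:CQT2}), both sides vanish unless $a,b,c$ all have degree $0$. When they do, the product $ab$ lies in $H_0$: this uses the dual Chevalley property, and the degree-$0$ part of the product on $\operatorname{gr}H$ agrees with the product on $H_0$. So in degree $0$ the identity is exactly the original identity for $R$ on the coquasi-Hopf subalgebra $H_0$. If some argument has positive degree, then $ab$ has positive degree, so the left side is zero, and every term on the right contains a factor with a positive-degree argument, so it is zero too. The same argument handles (\ref{eq:CQT3}).

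\textbf{(\ref{eq:CQT1}), the main obstacle.} Here a genuine multiplication of nonzero-degree elements appears:
\[
\operatorname{gr}(R)(a_1,b_1)\,a_2b_2=b_1a_1\,\operatorname{gr}(R)(a_2,b_2)
\]
for homogeneous $a\in H_n/H_{n-1}$, $b\in H_m/H_{m-1}$. Because $\operatorname{gr}(R)$ is nonzero only on degree-$0$ arguments, the left side keeps only the part of $\Delta(a)\otimes\Delta(b)$ lying in $(H_0\otimes H_n/H_{n-1})\otimes(H_0\otimes H_m/H_{m-1})$. In terms of representatives $\tilde a\in H_n$, $\tilde b\in H_m$, this is the component of the left $H_0$-coaction, computed modulo lower filtration. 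Similarly, the right side keeps only the right $H_0$-coaction components.

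Now apply (\ref{eq:CQT1}) in $H$ to $\tilde a,\tilde b$ and project onto $H_{n+m}/H_{n+m-1}$, using $H_iH_j\subseteq H_{i+j}$. Write $\Delta(\tilde a)\in\sum_i H_i\otimes H_{n-i}$. Terms where $R$ is evaluated on an argument of positive filtration degree $i$ are paired with a product lying in $H_{n+m-i}$, and so they vanish in the top quotient. What remains is exactly the graded identity. The step needing care is making this filtration bookkeeping precise: choose a filtration-compatible complement $H=\bigoplus_n V_n$ with $H_n=V_0\oplus\cdots\oplus V_n$, and use $\Delta(H_n)\subseteq\sum_i H_i\otimes H_{n-i}$ to check that the cross terms drop out modulo $H_{n+m-1}$.
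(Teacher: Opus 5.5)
Your proposal is correct and is essentially the paper's proof. The paper also treats convolution invertibility and (\ref{eq:CQT2}), (\ref{eq:CQT3}) as immediate, since $\operatorname{gr}(R)$ and $\operatorname{gr}\Phi$ are concentrated in degree $0$, and it proves (\ref{eq:CQT1}) exactly as you do: apply it in $H$ to representatives of $\bar a,\bar b$ and reduce modulo $H_{n+m-1}$. Your complement $H=\bigoplus_n V_n$ just makes explicit the filtration bookkeeping the paper leaves implicit. One small slip: in (\ref{eq:CQT2}), when only $c$ has positive degree the left side vanishes because $\operatorname{gr}(R)$ kills a positive-degree second argument, not because $ab$ has positive degree.
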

\begin{proof}
We only show (\ref{eq:CQT1}) for $\operatorname{gr}(R)$; since (\ref{eq:CQT2}), (\ref{eq:CQT3}) and the fact that $\operatorname{gr}(R)$ is convolution invertible are obvious. Let $\bar{a} \in H_n/H_{n-1}$ and $ \bar{b}\in H_{m}/H_{m-1}$. Then there exist $c \in H_{n-1} $ and $ d\in H_{m-1}$ such that $a+c\in H_n$ and $ b+d\in H_m$. Then by the assumption that $(H, R)$ is coquasitriangular, we have $$R(a_1+c_1, b_1+d_1)(a_2+c_2)(b_2+d_2)=(b_1+d_1)(a_1+c_1) R(a_2+c_2, b_2+d_2),$$
which implies that
$$
R(a_1+c_1, b_1+d_1)(a_2+c_2)(b_2+d_2)/H_{m+n-1}=(b_1+d_1)(a_1+c_1) R(a_2+c_2, b_2+d_2)/H_{m+n-1}.
$$
This means that $$\operatorname{gr}(R)(\bar{a}_1, \bar{b}_1)\bar{a}_2\bar{b}_2=\bar{b}_1\bar{a}_1 \operatorname{gr}(R)(\bar{a}_2, \bar{b}_2),$$
and thus (\ref{eq:CQT1}) follows.
\end{proof}
As a corollary of Lemma \ref{lem:grHcoquasi}, we have
\begin{corollary}
If the coquasi-Hopf algebra $\k(\mathrm{Q},\mathcal{S})$ constructed in Theorem \ref{thm:generalizedHopfquiver} is coquasitriangular with coquasitriangular structure $R$, then $\operatorname{gr}(R)$ is a concentrated coquasitriangular structure on $\k(\mathrm{Q},\mathcal{S})$.
\end{corollary}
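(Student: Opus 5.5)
The plan is to apply Lemma \ref{lem:grHcoquasi} directly, after checking that the structure $\k(\mathrm{Q},\mathcal{S})$ from Theorem \ref{thm:generalizedHopfquiver} meets its hypotheses. That structure is a graded coquasi-Hopf algebra with respect to the length grading. Its degree-zero part is $\k(\mathrm{Q}_0,\mathcal{S})=H$, which is a cosemisimple coquasi-Hopf subalgebra, so $\k(\mathrm{Q},\mathcal{S})$ has the dual Chevalley property. Lemma \ref{lem:grHcoquasi} then tells us that $(\operatorname{gr}\k(\mathrm{Q},\mathcal{S}),\operatorname{gr}(R))$ is coquasitriangular.

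The first step is to identify $\operatorname{gr}\k(\mathrm{Q},\mathcal{S})$ with $\k(\mathrm{Q},\mathcal{S})$ itself. We noted earlier that the coradical filtration of the modified generalized path coalgebra is given by $\k(\mathrm{Q},\mathcal{S})_n=\bigoplus_{i=0}^n\k(\mathrm{Q}_i,\mathcal{S})$. Hence the canonical map $\k(\mathrm{Q}_n,\mathcal{S})\to \k(\mathrm{Q},\mathcal{S})_n/\k(\mathrm{Q},\mathcal{S})_{n-1}$ is a linear isomorphism, and summing over $n$ gives a coalgebra isomorphism $\theta:\k(\mathrm{Q},\mathcal{S})\to\operatorname{gr}\k(\mathrm{Q},\mathcal{S})$.

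The second step, which is the only point needing care, is to check that $\theta$ is an isomorphism of coquasi-Hopf algebras. It is compatible with the structure maps for the following reasons.
\begin{itemize}
\item[(a)] The multiplication $m^\prime$ and the antipode $S^\prime$ are graded (Step 1 and Step 2 of the proof of Theorem \ref{thm:generalizedHopfquiver}). Therefore the product of representatives of degrees $n$ and $m$ already lies in degree $n+m$, and the product induced on $\operatorname{gr}$ coincides with $m^\prime$; the same holds for $S^\prime$.
\item[(b)] The maps $\Phi^\prime$, $\alpha^\prime$ and $\beta^\prime$ vanish outside $H$ by construction, which agrees with the description of $\operatorname{gr}\Phi$, $\operatorname{gr}\alpha$ and $\operatorname{gr}\beta$ in Example \ref{example:coquasiChevalley}.
\end{itemize}
Consequently, under $\theta$, the map $\operatorname{gr}(R)$ becomes the map on $\k(\mathrm{Q},\mathcal{S})$ that equals $R$ on $H\otimes H$ and vanishes on all pairs of homogeneous elements not both lying in $H=\k(\mathrm{Q},\mathcal{S})_0$.

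Finally, transporting the coquasitriangular axioms \eqref{eq:CQT1}--\eqref{eq:CQT3} along $\theta$ shows that $\operatorname{gr}(R)$ is a coquasitriangular structure on $\k(\mathrm{Q},\mathcal{S})$. It is concentrated by its very definition, since it vanishes unless both homogeneous arguments lie in degree $0$.
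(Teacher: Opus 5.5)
Your proposal is correct and is essentially the paper's argument. The paper obtains the corollary directly from Lemma \ref{lem:grHcoquasi}, using that $\k(\mathrm{Q},\mathcal{S})$ is coradically graded by path length, so $\operatorname{gr}\k(\mathrm{Q},\mathcal{S})\cong\k(\mathrm{Q},\mathcal{S})$ and $\operatorname{gr}(R)$ is concentrated by definition. (Your check of compatibility with $S^\prime,\alpha^\prime,\beta^\prime$ is harmless but not needed, since \eqref{eq:CQT1}--\eqref{eq:CQT3} involve only the product, coproduct and reassociator.)
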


Next, we show that the generalized dual Gabriel's theorem stated in Theorem \ref{thm:Gabriel} can be restricted to the coquasitriangular setting.
\begin{theorem}\label{thm:gabrielcoquasi}
Let $(H, R)$ be a coquasitriangular coquasi-Hopf algebra with the dual Chevalley property. Then there exists a unique generalized Hopf quiver $\mathrm{Q}$ associated with a cosemisimple datum of $H_0$ such that $\k(\mathrm{Q}, \mathcal{S})$ admits a concentrated coquasitriangular coquasi-Hopf algebra structure with the dual Chevalley property and $(\operatorname{gr}H,\operatorname{gr}(R))$ is isomorphic to a large coquasitriangular coquasi-Hopf subalgebra of $\k(\mathrm{Q}, \mathcal{S})$.
\end{theorem}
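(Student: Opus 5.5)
We take $\mathrm{Q}:=\mathrm{Q}(H)$, the link quiver of $H$. By Lemma \ref{lem:Hopflink=Hopfquiver} it is a generalized Hopf quiver associated with a cosemisimple datum of $H_0$. Uniqueness is then forced. Any $\k(\mathrm{Q},\mathcal{S})$ containing $\operatorname{gr}H$ as a large subalgebra must satisfy $\k(\mathrm{Q}_0,\mathcal{S})\oplus\k(\mathrm{Q}_1,\mathcal{S})=H_0\oplus H_1/H_0$. By Lemma \ref{lem:k(Q,S)link} and Lemma \ref{lem:linkquiver=S,P}, $\mathrm{Q}$ is therefore the link quiver of $\operatorname{gr}H$, which coincides with $\mathrm{Q}(H)$.

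For existence we work with the realization $\k(\mathrm{Q},\mathcal{S})\cong\operatorname{CoT}_{H_0}(M)$, where $M=H_1/H_0$. Its $H_0$-Majid bimodule structure comes from Example \ref{example:H1/H0}. Theorem \ref{thm:generalizedHopfquiver} gives the coquasi-Hopf structure $(m',\Phi',S',\alpha',\beta')$, and Theorem \ref{thm:Gabriel} gives the embedding $\psi:\operatorname{gr}H\hookrightarrow\operatorname{CoT}_{H_0}(M)$ as a large coquasi-Hopf subalgebra.

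We then define $R'$ on $\operatorname{CoT}_{H_0}(M)$ by $R'(a,b)=R(a,b)$ for $a,b\in H_0$ and $R'=0$ on homogeneous pairs not both in $H_0$. This mimics $\operatorname{gr}(R)$ from Lemma \ref{lem:grHcoquasi}, and $R'\circ(\psi\otimes\psi)=\operatorname{gr}(R)$ holds by construction. Convolution invertibility follows because $\varepsilon$ vanishes in positive degrees, so the inverse is $R^{-1}$ on $H_0$ extended by zero. Axioms (\ref{eq:CQT2}) and (\ref{eq:CQT3}) reduce by degree to the same identities on $H_0$, since $\Phi'$ and $R'$ are concentrated in degree $0$. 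Those identities hold because $H_0$ is a coquasitriangular coquasi-Hopf subalgebra: restricting (\ref{eq:CQT1})--(\ref{eq:CQT3}) to $H_0$ is legitimate since $H_0$ is a subcoalgebra closed under the product.

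The main obstacle is (\ref{eq:CQT1}) on $\operatorname{CoT}_{H_0}(M)$, i.e.\ $R'(a_1,b_1)a_2b_2=b_1a_1R'(a_2,b_2)$. Following the strategy of Step 1 in Theorem \ref{thm:generalizedHopfquiver}, we compare the two maps $F_1=R'*m'$ and $F_2=m'\tau*R'$ from $\operatorname{CoT}_{H_0}(M)^{\otimes2}$ to $\operatorname{CoT}_{H_0}(M)$, where $\tau$ is the flip. Both are coalgebra maps: the argument is the same as for $\Phi'*m'(m'\otimes\id)*(\Phi')^{-1}$, using that $R'$ is convolution invertible so that $F_1$ and $F_2$ are conjugates of $m'$ and $m'\tau$. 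By Lemma \ref{lemma:universal} it then suffices to check that the degree-$0$ and degree-$1$ components agree.

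In degree $0$ the check is (\ref{eq:CQT1}) on $H_0$. In degree $1$ we must show, for $h\in H_0$ and $x\in M$,
\begin{eqnarray*}
R(h_1,x_{-1})\,h_2\cdot x_0=x_0\cdot h_1\,R(h_2,x_1),\qquad R(x_{-1},h_1)\,x_0\cdot h_2=h_1\cdot x_0\,R(x_1,h_2).
\end{eqnarray*}
To obtain these, we apply (\ref{eq:CQT1}) in $H$ to $h\in H_0$ and a lift $\tilde x\in H_1$, then reduce modulo $H_0$, exactly as in the proof of Lemma \ref{lem:grHcoquasi}. The terms of $\Delta(\tilde x)$ lying in $H_0\otimes H_0$ are killed by the quotient, and $R$ evaluated on $H_1$-components contributes only through its $H_0$-parts, which gives the displayed identities.

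Concretely, the degree-$1$ identities say that the Majid bimodule $M$ is a braided-compatible object, i.e.\ the left and right actions are intertwined by $R$. This is precisely the content of $\operatorname{gr}H$ being coquasitriangular via Lemma \ref{lem:grHcoquasi}, restricted to degree $\le1$. Since $\operatorname{gr}H$ contains all of $H_0\oplus M$, that lemma yields the identities directly, which finishes the proof.
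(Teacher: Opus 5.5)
Your argument is correct in substance, but it verifies (\ref{eq:CQT1}) on $\k(\mathrm{Q}(H),\mathcal{S})$ by a different route from the paper.

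The two proofs share the setup. Both extend $R_0=R|_{H_0\otimes H_0}$ by zero. Both get convolution invertibility, (\ref{eq:CQT2}) and (\ref{eq:CQT3}) by a degree argument. Both obtain (\ref{eq:CQT1}) in bidegrees $(0,1)$ and $(1,0)$ from the fact that $\operatorname{gr}H$, which is coquasitriangular by Lemma \ref{lem:grHcoquasi}, contains $\k(\mathrm{Q}(H)_0,\mathcal{S})\oplus\k(\mathrm{Q}(H)_1,\mathcal{S})$.

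The difference is in how paths of length $\geq 2$ are handled. The paper computes directly with the shuffle-type product of Remark \ref{rm:multi}: it shows that the product of two $\mathcal{M}$-paths equals the reversed product conjugated by $R$ on the end vertices. You instead use the uniqueness half of Lemma \ref{lemma:universal}, as in Step 1 of Theorem \ref{thm:generalizedHopfquiver}, so nothing beyond degree $\le 1$ has to be checked. This avoids the path combinatorics and explains why degree-$\le 1$ data determine everything.

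You also justify the uniqueness of $\mathrm{Q}$, which the paper's proof leaves implicit. A large subalgebra has the same first coradical term as $\k(\mathrm{Q},\mathcal{S})$, hence the same link quiver.

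One step needs repair. As you define them, $F_1=R'*m'$ and $F_2=m'\tau*R'$ are not coalgebra maps; for instance $\varepsilon\circ F_1=R'\neq\varepsilon\otimes\varepsilon$. Nor are they conjugates of $m'$ and $m'\tau$.

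The fix is as follows.
\begin{itemize}
\item Use the convolution invertibility of $R'$ to rewrite (\ref{eq:CQT1}) as $m'=(R')^{-1}*(m'\tau)*R'$.
\item The flip $\tau$ is a coalgebra automorphism of $\operatorname{CoT}_{H_0}(M)^{\otimes 2}$, so $m'\tau$ is a coalgebra map. The right-hand side is its conjugate by $R'$, hence a coalgebra map by the same computation as for $\Phi'*m'(m'\otimes\id)*(\Phi')^{-1}$.
\item Since $m'$ is graded and $R'$ is concentrated in degree $0$, the $\pi_0$- and $\pi_1$-components of both sides vanish on $M^{\Box s}\otimes M^{\Box t}$ unless $s+t\le 1$.
\item In bidegree $(0,0)$ the two sides agree by (\ref{eq:CQT1}) on $H_0$. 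In bidegrees $(0,1)$ and $(1,0)$, their equality is equivalent, by invertibility of $R$ on $H_0\otimes H_0$, to your two displayed identities.
\end{itemize}
With this correction the proof goes through.
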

\begin{proof}
Lemma \ref{lem:grHcoquasi} implies that $(\operatorname{gr}H, \operatorname{gr}(R))$ remains coquasitriangular. By Theorem \ref{thm:generalizedHopfquiver}, $\operatorname{gr}H$ is a large coquasi-Hopf subalgebra of $\k(\mathrm{Q}(H),\mathcal{S})$, where $\mathrm{Q}(H)=(\mathcal{S}, \mathcal{P})$ is the link quiver of $H$. Since $(H, R)$ is a coquasitriangular coquasi-Hopf algebra with the dual Chevalley property, $R_0:=R\mid_{H_0\otimes H_0}$ is a coquasitriangular structure on $H_0$, making $(H_0, R_0)$ a coquasitriangular cosemisimple coquasi-Hopf algebra. We extend $R_0$ trivially to be a function $R^\prime$ on $\k(\mathrm{Q}(H),\mathcal{S}) \otimes \k(\mathrm{Q}(H),\mathcal{S})$ defined by $$R^\prime(a,b):=\left\{
\begin{aligned}
R(a,b), &\text{ if} ~~~ a,b\in H_0; \\
0,\;\;\;\;&\text{ otherwise}.
\end{aligned}
\right.$$
Now we claim that $(\k(\mathrm{Q}(H),\mathcal{S}), R^\prime)$ is coquasitriangular. Indeed, the fact that $R^\prime$ is convolution invertible, together with \eqref{eq:CQT2} and \eqref{eq:CQT3}, follows immediately from the definition of $R^\prime$. It remains to verify (\ref{eq:CQT1}). For any $\mathcal{M}$-paths $a,b$ of length $0$, (\ref{eq:CQT1}) is obvious. Since $\operatorname{gr}H \supseteq \k(\mathrm{Q}(H)_0, \mathcal{S})\oplus \k(\mathrm{Q}(H)_1, \mathcal{S})$, it follows that $R^\prime\mid_{\k(\mathrm{Q}(H)_0, \mathcal{S})\otimes \k(\mathrm{Q}(H)_1, \mathcal{S})\oplus \k(\mathrm{Q}(H)_1, \mathcal{S})\otimes \k(\mathrm{Q}(H)_0, \mathcal{S})}$ satisfies (\ref{eq:CQT1}). This means that for any $\mathcal{M}$-path $c_{i1}^{t(\alpha)}\alpha c_{1j}^{s(\alpha)}$ of length $1$ and any $\mathcal{M}$-path $c_{kl}^r$ of length $0,$ we have
$$
\sum\limits_{p=1}^{t(\alpha)}\sum\limits_{q=1}^{r}R(c_{ip}^{t(\alpha)}, c_{kq}^r)c_{p1}^{t(\alpha)}\alpha c_{1j}^{s(\alpha)} \cdot c_{ql}^r=\sum\limits_{p=1}^{s(\alpha)}\sum\limits_{q=1}^{r} c_{kq}^r\cdot c_{i1}^{t(\alpha)}\alpha c_{1p}^{s(\alpha)} R( c_{pj}^{s(\alpha)},  c_{ql}^r)
$$
and
$$
\sum\limits_{p=1}^{t(\alpha)}\sum\limits_{q=1}^{r} R(c_{kq}^r, c_{ip}^{t(\alpha)})c_{ql}^r\cdot c_{p1}^{t(\alpha)}\alpha c_{1j}^{s(\alpha)}=\sum\limits_{p=1}^{s(\alpha)}\sum\limits_{q=1}^{r} c_{i1}^{t(\alpha)}\alpha c_{1p}^{s(\alpha)}\cdot c_{kq}^r R(c_{ql}^r, c_{pj}^{s(\alpha)}).
$$
By the definition of $R^\prime$, Remark \ref{rm:multi}, and the fact that $R$ is convolution invertible,
we can show that
\begin{eqnarray*}
&&(c_{i1}^{t(\alpha_n)}\alpha_n\alpha_{n-1}\cdots \alpha_1 c_{1j}^{s(\alpha_{1})})\cdot(c_{k1}^{t(\beta_m)}\beta_m\beta_{m-1}\cdots \beta_1 c_{1l}^{s(\beta_{1})})\\
&=&\sum\limits_{p=1}^{t(\beta_m)}\sum\limits_{q=1}^{s(\beta_1)}\sum\limits_{u=1}^{t(\alpha_n)}\sum\limits_{v=1}^{s(\alpha_1)}R(c_{kp}^{t(\beta_m)}, c_{iu}^{t(\alpha_n)}) (c_{p1}^{t(\beta_m)}\beta_m\beta_{m-1}\cdots \beta_1 c_{1q}^{s(\beta_{1})})\\
&&(c_{u1}^{t(\alpha_n)}\alpha_n\alpha_{n-1}\cdots \alpha_1 c_{1v}^{s(\alpha_{1})})R^{-1}(c_{ql}^{s(\beta_1)}, c_{vj}^{s(\alpha_1)}),
\end{eqnarray*}
for all $\mathcal{M}$-paths of the form $c_{i1}^{t(\alpha_n)}\alpha_n\alpha_{n-1}\cdots \alpha_1 c_{1j}^{s(\alpha_{1})}$ with $n\geq 1$ and $c_{k1}^{t(\beta_m)}\beta_m\beta_{m-1}\cdots \beta_1 c_{1l}^{s(\beta_{1})}$ with $m\geq 1$. This completes the proof.
\end{proof}
We now proceed to construct a concentrated coquasitriangular structure on $\k(\mathrm{Q},\mathcal{S})$ under certain assumptions.
\begin{lemma}\label{lem:k(Q,S)coquasitriangular}
Let $(H, R)$ be a cosemisimple coquasitriangular coquasi-Hopf algebra over $\k$, and let $\mathrm{Q}$ be a generalized Hopf quiver associated with a cosemisimple datum of $H$. If $R(a_1,b_1)R(b_2, a_2)=\varepsilon(a)\varepsilon(b)$ for all $a,b\in H$, then $\k(\mathrm{Q},\mathcal{S})$ admits a concentrated coquasitriangular coquasi-Hopf algebra structure.
\end{lemma}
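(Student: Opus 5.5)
Proposal. The plan is to reduce everything to Theorem \ref{thm:generalizedHopfquiver}, with a specific choice of Yetter-Drinfeld structure, and then to check the braiding axioms degree by degree via Lemma \ref{lemma:universal}. Since $(H,R)$ is coquasitriangular, Example \ref{ex:coquaitriangular} shows that every left $H$-comodule $V$ carries the Yetter-Drinfeld structure $h\vartriangleright v:=R(v_{-1},h)v_0$. I apply this to $V=\k(\mathrm{Q}_1,\mathcal{S})^{co H}$; its comodule type is fixed by the generalized Hopf quiver datum. By Theorem \ref{thm:generalizedHopfquiver} and Proposition \ref{prop:Majidmod=YD}, this particular choice yields a graded coquasi-Hopf algebra structure on $\k(\mathrm{Q},\mathcal{S})\cong\operatorname{CoT}_H(M)$, where $M=\k(\mathrm{Q}_1,\mathcal{S})\cong V\otimes H$. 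The candidate braiding is the concentrated extension $R'$, defined as in the proof of Theorem \ref{thm:gabrielcoquasi}: $R'=R$ on $H\otimes H$ and $R'=0$ on all other homogeneous components.

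Convolution invertibility of $R'$ and the identities (\ref{eq:CQT2}), (\ref{eq:CQT3}) reduce to degree $0$. Both sides vanish unless every argument lies in $H$, because $\Phi'$ and $R'$ are concentrated and the multiplication is graded. In degree $0$ these identities hold since $R$ is coquasitriangular on $H$. The real content is therefore (\ref{eq:CQT1}), that is, $R'(a_1,b_1)a_2b_2=b_1a_1R'(a_2,b_2)$. Because $R'$ is concentrated, this says that the product $ba$ equals $R(a_{(-1)},b_{(-1)})\,a_0b_0\,R^{-1}(a_{(1)},b_{(1)})$, where the subscripts denote the outer $H$-coefficients of the paths. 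In other words, the opposite multiplication must be the conjugate of the multiplication by $R$ acting on the degree-$0$ ends. This is the same formula that closes the proof of Theorem \ref{thm:gabrielcoquasi}.

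I would prove this in two steps. First I check it on $H\otimes M$ and $M\otimes H$. For $m\in V$ and $h\in H$, I compare $m\cdot h$ with $h\cdot m$ using the explicit formula for $l\cdot(n\otimes h)$ in Proposition \ref{prop:Majidmod=YD}, together with $h\vartriangleright m=R(m_{-1},h)m_0$. The two sides should agree up to $R$-conjugation once the $\Phi$-factors are matched by (\ref{eq:CQT2}) and (\ref{eq:CQT1}) on $H$. I then pass to general elements $h'\cdot m\cdot h$ of $M$ by the Majid bimodule axioms. Second, I extend to higher degrees. Both maps $(a,b)\mapsto ba$ and $(a,b)\mapsto R'*m'*R'^{-1}$ are coalgebra maps $\operatorname{CoT}_H(M)\otimes\operatorname{CoT}_H(M)\to\operatorname{CoT}_H(M)$; this is the same computation as in Step 1 of Theorem \ref{thm:generalizedHopfquiver}, with $R'$ in place of $\Phi'$. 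Their degree-$0$ and degree-$1$ projections agree by the first step, so by Lemma \ref{lemma:universal} they coincide.

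The main obstacle is the degree-one check on $M\otimes H$ and $H\otimes M$, and this is where the hypothesis $R(a_1,b_1)R(b_2,a_2)=\varepsilon(a)\varepsilon(b)$ is needed. In that check two a priori different braidings on $M\cong V\otimes H$ must be compared: the one coming from the Yetter-Drinfeld action, which involves $R(m_{-1},h)$, and the one demanded by (\ref{eq:CQT1}), which involves $R^{-1}(h,m_{-1})$ on the opposite side. Triangularity identifies $R^{-1}(h,m)$ with $R(m,h)$, and this makes the two agree. I would first verify the identity on $V\otimes\k1_H$. There the Majid actions simplify to $h\cdot m=(h_1\vartriangleright m)h_2$ up to $\Phi$-factors, which cancel by the normalization $\Phi(a,1,b)=\varepsilon(a)\varepsilon(b)$. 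The general case then follows from associativity-up-to-$\Phi$, handled by (\ref{eq:CQT2}) and (\ref{eq:CQT3}).
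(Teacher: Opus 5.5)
Your proposal is correct and follows the paper's proof. Both use the Yetter--Drinfeld structure $h\vartriangleright m=R(m_{-1},h)m_0$ from Example~\ref{ex:coquaitriangular} and the concentrated extension $R'$, reduce (\ref{eq:CQT1}) to the $H\otimes M$ and $M\otimes H$ components, and invoke triangularity exactly to cancel the factor $R(l,n_{-1})$ produced by (\ref{eq:CQT3}) against the factor $R(n_{-1},l)$ coming from the action. The differences are organizational: the paper verifies $R(l_1,n_{-1}h_1)\,l_2\cdot(n_0\otimes h_2)=(n\otimes h_1)\cdot l_1\,R(l_2,h_2)$ for general $n\otimes h$ in one direct computation rather than first on $V\otimes\k1_H$ (where the $\Phi$-factors die by $\Phi(a,b,1_H)=\varepsilon(a)\varepsilon(b)$, not $\Phi(a,1,b)$), and your Lemma~\ref{lemma:universal} argument spells out the reduction the paper only cites as ``similar to Theorem~\ref{thm:gabrielcoquasi}''.
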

\begin{proof}
Let $M = \k(\mathrm{Q}_1,\mathcal{S})$. It follows from the proof of Theorem \ref{thm:generalizedHopfquiver} that $M$ forms an $H$-Majid bimodule. Then by Proposition \ref{prop:Hopfmod}, we have $M\cong M^{co H}\otimes H$. Since $H$ is coquasitriangular, Example \ref{ex:coquaitriangular} yields that $M^{co H}$ admits a left-left Yetter-Drinfeld module structure given by
$$ h\vartriangleright m:=R(m_{-1}, h)m_0.$$
Next, we use Proposition \ref{prop:Majidmod=YD} to define a new $H$-Majid bimodule structure on $M$, thereby obtaining a new coquasi-Hopf algebra structure on $\k(\mathrm{Q},\mathcal{S})$. For any $n\in M^{co H}$ and $h,l\in H$, define
\begin{eqnarray*}
&l\cdot
(n\otimes h):=\Phi (l_{1}, n_{-3}, h_{1})\Phi
^{-1}(n_{-1}, l_{3},
h_{2})R(n_{-2},l_2) n_0\otimes l_{4}h_{3}, & \\
    &(n\otimes
h)\cdot l:=\Phi ^{-1}(n_{-1}, h_{1}, l_{1})n_{0}\otimes
h_{2}l_{2}, &  \\
&\rho _{M^{co H}\otimes H}^{L}\left( n\otimes h\right) :=n_{-1}h_{1}\otimes
(n_{0}\otimes h_{2}), & \\
&\rho _{M^{co H}\otimes H}^{R}\left( n\otimes h\right) :=(n\otimes h_{1})\otimes
h_{2}.&
\end{eqnarray*}
With the $H$-Majid bimodule structure defined above, the proof of Theorem \ref{thm:generalizedHopfquiver} yields that we can form a coquasi-Hopf algebra. To avoid ambiguity, we denote this coquasi-Hopf algebra by $CoT_H(M)^\prime$. We extend $R$ trivially to be a function $R^\prime$ on $CoT_H(M)^\prime \otimes CoT_H(M)^\prime$ whenever one of the homogeneous elements $a, b$ lies outside $H$. It remains to verify that $R^\prime $ satisfies (\ref{eq:CQT1}). Similar to the proof of the Theorem \ref{thm:gabrielcoquasi}, we only need to show that $R^\prime\mid_{\k(\mathrm{Q}(H)_0, \mathcal{S})\otimes \k(\mathrm{Q}(H)_1, \mathcal{S})\oplus \k(\mathrm{Q}(H)_1, \mathcal{S})\otimes \k(\mathrm{Q}(H)_0, \mathcal{S})}$ satisfies (\ref{eq:CQT1}). Indeed, by (\ref{eq:CQT1}), (\ref{eq:CQT3}) and the condition that $R(a_1,b_1)R(b_2, a_2)=\varepsilon(a)\varepsilon(b)$, we have
\begin{eqnarray*}
&&R(l_1,n_{-1}h_1) l_2\cdot (n_0\otimes h_2)\\
&=&\Phi^{-1}(n_{-7},h_1,l_1)R(l_2,h_2)\Phi(n_{-6},l_3,h_3)R(l_4,n_{-5})\Phi^{-1}(l_5,n_{-4},h_4)\Phi(l_6,n_{-3},h_5)\\
&&\Phi^{-1}(n_{-1},l_8,h_6)R(n_{-2},l_7) n_0\otimes R^{-1}(l_9,h_7)h_8l_{10} R(l_{11},h_9)\\
&=&\Phi^{-1}(n_{-5},h_1,l_1)R(l_2,h_2)\Phi(n_{-4},l_3,h_3)R(l_4,n_{-3})
\Phi^{-1}(n_{-1},l_6,h_4)R(n_{-2},l_5) \\&&n_0\otimes R^{-1}(l_7,h_5)h_6l_{8} R(l_{9},h_7)\\
&=&\Phi^{-1}(n_{-3},h_1,l_1)R(l_2,h_2)\Phi(n_{-2},l_3,h_3)
\Phi^{-1}(n_{-1},l_4,h_4)n_0\otimes R^{-1}(l_5,h_5)h_6l_{6} R(l_{7},h_7)\\
&=&\Phi^{-1}(n_{-1},h_1,l_1)R(l_2,h_2)n_0\otimes R^{-1}(l_3,h_3)h_4l_{4} R(l_{5},h_5)\\
&=&\Phi^{-1}(n_{-1},h_1,l_1)n_0\otimes h_2l_{2} R(l_{3},h_3)\\
&=&(n\otimes h_1) \cdot l_1 R(l_2, h_2).
\end{eqnarray*}
\end{proof}

With the help of the preceding lemma, we can classify the concentrated coquasitriangular structures on $\k(\mathrm{Q},\mathcal{S})$ in the case where $\mathrm{Q}$ is connected.
\begin{proposition}\label{prop:1-1triangular}
Let $H$ be a finite-dimensional cosemisimple coquasi-Hopf algebra over $\k$, and let $\mathrm{Q}$ be a connected generalized Hopf quiver associated with a cosemisimple datum of $H$. Then the set of all concentrated coquasitriangular structures on $\k(\mathrm{Q},\mathcal{S})$ is in bijection with the set of coquasitriangular structures on $H$ that satisfy $R(a_1,b_1)R(b_2, a_2)=\varepsilon(a)\varepsilon(b)$ for all $a,b\in H$.
\end{proposition}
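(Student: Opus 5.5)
The map I would use sends a concentrated coquasitriangular structure $R'$ on $\k(\mathrm{Q},\mathcal{S})$ to its restriction $R:=R'|_{H\otimes H}$; in the other direction, a structure $R$ on $H$ goes to the trivial extension produced in Lemma \ref{lem:k(Q,S)coquasitriangular}. Because $R'$ is concentrated, it is determined by $R$, so the restriction map is injective. Moreover, the trivial extension of $R'|_{H\otimes H}$ is $R'$ itself, so the two maps are mutually inverse once both are well defined. Surjectivity onto the structures with $R(a_1,b_1)R(b_2,a_2)=\varepsilon(a)\varepsilon(b)$ is exactly Lemma \ref{lem:k(Q,S)coquasitriangular}. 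As in that lemma, the coquasi-Hopf structure on $\k(\mathrm{Q},\mathcal{S})$ is the one attached by Theorem \ref{thm:generalizedHopfquiver} to the Majid bimodule whose Yetter-Drinfeld part is $h\vartriangleright n=R(n_{-1},h)n_0$. So it remains to show two things: that $R$ is a coquasitriangular structure on $H$, and that it satisfies the symmetry condition.

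The first point is routine. Since $H=\k(\mathrm{Q}_0,\mathcal{S})$ is a coquasi-Hopf subalgebra and $R'$, $\Phi'$ vanish off degree $0$, the axioms (\ref{eq:CQT1})--(\ref{eq:CQT3}) for $R'$ evaluated on elements of $H$ are exactly the same axioms for $R$. Convolution invertibility also restricts, because $\varepsilon$ vanishes in positive degrees.

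For the symmetry condition, I would test (\ref{eq:CQT1}) for $R'$ on pairs consisting of an element $n\in M^{coH}\subseteq M=\k(\mathrm{Q}_1,\mathcal{S})$ and an element $h\in H$. I would do this in both orders, $(n,h)$ and $(h,n)$. Only the degree-$0$ tensor legs of $\Delta(n)$ survive under $R'$, namely $n_{-1}\otimes n_0$ on the left and $n_0\otimes 1$ on the right.
\begin{itemize}
\item The identity for $(h,n)$ expresses $h\cdot n$ in terms of $R(h,n_{-1})$ and the right action.
\item The identity for $(n,h)$ expresses $n\cdot h$ in terms of $h\cdot n$ and $R(n_{-1},h)$.
\end{itemize}
Feeding both into the formula for $h\vartriangleright n$ in Proposition \ref{prop:Majidmod=YD} gives two expressions for the Yetter-Drinfeld action: one is $R(n_{-1},h)n_0$, and the other uses $R^{-1}(h,n_{-1})$. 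Since $n_0$ ranges over a copy of each simple comodule $V_k$ with $n_k\neq 0$, comparing them should give $R(c_1,h_1)R(h_2,c_2)=\varepsilon(c)\varepsilon(h)$ for all $c\in C_k$ with $n_k\neq0$ and all $h\in H$. I expect this reassociator bookkeeping to be the main obstacle. What I would try first is to work with coinvariant entries $x_{1i}$ of the $(1,\C_k)$-primitive matrices spanning $M^{coH}$, because for these the Majid actions simplify as in the proof of Theorem \ref{thm:generalizedHopfquiver}.

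Finally, I would spread the symmetry from these $C_k$ to all of $H$. Let $A\subseteq H$ be the set of $c$ for which the monodromy $R_{21}*R$ is trivial on $c\otimes h$ for every $h\in H$. Categorically, $A$ corresponds to the comodules with trivial double braiding against every object. By the hexagon axioms (\ref{eq:CQT2}) and (\ref{eq:CQT3}), this class is closed under tensor products, and by rigidity it is closed under duals, so $A$ is closed under products and under $S$. Since $\mathrm{Q}$ is connected, Corollary \ref{coro:quiverconnect} says that $H$ is generated by $\bigcup_{n_k\neq0}C_k\oplus S(C_k)$. Hence $A=H$, which is the required condition.
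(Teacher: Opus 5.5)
Your argument is correct, and its key step differs from the paper's.

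\textbf{The paper's route.} The paper tests (\ref{eq:CQT1}) on two degree-one elements $c^i_{k1}\alpha$ and $c^j_{l1}\beta$, coming from arrows $\k1\to C_i$ and $\k1\to C_j$. It expands the degree-two products via Remark \ref{rm:multi} and compares coefficients using linear independence. This gives the symmetry only for pairs $a,b$ lying in the generating simple subcoalgebras. The paper then extends to all of $H$ by generation, using Proposition \ref{prop:(H1)0} and finite-dimensionality, together with (\ref{eq:CQT2}) and (\ref{eq:CQT3}).

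\textbf{Your route.} You never leave degrees $\le 1$. For $n\in M^{coH}$ we have $\Delta(n)=n_{-1}\otimes n_0+n\otimes 1$, and $R'$ is concentrated. So (\ref{eq:CQT1}) on the pairs $(h,n)$ and $(n,h)$ reads
\[
R(h_1,n_{-1})\,h_2\cdot n_0=n\cdot h,\qquad R(n_{-1},h_1)\,n_0\cdot h_2=h\cdot n .
\]
Substituting the first identity (applied to $n_0$) into the second gives $h\cdot n=R(n_{-2},h_1)R(h_2,n_{-1})\,h_3\cdot n_0$. Corollary \ref{coro:H0fundamental} then yields $R(c_1,h_1)R(h_2,c_2)=\varepsilon(c)\varepsilon(h)$ for every $c\in C_k$ with $n_k\neq 0$ and every $h\in H$.

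\textbf{The obstacle you anticipated does not arise.} No $\Phi$ enters the argument above, so there is no reassociator bookkeeping. You also need not use the explicit formula for $\vartriangleright$. If you do want to phrase it through the Yetter-Drinfeld action, use $l\cdot n=(l_1\vartriangleright n)\cdot l_2$. This is the case $h=1$ of the formula for $F$ in Proposition \ref{prop:Majidmod=YD}, and there the $\Phi$-factors are trivial.

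\textbf{A point you cover that the paper leaves implicit.} The second identity forces $h\vartriangleright n=R(n_{-1},h)n_0$. This is what determines the coquasi-Hopf structure on $\k(\mathrm{Q},\mathcal{S})$ from $R$. It is therefore what makes restriction injective when that structure is allowed to vary, as it must be, since Lemma \ref{lem:k(Q,S)coquasitriangular} builds a structure depending on $R$.

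\textbf{What your route buys.} It is shorter than the paper's. It gives symmetry of the generators against all of $H$, not only against each other. Its extension step, via closure of transparent objects under $\otimes$ and duals plus Corollary \ref{coro:quiverconnect}, does not use finite-dimensionality.

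\textbf{One repair.} Define $A$ as the sum of those simple subcoalgebras $C$ on which the monodromy is trivial on $C\otimes H$, i.e. the coefficient coalgebras of transparent comodules. Do not define it as an elementwise set. The elementwise set is only a subspace, not a subcoalgebra, and need not be closed under products.
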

\begin{proof}
According to Lemma \ref{lem:k(Q,S)coquasitriangular}, if $R(a_1,b_1)R(b_2, a_2)=\varepsilon(a)\varepsilon(b)$ for all $a,b\in H$, then we can construct a concentrated coquasitriangular structure on $\k(\mathrm{Q},\mathcal{S})$. Conversely, suppose that $(\k(\mathrm{Q},\mathcal{S}), R)$ is a coquasitriangular coquasi-Hopf algebra with $R$ concentrated. It follows that $(H, R\mid_{H\otimes H})$ is a coquasitriangular coquasi-Hopf algebra. Suppose that $\mathrm{Q}$ is associated with a cosemisimple datum $(H, \{C_i\}_{i\in I}, (n_1, n_2, \cdots), \{\alpha_{ij}^t\}_{i,j,t\in I})$. Let $\mathcal{S} = \{C_{i} \mid i \in \mathrm{Q}_0\}$ be the set of simple subcoalgebras of $H$, and let
$\mathcal{M}=\{\C_{i}\mid i \in \mathrm{Q}_0\},$ where each
$\C_{i}=(c_{jk}^{i})_{r_{i}\times r_i}$ is a basic multiplicative matrix of $C_{i}\in\mathcal{S}.$ For $n_i,n_j\neq0$, there exist two arrows $\alpha:1\rightarrow C_i$ and $\beta: 1\rightarrow C_j$ in $\mathrm{Q}$. Consider two $\mathcal{M}$-paths of length $1$, denoted $c_{k1}^{i}\alpha1$ and $c_{l1}^{j}\beta1$. Here we write $c_{k1}^{i}\alpha1$ as $c_{k1}^{i}\alpha$ for short (and similarly $c_{l1}^{j}\beta1$ as $c_{l1}^{j}\beta$). Then, by Remark \ref{rm:multi} and (\ref{eq:CQT1}), in $\operatorname{CoT}_{H}(\k(\mathrm{Q}_1, \mathcal{S}))$, we have
\begin{eqnarray*}
c_{k1}^i\alpha \cdot c_{l1}^j\beta&=&\sum\limits_{u=1}^{r_i}c_{ku}^i\cdot c_{l1}^j\beta \otimes c_{u1}^i\alpha+\sum\limits_{v=1}^{r_j} c_{k1}^i\alpha \cdot c_{lv}^j \otimes c_{v1}^j\beta\\
&=&\sum\limits_{u=1}^{r_i}c_{ku}^i\cdot c_{l1}^j\beta \otimes c_{u1}^i\alpha+\sum\limits_{m,v=1}^{r_j}\sum\limits_{n=1}^{r_i}R(c_{lm}^j,c_{kn}^i) c_{mv}^j\cdot c_{n1}^i\alpha \otimes c_{v1}^j\beta\\
&=&\sum\limits_{u=1}^{r_i}\sum\limits_{v=1}^{r_j}R(c_{lv}^j,c_{ku}^i) c_{v1}^j\beta \cdot c_{u1}^i\alpha\\
&=&\sum\limits_{u=1}^{r_i}\sum\limits_{v=1}^{r_j}R(c_{lv}^j,c_{ku}^i)(\sum\limits_{m=1}^{r_j}c_{vm}^j\cdot c_{u1}^i\alpha \otimes c_{m1}^j\beta+\sum\limits_{n=1}^{r_i}c_{v1}^j\beta \cdot c_{un}^i\otimes c_{n1}^i\alpha )\\
&=&\sum\limits_{u=1}^{r_i}\sum\limits_{m,v=1}^{r_j}R(c_{lv}^j,c_{ku}^i)c_{vm}^j\cdot c_{u1}^i\alpha \otimes c_{m1}^j\beta+\sum\limits_{n,p,u=1}^{r_i}\sum\limits_{q,v=1}^{r_j}R(c_{lv}^j,c_{ku}^i) R(c_{up}^i, c_{vq}^j)\\&& c_{pn}^i\cdot c_{q1}^j\beta\otimes c_{n1}^i\alpha.
\end{eqnarray*}
It follows that $$c_{kn}^i\cdot c_{l1}^j\beta=\sum\limits_{p,u=1}^{r_i}\sum\limits_{q,v=1}^{r_j}R(c_{lv}^j,c_{ku}^i) R(c_{up}^i, c_{vq}^j)c_{pn}^i\cdot c_{q1}^j\beta$$
for all $1\leq n\leq r_i.$ According to Lemma \ref{Lemma:cap=0} and \cite[Corollary 2.6]{YLL24}, $c_{pn}^i\cdot c_{q1}^j\beta \neq 0.$ It follows that $$\sum\limits_{p,u=1}^{r_i}\sum\limits_{q,v=1}^{r_j}R(c_{lv}^j,c_{ku}^i) R(c_{up}^i, c_{vq}^j)=\delta_{k,p}\delta_{q,l}.$$ Thus we have $R(a_1,b_1)R(b_2, a_2)=\varepsilon(a)\varepsilon(b)$ for all $a,b$ ranging over the elements of simple subcoalgebras in $\mathcal{S}{}^1$. By Proposition \ref{prop:(H1)0} and the fact that $H$ is finite-dimensional, $H$ is generated by $\{c\in C\mid C\in \mathcal{S}{}^1\},$ then using (\ref{eq:CQT2}) and (\ref{eq:CQT3}), we can complete the proof.
\end{proof}

In summary, from the preceding results we obtain the following proposition.
\begin{proposition}\label{prop:braidedtensorcategory}
Let $\mathscr{C}$ be a finite braided integral tensor category over $\k$ with the Chevalley property. Then
$\mathscr{C}$ is tensor equivalent to $\mathscr{M}^H$ for some finite-dimensional coquasitriangular coquasi-Hopf algebra $(H,R)$ with the dual Chevalley property. Moreover, we have an embedding $\mathscr{M}^{\operatorname{gr}(H)}\rightarrow \mathscr{M}^{\k(\mathrm{Q}(H),\mathcal{S})}$ of braided tensor categories, where $\mathrm{Q}(H)$ is the link quiver of $H$, and $\mathscr{M}^{\operatorname{gr}(H)}$ contains all objects of
$\mathscr{M}^{\k(\mathrm{Q}(H),\mathcal{S})}$ of Loewy length $\leq 2$. In particular, if the Ext quiver of $\mathscr{C}$ is connected, then $R(a_1,b_1)R(b_2, a_2)=\varepsilon(a)\varepsilon(b)$ for all $a,b\in H_0$.
\end{proposition}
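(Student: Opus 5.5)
The statement has three parts, and I would prove them in the order they appear, each time reducing to a result already established.

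\emph{Realization.} Since $\mathscr{C}$ is finite and integral, Lemma \ref{lem:integertensor} gives a tensor equivalence $\mathscr{C}\simeq\mathscr{M}^H$ for a finite-dimensional coquasi-Hopf algebra $H$. The Chevalley property of $\mathscr{C}$ means $H_0$ is a coquasi-Hopf subalgebra, as in the discussion after Lemma \ref{lem:integertensor}. Transporting the braiding across the equivalence gives a braiding on $\mathscr{M}^H$. By the bijection between braidings on $\mathscr{M}^H$ and coquasitriangular structures (\cite[Proposition 10.1]{BCPV19}), this yields $R$ with $(H,R)$ coquasitriangular.

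\emph{Embedding.} By Lemma \ref{lem:grHcoquasi}, $(\operatorname{gr}H,\operatorname{gr}(R))$ is coquasitriangular. Theorem \ref{thm:gabrielcoquasi} then embeds it as a large coquasitriangular coquasi-Hopf subalgebra of $\k(\mathrm{Q}(H),\mathcal{S})$ with the concentrated structure $R'$ extending $R_0$. A coquasi-Hopf algebra map that preserves $R$ induces a braided tensor functor $\mathscr{M}^{\operatorname{gr}H}\to\mathscr{M}^{\k(\mathrm{Q}(H),\mathcal{S})}$ by corestriction along the inclusion. This functor is fully faithful, since comodules over a subcoalgebra form a full subcategory closed under subobjects and quotients.

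\emph{Loewy length.} A comodule $V$ of Loewy length $\le 2$ over $\k(\mathrm{Q}(H),\mathcal{S})$ has coefficient coalgebra $\operatorname{cf}(V)$ contained in the second term of the coradical filtration. This term is $\k(\mathrm{Q}_0,\mathcal{S})\oplus\k(\mathrm{Q}_1,\mathcal{S})$, and by largeness it lies inside $\operatorname{gr}H$. Hence $V$ is a $\operatorname{gr}H$-comodule.

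\emph{Connected case.} By Remark \ref{rm:link=ext}, the Ext quiver of $\mathscr{C}$ equals the link quiver $\mathrm{Q}(H)$, so $\mathrm{Q}(H)$ is connected. By Lemma \ref{lem:Hopflink=Hopfquiver}, it is a generalized Hopf quiver for a cosemisimple datum of $H_0$. The structure $R'$ from Theorem \ref{thm:gabrielcoquasi} is a concentrated coquasitriangular structure on $\k(\mathrm{Q}(H),\mathcal{S})$ whose restriction to $H_0$ is $R_0=R|_{H_0\otimes H_0}$. Proposition \ref{prop:1-1triangular} applies, since $H_0$ is finite-dimensional cosemisimple and the quiver is connected. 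Its converse direction shows that $R_0(a_1,b_1)R_0(b_2,a_2)=\varepsilon(a)\varepsilon(b)$ for all $a,b\in H_0$.

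\emph{Main obstacle.} The delicate point is the bookkeeping that the $R'$ produced by Theorem \ref{thm:gabrielcoquasi} really lives on the coquasi-Hopf structure of $\k(\mathrm{Q}(H),\mathcal{S})$ to which Proposition \ref{prop:1-1triangular} is applied. I will check that both use the same Majid bimodule structure on $\k(\mathrm{Q}_1,\mathcal{S})$, namely the one induced from $\operatorname{gr}H$. Then the argument in the proof of Proposition \ref{prop:1-1triangular}, which compares products of length-one paths out of $\k1$, applies verbatim.
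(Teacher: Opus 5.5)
Your proposal is correct and follows the paper's proof: Lemma \ref{lem:integertensor} and the braiding/coquasitriangular correspondence give the realization, Lemma \ref{lem:grHcoquasi} and Theorem \ref{thm:gabrielcoquasi} give the braided embedding, and the connected case runs the length-one-path argument of Proposition \ref{prop:1-1triangular} on the concentrated structure extending $R_0$ (your coefficient-coalgebra argument for Loewy length $\leq 2$ just makes explicit what the paper asserts). The only omission is the degenerate case where $H$ is cosemisimple: there connectedness forces $H=\k1$ and the identity is trivial, but Lemma \ref{lem:Hopflink=Hopfquiver} (proved via Proposition \ref{prop:alphaijk}, which assumes non-cosemisimplicity) is not available, so the paper splits this case off first.
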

\begin{proof}
By Lemma \ref{lem:integertensor} and \cite[Proposition 10.1]{BCPV19}, $\mathscr{C}$ is tensor equivalent to $\mathscr{M}^H$ for some finite-dimensional coquasitriangular coquasi-Hopf algebra $H$ with the dual Chevalley property. According to Theorem \ref{thm:gabrielcoquasi}, we have an embedding $\mathscr{M}^{\operatorname{gr}(H)}\rightarrow \mathscr{M}^{\k(\mathrm{Q}(H),\mathcal{S})}$ of braided tensor categories, and $\mathscr{M}^{\operatorname{gr}(H)}$ contains all objects of
$\mathscr{M}^{\k(\mathrm{Q}(H),\mathcal{S})}$ of length $\leq 2$. If the Ext quiver of $\mathscr{C}$ is connected, then by Remark \ref{rm:link=ext}, $H$ is link-indecomposable. If $H = \k1$, the proof is trivial; if $H \neq \k1$, then by Theorem \ref{thm:gabrielcoquasi}, $(\operatorname{gr}H, \operatorname{gr}(R))$ is a large coquasitriangular coquasi-Hopf subalgebra of $\k(\mathrm{Q}(H) ,\mathcal{S})$, where $\mathrm{Q}(H)$ is the link quiver of $H$. A similar argument as in the proof of Proposition \ref{prop:1-1triangular}, we can show that $$R(a_1,b_1)R(b_2, a_2)=\varepsilon(a)\varepsilon(b)$$ for all $a,b\in H_0$.
\end{proof}

\begin{corollary}
Let $\k$ be an algebraically closed field with characteristic $0$, and let $\mathscr{C}$ be a finite braided non-fusion integral tensor categories over $\k$ with the Chevalley property of finite representation type. Then $\mathscr{C}$ is tensor equivalent to $\mathscr{M}^H$ for some finite-dimensional coquasitriangular coquasi-Hopf algebra $H$ with the dual Chevalley property such that $\operatorname{gr}H_{(1)}$ is a generalized Taft algebra presented by generators $g$ and $x$ with relations
$$
g^n = 1,\;\; x^2 = 0,\;\; gx = -xg,
$$
where $n$ is an even integer.
\end{corollary}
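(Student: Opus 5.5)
We combine Theorem \ref{thm:finite} with Proposition \ref{prop:braidedtensorcategory}. Since $\mathscr{C}$ is braided, integral and has the Chevalley property, Proposition \ref{prop:braidedtensorcategory} gives a tensor equivalence $\mathscr{C}\simeq\mathscr{M}^H$ with $(H,R)$ a finite-dimensional coquasitriangular coquasi-Hopf algebra with the dual Chevalley property. Because $\mathscr{C}$ is non-fusion, $H$ is non-cosemisimple. Since $\mathscr{C}$ is of finite representation type, Theorem \ref{thm:finite} and Proposition \ref{prop:finite} give $\operatorname{gr}H_{(1)}\cong M(n,s,q)$.

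The remaining task is to use the braiding to force $s=0$ and $q=-1$. The first step is to pass to the braided setting on the component. By Proposition \ref{prop:HcHd} (4), $H_{(1)}$ is a coquasi-Hopf subalgebra. Restricting $R$ to it gives a coquasitriangular structure, and Lemma \ref{lem:grHcoquasi} then makes $(\operatorname{gr}H_{(1)},\operatorname{gr}(R))$ coquasitriangular. By Lemma \ref{lem:finite=}, the link quiver of $H_{(1)}$ is a basic cycle on $\mathbb{Z}_n=\langle g\rangle$, so it is connected. The argument of Proposition \ref{prop:1-1triangular} (as used in the last part of Proposition \ref{prop:braidedtensorcategory}) then yields
\[
R(a_1,b_1)R(b_2,a_2)=\varepsilon(a)\varepsilon(b)\quad\text{for } a,b\in(H_{(1)})_0=\k\mathbb{Z}_n .
\]
Thus $R_0=R|_{\k\mathbb{Z}_n\otimes\k\mathbb{Z}_n}$ is a symmetric (cotriangular) structure on $(\k\mathbb{Z}_n,\Phi_s)$. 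In particular $R_0(g,g)^2=1$.

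The second step handles the reassociator. A braided structure on $\operatorname{Vec}_{\mathbb{Z}_n}^{\Phi_s}$ is a pair $(\Phi_s,R_0)$ forming an abelian 3-cocycle, and $R_0(g,g)$ is the associated quadratic form. Symmetry means $R_0(x,y)R_0(y,x)=1$. For cyclic groups, the symmetric abelian cocycles have $\Phi_s$ cohomologically trivial, up to the sign twist, which still corresponds to $s=0$ in the parametrization of $M(n,s,q)$. One must also check that twisting by a 2-cochain preserves the form $M(n,0,q)$. Concretely, I would compute with the explicit formula $\Phi_s(g^i,g^j,g^k)=\mathbbm{q}^{si[(j+k)/n]}$ and the hexagon axioms \eqref{eq:CQT2}, \eqref{eq:CQT3} evaluated at $a=g^i$, $b=g^j$, $c=g^k$. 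This gives $R_0(g,g)^{2n}$-type constraints forcing $\mathbbm{q}^s=1$, hence $s=0$. So $\operatorname{gr}H_{(1)}$ is a generalized Taft algebra $g^n=1$, $x^d=0$, $xg=qgx$.

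The third step uses the relation \eqref{eq:CQT1} for $\operatorname{gr}(R)$ with $a=g$ and $b=x$. Since $\operatorname{gr}(R)$ is concentrated and $\Delta(x)=x\otimes1+g\otimes x$, we get
\[
R(g,g)\,gx=xg\,R(g,1)=xg,
\]
so $q=R(g,g)^{-1}$ up to the convention $xg=qgx$. From step one, $R(g,g)=\pm1$. Since $x$ is a nontrivial $(g,1)$-primitive, $q\neq1$ (otherwise $x^p$ considerations and characteristic $0$ give $d=\infty$, contradicting finite dimension). Hence $q=-1$, so $d=2$, giving $x^2=0$ and $gx=-xg$. Finally, $q$ is an $n$-th root of unity, so $n$ is even.

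The main obstacle is step two: rigorously showing that a symmetric coquasitriangular structure forces $s=0$. This requires the hexagon computation on $\mathbb{Z}_n$ with the given 3-cocycle, or alternatively an appeal to the fact that symmetric pointed fusion categories over $\mathbb{Z}_n$ are equivalent to super-Vec-type structures with trivial associator class on the even part. I would first try the direct computation of \eqref{eq:CQT2} at $(g,g^{n-1},g)$ and $(g^{i},g^{j},g)$ to extract $\mathbbm{q}^{s}=R_0$-ratios, and combine this with $R_0(g^i,g^j)R_0(g^j,g^i)=1$.
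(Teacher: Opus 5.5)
Your proposal is correct, but it reaches the conclusion by a different route from the paper. The paper's proof is one line: it combines Theorem \ref{thm:finite} (so $\operatorname{gr}H_{(1)}\cong M(n,s,q)$) with \cite[Corollary 4.4]{HL12}. In effect, it imports from Huang--Liu's work on coquasitriangular pointed Majid algebras the fact that a braiding forces the generalized Taft form with $q=-1$. You instead rederive that input from the axioms \eqref{eq:CQT1} and \eqref{eq:CQT2} applied to the explicit algebra. This keeps the argument inside the paper, at the cost of a computation the citation avoids.

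The step you flag as the obstacle is short. It needs neither the argument of Proposition \ref{prop:1-1triangular} nor any discussion of twisting or cohomology classes.
\begin{itemize}
\item[(1)] Since \eqref{eq:CQT1} involves no reassociator, applying it with the concentrated $\operatorname{gr}(R)$ to the pairs $(g,x)$ and $(x,g)$ gives $xg=R(g,g)\,gx$ and $gx=R(g,g)\,xg$, for every $s$. As $gx\neq0$, this yields $R(g,g)=\pm1$.
\item[(2)] For group-likes, \eqref{eq:CQT2} reduces to $R(g^{i+1},g)=\Phi_s(g,g^i,g)\,R(g^i,g)\,R(g,g)$. Iterating up to $g^n=1$ and using $R(1,g)=1$ gives $1=\mathbbm{q}^{s}R(g,g)^{n}$.
\item[(3)] If $n$ is even, this gives $\mathbbm{q}^{s}=1$ directly. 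If $n$ is odd, $\mathbbm{q}^{s}=\pm1$ is an $n$-th root of unity and so equals $1$. Since $\mathbbm{q}$ is a primitive $n$-th root of unity and $0\le s\le n-1$, in both cases $s=0$ exactly.
\item[(4)] Now (1) gives $q=R(g,g)\in\{\pm1\}$. Moreover $q\neq1$, since $x\neq0$ while $x^{d}=0$ with $d$ the order of $q$. Hence $q=-1$, $x^{2}=0$, and $q^{n}=1$ forces $n$ to be even.
\end{itemize}
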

\begin{proof}
This follows directly from Theorem \ref{thm:finite} and \cite[Corollary 4.4]{HL12}.
\end{proof}

\section*{Acknowledgement}
The author would like to thank Professors Xiao-Wu Chen, Gongxiang Liu, and Yu Ye for their valuable comments and suggestions. The author is also grateful to Fengchang Li for useful discussions on quantum groups and to Dongdong Hu for helpful discussions on modulated graphs.


\begin{thebibliography}{99}
\setlength{\itemsep}{0em}
\bibitem{AAGM14}N. Andruskiewitsch, I. Angiono, A. Garc{\'{i}}a Iglesias, A. Masuoka, C. Vay, Lifting via cocycle deformation, J. Pure Appl. Algebra 218 (4) (2014) 684-703. https://doi.org/10.1016/j.jpaa.2013.08.008.
\bibitem{AEG01}N. Andruskiewitsch, P. Etingof, S. Gelaki, Triangular Hopf algebras with the Chevalley property, Machigan Math. J. 49 (2) (2001) 277-298. https://doi.org/10.1307/mmj/1008719774.
\bibitem{AS10}N. Andruskiewitsch, H.-J. Schneider, On the classification of finite-dimensional pointed Hopf algebras, Ann. Math. (2) 171 (1) (2010) 375-417. https://doi.org/10.4007/annals.2010.171.375.
\bibitem{Ang10}I.E. Angiono, Basic quasi-Hopf algebras over cyclic groups, Adv. Math. 225 (6) (2010) 3545-3575. https://doi.org/10.1016/j.aim.2010.06.013.
\bibitem{AG19}I. Angiono, A. Garc{\'{i}}a Iglesias, Liftings of Nichols algebras of diagonal type II: all liftings are cocycle deformations,
Selecta Math. (N.S.) 25 (2019) (1) Paper No. 5, 95 pp. https://doi.org/10.1007/s00029-019-0452-4.
\bibitem{AP12}A. Ardizzoni, A. Pavarin, Preantipodes for dual quasi-bialgebras, Israel J. Math. 192 (1) (2012) 281-295. https://doi.org/10.1007/s11856-012-0024-1.
\bibitem{AP13}A. Ardizzoni, A. Pavarin, Bosonization for dual quasi-bialgebras and preantipode, J. Algebra 390 (2013), 126-159. https://doi.org/10.1016/j.jalgebra.2013.05.014.
\bibitem{ASS06}I. Assem, D. Simson, A. Skowro{\'{n}}ski, Elements of the Representation Theory of Associative Algebra Vol.1, Techniques of representation theory, Lond. Math. Soc. Students Texts 65. Cambridge university Press, 2006.
\bibitem{ARS95}M. Auslander, I. Reiten, S. Smal{\o}, Representation Theory of Artin Algebras, Cambridge Studies in Adv. Math. 36. Cambridge University Press, 1995.
\bibitem{Bal09}A. Balan, Yetter-Drinfeld modules and Galois extensions over coquasi-Hopf algebras, Politehn. Univ. Bucharest Sci. Bull. Ser. A Appl. Math. Phys. 71 (3) (2009) 43-60.
\bibitem{BIR09}M. Beattie, M. C. Iovanov, S. Raianu, The antipode of a dual quasi-Hopf algebra with nonzero integrals is bijective,
Algebr. Represent. Theory 12 (2-5) (2009) 251-255. https://doi.org/10.1007/s10468-009-9148-3.
\bibitem{Bul20}D. Bulacu, Quasi-quantum groups obtained from tensor braided Hopf algebras, J. Algebraic Combin. 52 (2020) (3) 405-453. https://doi.org/10.1007/s10801-019-00907-5.
\bibitem{BC03}D. Bulacu, S. Caenepeel, Integrals for (dual) quasi-Hopf algebras. Applications, J. Algebra 266 (2003) 552-583. https://doi.org/10.1016/S0021-8693(03)00175-3.
\bibitem{BCPV19}D. Bulacu, S. Caenepeel, F. Panaite, F. Van Oystaeyen, Quasi-Hopf Algebras: A Categorical Approach, Cambridge University Press, Cambridge, 2019.
\bibitem{Che54}C. Chevalley, Th\'{e}orie des groupes de Lie, Tome III, Hermann, Paris, 1954.
\bibitem{CHZ06}X.-W. Chen, H.-L. Huang, P. Zhang, Dual Gabriel theorem with applications, Sci. China Ser. A 49 (1) (2006) 9-26. https://doi.org/10.1007/s11425-004-5235-4.
\bibitem{CZ07}X.-W. Chen, P. Zhang, Comodules of $U_q(sl_2)$ and modules of $SL_q(2)$ via quiver methods, J. Pure Appl. Algebra 211 (2007) (3) 862-876. https://doi.org/10.1016/j.jpaa.2007.03.010.
\bibitem{CR02}C. Cibils, M. Rosso, Hopf quivers, J. Algebra 254 (2) (2002) 241-251. https://doi.org/10.1016/S0021-8693(02)00080-7.
\bibitem{CL00}F.U. Coelho, S. X. Liu, Generalized path algebras, in: Interactions Between Ring Theory and Representations of Algebras, Murcia, in: Lect. Notes Pure Appl. Math., vol. 210, Marcel Dekker, New York, 2000, pp. 53-66.
\bibitem{DPR90}R. Dijkgraaf, V. Pasquier, P. Roche, Quasi Hopf algebras, group cohomology and orbifold models, Nucl. Phys. B, Proc. Suppl. 18B (1990) 60-72. https://doi.org/10.1016/0920-5632(91)90123-V.
\bibitem{DR74}V. Dlab, C. M. Ringel, Representations of Graphs and Algebras, Carleton Mathematical Lecture Notes, vol. 8, Department of Mathematics, Carleton University, Ottawa, Ont., 1974.
\bibitem{Dri87}V.G. Drinfeld, Quantum groups, in: Proceedings of the International Congress of Mathematicians, vol. 1, 2, Berkeley, Calif., 1986, Amer. Math. Soc., Providence, RI, 1987.
\bibitem{Dri90}V.G. Drinfeld, Quasi-Hopf algebras, Algebra Anal. 1 (6) (1989) 114-148; translation in Leningr. Math. J. (ISSN 0234-0852) 1 (6) (1990) 1419-1457.
\bibitem{Dro79}J. A. Drozd, Tame and wild matrix problems, representation theory, II, in: Proc. Second Internat. Conf., Carleton Univ., Ottawa, Ont., 1979, Lecture Notes in Math., vol. 832, Springer, Berlin, 1980, pp. 242-258.
\bibitem{DNR25}C. Dong, S.-H. Ng, L. Ren, Orbifolds and minimal modular extensions, Adv. Math. 462 (2025), Paper No. 110103, 43 pp. https://doi.org/10.1016/j.aim.2025.110103.
\bibitem{EG03}P. Etingof, S. Gelaki, The classification of finite-dimensional triangular Hopf algebras over an algebraically closed field of characteristic $0$, Mosc. Math. J. 3 (2003) (1) 37-43, 258.
\bibitem{EG04}P. Etingof, S. Gelaki, Finite-dimensional quasi-Hopf algebras with radical of codimension $2$, Math. Res. Lett. 11 (2004) 685-696. https://doi.org/10.4310/MRL.2004.v11.n5.a11.
\bibitem{EG05}P. Etingof, S. Gelaki, On radically graded finite-dimensional quasi-Hopf algebras, Mosc. Math. J. 5 (2) (2005) 371-378. https://doi.org/10.17323/1609-4514-2005-5-2-371-378.
\bibitem{EG06}P. Etingof, S. Gelaki, Liftings of graded quasi-Hopf algebras with radical of prime codimension, J. Pure Appl. Algebra 205 (2) (2006) 310-322. https://doi.org/10.1016/j.jpaa.2005.06.016.
\bibitem{EG21a}P. Etingof, S. Gelaki, Finite symmetric integral tensor categories with the Chevalley property, with an appendix by Kevin Coulembier and Pavel Etingof, Int. Math. Res. Not. (12) (2021) 9083-9121. https://doi.org/10.1093/imrn/rnz093.
\bibitem{EG21b}P. Etingof, S. Gelaki, Finite symmetric tensor categories with the Chevalley property in characteristic $2$, J. Algebra Appl. 20 (1) (2021), Paper No. 2140010. https://doi.org/10.1142/S0219498821400107.
\bibitem{EGNO15}P. Etingof, S. Gelaki, D. Nikshych, V. Ostrik, Tensor Categories, Math. Surveys Monogr., vol. 205, American Mathematical Society, Providence, 2015.
\bibitem{EO04}P. Etingof, V. Ostrik, Finite tensor categories, Mosc. Math. J. 4 (3) (2004) 627-654. https://doi.org/10.17323/1609-4514-2004-4-3-627-654.
\bibitem{FGR22}V. Farsad, A.M. Gainutdinov, I. Runkel, The symplectic fermion ribbon quasi-Hopf algebra and the $SL(2, \mathbb{Z})$-action on its centre, Adv. Math. 400 (2022) Paper No. 108247, 87 pp. https://doi.org/10.1016/j.aim.2022.108247.
\bibitem{Gab72}P. Gabriel, Unzerlegbare Darstellungen I, Manuscr. Math. 6 (1972) 71-103. https://doi.org/10.1007/BF01298413.
\bibitem{GR25}T. Gannon, A. Riesen, Orbifolds of pointed vertex operator algebras I, Adv. Math. 482 (2025), part A, Paper No. 110546, 47 pp. https://doi.org/10.1016/j.aim.2025.110546.
\bibitem{Gei95}C. Geiss, On degenerations of tame and wild algebras, Arch. Math. 64 (1995) 11-16. https://doi.org/10.1007/BF01193544.
\bibitem{GLS17}C. Geiss, B. Leclerc, J. Schr$\ddot{o}$er, Quivers with relations for symmetrizable Cartan matrices I: foundations, Invent. Math. 209 (1) (2017) 61-158. https://doi.org/10.1007/s00222-016-0705-1.
\bibitem{Gel05}S. Gelaki, Basic quasi-Hopf algebras of dimension $n^3$, J. Pure Appl. Algebra 198 (2005) 165-174. https://doi.org/10.1016/j.jpaa.2004.10.003.
\bibitem{GS98}E.L. Green, {\O}. Solberg, Basic Hopf algebras and quantum groups, Math. Z. 229 (1998) 45-76. https://doi.org/10.1007/PL00004650.
\bibitem{Hua09}H.-L. Huang, Quiver approaches to quasi-Hopf algebras, J. Math. Phys. 50 (4) (2009) 043501. https://doi.org/10.1063/1.3103569.
\bibitem{Hua12}H.-L. Huang, From projective representations to quasi-quantum groups, Sci. China Math. 55 (10) (2012) 2067-2080. https://doi.org/10.1007/s11425-012-4437-4.
\bibitem{HL12}H.-L. Huang, G. Liu, On coquasitriangular pointed Majid algebras, Comm. Algebra 40 (2012) (10) 3609-3621. https://doi.org/10.1080/00927872.2011.582059.
\bibitem{HLY11}H.-L. Huang, G. Liu, Y. Ye, Quivers, quasi-quantum groups and finite tensor categories, Comm. Math. Phys. 303 (3) (2011) 595-612. https://doi.org/10.1007/s00220-011-1229-6.
\bibitem{HLY15}H.-L. Huang, G. Liu, Y. Ye, Graded elementary quasi-Hopf algebras of tame representation type, Israel J. Math. 209 (1) (2015) 157-186. https://doi.org/10.1007/s11856-015-1214-4.
\bibitem{HLYY20}H.-L. Huang, G. Liu, Y. Yang, Y. Ye, Finite quasi-quantum groups of diagonal type, J. Reine Angew. Math. 759 (2020) 201-243. https://doi.org/10.1515/crelle-2017-0058.
\bibitem{HLYY21}H.-L. Huang, G. Liu, Y. Yang, Y. Ye, Finite quasi-quantum groups of rank two, Trans. Amer. Math. Soc. Ser. B 8 (2021) 635-678. https://doi.org/10.1090/btran/79.
\bibitem{HLYY26}H.-L. Huang, G. Liu, Y. Yang, Y. Ye, On the classification of finite quasi-quantum groups over abelian groups, Adv. Math. 486 (2026), Paper No. 110740. https://doi.org/10.1016/j.aim.2025.110740.
\bibitem{HYZ18}H.-L. Huang, Y. Yang, Y. Zhang, On nondiagonal finite quasi-quantum groups over finite abelian groups, Selecta Math. (N.S.) 24 (2018) (5) 4197-4221. https://doi.org/10.1007/s00029-018-0420-4.
\bibitem{HQW26}Y.-M. Huang, Z.-K. Mi, T.-C. Qi, Q.-S. Wu, Chevalley property and discriminant ideals of Cayley-Hamilton Hopf algebras, Int. Math. Res. Not. IMRN 2026 (11) Paper No. rnag111. https://doi.org/10.1093/imrn/rnag111.
\bibitem{HQWZ26}Y.-M. Huang, T.-C. Qi, Q.-S. Wu, R.-P. Zhu, Chevalley property of module-finite Hopf algebras and discriminant ideals, arXiv:2604.15986.
\bibitem{Jos94}A. Joseph, On the prime and primitive spectra of the algebra of functions on a quantum group, J. Algebra 169 (1994) 441-511. https://doi.org/10.1006/jabr.1994.1294.
\bibitem{KR06}L. Krop, D.E. Radford, Finite-dimensional Hopf algebras of rank one in characteristic zero, J. Algebra 302 (2006) 214-230. https://doi.org/10.1016/j.jalgebra.2006.03.031.
\bibitem{Lar71}R.G. Larson, Characters of Hopf algebras, J. Algebra 17 (1971) 352-368. https://doi.org/10.1016/0021-8693(71)90018-4.
\bibitem{Li12}F. Li, Modulation and natural valued quiver of an algebra, Pac. J. Math. 256 (1) (2012) 105-128. https://doi.org/10.2140/pjm.2012.256.105.
\bibitem{LL12}F. Li, Z. Lin, Approach to Artinian algebras via natural quivers, Trans. Amer. Math. Soc. 364 (2012) (3) 1395-1411. https://doi.org/10.1090/S0002-9947-2011-05410-3.
\bibitem{LL08}F. Li, G. Liu, Generalized path coalgebras and a generalized dual Gabriel theorem, Acta Math. Sinica (Chinese Ser.) 51 (5) (2008) 853-862.
\bibitem{Li22}K. Li, The link-indecomposable components of Hopf algebras and their products, J. Algebra 593 (2022) 235-273. https://doi.org/10.1016/j.jalgebra.2021.11.016.
\bibitem{LZ19}K. Li, S. Zhu, On the exponent of finite-dimensional non-cosemisimple Hopf algebras, Comm. Algebra 47 (11) (2019) 4476-4495. https://doi.org/10.1080/00927872.2018.1539176.
\bibitem{LN14}G. Liu, S.-H. Ng, On Total Frobenius-Schur Indicators, in: Recent Advances in Representation Theory, Quantum Groups, Algebraic Geometry, and Related Topics, Contemp. Math., 623, American Mathematical Society, Providence, RI, 2014 pp. 193-213.
\bibitem{LS07}C. Lomp, A. Sant'Ana, Chain and distributive coalgebras, J. Pure Appl. Algebra 211 (3) (2007) 581-595. https://doi.org/10.1016/j.jpaa.2007.02.009.
\bibitem{Maj92}S. Majid, Tannaka-Krein theorems for quasi-Hopf algebras and other results, Contemp. Math. 134 (1992) 219-232.
in: Deformation theory and quantum groups with applications to mathematical physics,
Contemp. Math., 134,
American Mathematical Society, Providence, RI, 1992, 219-232.
\bibitem{Maj95}S. Majid, Foundations of Quantum Group Theory, Cambridge University Press, 1995.
\bibitem{MT19}S. Majid, W-Q. Tao, Generalised noncommutative geometry on finite groups and Hopf quivers, J. Noncommut. Geom. 13 (2019) 1055-1116. https://doi.org/10.4171/jncg/345.
\bibitem{MN01}G. Mason, S.-H. Ng, Group cohomology and gauge equivalence of some twisted quantum doubles, Trans. Am. Math. Soc. 353 (9) (2001) 3465-3509. https://doi.org/10.1090/S0002-9947-01-02771-4.
\bibitem{Mon93}S. Montgomery, Hopf Algebras and Their Actions on Rings, CBMS Regional Conference Series in Mathematics, 82. Published for the Conference Board of the Mathematical Sciences, Washington, DC; by the American Mathematical Society, Providence, RI, 1993.
\bibitem{Mon95}S. Montgomery, Indecomposable coalgebras, simple comodules and pointed Hopf algebras, Proc. Amer. Math. Soc. 123 (8) (1995) 2343-2351. https://doi.org/10.2307/2161257.
\bibitem{NS08}S.H. Ng, P. Schauenburg, Central invariants and higher indicators for semisimple quasi-Hopf algebras, Trans. Am. Math. Soc. 360 (4) (2008) 1839-1860. https://doi.org/10.1090/S0002-9947-07-04276-6.
\bibitem{Nic78}W. Nichols, Bialgebras of type I, Comm. Alg 6 (1978) 1521-1552. https://doi.org/10.1080/00927877808822306.
\bibitem{Rad77}D.E. Radford, Operators on Hopf algebras, Amer. J. Math. 99 (1) (1977) 139-158. https://doi.org/10.2307/2374012.
\bibitem{Rad12}D.E. Radford, Hopf Algebras, Series on Knots and Everything, vol. 49, World Scientific Publishing Co. Pte. Ltd., Hackensack, NJ, 2012.
\bibitem{RT91}N. Reshetikhin, V. Turaev, Invariants of 3-manifolds via link polynomials and quantum groups, Invent. Math. 103 (1991) 547-597. https://doi.org/10.1007/BF01239527.
\bibitem{Rin75}C.M. Ringel, The representation type of local algebras, in: Representations of Algebras, Proc. Internat. Conference, Ottawa 1974, in: Springer LNM, vol. 488, 1975, pp. 282-305.
\bibitem{SS93}S. Shnider, S. Sternberg, Quantum Groups: From Coalgebras to Drinfel'd Algebras, Grad. Texts Math. Phys., II, International Press, Cambridge, MA, 1993.
\bibitem{SS07}D. Simson and A. Skowro{\'{n}}ski, Elements of the representation theory of associative algebras, Vol. 3, London Math. Soc. Stud. Texts, 72, Cambridge University Press, Cambridge, 2007.
\bibitem{Ros98}M. Rosso, Quantum groups and quantum shuffles, Invent. Math. 133 (1998) (2) 399-416. https://doi.org/10.1007/s002220050249.
\bibitem{VZ04}F. Van Oystaeyen, P. Zhang, Quiver Hopf algebras, J. Algebra 280 (2004) 577-589. https://doi.org/10.1016/j.jalgebra.2004.06.008.
\bibitem{YLL24}J. Yu, K. Li, G. Liu, Hopf algebras with the dual Chevalley property of finite corepresentation type, Algebr. Represent. Theory 27 (5) (2024) 1821-1867. https://doi.org/10.1007/s10468-024-10284-8.
\bibitem{YL24}J. Yu, G. Liu, Coradically graded Hopf algebras of tame corepresentation type, arXiv:2407.21389.
\bibitem{YL26}J. Yu, G. Liu, Hopf algebras with the dual Chevalley property of discrete corepresentation type, J. Algebra 688 (2026) 803-843. https://doi.org/10.1016/j.jalgebra.2025.10.011.
\bibitem{ZGH21}Y. Zheng, Y. Gao, N. Hu, Finite-dimensional Hopf algebras over the Hopf algebra $H_{b:1}$ of Kashina, J. Algebra 567 (2021) 613-659. https://doi.org/10.1016/j.jalgebra.2020.09.035.
\end{thebibliography}
\end{document}